\documentclass[12pt]{article}
\usepackage{amsmath,amssymb,amsthm,eucal}
\usepackage{color}
\usepackage[all]{xy}

\pdfoutput=1

\title{\vspace*{-1pc}%
  Curvature of differentiable Hilbert modules and Kasparov modules
}

\author{Bram Mesland\S$^*$, Adam Rennie\dag, Walter D. van Suijlekom\ddag
\thanks{email: 
\texttt{b.mesland@math.leidenuniv.nl},\,\texttt{renniea@uow.edu.au},\,\texttt{waltervs@math.ru.nl}
}
\\[3pt]
\S Mathematisch Instituut, Universiteit Leiden, The Netherlands
\\[3pt]
\dag School of Mathematics and Applied Statistics, University of Wollongong\\
Wollongong, Australia\\[3pt]
\ddag IMAPP – Mathematics,
Faculty of Science\\
Radboud University Nijmegen, The Netherlands
}

%\date{September 2011}

%% modified `fullpage' settings:
\topmargin=0pt
\advance\topmargin by -\headheight
\advance\topmargin by -\headsep
\textheight=8.9in  %% was 8.5in
\oddsidemargin=15pt
\evensidemargin=\oddsidemargin
\marginparwidth=0.5in
\textwidth=6.2in  %% was 6.5in

%% Deflate (sub)section titles:

\makeatletter
\def\section{\@startsection{section}{1}{\z@}{-3.5ex plus -1ex minus
  -.2ex}{2.3ex plus .2ex}{\large\bf}}
\def\subsection{\@startsection{subsection}{2}{\z@}{-3.25ex plus -1ex
  minus -.2ex}{1.5ex plus .2ex}{\normalsize\bf}}
\makeatother

\numberwithin{equation}{section} %% needs `amsmath' package

\theoremstyle{plain} %% needs `amsmath' package
\newtheorem{thm}{Theorem}[section]
\newtheorem{lemma}[thm]{Lemma}
\newtheorem{prop}[thm]{Proposition}
\newtheorem{corl}[thm]{Corollary}

\theoremstyle{definition} %% needs `amsmath' package
\newtheorem{defn}[thm]{Definition}
% \newtheorem{axiom}[thm]{Axiom} %% (not in any section)
 %% (not in any section)

\theoremstyle{remark} %% needs `amsmath' package
\newtheorem{rmk}[thm]{Remark}

     %% adjoint derivation
 %% Clifford algebra
 %% cokernel
   %% derivations
\DeclareMathOperator{\Dom}{Dom}   %% domain of an operator
\DeclareMathOperator{\End}{End}   %% endomorphism algebra
   %% module of homomorphisms
     %% identity map
 %% linear span
  %% range/image of an operator
 %% rank of a matrix
   %% spectrum
\DeclareMathOperator{\supp}{supp} %% support
     %% operator trace
\DeclareMathOperator{\tr}{tr}     %% matrix trace
  %% matrix trace
 %% small sum in display
\newcommand{\alg}{\textnormal{alg}}
\newcommand{\Tex}{\textnormal}
      %% short for \alpha
       %% short for \beta
      %% short for \delta
 %% short for \varepsilon
      %% short for \Gamma
      %% short for \gamma
     %% short for \Lambda
     %% short for \lambda
      %% short for \nabla
      %% short for \Omega
      %% short for \omega
      %% short for \sigma

\newcommand{\diag}{\textnormal{diag }}

\newcommand{\A}{\mathcal{A}}  %% an algebra
 %% an operator
\newcommand{\B}{\mathcal{B}}  %% another algebra
\newcommand{\BB}{\mathbb{B}} %%cause Bram wants a blackboard B
\newcommand{\C}{\mathbb{C}}   %% complex numbers
 %% yet another algebra
  %% Hochschild cycle
 %% alg <\A, [\D,\A]>
 %% alg <\A, [\D,\A]>
 %% alg <\B, [\D,\B]>
 %% a grading operator
   %% smooth function algebra
\newcommand{\D}{\mathcal{D}}  %% a selfadjoint operator
 %%...and his mate
\renewcommand{\d}{\mathrm{d}} %% \d a = [\D,a]
   %% partial derivative symbol
 %% \Dreg for \Dhat
 %% smooth domain of operator
 %% <D> = (1 + D^2)^{1/2}
 %% short for \displaystyle
 %% a selfadjoint operator...
\newcommand{\E}{\mathcal{E}}  %% a bi/module
  %% another bi/module
\renewcommand{\H}{\mathcal{H}}  %% a Hilbert space
 %% small fraction  1/2

 %% graded tensorproduct
 %% injection
%% Hodge star operator
  %% operator ideal
    %% isomorphism symbol
\newcommand{\J}{\mathcal{J}}  %% ideal
  %% compact endomorphisms
 %% endomorphism algebra
 %% short for \setminus
   %% natural numbers
   %% for alignments
 %% for alignments
       %% opposite-algebra marker
 %% bijection
  %% short for \overline
\newcommand{\ox}{\otimes}     %% tensor product
 %% repeated tensor product
  %%% Cuntz-Pimsner algebra
 %% an auxiliary isomorphism
 %% natural positive cone
\newcommand{\R}{\mathbb{R}}   %% real numbers
 %% an operator algebra
  %% sphere
 %%Toeplitz
 %% a derivation
    %% an auxiliary function
    %% an auxiliary function
 %% an auxiliary compact set
 %% Dixmier trace
 %% arrow with rider
       %% exterior product
 %% repeated exterior product
       %% Cartesian product
\newcommand{\X}{\mathcal{X}}  %% curly exx
 %% repeated Cartesian product
\newcommand{\Z}{\mathbb{Z}}   %% integers
 %% an operator ideal
       %% E^\3 = dual of f.g.p. A-module E
      %% placeholder for exponents: like *
      %% scalar product in R^n
     %% colon in `f \: A \to B'

% \newcommand{\as}{\quad\mbox{as}\enspace} %% `as' with spacing
 %% (X|Y)
 %% bra vector <\xi|
 %% <<\xi|\eta>>
 %% <\xi|\eta>
% \newcommand{\hideqed}{\renewcommand{\qed}{}} %% no `\qed' at end-proof
% \newcommand{\interior}[1]{{#1}^\circ} %% interior of a set
%% \newcommand{\ket}[1]{|#2\rangle} %% ket vector |\eta>
 %% |\xi><\eta|

 %% pairing (x|y)
 %% partial deriv
 %% greatest-integer function
 %% <T> = (1 + T^2)^{1/2}
 %% list:  a_1,...,a_n
  %% set notation
\newcommand{\stroke}{\mathbin|}     %% (for `\pair' and such)
 %% 2 x 2 matrix
 %% well-spaced words

% \def\pair(#1|#2){(#1\stroke#2)} %% hermitian pairing (s|t)
\def\pairL_#1(#2|#3){{}_{#1}(#2\stroke#3)} %% hermitian pairing _B(s|t)
\def\pairR(#1|#2)_#3{(#1\stroke#2)_{#3}} %% hermitian pairing (s|t)_A
\def\scal<#1|#2>{\langle#1\stroke#2\rangle} %% scalar product <y|z>

 %% dividing line for revisions

%%% Not used here, so far:
\newbox\ncintdbox \newbox\ncinttbox %% noncommutative integral symbols
	\setbox0=\hbox{$-$}
	\setbox2=\hbox{$\displaystyle\int$}
	\setbox\ncintdbox=\hbox{\rlap{\hbox
		to \wd2{\hskip-.125em \box2\relax\hfil}}\box0\kern.1em}
	\setbox0=\hbox{$\vcenter{\hrule width 4pt}$}
	\setbox2=\hbox{$\textstyle\int$}
	\setbox\ncinttbox=\hbox{\rlap{\hbox
		to \wd2{\hskip-.175em \box2\relax\hfil}}\box0\kern.1em}

%% Co-authors' comments:
 %% Adam
 %% Joe
 %% Steve

\hyphenation{geo-me-try ma-ni-fold ma-ni-folds pro-duct pro-ducts}

%%%===================================================================

\begin{document}

\maketitle

\vspace{-2pc}

\begin{abstract} In this paper we introduce the curvature of 
densely defined universal connections on Hilbert $C^{*}$-modules 
relative to a spectral triple (or unbounded Kasparov module), 
obtaining a well-defined 
curvature operator. Fixing the spectral triple, we find that 
modulo junk forms, the curvature only depends on the represented form 
of the universal connection. We refine our definition of curvature to factorisations 
of unbounded Kasparov modules.
%, as the defect of the unbounded Kasparov product 
%to commute with the squaring operation on self-adjoint regular operators.
Our refined definition recovers all the curvature data of a Riemannian submersion of compact manifolds, viewed as a $KK$-factorisation.

%% {\bf JOBS:

%% Finish intro. AR happy with what is there 

%% Address and remove all boldface issues. Nearly done, except Page 44.

%% %After Defn 2.5, need clarity on where and how we introduce the double index notation for smoothness, describe purpose of section 4, and address red text about existence, and finally the 

%% Minor but important issues in Sec 5. Nearly done, except  page 44

%% Tidy up references. Added titles authors arXiv. Haven't checked all for publication details, but some. Seems Connes-Tretkoff was never published.
%% }

\end{abstract}

\tableofcontents

\parskip=4pt
\parindent=0pt

\addtocontents{toc}{\vspace{-1pc}}
\section*{Introduction}
This paper offers a new approach to defining and effectively computing
curvature of Hilbert modules and unbounded Kasparov modules. We will introduce a notion of curvature directly at the operator-algebraic level, thus sidestepping 
%To do this we use a novel algebraic formalism that sidesteps
some
of the difficulties imposed by the absence of a differential graded algebra. Our approach does not rely on the heat kernel coefficient
analogy, and so our results differ from the recent work of \cite{CT,CF,CMcurv,FGK,GK,LeMo1,LeMo2}. Rather, 
we provide a complementary point of view. Numerous other
approaches to curvature have appeared independently in many
works \cite{AW,BGJ1,BGJ2,DHLS,DMM,Ros1}.

The usual theory of curvature of $\mathbb{Z}_2$-graded right modules $\X$ over associative 
algebras $\B$ 
relies, in its most algebraic formulation, on the existence of a
differential graded algebra $(\Omega^*(\B),\delta)$. The first step, 
existence of connections
$$
\nabla:\X\to\X\ox_\B\Omega^1(\B)\qquad \nabla(xb)=\nabla(x)b+\gamma(x)\otimes \delta(b),\quad x\in\X,\ b\in\B
$$
where $\gamma$ is the grading, was settled by Cuntz and Quillen \cite{CQ}. 
The curvature $R_\nabla$
of $(\X,\nabla)$ is then 
defined %as in the classical case 
as the composition
\begin{equation}
\label{eq: intronasquare}
\nabla^{2}:\X_\B\stackrel{\nabla}{\to}\X\ox_\B\Omega^1(\B)\stackrel{\nabla\ox 1+1\ox\delta}{\longrightarrow}\X\ox_\B\Omega^2(\B).
\end{equation}
Thus as soon as we have a differential graded algebra and a connection we obtain an
endomorphism-valued two-form $\nabla^{2}$. The full details of this approach
in noncommutative geometry, pioneered
by Connes and Rieffel \cite{CR}, appear in the book of Connes \cite[Sect. VI.1]{BRB} (see also \cite[Sect. 7.2]{Landi}).

%The natural geometric objects of non-commutative geometry, spectral triples and unbounded Kasparov modules, give rise to a bimodule of differential
%one-forms. Given a spectral triple $(\B,H,D)$, say, the one-forms \cite[Chapter VI]{BRB}
%$$
%\Omega^1_D(\B)={\rm span}\{a[D,b]:\,a,\,b\in\B\}
%$$
%are operators on the Hilbert space $H$.
%To access second and higher order differentials 
%requires leaving the concrete representation on $H$ behind, 
%and dealing with an abstract
%quotient differential algebra. 
%Quotienting out the ``junk'' forms yields 
%a differential algebra \cite[Chapter VI]{BRB},
%but means that higher differentials do not naturally act in the same
%representation as the algebra and the one-forms. Let us make this more concrete for the second-order derivatives required to arrive at a notion of of curvature. Given a
%spectral triple $(\B,H,D)$, say, to extend the differential 
%\begin{equation}
%\B\ni b\stackrel{d}{\mapsto} [D,b]\in \Omega^1_D(\B)\subset \mathbb{B}(H)
%\label{eq:differential}
%\end{equation} 
%to a differential
%$\Omega^1_D(\B)\to\Omega^2_D(\B)$ already necessitates the
%passing to a quotient. 

%\color{blue}

Instead, the route we take to curvature is inspired by tools and ideas from 
unbounded $KK$-theory, and the unbounded version of the internal Kasparov product in particular. As we will see, the above algebraic notion of curvature appears naturally in this functional analytical framework. Let us sketch the main idea. 

Suppose we are given two (suitably differentiable) unbounded 
$KK$-cycles $(\mathcal A,X,S)$ and $(\mathcal B,Y,T)$ and a (suitable) connection $\nabla$ on $X$. We may consider an unbounded representative of the internal Kasparov product given by the essentially self-adjoint and regular operator $S \otimes 1 + 1 \otimes_\nabla T$, defined on the appropriate  domain in $X \otimes_{B} Y$ (see 
\cite{BMS,KaLe2,LM,Mes,MR} for more details). Our definition of curvature is given by the  formula 
\begin{equation}
  \label{correspondencecurve-intro}
R_{(S,\nabla_{T})}=(S\otimes 1+1\otimes_{\nabla}T)^{2}-(S^{2}\otimes 1
+1\otimes_{\nabla}T^{2}).
\end{equation}
We will make precise sense of this unbounded operator on $X \otimes_{B} Y$ in due course, but let us highlight some of its features:
\begin{itemize}
\item the curvature $R_{(S,\nabla_{T})}$ can be interpreted as a measure of the ``defect'' of the internal Kasparov product to respect the taking of squares of the operators $S$ and $T$. This is in line with the notion of curvature for linear maps used in cyclic theory (see \cite[page 255]{CQ}).
  \item it vanishes for a direct product of spaces: for the {\em external} Kasparov product we have for the tensor sum (on graded modules):  
    $$
(D_1 \otimes 1 + 1 \otimes D_2)^2 - D_1^2 \otimes 1 - 1 \otimes D_2^2 = 0
    $$
  \item the ``geometric'' information described by $R_{(S,\nabla_{T})}$ is only accessible at the unbounded level, thus forming a refinement of the topological information described at the level of (bounded) $KK$-theory. 
  \end{itemize}

Our main task is now to make sense of formula \eqref{correspondencecurve-intro} so let us see what it says {\em algebraically}. Upon expanding the brackets we observe that the curvature can be understood in terms of the following two operators
\begin{align*}
(1\otimes_{\nabla}T)^2-1\otimes_{\nabla}T^{2}, \qquad 
  \left[ S \otimes 1 , 1 \otimes_\nabla T \right ].
\end{align*}
The approach we will take is to first make sense of the above two operators and then define $R_{(S,\nabla_{T})}$ in terms of them in Section \ref{sect:curv-corresp}. Intriguingly, the well-definedness of the operator $(1\otimes_{\nabla}T)^2-1\otimes_{\nabla}T^{2}$ in Definition \ref{def: curv-operator} relies heavily on the existence of a relative $S$-bound on the commutator $\left[ S \otimes 1 , 1 \otimes_\nabla T \right ]$ ({\em cf.} Definition \ref{wac} below), which in turn is a sufficient condition
for representing the Kasparov product (see \cite{Kuc, LM}). Moreover, it turns out that the operator $(1\otimes_{\nabla}T)^2-1\otimes_{\nabla}T^{2}$ is of interest in itself, and we will call it the curvature operator of the ($C^2$-) connection $\nabla$ on the module $X$. 

This terminology is justified by the result (Theorem \ref{thm: Ristwoformdetermineduptojunk}) that $(1\otimes_{\nabla}T)^2-1\otimes_{\nabla}T^{2}$ is given by a represented square $\pi_T(\nabla^2)$ of a universal connection $\nabla$ as we now explain. 

Unbounded Kasparov modules like $(\B,Y,T)$ 
provide the basic geometric objects of non-commutative geometry.
One feature of the Kasparov module $(\B,Y,T)$ is the  bimodule of differential
one-forms \cite[Chapter VI]{BRB} 
$$
\Omega^1_T(\B):={\rm span}\{a[T,b]:\,a,\,b\in\B\}.
$$
The bimodule $\Omega^1_T(\B)$ 
consists of operators on the Hilbert $C^{*}$-module $Y$. 
The differential graded algebra of universal differential forms 
$\Omega^{*}_{u}(\B)$ can then be represented as operators on 
$Y$ by taking products, but the image of this representation 
does not carry the structure of a differential graded algebra anymore. 

The obstruction to defining a differential is the existence 
of \emph{junk-forms} \cite[Chapter VI]{BRB}. 
Although quotienting out the junk forms yields a 
differential graded algebra, it can no longer be represented on $Y$. 
Our represented curvature
\[
\pi_{T}(\nabla^{2})=1\otimes_{\nabla}T^{2}-(1\otimes_{\nabla}T)^{2},
\]
is well-defined up to junk forms, and so connects to the  
existing literature on curvature of connections. 
Of course the challenge is to make sense of this square, 
which we do in Section \ref{sect:curv-nabla2}.

It is also useful to illustrate our notion of curvature for finitely-generated projective modules $X$ over $\mathcal B$ (see Section \ref{sect:fgp} below for full details). 
So, consider computing the Kasparov
product $(\C,X_B,0)\ox_B(\B,H,D)$. In this case, a 
smooth submodule $\X_\B\subset X_B$ is guaranteed to exist. Then realising $\X\cong p\B^N$
for some projection $p\in M_N(\B)$, all compatible connections 
are of the form $p\circ \d +A$
%where $d$ is as in Equation \eqref{eq:differential}
for 
$A\in \X\ox_\B \Omega^1_D(\B)\ox_\B\X^*$ an endomorphism-valued
one-form. For any (Hermitian) connection
on the module $\X$ we obtain a representative $(\C,X\ox_BH,1\ox_\nabla D)$
of the Kasparov product with operator
\begin{equation}
1\ox_\nabla D:\,\X\ox_\B \Dom(D)\to X\ox_B H\quad
1\ox_\nabla D(x\ox\xi):=\gamma(x)\ox D\xi+ \nabla_{D}(x)\xi.
\label{eq:op-prod}
\end{equation}
The key observation is then that the curvature operator is given by
\begin{equation}
(1\otimes_\nabla D)^2-1\otimes_\nabla D^2=p[D,p][D,p]p+A^2+\d A.
\label{eq:Bram-is-a-good-boy}
\end{equation}
%gives direct access to the curvature represented on $H$. 
Here $\d A=\pi_{D}(\delta A_{u})$ indicates an operator defined through choosing a lift $A_{u}$ of $A$ to the universal calculus and is independent of the choice of lift 
%well-defined in terms of 
%a differential $\d:\Omega^1_D(\B)\to\Omega^2_D(\B)$ only 
up to junk forms. Nevertheless, the left hand side of
Equation \eqref{eq:Bram-is-a-good-boy}
is a well-defined, direct and constructive way of representing the curvature
of a module $\X_\B$: all we require is
the differential structure provided by 
a spectral triple or unbounded Kasparov module.

Going beyond finitely generated modules, for countably 
generated $C^{*}$-$B$-modules we not only require a differentiable 
structure induced from a differentiable structure on $B$, but we also need to fix a regular operator $S$ on $X$. This operator should be thought of
as defining a vertical differential structure on $X$. Hilbert $C^{*}$-modules are generalisations of continuous fields of Hilbert spaces, and such fields are trivial
if and only if they are locally trivial. Thus, in order to detect nontrivial topological content, working at the continuous level will not suffice. Differential structures on
continuous fields of Hilbert spaces are not naturally given, and need to be prescribed. 
This phenomena requires us to talk about both horizontal and vertical
differentiability, and as already noted, 
these considerations are compatible with $KK$-factorisation.

Examples where curvature appears in the context of unbounded Kasparov theory is in the factorisation of Dirac operators on Riemannian submersions and $G$-spectral triples   \cite{BMS, CaMes, KS16}. We will review and illustrate our notion of curvature for Riemannian submersions in Section \ref{sect:riem-subm}. As we will see, the curvature operator contains the
information of the second fundamental form of the Riemannian submersion, the mean curvature associated to it, as well as the curvature of the metric connection used in the Kasparov product. Note once again that all this geometric information becomes available only at the unbounded level, thus refining the topological information present at the level of bounded $KK$-theory. 

Section \ref{sec:two} outlines both the algebraic and analytic aspects of differential forms. Section \ref{Hilbcurve} outlines the analysis required to make sense of the curvature operator, and especially second derivatives. Section
\ref{sect:fgp} outlines the consequences for finitely generated projective modules, Section \ref{sec:graaaaaaaaaa} gives results for the special case of Grassmann 
connections,
and in Section \ref{sect:riem-subm} we outline the application to Riemannian submersions.

\subsection*{Notation}
All algebras denoted by symbols $A,B,C$ are assumed to be \emph{unital} and \emph{trivally graded}. All modules denoted 
by symbols $X,Y$ are assumed to be $\mathbb{Z}/2$-graded, with grading operator $\gamma_{X},\gamma_{Y}$ or simply $\gamma$. 
Consequently, algebras of operators on such modules are $\mathbb{Z}/2$-graded as well. Homomorphisms between graded algebras are
assumed to respect the grading. All commutators $[a,b]$ are graded commutators, which, for homogenous elements $a,b$ with degrees $\partial a$, $\partial b$ respectively 
are defined
to be 
\[
[a,b]:=ab-(-1)^{\partial a\partial b}ba,
\] 
and extended by linearity. Sometimes we write 
$[\cdot,\cdot]_+$ for anticommutators for emphasis.

We use various completed tensor products. The algebraic tensor product of modules will be denoted $ \otimes^{\rm alg}$, the completed tensor product of Hilbert $C^{*}$-modules will be denoted $\otimes$, the completed projective tensor product of locally convex vector spaces will be denoted $\widehat \otimes$, and Haagerup tensor product of operator spaces will be denoted $\otimes^h$.

%We do consider operators on $\mathbb{Z}_{2}$-graded spaces (or modules). 

%Alternative titles:
%\begin{itemize}
%\item Curvature and $KK$-factorisation
%\item Curvature via factorisation in unbounded $KK$-theory
%\item Curvature in unbounded Kasparov theory
%\item Curvature of differentiable Hilbert $C^{*}$-modules
%\item Curvature of differentiable Hilbert $C^{*}$-modules and Kasparov modules
%\item Curvature of connections on diffentiable Hilbert $C^{*}$-modules
%\end{itemize}
%

{\bf Acknowledgements}
The authors acknowledge the support of 
the Erwin Schr\"{o}dinger Institute during the Thematic Programme \emph{Bivariant $K$-theory in geometry and physics} in November 2018, 
when a substantial part of this work was conducted. 
AR and BM thank the Gothenburg Centre for Advanced Studies in Science and Technology 
for funding and the University of Gothenburg and Chalmers University of Technology 
for their hospitality in November 2017 when this project took shape. We thank Alan Carey, Jens Kaad 
and Giovanni Landi for inspiring feedback in the course of this work. BM thanks Matthias Lesch for
numerous conversations related to some of the technical aspects of this work.

\section{Universal differential forms for $C^{2}$-Kasparov modules}
\label{sec:two}

This section develops the necessary tools to talk about 
universal differential forms in the context of unbounded Kasparov modules. We do not need forms of all 
degrees: in order to talk about curvature, degrees one and two 
suffice and we restrict attention to these degrees. 
%As well as differential forms of degree one and two, we require
%both first and second-order covariant derivatives,
%and so there are two layers of differentiability to consider.

We carefully capture the corresponding $C^1$-and $C^2$-topologies in terms of suitable operator $\ast$-algebras (see \cite{BKM}), which we will first introduce.

\subsection{Operator $*$-algebras and differential structures}
Throughout this section we will develop our approach to differential structure
relative to a fixed unbounded
\emph{Kasparov module} $(\mathcal{B},Y,T)$. This Kasparov module consists of a 
complex $*$-algebra $\mathcal{B}$, a $\mathbb{Z}_{2}$-graded Hilbert 
module over a $C^*$-algebra $C$, and an 
odd self-adjoint operator $T:\Dom T\to Y$. This data satisfies the requirements: 
\begin{enumerate}
\item there is an \emph{injective} $*$-representation $\mathcal{B}\to \mathbb{B}(Y)$;  
\item $T:\Dom T\to H$ is a self-adjoint operator 
and $b(T\pm i)^{-1}\in\mathbb{K}(Y)$ for all $b\in\mathcal{B}$;
\item for all $b\in\mathcal{B}$ it holds that 
$b:\Dom T\to \Dom T$ and $[T,b]$ extends 
to a bounded operator. 
\end{enumerate}
\begin{rmk}
We note that the property of locally compact resolvent in point $2.$
is {\em not used anywhere} in connection with the differential structure 
provided by $T$. So for the purposes of differential structure, we may
use any unbounded operator on the Hilbert module $Y$. In particular,
one may use indefinite Kasparov modules (non-commutative
analogues of pseudo-Riemannian geometries, \cite{DR}) to define differential structures.
\end{rmk}

We can always reduce to the case of operators on a Hilbert space.
Given a Kasparov module $(\B,Y_C,T)$, we may choose 
an injective Hilbert space representation $C\to B(H)$. Then 
we obtain the injective $*$-homomorphism 
$\mathbb{K}(Y)\to \mathbb{B}(Y\otimes_{C}H)$ by \cite[Proposition 4.7]{Lance} and since $\mathbb{B}(Y)=M(\mathbb{K}(Y))$ 
this extends to an injective $*$-homomorphism $\mathbb{B}(Y)\to\mathbb{B}(Y\otimes_{C}H)$. We may therefore consider $(\B, Y\otimes_{C} H, T\otimes 1)$.

%
%{\bf BM: I am starting to feel a little silly about this: the entire paper is Kasmods, why should only this section be spectral triples, which makes us have to say the above. }
%
We write $B$ for the $C^{*}$-closure of 
$\mathcal{B}$ in the norm it inherits as an 
algebra of operators on $\mathbb{B}(Y)$. An \emph{operator space} is a closed
subspace of $\mathbb{B}(H)$, an \emph{operator algebra} is an operator space 
that is closed under multiplication in $\mathbb{B}(H)$, and an \emph{operator $*$-algebra} is
an operator algebra that carries a completely isometric involution, \cite[Definition 1.4]{BKM}.

Consider 
the algebra representation
\[
\pi_{T}^{1}: \B\ni b\mapsto
\begin{pmatrix} b & 0 \\ [T,b] & b \end{pmatrix}
\in\mathbb{B}(Y\oplus Y).
\]
 The representation $\pi_{T}^{1}$ extends to matrices and satisfies the properties
 \[\|\pi^{1}_{T}(b)^{*}\|=\|\pi_{T}^1(b^{*})\|,\quad 
 \|b\|\leq \|\pi^{1}_{T}(b)\|,\quad b\in M_{n}(\B),
 \]
 and thus defines an operator $*$-algebra structure on the closure $\mathcal{B}_{1}$ of $\mathcal{B}$ in the norm
 \[
 \|b\|_{1}:=\|\pi_{T}^{1}(b)\|,
 \]
 which is compatible with the $C^{*}$-norm on $B$ in the sense that the inclusion $\mathcal{B}\to B$ is a completely contractive homomorphism of operator $*$-algebras.
 
%\subsubsection{The operator $*$-algebra of a $C^{2}$-spectral triple}
We now present the additional $C^{2}$-condition the 
unbounded Kasparov module $(\mathcal{B},Y,T)$ is required to satisfy.
\begin{defn} 
\label{C2spec}
An unbounded Kasparov module $(\mathcal{B},Y,T)$ satisfies the 
\emph{$C^{2}$-condition} if there is a core 
$\mathcal{C}\subset\Dom T^{2}$ for $T^{2}$ 
such that for all $b\in\mathcal{B}$ 
%in the $C^{*}$-dense subalgebra $\mathcal{B}\subset B$ 
we have 
\[
b:\mathcal{C}\to\Dom T^{2},
\]
and the densely defined operator
\[
[T^{2},b](T\pm i)^{-1}:(T\pm i)\mathcal{C}\to Y,
\]
extends to a bounded operator on $Y$.  
\end{defn}
Note that any core for $T^{2}$ is a core for $T$ 
and that $[T,b]$ extends to a bounded operator for all 
$b\in \mathcal{B}$ since $(\mathcal{B},Y,T)$ is an unbounded Kasparov module. 
 
The $C^{2}$-topology on $\mathcal{B}$ is defined 
as in \cite{Mes}. A concrete description as an operator 
$*$-algebra comes from the algebra representations
\begin{equation}
\label{diffreps}
\pi_{T}^{1}: a\mapsto
\begin{pmatrix} a & 0 \\ [T,a] & a \end{pmatrix}
\in\mathbb{B}(Y\oplus Y),\quad 
\pi_{T}^{2}:a \mapsto 
\begin{pmatrix} (T+i)a(T+i)^{-1} & 0 \\ [T^{2},a](T+i)^{-1} & a\end{pmatrix}
\in\mathbb{B}(Y\oplus Y).
\end{equation}

The representation $\pi^{2}_{T}$ extends to matrices but is not directly compatible 
with the $*$-structure (as in \cite{Mes}), so we define the operator $*$-norm
\[
\|b\|_{2}:=
\max\big\{\|\pi^{1}_{T}(b)\|,\|\pi^{2}_{T}(b)\|,\|\pi_{T}^{2}(b^{*})\|\big\},\quad b\in M_{n}(\B).
\]
The norm $\|b\|_{2}$ is realised concretely in the representation
\begin{equation}
\pi(a):=\pi_{T}^{1}(a)\oplus \pi^{2}_{T}(a)\oplus \pi^{2}_{T}(a^{*})^{*}.
\end{equation}
This gives the completion $\mathcal{B}_{2}$ of $\B$ in the 
norm $\Vert\cdot\Vert_2$ the structure of an 
operator $*$-algebra \cite{BKM}. By construction, the inclusions 
$\mathcal{B}_{2}\to\mathcal{B}_{1}\to B$ are completely contractive 
operator $*$-algebra homomorphisms. We now discuss several 
realisations of  bimodules of universal differential forms over $\mathcal{B}$.

\begin{rmk}
The norm $\Vert\cdot\Vert_2$ gives an analogue of a $C^2$-norm,
and to define this norm we needed the additional smoothness of the Kasparov module described in Definition \ref{C2spec}. In this light,
an unbounded Kasparov module is a $C^1$-Kasparov module,
having enough smoothness to define the ``$C^1$-norm'' $\Vert\cdot\Vert_1$. 
\end{rmk}

In the sequel we will make frequent use of the Haagerup tensor product for operator spaces (see \cite{BleLeM}). Given two operator spaces $X$ and $Y$ their Haagerup tensor product is the completion of $X\otimes^{\alg}Y$ in the norm
\[\|z\|_{h}^{2}:=\inf\left\{\left\|\sum x_{i}x_{i}^{*}\right\|\left\|\sum y_{i}^{*}y_{i}\right\|:z=\sum x_i\otimes y_{i}\right\},\]
which is denoted $X\otimes^{h}Y$ and can be shown to be an operator space again. It follows quite directly from the definition 
that for closed subspaces $A,B\subset \mathbb{B}(H)$, the multiplication map
\[m:A\otimes^{h}B\to \mathbb{B}(H),\quad a\otimes b\mapsto ab,\]
is completely contractive, and this property motivates the definition of the Haagerup norm (see \cite[Theorem 2.3.2]{BleLeM}) .
For our purposes the following characterisation of elements in $X\otimes^{h}Y$ is of crucial importance.
\begin{prop}[Proposition 1.5.6 of \cite{BleLeM}] 
\label{Haaconvergent}
Let $X$ and $Y$ be operator spaces, and $X\otimes^hY$ the Haagerup tensor product.
\begin{enumerate}
\item Let $z\in X\otimes^{h}Y$ with $\|z\|_{h}< 1$. Then $z$ can be written as a norm convergent series
$z=\sum x_{k}\otimes y_{k}$, where $\sum_{k} x_{k}x_{k}^{*}$ and $\sum y_{k}^{*}y_{k}$ are norm convergent series of norm $< 1$ in $X$ and $Y$ respectively;
\item If the sequences $(x_{k})\subset X$ and $(y_{k})\subset Y$ are such that $\sum_{k}y_{k}^{*}y_{k}$ is norm convergent and $\sup_{N}\|\sum_{|k|\leq N}x_{k}x_{k}^{*}\|<\infty$, then
$\sum x_{k}\otimes y_{k}$ is norm convergent in $X\otimes^{h}Y$. 
\end{enumerate}
\end{prop}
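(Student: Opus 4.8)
The plan is to deduce both statements from nothing beyond the defining formula for $\|\cdot\|_h$ as an infimum over finite decompositions, together with completeness of $X\otimes^h Y$; I would prove part (2) first and then use it as the convergence criterion for part (1).

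\emph{Part (2).} The starting remark is that, straight from the definition of $\|\cdot\|_h$, every \emph{finite} sum satisfies $\|\sum_j a_j\otimes b_j\|_h\le\|\sum_j a_ja_j^*\|^{1/2}\|\sum_j b_j^*b_j\|^{1/2}$. Applying this to the partial sums of $\sum_k x_k\otimes y_k$ gives, for $M<N$,
\[
\Big\|\sum_{M<k\le N}x_k\otimes y_k\Big\|_h\le\Big\|\sum_{M<k\le N}x_kx_k^*\Big\|^{1/2}\Big\|\sum_{M<k\le N}y_k^*y_k\Big\|^{1/2}.
\]
The first factor stays bounded by $\big(\sup_N\|\sum_{|k|\le N}x_kx_k^*\|\big)^{1/2}$, because $0\le\sum_{M<k\le N}x_kx_k^*\le\sum_{|k|\le N'}x_kx_k^*$ for $N'$ large and the operator norm is monotone on positive elements; the second factor tends to $0$, because $\sum_k y_k^*y_k$ is norm convergent and hence has Cauchy partial sums. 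Thus the partial sums of $\sum_k x_k\otimes y_k$ are Cauchy in the complete space $X\otimes^h Y$, so the series converges; letting $M=0$, $N\to\infty$ and using continuity of the norm also yields the bound $\|\sum_k x_k\otimes y_k\|_h\le\big(\sup_N\|\sum_{|k|\le N}x_kx_k^*\|\big)^{1/2}\|\sum_k y_k^*y_k\|^{1/2}$, which is convenient below.

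\emph{Part (1).} Let $z\in X\otimes^h Y$ with $\|z\|_h<1$, and fix $\varepsilon>0$ with $\|z\|_h+\tfrac{11}{3}\varepsilon<1$. Since $X\otimes^{\alg}Y$ is dense in $X\otimes^h Y$, choose finite tensors $u_n$ with $u_0=0$ and $\|z-u_n\|_h<\varepsilon\,4^{-(n-1)}$, and set $w_n:=u_n-u_{n-1}$, so that $z=\sum_{n\ge1}w_n$ in $\|\cdot\|_h$, each $w_n$ is a finite tensor, and $\|w_n\|_h<\delta_n$ with $\delta_1:=\|z\|_h+\varepsilon$, $\delta_n:=8\varepsilon\,4^{-(n-1)}$ for $n\ge2$, whence $\sum_n\delta_n<1$. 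For each $n$ pick a finite decomposition $w_n=\sum_i a_i^{(n)}\otimes b_i^{(n)}$ with $\|\sum_i a_i^{(n)}(a_i^{(n)})^*\|^{1/2}\|\sum_i(b_i^{(n)})^*b_i^{(n)}\|^{1/2}<\delta_n$ and rescale $(a_i^{(n)},b_i^{(n)})\mapsto(\lambda_n a_i^{(n)},\lambda_n^{-1}b_i^{(n)})$ — which leaves $w_n$ unchanged — so that \emph{both} square-sums have norm $<\delta_n$. Concatenating these finite families block by block produces $(x_k),(y_k)$ with $z=\sum_k x_k\otimes y_k$; since sums of positive operators are norm-subadditive and monotone, any partial sum of $\sum_k x_kx_k^*$ up to the end of the $N$-th block has norm $\le\sum_{n\le N}\delta_n<1$ and the tails are bounded by $\sum_{n>N}\delta_n\to0$, so $\sum_k x_kx_k^*$ converges in norm with norm $<1$, and likewise for $\sum_k y_k^*y_k$. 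Finally, part (2) gives convergence of $\sum_k x_k\otimes y_k$ in $X\otimes^h Y$, and since its partial sums at block boundaries equal the $u_N\to z$ while intermediate partial sums differ from the nearest $u_N$ by at most $\delta_N$ in $\|\cdot\|_h$, the sum equals $z$.

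\emph{Main obstacle.} Everything except the assembly in part (1) is a direct unwinding of the definition of $\|\cdot\|_h$. The delicate points are: the Haagerup norm of a block $w_n$ bounds only the \emph{product} of its row- and column-square-sum norms, so one must rescale each block (harmless, since scaling $a\otimes b$ by $\lambda\otimes\lambda^{-1}$ is trivial) to make the two norms individually small; the telescoping rate must be fast enough that $\sum_n\delta_n<1$, so that the \emph{concatenated} square-sums — sums of positive operators — keep norm $<1$; and one must check that the assembled series converges \emph{to} $z$, for which it is essential that full-block partial sums reproduce the approximants $u_N$.
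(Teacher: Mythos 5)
Your proof is correct. Note that the paper itself gives no argument for this statement --- it is imported verbatim as Proposition 1.5.6 of Blecher--LeMerdy --- and your reconstruction is essentially the standard proof from that reference: part (2) is the Cauchy criterion obtained from the cross-norm inequality $\|\sum_j a_j\otimes b_j\|_h\le\|\sum_j a_ja_j^*\|^{1/2}\|\sum_j b_j^*b_j\|^{1/2}$ applied to tails, and part (1) is the telescoping decomposition of $z$ into rapidly small finite tensors, with the (correctly handled) rescaling $\lambda_n\otimes\lambda_n^{-1}$ needed to turn the bound on the product of the row and column norms of each block into individual bounds, so that the concatenated positive series stay of norm $<1$.
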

If $\B$ is an algebra and $X$ is a right- and $Y$ a left- $\B$ module, the \emph{Haagerup module tensor product} $X\otimes^{h}_{\B}Y$ is the quotient 
of $X\otimes^{h}Y$ by the closed subspace generated by $xb\otimes y-x\otimes by$ (see \cite[Section 3.4.2]{BleLeM}). Hilbert $C^{*}$-modules carry a natural operator space structure inherited from the embedding into their linking algebra. The following theorem characterizes the Haagerup module tensor product for Hilbert $C^{*}$-modules.
\begin{thm}[Theorem 4.3 of \cite{Blecher}]
\label{thm: Blecher}
Let $X$ and $Y$ be Hilbert $C^{*}$-modules over $C^{*}$-algebras $B$ and $C$ respectively, and suppose $B\to \mathbb{B}(Y)$ is a $*$-homomorphism. Then the Hilbert $C^{*}$-module tensor product $X\otimes_{B}Y$ is completely isometrically isomorphic to the Haagerup module tensor product $X\otimes^{h}_{B}Y$.
\end{thm}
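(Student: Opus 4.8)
The plan is to show that the identity map on $X\otimes^{\alg}_B Y$ is completely isometric for the two operator space norms at issue — the one induced by the Hilbert $C^{*}$-module $X\otimes_B Y$ and the quotient Haagerup norm of $X\otimes^{h}_B Y$ — and then to note that each of these spaces arises as the completion of $X\otimes^{\alg}_B Y$ for the respective norm. I would establish the two norm estimates separately.

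For the inequality $\|\cdot\|_{X\otimes_B Y}\le\|\cdot\|_{X\otimes^h_B Y}$, I would construct a completely contractive map $q\colon X\otimes^h_B Y\to X\otimes_B Y$ restricting to the identity on $X\otimes^{\alg}_B Y$. Fix a faithful nondegenerate representation $C\subset\mathbb B(H)$ and set $K:=Y\otimes_C H$ and $L:=(X\otimes_B Y)\otimes_C H=X\otimes_B K$. Then $Y\hookrightarrow\mathbb B(H,K)$ is the completely isometric embedding realising the operator space structure of $Y$, and $X\otimes_B Y\hookrightarrow\mathbb B(H,L)$ is the one realising the structure of $X\otimes_B Y$. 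Each $x\in X$ gives $\lambda_x\in\mathbb B(K,L)$, $k\mapsto x\otimes_B k$, with $\|\lambda_x\|\le\|x\|$, and $x\mapsto\lambda_x$ is completely contractive. The bilinear map $(x,y)\mapsto\lambda_x\circ y$ is $B$-balanced, so composing $X\otimes^h Y\to\mathbb B(K,L)\otimes^h\mathbb B(H,K)$ with the completely contractive multiplication $\mathbb B(K,L)\otimes^h\mathbb B(H,K)\to\mathbb B(H,L)$ and passing to the quotient yields a complete contraction $X\otimes^h_B Y\to\mathbb B(H,L)$; on $X\otimes^{\alg}_B Y$ it coincides with the embedding $X\otimes_B Y\hookrightarrow\mathbb B(H,L)$, and post-composing with the inverse of that embedding gives $q$.

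For the reverse inequality I would estimate the Haagerup norm directly by producing an efficient factorisation. Since the balancing relations for $B$ and for its unitisation coincide, one may assume $B$ unital. Given $z=\sum_{i=1}^{n}x_i\otimes y_i$, set $G:=[\langle x_i,x_j\rangle_B]\in M_n(B)_{+}$, write $\mathbf{y}:=(y_1,\dots,y_n)$, and for $\varepsilon>0$ put $H_\varepsilon:=(G+\varepsilon)^{-1/2}\in M_n(B)$. With $w_k:=\sum_i x_i(H_\varepsilon)_{ik}\in X$ and $\tilde y_k:=\sum_j\big((G+\varepsilon)^{1/2}\big)_{kj}y_j\in Y$, the identity $H_\varepsilon(G+\varepsilon)^{1/2}=1$ together with the balancing relation gives $z=\sum_k w_k\otimes\tilde y_k$ in $X\otimes^{\alg}_B Y$. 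One then checks $\big\|\sum_k w_k w_k^{*}\big\|_{\mathbb B(X)}=\big\|[\langle w_k,w_l\rangle_B]\big\|_{M_n(B)}=\|H_\varepsilon^{*}GH_\varepsilon\|=\|G(G+\varepsilon)^{-1}\|\le1$ and $\big\|\sum_k\tilde y_k^{*}\tilde y_k\big\|_C=\big\|\sum_{i,j}\langle y_i,(G+\varepsilon)_{ij}\,y_j\rangle_C\big\|\le\|z\|_{X\otimes_B Y}^{2}+\varepsilon\|\mathbf{y}\|^{2}$, so this factorisation bounds $\|z\|_{X\otimes^h_B Y}^{2}\le\|z\|_{X\otimes_B Y}^{2}+\varepsilon\|\mathbf{y}\|^{2}$; letting $\varepsilon\to0$ finishes this inequality. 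The complete (matrix-level) version follows by running the same argument for amplifications, using the standard description of the Haagerup norm on $M_n(X\otimes^h Y)$ through matrix-product factorisations, so that $G$ is replaced by the positive matrix over $M_n(B)$ coming from such a factorisation.

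I expect the reverse inequality to be the crux: the first estimate is essentially formal once the operator space structures are identified with concrete embeddings into spaces of operators, whereas here one must genuinely manufacture a Haagerup-efficient factorisation of a given $z$ — the regularisation $(G+\varepsilon)^{-1/2}$ copes with non-invertibility of the Gram matrix $G$, passing to the unitisation makes square roots available, and the matrix bookkeeping must be arranged so as to deliver a \emph{completely} isometric isomorphism, not merely an isometric one.
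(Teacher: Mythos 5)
Your proposal is correct, but there is nothing in the paper to compare it against: the paper does not prove this statement at all, it simply quotes it as Theorem 4.3 of Blecher's paper \cite{Blecher} and uses it as a black box. What you have written is a self-contained reconstruction of that result, and it is essentially the standard argument (compare the treatment in Blecher--LeMerdy): the inequality $\|\cdot\|_{X\otimes_B Y}\le\|\cdot\|_{X\otimes^h_BY}$ comes for free once $X$, $Y$ and $X\otimes_BY$ are realised concretely via a faithful nondegenerate representation of $C$, because composition of operators is completely contractive for the Haagerup norm and kills the $B$-balancing relations; the reverse inequality is the real content and your regularised Gram-matrix factorisation $z=\sum_k\bigl(\sum_i x_i(G+\varepsilon)^{-1/2}_{ik}\bigr)\otimes\bigl(\sum_j(G+\varepsilon)^{1/2}_{kj}y_j\bigr)$ does exactly the right thing, since $\|(G+\varepsilon)^{-1/2}G(G+\varepsilon)^{-1/2}\|\le 1$ and $\sum_{i,j}\langle y_i,G_{ij}y_j\rangle_C=\langle z,z\rangle_{X\otimes_BY}$. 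Two points are worth making explicit rather than asserting: (i) the passage to the unitisation is harmless because the balancing subspaces for $B$ and $B^\sim$ coincide (the relations coming from scalar multiples of the unit are trivial) and $X\otimes_BY=X\otimes_{B^\sim}Y$; (ii) at the matrix level you should note that $M_m(X\otimes_BY)$, with its canonical operator space structure, is the Hilbert $M_m(C)$-module norm, so that for a factorisation $Z=R\odot S$ with $R\in M_{m,n}(X)$, $S\in M_{n,m}(Y)$ one can take $G:=R^*R\in M_n(B^\sim)_+$ and repeat the computation verbatim, using $\langle Z,Z\rangle_{qq'}=\sum_{i,j}\langle S_{iq},G_{ij}S_{jq'}\rangle_C$. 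With those two remarks spelled out, your argument is complete, and its benefit over the paper's citation is precisely that it is elementary and self-contained.
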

We will use Theorem \ref{thm: Blecher} result freely in the sequel. We use the symbol $\otimes_{B}$ for the $C^{*}$-module tensor product, $\otimes^{h}_{B}$ for the Haagerup module tensor product and $\otimes_{\B}^{\alg}$ for the balanced algebraic tensor product.
\subsection{Universal and represented 
differential forms for $C^{2}$-spectral triples} 
\label{subsec:diff-forms}
We wish to define bimodules of universal 1-forms and 2-forms 
associated to a $C^{2}$-Kasparov module $(\mathcal{B},Y,T)$. 
To this end we use the Haagerup tensor product for the 
unital operator $*$-algebras $\mathcal{B}_{2},\mathcal{B}_{1}$ and $B$. 
To define universal one-forms we need to consider the kernel of the multiplication map $m:\,B\times B\to B$ restricted to suitable subalgebras.
We define three spaces of \emph{universal one-forms} 
for the Kasparov module $(\mathcal{B},Y,T)$. %$\mathcal{B}_{2}$:
\begin{align*}
\Omega^{1}_{u}(B,\B_{2})
&:=\ker\big(B\otimes^{h}\B_{2}\xrightarrow{m} B\big)\\
\Omega^{1}_{u}(B, \B_{1})
&:=\ker\big(B\otimes^{h}\B_{1}\xrightarrow{m} B\big)\\
\Omega^{1}_{u}(\B_{1},\B_{2})
&:=\ker\big(\B_{1}\otimes^{h}\B_{2}\xrightarrow{m} \B_{1}\big),
\end{align*} 
with $m$ the multiplication map. As $m$ is a complete 
contraction on each of these spaces, the respective modules 
of forms are operator bimodules for the respective algebras.

We  denote elements of $\Omega^1_u(B,\B_1)$ (for instance)
as $\sum_ia_i\delta(b_i)$ where $\delta(b)=1\ox b-b\ox 1$. Here we
should take $a_i\in B$ and $b_i\in\B_1$, with similar descriptions of 
the other bimodules of one-forms. %It does not matter if $\B_1$ is not
%unital, as we can take the ``1'' to be in the multiplier algebra of $\B_1$
%(see \cite{MR} for multiplier algebras).

\subsubsection{Rough algebraic outline}

Let us give a brief algebraic sketch of what we need our forms to do,
so that the purpose of the analysis to follow is clear.
There is a map
$$
\pi_T:\,\Omega^1_u(B,\B_1)\to \BB(Y),\quad \pi_T(a\delta(b))=a[T,b].
$$
The range is denoted $\Omega^1_T(B,\B_1)$, and these are  called the
represented one forms. Restricting $\pi_T$ to the other bimodules
gives different spaces of one-forms.
To discuss curvature we need to 
consider two-forms. Universally we have a few options, for instance,
$$
\Omega^2_u(B,\B_2)
:=\Omega^1_u(B,\B_1)\ox^h_{\B_1}\Omega^1_u(\B_1,\B_2).
$$
The common factor of $\B_1$ allows us to use the Leibniz rule
$$
a\delta(b_1)b_2\delta(c)=a\delta(b_1b_2)\delta(c)-ab_1\delta(b_2)\delta(c),
\quad a\in B,\ b_1,\,b_2\in\B_1,\ c\in\B_2,
$$
to see that all two-forms can be represented as sums 
$\sum_ia_i\delta(b_i)\delta(c_i)$
for appropriate algebra elements.
The \emph{universal differential}  
$\delta:\Omega^{1}_u(\mathcal{B}_{1},\mathcal{B}_{2})
\to \Omega^{1}_{u}(B,\mathcal{B}_{1})\otimes^{h}_{\mathcal{B}_{1}}\Omega^{1}_{u}(\mathcal{B}_{1},\mathcal{B}_{2})$ 
is defined by
\begin{align*}
a\delta(b)=a\otimes b-ab\otimes 1 
&\mapsto (1\otimes a- a\otimes 1) \otimes (1\otimes b- b\otimes 1)
=\delta(a)\delta(b),
&\\
&=1\otimes a\otimes b-1\otimes ab\otimes 1 -a\otimes 1\otimes b+a\otimes b\otimes 1
\end{align*}
and satisfies $\delta^{2}(a)=0$ for all $a\in\mathcal{B}_{2}$. 

By declaring the
symbol $\delta$ to be odd, we obtain a $\Z_2$-grading on 
the various spaces of universal one-forms $\Omega^{1}_{u}$.
Since $B$ is trivially graded, all elements of $\Omega^{1}_{u}$ are odd.

Since the map $\pi_T$ is $B$-bilinear, and the Haagerup tensor product linearises
operator multiplication,
we can also represent our two-forms in $\mathbb{B}(Y)$ via

$$
m\circ(\pi_{T}\otimes \pi_{T}):\Omega^2_u(B,\B_2)\ni a\delta(b)\delta(c)
\mapsto \pi_T(a\delta(b))\pi_T(\delta(c))= a[T,b][T,c].
$$
The map $m\circ (\pi_T\otimes\pi_{T})$ is compatible with $*$-structures as well if we define
$\delta(a)^*=-\delta(a^*)$ for $a\in\B_1$.
%{\bf We denote the closure of the range of $m\circ \pi_T\otimes\pi_{T}$ by $\Omega^2_T(B,\B_2)$. not sure check back later. AR says what is wrong with Defn 1.6? What is the question here?}

As is well-known, there is typically no differential 
$\d:\,\Omega^1_T(\B_1,\B_2)\to \Omega^2_T(B,\B_2)$
such that $\pi_T\circ\delta=\d\circ\pi_T$. The naive formula
$$
\d\left(\sum_ib_i[T,c_i]\right)=\sum_i[T,b_i][T,c_i]
$$
is not well-defined. It can happen that 
$\sum_ib_i[T,c_i]=0$ while $\sum_i[T,b_i][T,c_i]\neq 0$.
The forms in $\J^2_T:=\pi_T(\delta(\ker(\pi_T)))$ are known as {\em junk forms}.

Below we will identify analytic spaces of represented one- and two-forms
which will serve as suitable receptacles for curvature. 

\subsubsection{The formal definitions of represented forms}

We again fix a $C^{2}$-Kasparov module $(\mathcal{B},Y,T)$.

By construction, the operator $*$-algebra $\mathcal{B}_{1}$ acts 
completely contractively on the Hilbert space $\Dom T$ equipped 
with the graph norm. Via this representation the $C^{*}$-algebra 
$\mathbb{B}(\Dom T)$ becomes an operator 
$\mathcal{B}_{1}$-bimodule. Furthermore recall that the 
operator norm on $\mathbb{B}(\Dom T)$ can be expressed as
\[
\|R\|_{\mathbb{B}(\Dom T)}=\|(T+i)R(T+i)^{-1}\|_{\mathbb{B}(Y)},\ \ \quad R\in \mathbb{B}(\Dom T),
\]
which we will exploit in the proof of the following Lemma.

\begin{lemma}
\label{indc2}
The map $b\mapsto [T,b]$ defines a completely bounded derivation 
$\delta_{T}:\mathcal{B}_{2}\to \mathbb{B}(\Dom T)$. 
%$\pi_{D}:\Omega^{1}_{u}(\mathcal{B}_{1},\mathcal{B}_{2})\to \mathbb{B}(H)$ restricts
Hence there is a completely bounded map
$$
\pi_{T}:\Omega^{1}_{u}(\mathcal{B}_{1},\mathcal{B}_{2})\to 
\mathbb{B}(\Dom T), \quad a\otimes b\mapsto a[T,b].
$$
The map $b\mapsto [T^{2},b]$ extends to a completely bounded derivation
\[
\delta_{T^{2}}:\mathcal{B}_{2}\to \mathbb{B}(\Dom T, Y),
\]
on $\mathcal{B}_{2}$. Hence there is a completely bounded map
\begin{align*}
\pi_{T^2}:\Omega^{1}_{u}(B,\mathcal{B}_{2})\to 
\mathbb{B}(\Dom T, Y),\quad  a\otimes b\mapsto a[T^{2},b].
\end{align*}
We can extend $\delta_T$ to a completely bounded  map 
\begin{align}
\delta_T:\,&\pi_T(\Omega^{1}_{u}(\mathcal{B}_{1},\mathcal{B}_{2}))
\to \BB(\Dom T,Y)\nonumber\\
 \delta_T(a[T,b])&:=T(a[T,b])+(a[T,b])T
=[T,a][T,b]+a[T^2,b]
\label{eq:delta-tee}
\end{align}
%by Lemma \ref{squareind}, 
and
the composition 
$\delta_{T}\circ \pi_{T}:\Omega^{1}_{u}(\mathcal{B}_{1},\mathcal{B}_{2})\to \mathbb{B}(\Dom T, Y)$ 
is a  completely bounded map.
\end{lemma}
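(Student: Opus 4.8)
The plan is to prove the four claims of Lemma~\ref{indc2} in sequence, in each case first establishing that the relevant derivation is completely bounded by realising it, up to completely bounded maps, as a corner of the matrix representations $\pi^1_T$ and $\pi^2_T$ defining the norms $\|\cdot\|_1$ and $\|\cdot\|_2$ on $\mathcal B_1$ and $\mathcal B_2$; then deducing the statement about $\pi_T$ (resp.\ $\pi_{T^2}$) by applying the functoriality of the Haagerup tensor product together with the fact (recorded just before the lemma) that the multiplication map $A\otimes^h B\to\BB(H)$ is completely contractive.

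First I would treat $\delta_T:\mathcal B_2\to\BB(\Dom T)$. Since the inclusion $\mathcal B_2\to\mathcal B_1$ is a complete contraction of operator $*$-algebras, it suffices to show $b\mapsto[T,b]$ is completely bounded $\mathcal B_1\to\BB(\Dom T)$. Using the identity $\|R\|_{\BB(\Dom T)}=\|(T+i)R(T+i)^{-1}\|_{\BB(Y)}$ recalled above, one computes $(T+i)[T,b](T+i)^{-1}=[T,b]+[T,[T,b]](T+i)^{-1}$; the first summand is the lower-left corner of $\pi^1_T(b)$ and is thus controlled by $\|b\|_1$, while the second, using $[T,[T,b]]=[T^2,b]-2b T^2 + 2TbT$\,... — rather than pushing this through directly, the cleaner route is to observe $(T+i)[T,b](T+i)^{-1} = [T^2,b](T+i)^{-1} - [T,b]$, which is (up to sign) the difference of the lower-left corners of $\pi^2_T(b)$ and $\pi^1_T(b)$, hence bounded by $2\|b\|_2$; the same applies at all matrix levels, giving complete boundedness of $\delta_T$ on $\mathcal B_2$. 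The Leibniz/derivation property of $b\mapsto[T,b]$ on $\Dom T$ is formal. Complete boundedness of $\pi_T$ on $\Omega^1_u(\mathcal B_1,\mathcal B_2)=\ker(m:\mathcal B_1\otimes^h\mathcal B_2\to\mathcal B_1)$ then follows: on $\mathcal B_1\otimes^h\mathcal B_2$ the map $a\otimes b\mapsto a\,\delta_T(b)$ is completely bounded (as $\mathrm{id}_{\mathcal B_1}\otimes^h\delta_T$ followed by the module action $\mathcal B_1\otimes^h\BB(\Dom T)\to\BB(\Dom T)$, both completely bounded), and restriction to the closed subspace $\Omega^1_u(\mathcal B_1,\mathcal B_2)$ preserves complete boundedness; one checks that this restriction lands on the span of the $a[T,b]$ and agrees with $\pi_T$ there.

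The claim for $\delta_{T^2}:\mathcal B_2\to\BB(\Dom T,Y)$ is immediate from the very \emph{definition} of $\|\cdot\|_2$: the map $b\mapsto[T^2,b](T+i)^{-1}$ is, at every matrix level, the lower-left corner of $\pi^2_T(b)$, so $\|[T^2,b](T+i)^{-1}\|_{\BB(Y)}\le\|b\|_2$ completely; since precomposition with $(T+i)$ is an isometry $\BB(\Dom T,Y)\to\BB(Y)$, this says exactly that $\delta_{T^2}$ is a complete contraction into $\BB(\Dom T,Y)$. The derivation property on the core $\mathcal C$ of $T^2$, and its extension to $\Dom T^2$, uses the $C^2$-condition of Definition~\ref{C2spec}; then $\pi_{T^2}$ on $\Omega^1_u(B,\mathcal B_2)$ is handled as before with $B$ in the first slot. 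For the final assertion, $\delta_T$ on $\pi_T(\Omega^1_u(\mathcal B_1,\mathcal B_2))$ is defined by the right-hand side of \eqref{eq:delta-tee}, namely $a[T,b]\mapsto[T,a][T,b]+a[T^2,b]$; reading this as the composite $\pi_T\otimes\pi_{T^2}$ summed appropriately, i.e.\ $\delta_T\circ\pi_T = m\circ\bigl((\delta_T\otimes\pi_T)+(\pi_{B\text{-incl}}\otimes\delta_{T^2})\bigr)\circ\delta$ on $\Omega^1_u(\mathcal B_1,\mathcal B_2)$, complete boundedness of $\delta_T\circ\pi_T$ follows from complete boundedness of $\delta_T$, $\pi_T$, $\delta_{T^2}$, of the Haagerup tensor products of these, and of the multiplication maps into $\BB(\Dom T,Y)$.

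\textbf{Main obstacle.} The genuinely delicate point is \emph{well-definedness} of the maps obtained by restriction to the kernels $\Omega^1_u$: complete boundedness on the ambient Haagerup tensor products is formal, but one must verify that the formulas $a\otimes b\mapsto a[T,b]$, etc., genuinely descend — that is, that an element of the kernel which happens to be written as $\sum a_i\otimes b_i$ with $\sum a_i b_i=0$ really does map to a well-defined operator independent of the representation, and that the resulting operator is controlled by the Haagerup norm of the \emph{form} rather than of a particular decomposition. This is where Proposition~\ref{Haaconvergent} enters: a form of Haagerup norm $<1$ admits a norm-convergent series representation with $\sum a_ia_i^*$ and $\sum b_i^*b_i$ of norm $<1$, and one must check convergence of $\sum a_i[T,b_i]$ in $\BB(\Dom T,Y)$ in that representation and that the limit depends only on the form. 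Managing the unbounded slot — ensuring everything respects the passage between $\BB(Y)$, $\BB(\Dom T)$ and $\BB(\Dom T,Y)$, and that the $(T+i)$-conjugations interact correctly with the infinite Haagerup sums — is the technical heart of the argument; once that bookkeeping is in place, the derivation identities are purely algebraic manipulations on the core $\mathcal C$.
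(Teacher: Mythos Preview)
Your approach is essentially that of the paper: control $\|[T,b]\|_{\BB(\Dom T)}$ via the identity $\|R\|_{\BB(\Dom T)}=\|(T+i)R(T+i)^{-1}\|_{\BB(Y)}$, recognise the pieces as corners of $\pi_T^1$ and $\pi_T^2$, and then pass to $\pi_T$, $\pi_{T^2}$ using the contractivity of multiplication on the Haagerup tensor product. Two corrections.

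First, your ``cleaner'' identity is slightly off. Writing $R=[T,b]$, one has $(T+i)R-R(T+i)=TR-RT$ (ungraded!), and since $TR+RT=[T^2,b]$ this gives $TR-RT=[T^2,b]-2RT$, hence
\[
(T+i)[T,b](T+i)^{-1}=[T^2,b](T+i)^{-1}-[T,b]+2i[T,b](T+i)^{-1}.
\]
The extra term is bounded by $2\|b\|_1$, so your conclusion and cb bound survive with a worse constant; the paper simply carries the sum $\sum a_i[T,b_i]$ through the same computation and arrives at the bound $3\|\sum a_i\otimes b_i\|_{\mathcal B_1\otimes^h\mathcal B_2}$ in one step.

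Second, the ``main obstacle'' you describe is not one. The maps $a\otimes b\mapsto a[T,b]$ and $a\otimes b\mapsto a[T^2,b]$ are bilinear, hence well-defined on the algebraic tensor products; the Haagerup estimates you (and the paper) establish show they are completely bounded on the \emph{ambient} spaces $\mathcal B_1\otimes^h\mathcal B_2$ and $B\otimes^h\mathcal B_2$. Since $\Omega^1_u(\mathcal B_1,\mathcal B_2)$ and $\Omega^1_u(B,\mathcal B_2)$ are closed \emph{subspaces} (kernels of $m$), restriction is automatic and independence of representation is immediate --- there is no quotient to descend to, and Proposition~\ref{Haaconvergent} is not needed here (the paper invokes it only later, in Proposition~\ref{ajunkie}).
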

\begin{proof}The estimate
\begin{align*}
\Big\| \sum_{i}a_{i}[T,b_i]\Big\|_{\mathbb{B}(\Dom T)}
&=\Big\| (T+i)\Big(\sum_{i}a_{i}[T,b_i]\Big)(T+i)^{-1}\Big\|_{\mathbb{B}(Y)}\\
&\leq \Big\| \sum_{i}a_{i}[T,b_i]\Big\|_{\mathbb{B}(Y)}
+\Big\| \Big[T,\Big(\sum_{i}a_{i}[T,b_i]\Big)(T+i)^{-1}\Big]\Big\|_{\mathbb{B}(Y)}\\
&\leq \Big\| \sum_{i}a_{i}[T,b_i]\Big\|_{\mathbb{B}(Y)}
+\Big\| \sum_{i}[T,a_{i}][T,b_i]\Big\|_{\mathbb{B}(Y)}\\ 
&\quad\quad\quad\quad 
+\Big\|\sum_{i}a_{i}[T^{2},b_{i}](T+i)^{-1}\Big\|_{\mathbb{B}(Y)},
\end{align*}
shows that $\left\| \sum_{i}a_{i}[T,b_i]\right\|_{\mathbb{B}(\Dom T)}\leq 3\|\sum a_{i}\otimes b_{i}\|_{\mathcal{B}_{1}\otimes^{h}\mathcal{B}_{2}}$. 

For $b\in\mathcal{B}_{2}$, the operator $[T^{2},b]$ is defined on $\Dom T^{2}$ and 
$[T^{2},b](T+i)^{-1}:\Dom T\to Y$ extends to a bounded operator 
on $Y$. Thus if $\xi_{n}\in\Dom T^{2}\to \xi\in\Dom T$ in the 
graph norm of $T$, then $[T^{2},b]\xi_{n}=[T^{2},b](T+i)^{-1}(T+i)\xi_{n}$ 
is convergent. The estimate
 \[
 \|[T^{2},b]\xi\|\leq \|[T^{2},b](T+i)^{-1}\|\|(T+i)\xi\|,
 \]
holds on the $T$-graph-norm dense subspace 
$\Dom T^{2}\subset\Dom T$, and shows that the derivation $\delta_{T^2}$
is completely bounded as a map 
$\mathcal{B}_{2}\to \mathbb{B}(\Dom T, Y)$. We denote 
the $cb$-norm of $\delta_{T^2}$ by $\Vert\delta_{T^2}\Vert_{\textnormal{cb}}$.
The second statement now follows from the standard Haagerup estimate
\begin{align*}
\Big\|\sum_{i} a_{i}[T^{2},b_{i}]\Big\|^{2} 
&\leq \Big\|\sum a_{i}a_{i}^{*}\Big\|_{B} \Big\|\sum_{i} [T^{2},b_{i}]^{*}[T^{2},b_{i}]\Big\|_{\mathbb{B}(\Dom T, Y)} \\
&\leq\big\|\delta_{T^{2}}\big\|^{2}_{\textnormal{cb}} \big\|(a_{i})^{t}\big\|_{B}^{2} \big\| (b_{i})\big\|_{\mathcal{B}_{2}}^{2}.
\end{align*}
where $(a_i)^t$ is a row vector and $(b_i)$ a column.
Since $\pi_{T}:\mathcal{B}_{1}\otimes^{h}\mathcal{B}_{2}\to \mathbb{B}(Y)$, this proves that $\delta_{T}\circ \pi_T (\omega)$ is defined as an operator $\mathbb{B}(\Dom T, Y)$.
\end{proof}

The appearance of commutators $[T^2,b]$ for $b\in\B$ is a
consequence of the natural norm on the domain of $T$. Somewhat more
heuristically, it can be considered as a consequence of trying to extend
the derivation $b\mapsto [T,b]$ to one-forms via the graded commutator
$$
a[T,b]\mapsto \delta_T(a[T,b])=\big[T,a[T,b]\big]_+=[T,a][T,b]+a[T^2,b].
$$
Thus the graded commutator implements the ill-defined ``differential''
$a[T,b]\mapsto[T,a][T,b]$, up to the unwanted 
unbounded term $a[T^2,b]$, as in Equation \eqref{eq:delta-tee}.

%Recall that we have a map $\pi_{D}:\Omega^{1}_{u}(B,\mathcal{B}_{1})\to \mathbb{B}(H)$ defined by $a\otimes b\mapsto a[D,b]$ as well.
\begin{defn} 
For a $C^{2}$-Kasparov module $(\B,Y,T)$ 
we denote by $\Omega^{1}_{T}(\mathcal{B}_{1},\mathcal{B}_{2})$ the closure of the image of the map
\[
\pi_{T}:\Omega^{1}_{u}(\mathcal{B}_{1},\mathcal{B}_{2})\to \mathbb{B}(\Dom T),
\]
and by $\Omega^{1}_{T}(B,\mathcal{B}_{1})$ the closure of the image of the map
\[
\pi_{T}:\Omega^{1}_u(B,\mathcal{B}_{1})\to \mathbb{B}(Y).
\]
\end{defn}

Note that $\Omega^{1}_{T}(B,\mathcal{B}_{1})$ contains the module 
of differential forms $\Omega^{1}_{T}$ defined by Connes in \cite[Chapter VI]{BRB} as a dense subspsace (for which only finite linear combinations of the $a[T,b]$ are allowed). However, in the case that $\Omega^1_{T}$ is a finitely generated projective (right) $\B$-module, they coincide.

\begin{defn}
\label{def: formsdomrep}
For a $C^{2}$-Kasparov module $(\B,Y,T)$  we denote by $\Omega^{2}_{T}(\mathcal{B}_{1})$ the closure of the image of the map $m\circ (\pi_{T}\otimes\pi_{T}),$ defined as the composition
\begin{equation}
\Omega^{1}_{u}(B,\mathcal{B}_{1})\otimes^{h}_{\mathcal{B}_{1}}\Omega^{1}_{u}(\mathcal{B}_{1},\mathcal{B}_{2})\to \Omega^{1}_{u}(B,\mathcal{B}_{1})\otimes^{h}_{\mathcal{B}_{1}}\Omega^{1}_{u}(B,\mathcal{B}_{1})\xrightarrow{\pi_{T}\otimes \pi_{T}} \mathbb{B}(Y)\otimes^{h}_{\mathcal{B}_{1}}\mathbb{B}(Y)\xrightarrow{m}\mathbb{B}(Y).\end{equation}
%\[\Omega^{1}_{u}(B,\mathcal{B}_{1})\otimes^{h}_{\mathcal{B}_{1}}\Omega^{1}_{u}(\mathcal{B}_{1},\mathcal{B}_{2})\xrightarrow{\pi_{T}\otimes \pi_{T}} \mathbb{B}(\Dom T,Y)\otimes^{h}_{\mathcal{B}_{1}}\mathbb{B}(\Dom T)\xrightarrow{m}\mathbb{B}(\Dom T,Y).\]
\end{defn}
\begin{lemma} 
\label{lem:junk-range}
The closure of the range of the map \[m\circ (\pi_{T}\otimes\pi_{T}):\Omega^{1}_{u}(B,\mathcal{B}_{1})\otimes^{h}_{\mathcal{B}_{1}}\Omega^{1}_{u}(\mathcal{B}_{1},\mathcal{B}_{2})\to \mathbb{B}(Y)\]
%extends to a
%completely contractive $(B,\mathcal{B}_{2})$-bimodule map denoted
%\[
%m\circ (\pi_{T}\otimes\pi_{T}):\Omega^{1}_{u}(B,\mathcal{B}_{1})\otimes^{h}_{\mathcal{B}_{1}}\Omega^{1}_{u}(\mathcal{B}_{1},\mathcal{B}_{2})
%\rightarrow\mathbb{B}(Y),
%%\]
%as well. 
%The closure of its range 
coincides with the space of two-forms
%defined by
\[
\Omega^{2}_{T}(\mathcal{B}_{1})=\overline{\left\{\sum a_{i}[T,b_{i}][T,c_i]:\, a_i\in B,\ b_i,\,c_i\in\mathcal{B}_{1}\right\}},
\]
over $\mathcal{B}_{1}$.
\end{lemma}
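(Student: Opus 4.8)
The plan is to show that the two descriptions of the range coincide by tracking an arbitrary element through the composition in Definition \ref{def: formsdomrep} and using the Leibniz rule together with Blecher's characterisation of the Haagerup module tensor product (Theorem \ref{thm: Blecher}). First I would observe that a finite sum $\sum_i a_i\otimes b_i\otimes c_i$ in $\Omega^1_u(B,\B_1)\otimes^{\alg}_{\B_1}\Omega^1_u(\B_1,\B_2)$ maps under $m\circ(\pi_T\otimes\pi_T)$ to $\sum_i a_i[T,b_i][T,c_i]$, so the algebraic span $\{\sum a_i[T,b_i][T,c_i]: a_i\in B,\ b_i,c_i\in\B_1\}$ is manifestly contained in the range; the content is that taking closures is compatible with the Haagerup completion. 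Here one must be a little careful, because in $\Omega^2_T(\B_1)$ the coefficients $c_i$ are allowed to lie in $\B_1$ whereas the universal two-form lives over $\Omega^1_u(\B_1,\B_2)$ with $c_i\in\B_2$; since $\B_2\subset\B_1$ is a dense (completely contractive) inclusion of operator $*$-algebras and $\pi_T$ is completely bounded on $\Omega^1_u(\B_1,\B_2)$ by Lemma \ref{indc2}, the image is dense in the corresponding space with $\B_1$-coefficients, which accounts for the use of $\B_1$ in the displayed formula for $\Omega^2_T(\B_1)$.

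Next I would address the reverse inclusion: every element of the closed range is a norm-limit of finite sums $\sum a_i[T,b_i][T,c_i]$. For this I would invoke Proposition \ref{Haaconvergent}(1): an element $z$ of the Haagerup module tensor product $\Omega^1_u(B,\B_1)\otimes^h_{\B_1}\Omega^1_u(\B_1,\B_2)$ of norm $<1$ can be written as a norm-convergent series $z=\sum_k \omega_k\otimes\eta_k$ with $\sum_k\omega_k\omega_k^*$ and $\sum_k\eta_k^*\eta_k$ norm-convergent of norm $<1$. Since $\pi_T$ and the multiplication map $m$ are completely contractive (or at least completely bounded), applying $m\circ(\pi_T\otimes\pi_T)$ term by term yields a norm-convergent series $\sum_k \pi_T(\omega_k)\pi_T(\eta_k)$ in $\BB(Y)$, whose partial sums are finite sums of the desired form. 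Truncating the series at level $N$ and estimating the tail using the Haagerup estimate $\|\sum_{k>N}\pi_T(\omega_k)\pi_T(\eta_k)\|^2\le \|\sum_{k>N}\pi_T(\omega_k)\pi_T(\omega_k)^*\|\,\|\sum_{k>N}\pi_T(\eta_k)^*\pi_T(\eta_k)\|$ together with complete boundedness of $\pi_T$ shows the tail is small, so the limit lies in the norm-closure of the algebraic span. Finally one rewrites each $\pi_T(\omega_k)\pi_T(\eta_k)$ in the normal form $\sum a[T,b][T,c]$ using the Leibniz rule $a\delta(b_1)b_2\delta(c)=a\delta(b_1b_2)\delta(c)-ab_1\delta(b_2)\delta(c)$ recorded before Definition \ref{def: formsdomrep}, which is precisely the step that makes the common factor $\B_1$ in the Haagerup module tensor product do its work.

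The main obstacle I expect is the bookkeeping around the two different coefficient algebras and making sure the closures match up: one must check that passing to the Haagerup completion does not enlarge the closed range beyond the norm-closure of $\{\sum a_i[T,b_i][T,c_i]\}$, and that the density of $\B_2$ in $\B_1$ (rather than an actual equality) is enough. Concretely, the subtlety is that $\Omega^1_u(\B_1,\B_2)=\ker(\B_1\otimes^h\B_2\xrightarrow{m}\B_1)$ sits inside $\Omega^1_u(\B_1,\B_1)$ densely, but one needs this density to survive the tensor product over $\B_1$ and the application of $\pi_T\otimes\pi_T$ — this follows from functoriality and complete contractivity of the Haagerup module tensor product, but deserves an explicit sentence. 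Once this is in place, the identification is a routine consequence of Proposition \ref{Haaconvergent}, complete boundedness of the maps involved from Lemma \ref{indc2}, and the Leibniz rule; I would not expect any genuinely new analytic input beyond what has already been assembled.
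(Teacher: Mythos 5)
Your proposal is correct and follows essentially the same route as the paper's proof: the Leibniz rewriting of $a[T,b]c[T,d]$ into the normal form $\sum a[T,b'][T,c']$ gives $\overline{\mathrm{ran}}\,(m\circ(\pi_T\otimes\pi_T))\subset\Omega^2_T(\B_1)$, and approximating $c\in\B_1$ by $c_i\in\B_2$ with $[T,c_i]\to[T,c]$ in $\mathbb{B}(Y)$ gives the reverse inclusion. The only (cosmetic) difference is that the paper does not invoke the series decomposition of Proposition \ref{Haaconvergent}, since complete boundedness of $m\circ(\pi_T\otimes\pi_T)$ together with density of the finite algebraic sums already reduces the first inclusion to the finite-sum computation.
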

\begin{proof} 
%The map obtained from the 
%completely contractive inclusion 
%$\iota:\Omega^{1}_{u}(\mathcal{B}_{1},\mathcal{B}_{2})\to \Omega^{1}_{u}(B,\mathcal{B}_{1})$ and the maps
%\[
%\pi_{T}: \Omega^{1}_{u}(B,\mathcal{B}_{1})\to \mathbb{B}(Y),
%\quad m:\mathbb{B}(Y)\otimes^{h}_{\mathcal{B}_{1}}\mathbb{B}(Y)\to \mathbb{B}(Y), 
%\]
%as the composition
%$m\circ (\pi_{T}\otimes \pi_{T})\circ (1\otimes\iota)$ is completely contractive. On $\Dom T$ this map restricts to the map $m\circ (\pi_{T}\otimes \pi_{T})$ of Definition \ref{def: formsdomrep}. 
To see that the closure of the range coincides with 
$\Omega^{2}_{T}(\mathcal{B}_{1})$, consider a finite sum
\begin{align*}
\sum_{i,j}a_{i}[T,b_{i}]c_{j}[T,d_{j}]
=\sum_{i,j}a_{i}[T,b_{i}c_{j}][T,d_{j}]-a_{i}b_{i}[T,c_{j}][T,d_{j}]
= \sum_{i,j}a_{i}[T,b_{i}c_{j}][T,d_{j}],
\end{align*}
since $\sum_ia_ib_i=0$.
Hence we have an inclusion 
$\textnormal{ran } m\circ (\pi_{T}\otimes\pi_{T})\subset \Omega^{2}_{T}(\mathcal{B}_{1})$. 
Since any $c\in\mathcal{B}_{1}$ is a limit of $c_{i}\in\mathcal{B}_{2}$ such that $[T,c_i]\to[T,c]$, all expressions $a[T,b][T,c]$ are in the range of $m\circ (\pi_{T}\otimes\pi_{T})$ and
the reverse inclusion follows as well.
\end{proof}

\subsection{Second derivatives and junk}
\label{subsec:sec-deriv-junk}

We now provide a discussion of junk forms for Kasparov modules in our analytic context. The rather strange representation of forms given by
$\pi_{T^2}$ turns out to be precisely what is required to cancel 
out the unwanted term in the anticommutator $[T,a[T,b]]_+$. In turn,
this cancellation allows us to both 
represent curvature and capture junk.

%\begin{align*}
%\Omega^{2}_{D}(\mathcal{B}_{1}
%\end{align*}
%We consider $\Omega^{2}_{D}$ as operators in $\mathbb{B}(\Dom D, H)$ via restriction. 
%\end{definition}
\begin{prop}
\label{ajunkie} 
For any $\omega\in\Omega^{1}_{u}(\mathcal{B}_{1},\mathcal{B}_{2})$ 
we have that 
$$
\delta_{T}\circ \pi_{T}(\omega)-\pi_{T^2}(\omega)=m\circ (\pi_{T}\otimes\pi_{T})(\delta \omega)
\in\Omega^{2}_{T}(\mathcal{B}_{1})\subset \mathbb{B}(Y).
$$ 
The map $\delta_{T}\circ \pi_{T}-\pi_{T^2}:\Omega^{1}_{u}(\mathcal{B}_{1},\mathcal{B}_{2})\to \Omega^{2}_{T}(\mathcal{B}_{1})$ 
is completely contractive.
In particular for 
$\omega\in\Omega_{u}^{1}(\mathcal{B}_{1}, \mathcal{B}_{2})$ 
such that $\pi_{T}(\omega)=0$,  the operator 
$\pi_{T^2}(\omega)$ is well-defined and bounded on $Y$. 
\end{prop}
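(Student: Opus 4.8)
The plan is to establish the identity first on the dense subspace of finite sums $\omega=\sum_i a_i\delta(b_i)$ and then to extend it by continuity; the substance of the statement is that the a priori \emph{unbounded} term $a[T^2,b]$, which occurs separately in $\delta_T\circ\pi_T$ and in $\pi_{T^2}$, cancels in the difference, leaving a bounded represented two-form. I expect the delicate point to be showing that $\omega\mapsto\sum_i[T,a_i][T,b_i]$ is bounded from the \emph{Haagerup completion} $\Omega^1_u(\mathcal B_1,\mathcal B_2)$ into $\mathbb B(Y)$: on finite sums everything is elementary, but it is precisely here that the Haagerup tensor product (rather than a projective or algebraic one) and the complete contractivity of the derivations $c\mapsto[T,c]$ are essential, and this is what makes the cancellation of the unbounded terms persist in the completed setting.

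On an elementary form $\omega=a\delta(b)=a\otimes b-ab\otimes 1$ with $a\in\mathcal B_1$ and $b\in\mathcal B_2$, formula \eqref{eq:delta-tee} gives $\delta_T\circ\pi_T(a\delta(b))=[T,a][T,b]+a[T^2,b]$, while $\pi_{T^2}(a\delta(b))=a[T^2,b]$ because $[T^2,1]=0$, so the difference is $[T,a][T,b]$; on the other hand $\delta(a\delta(b))=\delta(a)\delta(b)$ and $\pi_T(\delta(c))=[T,c]$, so $m\circ(\pi_T\otimes\pi_T)(\delta(a\delta(b)))=[T,a][T,b]$ as well. By linearity, for any finite sum $\omega=\sum_i a_i\delta(b_i)$ both sides of the asserted identity equal $\sum_i[T,a_i][T,b_i]$, which lies in $\Omega^2_T(\mathcal B_1)$ by Lemma~\ref{lem:junk-range}. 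These finite sums are dense in $\Omega^1_u(\mathcal B_1,\mathcal B_2)$: the multiplication map $m\colon\mathcal B_1\otimes^h\mathcal B_2\to\mathcal B_1$ admits the bounded section $a\mapsto a\otimes 1$, so $\mathrm{id}-(\cdot\otimes 1)\circ m$ is a bounded projection onto $\Omega^1_u(\mathcal B_1,\mathcal B_2)$ which carries the dense subspace $\mathcal B_1\otimes^{\alg}\mathcal B_2$ onto $\{\sum_i a_i\delta(b_i)\}$.

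Next I would show that $\omega\mapsto\sum_i[T,a_i][T,b_i]$ extends to a completely contractive map $\Psi\colon\Omega^1_u(\mathcal B_1,\mathcal B_2)\to\Omega^2_T(\mathcal B_1)$. The derivation $c\mapsto[T,c]$ is completely contractive as a map $\mathcal B_1\to\mathbb B(Y)$ and as a map $\mathcal B_2\to\mathbb B(Y)$, being the lower-left corner of $\pi^1_T$ and using $\|\cdot\|_1\le\|\cdot\|_2$. Hence, by functoriality of the Haagerup tensor product and complete contractivity of the multiplication $m\colon\mathbb B(Y)\otimes^h\mathbb B(Y)\to\mathbb B(Y)$, the composite
\[
\mathcal B_1\otimes^h\mathcal B_2\xrightarrow{\delta_T\otimes\delta_T}\mathbb B(Y)\otimes^h\mathbb B(Y)\xrightarrow{m}\mathbb B(Y),\qquad a\otimes b\mapsto[T,a][T,b],
\]
is completely contractive; restricting it along the complete isometry $\Omega^1_u(\mathcal B_1,\mathcal B_2)\hookrightarrow\mathcal B_1\otimes^h\mathcal B_2$ and corestricting to the closed subspace $\Omega^2_T(\mathcal B_1)$ (which contains all values on finite sums) defines $\Psi$.

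To conclude, observe that by Lemma~\ref{indc2} the map $\delta_T\circ\pi_T-\pi_{T^2}$ is completely bounded from $\Omega^1_u(\mathcal B_1,\mathcal B_2)$ into $\mathbb B(\Dom T,Y)$; composing $\Psi$ with the injective contraction $\mathbb B(Y)\hookrightarrow\mathbb B(\Dom T,Y)$ produces a second continuous map into $\mathbb B(\Dom T,Y)$ agreeing with it on the dense set of finite sums, hence on all of $\Omega^1_u(\mathcal B_1,\mathcal B_2)$. Injectivity of the inclusion then forces $\delta_T\circ\pi_T(\omega)-\pi_{T^2}(\omega)$ to lie in $\mathbb B(Y)$ for every $\omega$ and to equal $\Psi(\omega)$, which equals $m\circ(\pi_T\otimes\pi_T)(\delta\omega)\in\Omega^2_T(\mathcal B_1)$ — the last identity holding on finite sums by the first step and on all of $\Omega^1_u(\mathcal B_1,\mathcal B_2)$ by continuity of $m\circ(\pi_T\otimes\pi_T)\circ\delta$. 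Complete contractivity of $\delta_T\circ\pi_T-\pi_{T^2}=\Psi$ is then immediate. Finally, if $\pi_T(\omega)=0$ then $\delta_T(\pi_T(\omega))=0$ by linearity of $\delta_T$, whence $\pi_{T^2}(\omega)=-m\circ(\pi_T\otimes\pi_T)(\delta\omega)\in\Omega^2_T(\mathcal B_1)\subset\mathbb B(Y)$, which is the ``in particular'' assertion.
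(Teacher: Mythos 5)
Your argument is correct, and it reaches the same cancellation as the paper but by a genuinely different route. The paper proves the statement by approximating a matrix of universal forms by a series of elementary tensors via Proposition \ref{Haaconvergent}.1, observing the anticommutator identity $[T,\pi_T(\alpha_n)]=\sum[T,a_{ik}][T,b_{kj}]+\sum a_{ik}[T^2,b_{kj}]$ on partial sums, and then upgrading convergence of $\sum_k[T,a_{ik}][T,b_{kj}]$ from $\mathbb{B}(\Dom T,Y^N)$ to $\mathbb{B}(Y^N)$ through an explicit Haagerup tail estimate. You instead package that same estimate functorially: the map $a\otimes b\mapsto[T,a][T,b]$ is the composite $m\circ(\delta_T\otimes\delta_T)$, completely contractive on $\mathcal{B}_1\otimes^h\mathcal{B}_2$ by functoriality of $\otimes^h$ and complete contractivity of multiplication, and you restrict it to $\ker m$; the identification with $\delta_T\circ\pi_T-\pi_{T^2}$ is then a matter of comparing two continuous maps into $\mathbb{B}(\Dom T,Y)$ (Lemma \ref{indc2}) on a dense subspace. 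Your one new ingredient is the density of finite sums $\sum_i a_i\delta(b_i)$ in $\Omega^1_u(\mathcal{B}_1,\mathcal{B}_2)$, obtained from the bounded projection $\mathrm{id}-(\cdot\otimes 1)\circ m$ (using unitality of $\mathcal{B}_2$); the paper never needs this because its series are already built from elementary tensors. What your route buys is the avoidance of series bookkeeping and matrix-level tail estimates, and it isolates complete contractivity as a one-line consequence of functoriality; what the paper's route buys is that it never has to discuss density of algebraic forms and it produces the norm-convergent series representation directly. One small point to make explicit: the equality with $m\circ(\pi_T\otimes\pi_T)(\delta\omega)$ for general $\omega$ uses that $\delta$ itself is completely bounded on the Haagerup completion; this is implicit in the paper (and used again in Proposition \ref{nablassquare}) and follows from the same functoriality argument you already employ, so it is a sentence to add rather than a gap.
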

\begin{proof} 
Let $\alpha=(\alpha_{ij})\in M_{N}(\Omega^{1}_{u}(\mathcal{B}_{1},\mathcal{B}_{2}))$. Approximate $\alpha$ by a series 
\begin{equation}
\label{Hmagic}
\alpha_{n}:=\left(\sum_{k=1}^{n} a_{ik}\otimes b_{kj}\right)_{ij}\in  M_{N}\big((\mathcal{B}_{1}\otimes^{\textnormal{alg}}\mathcal{B}_{2})\cap\Omega^{1}_{u}(\mathcal{B}_{1},\mathcal{B}_{2})\big),
\end{equation} 
using Proposition \ref{Haaconvergent}.1. By abuse of notation we denote the self-adjoint regular operator $\diag T:(\Dom T)^{N}\to Y^{N}$ by $T$ and we identify $M_{N}(\mathbb{B}(Y))$ with $\mathbb{B}(Y^{N})$.
The identity
\[
\left[T, \pi_{T}(\alpha_{n})\right]=\left(\Big[T,\sum_{k=1}^{n} a_{ik}[T,b_{kj}]\Big]_+\right)_{ij}
=\left(\sum_{k=1}^{n} [T,a_{ik}][T,b_{kj}]+\sum_{k=1}^{n} a_{ik}[T^{2},b_{kj}]\right)_{ij},
\]
is valid in $\mathbb{B}(\Dom T, Y^{N})$. By continuity of 
$\delta_{T}\circ \pi_{T}$ the left hand side of this equation 
converges (as $n \to \infty$) in $\mathbb{B}(\Dom T, Y^{N})$. 
By continuity of $\pi_{T^2}$ we have 
$\pi_{T^2}(\alpha_{n})\to \pi_{T^2}(\alpha)$ in 
$\mathbb{B}(\Dom T, Y^{N})$. Therefore 
$\left(\sum_{k=1}^{n} [T,a_{ik}][T,b_{kj}]\right)_{ij}$ is convergent in 
$\mathbb{B}(\Dom T, Y^{N})$.
Since there is a complete contraction $\mathcal{B}_{2}\to \mathcal{B}_{1}$ 
we have the estimate
\[
\Big\|\Big(\sum_{k=\ell}^{m} [T,a_{ik}][T,b_{kj}]\Big)_{ij}\Big\|_{\mathbb{B}(Y^{N})}\!\!\!\leq\left\|(a_{ik})_{k= m}^{\ell}\right\|_{\mathcal{B}_{1}}\left\|(b_{kj})_{k= m}^{\ell}\right\|_{\mathcal{B}_{1}}\leq \left\|(a_{ik})_{k= m}^{\ell}\right\|_{\mathcal{B}_{1}}\left\|(b_{kj})_{k= m}^{\ell}\right\|_{\mathcal{B}_{2}} ,
\]
from which we infer that
\[
\left\|\left(\sum_{k=\ell}^{m} [T,a_{ik}][T,b_{kj}]\right)_{ij}\right\|_{\mathbb{B}(Y^{N})}
\leq \|\alpha_{k}-\alpha_{m}\|_{M_{N}(\Omega^{1}_{u}(\mathcal{B}_{1},\mathcal{B}_{2}))}\to 0.
\]
It thus follows that the series
$$
\left(\sum_{k} [T,a_{ik}][T,b_{kj}]\right)_{ij},
$$
is norm convergent in $\mathbb{B}(Y^{N})$, 
proving that 
$\delta_{T}\circ \pi_{T}(\omega)-\pi_{T^2}(\omega)\in M_{N}(\Omega^{2}_{T})
\subset \mathbb{B}(Y^{N})$. 
The above estimates show that
\[
\|(\delta_{T}\circ \pi_{T}-\pi_{T^2})(\omega)\|_{\mathbb{B}(Y^N)}
\leq \|\omega\|_{M_{N}(\Omega^{1}_{u}(\mathcal{B}_{1},\mathcal{B}_{2}))}, \quad
\omega\in M_{N}(\Omega^{1}_{u}(\mathcal{B}_{1},\mathcal{B}_{2})),
\]
which proves that $\delta_{T}\circ \pi_{T}-\pi_{T^2}$ is completely contractive.
In case $\omega\in\Omega^{1}_{u}(\mathcal{B}_{1},\mathcal{B}_{2})$ and $\pi_{T}(\omega)=\sum_ia_i[T,b_i]=0$ we find that
\begin{align*}
\pi_{T^2}(\omega)&=\sum_ia_i[T^2,b_i]=\sum_ia_iT[T,b_i]+a_i[T,b_i]T\\
&=\sum_i[a_i,T][T,b_i]+\sum_iTa_i[T,b_i]=-\sum_{i}[T,a_{i}][T,b_{i}]\in \mathbb{B}(\Dom T, Y).
\end{align*}
This proves that $\pi_{T^2}(\omega)$ extends to  a bounded operator on $Y$. 
\end{proof}
\begin{corl} 
\label{cor:junk-squared}
Let $\pi_{T^2}:\Omega^{1}_{u}(\mathcal{B}_{1},\mathcal{B}_{2})
\to \Omega^{1}_{T^{2}}(\mathcal{B}_{1},\mathcal{B}_{2})$ and 
$\pi_{T}:\Omega^{1}_{u}(\mathcal{B}_{1},\mathcal{B}_{2})
\to \Omega^{1}_{T}(\mathcal{B}_{1},\mathcal{B}_{2})$ be the universal maps. 
Then $\ker \pi_{T}$ is a closed subbimodule of 
$\Omega^{1}_{u}(\mathcal{B}_{1},\mathcal{B}_{2})$ 
and the closed bimodule of junk forms 
\[
\mathcal{J}^{2}_{T}(\B_1)
:=\overline{\left\{\sum_{i}[T,a_i][T,b_i]: \sum a_{i}b_{i}
=\sum a_i[T,b_i]=\sum[T,a_i]b_i=0\right\}},
\]
is equal to the closure of the space $\pi_{T^2}(\ker \pi_{T})$.
\end{corl}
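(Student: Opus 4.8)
The plan is to read the statement off Proposition~\ref{ajunkie} and the explicit computation carried out in its proof.

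\emph{The subbimodule assertion.} By Lemma~\ref{indc2} the map $\pi_T\colon\Omega^1_u(\mathcal{B}_1,\mathcal{B}_2)\to\mathbb{B}(\Dom T)$ is completely bounded, hence continuous, so $\ker\pi_T=\pi_T^{-1}(0)$ is closed; since $\pi_T$ is linearised by the Haagerup product and intertwines the left and right (Leibniz-twisted) module actions, $\pi_T(a\,\delta(b)\,c)=a[T,b]c$, its kernel is invariant under these actions. Thus $\ker\pi_T$ is a closed subbimodule, which I would dispatch in a line or two. By the same Leibniz manipulations used in Lemma~\ref{lem:junk-range} (e.g. $b[T,a_i]=[T,ba_i]-[T,b]a_i$ together with $\sum a_i[T,b_i]=0$) one checks that the set defining $\mathcal{J}^2_T(\mathcal{B}_1)$ is stable under the left and right $\mathcal{B}_1$-actions, so its closure is the asserted closed subbimodule.

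\emph{The equality of spaces.} Fix $\omega\in\ker\pi_T$ and, after rescaling, write $\omega=\sum_k a_k\otimes b_k$ as a norm-convergent series via Proposition~\ref{Haaconvergent}.1. The last paragraph of the proof of Proposition~\ref{ajunkie} then gives
\[
\pi_{T^2}(\omega)=-\sum_k [T,a_k][T,b_k],
\]
with the right-hand series norm-convergent in $\mathbb{B}(Y)$. The coefficients obey the three relations defining the junk set: $\sum_k a_kb_k=m(\omega)=0$ since $\omega\in\Omega^1_u(\mathcal{B}_1,\mathcal{B}_2)$; $\sum_k a_k[T,b_k]=\pi_T(\omega)=0$ since $\omega\in\ker\pi_T$; and, applying the continuous derivation $b\mapsto[T,b]$ of Lemma~\ref{indc2} to the identity $\sum_k a_kb_k=0$, also $\sum_k[T,a_k]b_k=-\sum_k a_k[T,b_k]=0$. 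Hence $\pi_{T^2}(\omega)\in\mathcal{J}^2_T(\mathcal{B}_1)$, and taking closures yields $\overline{\pi_{T^2}(\ker\pi_T)}\subseteq\mathcal{J}^2_T(\mathcal{B}_1)$. For the reverse inclusion, given an element $\sum_i[T,a_i][T,b_i]$ of the set defining $\mathcal{J}^2_T(\mathcal{B}_1)$ with $\sum_i a_ib_i=0=\sum_i a_i[T,b_i]$, put $\omega:=\sum_i a_i\otimes b_i$; the first relation says $\omega\in\Omega^1_u(\mathcal{B}_1,\mathcal{B}_2)$ and the second that $\pi_T(\omega)=0$, so $\omega\in\ker\pi_T$, and Proposition~\ref{ajunkie} gives $\sum_i[T,a_i][T,b_i]=-\pi_{T^2}(\omega)\in\pi_{T^2}(\ker\pi_T)$; taking closures finishes the argument.

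\emph{The main obstacle.} The one point requiring care is the bookkeeping between the (a~priori finite) sums appearing in the stated formula for $\mathcal{J}^2_T(\mathcal{B}_1)$ and the norm-convergent series produced by the Haagerup representation of a general element of $\ker\pi_T$, together with keeping the second slots in $\mathcal{B}_2$ (where $\pi_{T^2}$ is defined) rather than in $\mathcal{B}_1$. Both are handled by continuity plus the density of $\mathcal{B}\subseteq\mathcal{B}_2$ in $\mathcal{B}_1$: truncating $\omega=\sum_k a_k\otimes b_k$ to $\sum_{k\le N}a_k\otimes b_k-\bigl(\sum_{k\le N}a_kb_k\bigr)\otimes 1\in\Omega^1_u(\mathcal{B}_1,\mathcal{B}_2)$ gives finite forms converging to $\omega$ whose residual $\pi_T$-image $\sum_{k\le N}a_k[T,b_k]$ tends to $0$, and the continuity of $\delta_T\circ\pi_T$ (Lemma~\ref{indc2}) together with Proposition~\ref{ajunkie} shows that the corresponding $\delta_T$-images differ from honest finite junk sums by terms vanishing in $\mathbb{B}(Y)$. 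This does not affect any of the identities above; it only reconciles the two descriptions of $\mathcal{J}^2_T(\mathcal{B}_1)$.
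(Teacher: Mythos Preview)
Your proposal is correct and takes essentially the same approach as the paper. The paper offers no explicit proof of this corollary, leaving it as an immediate consequence of Proposition~\ref{ajunkie}; your argument does exactly that, reading off the identification $\pi_{T^2}(\omega)=-\sum_k[T,a_k][T,b_k]$ for $\omega\in\ker\pi_T$ from the last paragraph of that proof and then checking both inclusions, with the bookkeeping about finite versus convergent sums and $\mathcal{B}_2$ versus $\mathcal{B}_1$ handled by density and continuity as you indicate.
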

%To compute $m\circ (\pi_{T}\otimes \pi_{T})$ on exact forms 
%$\delta(\omega)$ we have the following corollary.
%\begin{corl}
%\label{repd}
%For $\omega\in\Omega^{1}_{u}(\mathcal{B}_{1},\mathcal{B}_{2})$ it holds that
%\[
%m\circ(\pi_{T}\otimes \pi_{T})(\delta\omega)=(\delta_{T}\pi_{T}-\pi_{T^2})(\omega).
%\]
%\end{corl}
%\begin{proof}
%This follows directly from the identity 
%\[
%m\circ(\pi_{T}\otimes \pi_{T})(\delta(a)\otimes \delta(b))
%=[T,a][T,b]=[T, a[T,b]]-a[T^{2},b]
%=(\delta_{T}\pi_{T}-\pi_{T^2})(a\delta(b)),
%\]
%which shows that equality holds on a dense subspace. Continuity of both maps now gives equality for all forms $\omega$.
%\end{proof}

\section{Curvature in unbounded KK-theory}
\label{Hilbcurve}

%\color{blue}
We now come to the main construction in this paper, which is the notion of curvature in the context of the unbounded Kasparov product. As mentioned in the introduction, it is our goal to make sense of the following formula 
\begin{equation*}
R_{(S,\nabla_{T})}=(S\otimes 1+1\otimes_{\nabla}T)^{2}-S^{2}\otimes 1
-1\otimes_{\nabla}T^{2}.
\end{equation*}
For this we again fix an unbounded Kasparov module
$(\mathcal{B},Y,T)$ which will provide our reference `horizontal' differential structure.
For the most part we make no use of the (locally) compact resolvent
of $T$, only occasionally (and always explicitly) 
requiring that $(\mathcal{B},Y,T)$
defines a $KK$-class. More important are the various modules of forms, 
junk and representations
$$
\Omega^1_T(\B_*,\B_*),\quad \mathcal{J}_T,\quad  \pi_T,\quad \pi_{T^2}
$$
defined as in the last section.

In order to define the curvature we need to introduce a suitable notion of $C^{2}$-connection, along with some `vertical' differentiability conditions on the $C^*$-module $X$. As we will see, the latter is phrased naturally in terms of the self-adjoint regular operator $S$ on $X$.

%\color{black}
\subsection{The definition of $C^{1}$ and $C^{2}$-connections}
In order to define curvature we need a suitable notion of $C^{2}$-connection.
We require the notion of ``form-valued inner products''. If
we have $x,\,y\in X$ and $\omega\in \Omega^1_u(\B_{1},\B_2)$ (for instance),
we define pairings
\begin{equation}
\langle x, y\ox\omega\rangle:=\langle x,y\rangle\ox\omega,
\quad\mbox{and}\quad
\langle x\ox\omega, y\rangle:=\omega^*\ox\langle x,y\rangle,
\label{eq:do-we-need-to-refer-to-this}
\end{equation}
which are compatible with the balancing relation over $\B_{1}$.
\begin{defn} 
\label{diffconn}
%The frame $(x_{i})_{i\in\hat{\mathbb{Z}}}$ for $X$ 
Let $X$ be a $\Z_2$-graded $C^{*}$-module over $B$ and $(\mathcal{B},Y,T)$ a $C^{k}$- unbounded Kasparov module ($k=1,2$). A dense $\mathcal{B}_{k}$-submodule $\mathcal{X}\subset X$ is \emph{horizontally $C^{k}$-differentiable} with respect to $(\mathcal{B},Y, T)$ if for all $x,y\in\mathcal{X}$ it holds that $\langle x,y\rangle \in \mathcal{B}_{k}$. 
%We will refer to such $\X$ as a 
%horizontally $C^k$-module.
\end{defn}
\begin{rmk}
We will develop curvature with the minimal smoothness assumptions that we can, working with horizontally differentiable  $C^{1}$-modules where possible, and imposing further $C^{2}$-structure as we need it.
\end{rmk}
\begin{defn}
\label{defn:connection}
A  universal \emph{connection} on $\mathcal{X}$ is a linear map, so if $\gamma$ is the grading operator of $X$ then
\[
\nabla:\mathcal{X}\to X\otimes^{h}_{B}\Omega^{1}_{u}(B,\mathcal{B}_{1}),
\quad 
\nabla(\gamma(x))=(\gamma\otimes 1)\nabla(x),
\]
which satisfies the Leibniz rule
\[\nabla(xb)=\nabla(x)b+\gamma(x)\otimes\delta(b),\quad \forall x\in\mathcal{X} \quad b\in\mathcal{B}_{1}.\]
If in addition $\nabla$ satisfies 
\[\langle \gamma(x),\nabla(y)\rangle -\langle \nabla(\gamma(x)),y\rangle=\delta (\langle x,y\rangle),\quad \forall x,y\in\mathcal{X},\]
then $\nabla$ is said to be \emph{Hermitian} or compatible. 
%{\bf BM: More confusion, if we induce wrt to an even operator like $T^{2}$ we do not want the grading to appear.}
%The triple $(\mathcal{X},\mathcal{H},v)$ is referred to as a \emph{horizontal $C^{1}$-structure} on $X$.
%
%{\bf BM: this seems to be the correct identity for graded $C^{*}$-algebras (motivated by the proof below). It simplifies slightly for the ungraded case:
%\[\langle \gamma(x),\nabla(y)\rangle +\langle \nabla(x),\gamma(y)\rangle=\delta (\langle x,y\rangle),\quad \forall x,y\in\mathcal{X}.\]
%}
\end{defn}
We write $\nabla_{T}:=\pi_{T}\circ\nabla:\mathcal{X}\to X\otimes^{h}_{B}\Omega^{1}_{T}(\mathcal{B}_{1})$ and call the composition the
\emph{represented connection induced by} $\nabla$.
\begin{lemma}
\label{symmetric}
Let $\mathcal{X}$ be a horizontally differentiable $C^{1}$-module for the unbounded Kasparov module $(\mathcal{B},Y,T)$  and 
$\nabla:\mathcal{X}\to X\otimes^{h}_{B}\Omega^{1}_{u}(B,\mathcal{B}_{1})$
a Hermitian connection. Then
\[
1\otimes_{\nabla}T:\mathcal{X}\otimes^{\alg}_{\mathcal{B}}\Dom T
\to X\otimes_{B}Y,\quad x\otimes y\mapsto \gamma(x)\otimes Ty+\nabla_{T}(x)y,
\]
is a densely-defined odd symmetric operator in $X\otimes_{B}Y$. 
\end{lemma}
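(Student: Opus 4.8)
The plan is to establish, in order: (i) that the stated formula descends to the balanced algebraic tensor product; (ii) that the resulting operator is densely defined; (iii) that it is odd; and (iv) that it is symmetric, with the compatibility of $\nabla$ entering only in the last step. For \emph{well-definedness} I would check that $x\otimes y\mapsto\gamma(x)\otimes Ty+\nabla_{T}(x)y$ annihilates the balancing relations $xb\otimes y-x\otimes by$ for $b\in\mathcal{B}$: since $(\mathcal{B},Y,T)$ is an unbounded Kasparov module, $b$ preserves $\Dom T$ and $[T,b]$ is bounded; applying $\pi_{T}$ to the Leibniz rule gives $\nabla_{T}(xb)=\nabla_{T}(x)b+\gamma(x)\otimes[T,b]$, and $\gamma(xb)=\gamma(x)b$ because $b$ is even; combining these with $Tby=[T,b]y+bTy$ and the balancing over $B$ inside $X\otimes_{B}Y$, the two expressions $1\otimes_{\nabla}T(xb\otimes y)$ and $1\otimes_{\nabla}T(x\otimes by)$ agree. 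Density of the domain is routine: $\mathcal{X}$ is dense in $X$, $\Dom T$ is dense in $Y$, and $\|x\otimes y\|\leq\|x\|\,\|y\|$ in $X\otimes_{B}Y$, so elementary tensors from $\mathcal{X}$ and $\Dom T$ span a dense subspace.

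For \emph{oddness}, applying $\pi_{T}$ (which acts as the identity on the form factor) to $\nabla(\gamma(x))=(\gamma\otimes1)\nabla(x)$ gives $\nabla_{T}(\gamma(x))=(\gamma\otimes1)\nabla_{T}(x)$. Writing $\Gamma=\gamma_{X}\otimes\gamma_{Y}$, note that $\gamma_{Y}$ preserves $\Dom T$ (as $T$ is odd) and $\gamma_{X}$ preserves the graded submodule $\mathcal{X}$, so $\Gamma$ preserves the domain. A one-line computation on an elementary tensor, using that $T$ is odd and that the represented one-forms occurring in $\nabla_{T}(x)$ are odd operators on $Y$ (they are of the form $a[T,b]$), then yields $\Gamma\,(1\otimes_{\nabla}T)=-(1\otimes_{\nabla}T)\,\Gamma$ on the domain.

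For \emph{symmetry}, by sesquilinearity it suffices to test $\langle(1\otimes_{\nabla}T)\xi,\eta\rangle=\langle\xi,(1\otimes_{\nabla}T)\eta\rangle$ on elementary tensors $\xi=x_{1}\otimes y_{1}$, $\eta=x_{2}\otimes y_{2}$ with $x_{i}\in\mathcal{X}$, $y_{i}\in\Dom T$. Applying $\pi_{T}$ to the compatibility identity of $\nabla$ and using $\pi_{T}(\omega^{*})=\pi_{T}(\omega)^{*}$ gives the represented compatibility
\[
\langle\gamma(x),\nabla_{T}(y)\rangle-\langle\nabla_{T}(\gamma(x)),y\rangle=[T,\langle x,y\rangle]\qquad\text{in }\mathbb{B}(Y),
\]
where the form-valued inner products of \eqref{eq:do-we-need-to-refer-to-this} are now read inside $\Omega^{1}_{T}\subset\mathbb{B}(Y)$. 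Expanding the difference of inner products, the two terms coming from $\gamma(x)\otimes Ty$ combine: using self-adjointness of $T$ together with $\langle\gamma(x_{1}),x_{2}\rangle=\langle x_{1},\gamma(x_{2})\rangle\in\mathcal{B}_{1}$ — and it is precisely horizontal $C^{1}$-differentiability of $\mathcal{X}$ that guarantees $\langle x_{1},\gamma(x_{2})\rangle\Dom T\subseteq\Dom T$, so $T$ may be moved across — this part equals $\langle y_{1},[T,\langle x_{1},\gamma(x_{2})\rangle]y_{2}\rangle_{Y}$. The two terms coming from $\nabla_{T}$ equal $\langle y_{1},(\langle\nabla_{T}(x_{1}),x_{2}\rangle-\langle x_{1},\nabla_{T}(x_{2})\rangle)y_{2}\rangle_{Y}$, and substituting $x_{2}\mapsto\gamma(x_{2})$ in the represented compatibility and taking adjoints (using $\langle\alpha,\beta\rangle^{*}=\langle\beta,\alpha\rangle$ for the form-valued pairings, $\langle\gamma a,b\rangle=\langle a,\gamma b\rangle$, and $[T,b]^{*}=-[T,b^{*}]$) yields
\[
\langle\nabla_{T}(x_{1}),x_{2}\rangle-\langle x_{1},\nabla_{T}(x_{2})\rangle=-[T,\langle x_{1},\gamma(x_{2})\rangle].
\]
Hence the two contributions cancel and $1\otimes_{\nabla}T$ is symmetric.

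I expect the main obstacle to be conceptually just the ``integration by parts'' against the self-adjoint $T$, which needs $\langle x,y\rangle\Dom T\subseteq\Dom T$; this is exactly what the horizontal $C^{1}$-differentiability hypothesis supplies, and it is why that hypothesis is present. Everything else — careful tracking of the $\mathbb{Z}/2$-gradings and of the $*$-structure on represented one-forms, and the observation that for $x\in\mathcal{X}$ the element $\nabla(x)$ is only a norm-convergent series of elementary tensors in a Haagerup module tensor product (Proposition \ref{Haaconvergent}), so that the identities above, all norm-continuous, reduce to the purely algebraic case and then pass to the limit — is bookkeeping rather than a genuine difficulty.
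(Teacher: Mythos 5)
Your proposal is correct and follows essentially the same route as the paper: oddness from the represented grading relation $\nabla_T(\gamma(x))=(\gamma\otimes 1)\nabla_T(x)$, and symmetry from the represented Hermitian condition together with the fact that $\langle x,\gamma(y)\rangle\in\mathcal{B}_1$ preserves $\Dom T$ so that $T$ can be moved across the inner product. The only (inessential) differences are that you verify the balancing relation and density explicitly and test symmetry on pairs of elementary tensors, whereas the paper computes the quadratic form on the diagonal and invokes polarisation.
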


\begin{proof} On $X\otimes_{B}\Omega^{1}_{u}(B,\mathcal{B}_{1})$ it holds that $(1\otimes \gamma)(x\otimes\omega_{T})y=-x\otimes \omega_{T}\gamma(y)$. Since $\gamma\otimes \gamma=(\gamma\otimes 1)(1\otimes \gamma),$ for 
$x\otimes y\in \mathcal{X}\otimes^{\alg}_{\mathcal{B}}\Dom T$ we have
\begin{align*}
(\gamma\otimes \gamma)(1\otimes_{\nabla}T)(x\otimes y)&=(\gamma\otimes\gamma)(\gamma(x)\otimes Ty +\nabla_{T}(x)y)\\
&=-x\otimes T\gamma(y)-(\gamma\otimes 1)\nabla_{T}(x)\gamma(y)\\
&=-x\otimes T\gamma(y)-\nabla_{T}(\gamma(x))\gamma(y)\\
&=-(1\otimes_{\nabla}T)(\gamma\otimes\gamma)(x\otimes y),
\end{align*}
so $1\otimes_{\nabla}T$ is an odd operator. For symmetry, we write
\begin{align*}
\big\langle (1\otimes_{\nabla}T)&(x\otimes y), x\otimes y\big\rangle 
=\big\langle \langle x,\gamma(x)\rangle Ty,  y\rangle+\langle \nabla_{T}(x)y, x\otimes y\big\rangle\\
&=-\big\langle \left[T,\langle \gamma(x),x\rangle\right]y,  y\rangle+\langle \langle \gamma(x),x\rangle y, Ty\rangle+\langle \nabla_{T}(x)y, x\otimes y\big\rangle\\
&=\big\langle\langle \nabla_{T}(x),x\rangle  y, y\rangle-\langle\langle x,\nabla_{T}(x)\rangle  y, y\rangle+\langle y, \langle x,\gamma(x)\rangle Ty\rangle+\langle \nabla_{T}(x)y, x\otimes y\big\rangle\\
&=\big\langle y, \langle x,\nabla_{T}(x)\rangle y\rangle+\langle y, \langle x,\gamma(x)\rangle Ty\big\rangle\\
&=\big\langle x\otimes y, \gamma(x)\otimes Ty\rangle+\langle x\otimes y, \nabla_{T}(x)y\big\rangle\\
&=\big\langle x\otimes y, (1\otimes_{\nabla}T)(x\otimes y)\big\rangle. 
\end{align*}
Polarisation completes the proof.
\end{proof}

%For our discussion of curvature of connections on 
%Hilbert $C^{*}$-modules, we need to impose some 
%further $C^{2}$-conditions on the $C^{1}$-module $\mathcal{X}$.

A horizontal $C^1$-structure is all one requires for finitely generated modules, but it is not sufficient to describe the connections and curvature of countably generated modules. In order to define appropriate notions of connections, we need to introduce some further smoothness on the $C^{*}$-module $X$. 

Let $S:\Dom S\to X$ 
be a self-adjoint regular operator on $X$. 
We think of $S$ as defining a vertical 
differential structure on $X$. The presence of a compatible horizontal differentiable structure for a Kasparov 
module $(\mathcal{B},Y,T)$ then provides us with the correct notion of differentiable submodule.

%In this spirit we sometimes write $X_{S}$ 
%for the $C^{*}$-module $\Dom S$ with the graph norm. 
% AR: didn't see anywhere that we used the $X_S$ notation
%and if I had I would have removed it...
We do not require that the data $(X,S)$ define an unbounded Kasparov module, although this is often the case in examples.

\begin{defn}
\label{def: horver}
Let $(\mathcal{B},Y,T)$ be a $C^{1}$-Kasparov module. 
A $C^{1}$-\emph{module} for $(\mathcal{B},Y,T)$ 
is a pair $(\mathcal{X},S)$ such that
\begin{enumerate}
\item $\mathcal{X}$ is a horizontally differentiable $C^{1}$-module with $C^{*}$-closure $X$;
\item $S:\mathcal{X}\to X$ is an essentially self-adjoint and regular operator on $X$.
\end{enumerate}
A $C^{1}$-\emph{connection} on a $C^{1}$-module $(\mathcal{X},S)$ is a Hermitian connection 
$$
\nabla:\mathcal{X}\to X\otimes_{B}\Omega^{1}_{u}(B,\mathcal{B}_{1}),
$$ 
such that 
$S\otimes 1+1\otimes_{\nabla}T:\mathcal{X}\otimes^{\alg}_{\mathcal{B}_{1}}\Dom T\to X\otimes_{B} Y$ is essentially self-adjoint and regular.
\end{defn}
%{\bf In this definition and in the sequel, in order to denote the differentiability properties of modules $(\mathcal{X},S)$ in the horizontal and vertical directions we use pairs $(n,k)$ where $k$ corresponds to the horizontal and $n$ to the vertical direction.
\begin{rmk} 
The choice $S=0$ is allowed for defining a vertical differential structure.
\end{rmk}
% Feel like this should be later}
%\begin{defn} 
%\label{C2frame}
%Let $(\mathcal{B},Y,T)$ be a $C^{2}$-Kasparov module. A $C^{2}$-\emph{module} is a pair $(\mathcal{X},S)$ such that 

%\begin{enumerate}
%\item for all $x,y\in\mathcal{X}$ we have $\langle x,y\rangle\in\mathcal{B}_{2}$;
%\item $\mathcal{X}\subset\Dom S$.
%\end{enumerate}
%\end{defn}

Given a $C^{1}$-Hermitian connection
$
\nabla:\mathcal{X}\to X\otimes_{B}^{h}\Omega^{1}_{u}(B,\mathcal{B}_{1})
$
we can define a completion of $\mathcal{X}$ by
\begin{equation}
\mathcal{X}_{\nabla_{T}}:=\big\{x\in X:\exists x_{n}\in\mathcal{X},\ x_n\to x\in X,
\quad \nabla_{T}(x_n)\to \nabla_{T}(x)\in X\otimes_{B}^{h}\Omega^{1}_{T}(\mathcal{B}_{1})\big\}.
\label{eq:ex-nabla-tee}
\end{equation}
\begin{prop}[\cite{BKM} Section 3.6, Lemma 3.4]
\label{prop: nablaclosure}
Let $(\mathcal{B},Y,T)$ be a $C^{1}$-Kasparov module and
$(\mathcal{X},S)$ a $C^{1}$-module.
The space $\mathcal{X}_{\nabla_{T}}$ is an operator $*$-module 
over $\mathcal{B}_{1}$. Moreover, for every $x\in\mathcal{X}_{\nabla_{T}}$
the linear map $|x\rangle:\,Y\to X\ox_BY$, $y\mapsto x\ox y$ satisfies
\[
|x\rangle :\Dom T\to \Dom 1\otimes_{\nabla}T,\quad \mbox{and}\quad
\big(|\gamma(x)\rangle T-1\otimes_{\nabla}T|x\rangle\big)\in 
\mathbb{B}(Y,X\otimes_{B}Y). 
\]
\end{prop}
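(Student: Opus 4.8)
The plan is to treat the two assertions separately: the operator $*$-module structure rests on the continuity of the $\mathcal{B}_1$-valued inner product for the graph topology defining $\mathcal{X}_{\nabla_T}$, while the mapping property of $|x\rangle$ follows from a direct identity on $\mathcal{X}$ together with a limiting argument.

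For the first assertion I would check that the inner product and the right $\mathcal{B}_1$-action extend continuously from $\mathcal{X}$ to the completion $\mathcal{X}_{\nabla_T}$. For $x,y\in\mathcal{X}$ the off-diagonal entry of $\pi^1_T(\langle x,y\rangle)$ is $[T,\langle x,y\rangle]$, and applying $\pi_T$ to the compatibility identity of Definition \ref{defn:connection} gives
\[
[T,\langle x,y\rangle]=\langle\gamma(x),\nabla_T(y)\rangle-\langle\nabla_T(\gamma(x)),y\rangle ,
\]
so that $\|\langle x,y\rangle\|_{\mathcal{B}_1}\le\|\langle x,y\rangle\|_B+\|x\|_X\,\|\nabla_T(y)\|+\|\nabla_T(x)\|\,\|y\|_X$, together with the analogous completely bounded estimates on matrix amplifications. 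The same argument applied to $\nabla_T(xb)=\nabla_T(x)b+\gamma(x)\otimes[T,b]$ shows $xb\in\mathcal{X}_{\nabla_T}$ and that $b\mapsto x\cdot b$ is completely bounded. With these continuity statements in hand, the assertion that $\mathcal{X}_{\nabla_T}$ is an operator $*$-module over $\mathcal{B}_1$ is the completion statement of \cite[Section 3.6, Lemma 3.4]{BKM}, which I would invoke; its concrete realisation is precisely through the maps $|x\rangle$, $|\gamma(x)\rangle$ and the bounded defect operator of the second part of the proposition.

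For the second assertion I would first take $x\in\mathcal{X}$. Using Theorem \ref{thm: Blecher}, the inclusion $\Omega^1_T(\mathcal{B}_1)\subset\mathbb{B}(Y)$, and the row/column characterisation of the Haagerup tensor product in Proposition \ref{Haaconvergent}, the module multiplication
\[
X\otimes^h_B\Omega^1_T(\mathcal{B}_1)\longrightarrow\mathbb{B}(Y,X\otimes_B Y),\qquad (x\otimes\omega)\cdot y=x\otimes\omega y ,
\]
is well defined and satisfies $\|\eta\,y\|_{X\otimes_B Y}\le\|\eta\|_{X\otimes^h_B\Omega^1_T}\,\|y\|_Y$; in particular $\nabla_T(x)$ acts as a bounded operator $Y\to X\otimes_B Y$. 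For $y\in\Dom T$, Lemma \ref{symmetric} gives $x\otimes y\in\Dom(1\otimes_\nabla T)$ and $(1\otimes_\nabla T)(x\otimes y)=\gamma(x)\otimes Ty+\nabla_T(x)y=|\gamma(x)\rangle Ty+\nabla_T(x)y$, so on $\Dom T$ we have $|\gamma(x)\rangle T-(1\otimes_\nabla T)|x\rangle=-\nabla_T(x)$, which is bounded. To pass to a general $x\in\mathcal{X}_{\nabla_T}$, pick $x_n\in\mathcal{X}$ with $x_n\to x$ in $X$ and $\nabla_T(x_n)\to\nabla_T(x)$ in $X\otimes^h_B\Omega^1_T(\mathcal{B}_1)$. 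Since $\|z\otimes w\|_{X\otimes_B Y}\le\|z\|_X\|w\|_Y$ we get $|x_n\rangle\to|x\rangle$ and $|\gamma(x_n)\rangle\to|\gamma(x)\rangle$ in operator norm and, by the displayed estimate, $\nabla_T(x_n)\to\nabla_T(x)$ in $\mathbb{B}(Y,X\otimes_B Y)$. Fixing $y\in\Dom T$, the vectors $x_n\otimes y\to x\otimes y$ and $(1\otimes_\nabla T)(x_n\otimes y)=\gamma(x_n)\otimes Ty+\nabla_T(x_n)y\to|\gamma(x)\rangle Ty+\nabla_T(x)y$ both converge in $X\otimes_B Y$; as $1\otimes_\nabla T$ is closable (it is symmetric by Lemma \ref{symmetric}), this puts $x\otimes y$ in the domain of its closure with the expected value, so $|x\rangle:\Dom T\to\Dom(1\otimes_\nabla T)$ and $|\gamma(x)\rangle T-(1\otimes_\nabla T)|x\rangle$ agrees on $\Dom T$ with the bounded operator $-\nabla_T(x)$.

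The routine parts are the bilinear continuity estimates for the graph norm and the final passage to the limit. The step I would single out as the main obstacle is pinning down the module multiplication $X\otimes^h_B\Omega^1_T(\mathcal{B}_1)\to\mathbb{B}(Y,X\otimes_B Y)$ together with its norm bound, since this is exactly what makes $\nabla_T(x)$ a genuine bounded operator and hence forces $|\gamma(x)\rangle T-(1\otimes_\nabla T)|x\rangle$ to be bounded; here the identification $X\otimes_B Y\cong X\otimes^h_B Y$ and the Haagerup estimates do the work. A secondary point requiring care is the bookkeeping showing that $x\otimes y$ genuinely lies in the domain of the \emph{closure} of $1\otimes_\nabla T$, rather than merely being approximable by elements of the initial domain.
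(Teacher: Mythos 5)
Your argument is correct, but note that the paper itself offers no proof of this proposition: it is quoted verbatim from \cite{BKM} (Section 3.6, Lemma 3.4), so there is no in-paper argument to match. What you supply is a sound reconstruction of the standard reasoning. For the operator $*$-module statement you do exactly what the paper does, namely defer the structural completion statement to \cite{BKM}, after correctly checking the two continuity facts that make this legitimate: the estimate $\|\langle x,y\rangle\|_{1}\leq\|\langle x,y\rangle\|_{B}+\|x\|\,\|\nabla_{T}(y)\|+\|\nabla_{T}(x)\|\,\|y\|$ obtained by applying $\pi_{T}$ to the Hermitian compatibility identity, and the Leibniz estimate showing the right $\mathcal{B}_{1}$-action is completely bounded for the $\nabla_{T}$-graph topology. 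For the mapping property your route is the natural one and is complete: the Haagerup/Blecher identification $X\otimes_{B}Y\cong X\otimes^{h}_{B}Y$ makes $\nabla_{T}(x)\in X\otimes^{h}_{B}\Omega^{1}_{T}(B,\mathcal{B}_{1})$ act as a bounded operator $Y\to X\otimes_{B}Y$ with norm controlled by the Haagerup norm, so on $\mathcal{X}$ one has $|\gamma(x)\rangle T-(1\otimes_{\nabla}T)|x\rangle=-\nabla_{T}(x)$ on $\Dom T$, and the passage to general $x\in\mathcal{X}_{\nabla_{T}}$ via closability of $1\otimes_{\nabla}T$ (symmetry from Lemma \ref{symmetric}, using that $\nabla$ is Hermitian) is exactly the right mechanism; your reading of $\Dom(1\otimes_{\nabla}T)$ as the domain of the closure is also consistent with how the paper uses the operator later (e.g.\ in Lemma \ref{lem: XSinclusion}). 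The only caveat worth recording is that your first part proves continuity of the pairing and the action but, as you acknowledge, the verification that the completed space carries the full (matrix-normed, involutive) operator $*$-module structure is still outsourced to \cite{BKM} --- which is precisely the division of labour the paper itself adopts.
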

We define $X^{S}$ to the completion of $X$ in the norm 
\[
\|x\|^{S}:=\|(S+i)^{-1}x\|=\|(S-i)^{-1}x\|.
\] 
The norm on $X^{S}$ is induced from the inner product 
\[
\langle x,y\rangle^{S}:=\langle (S+i)^{-1}x, (S+i)^{-1}y\rangle,
\]
and $X^{S}$ is a Hilbert $C^{*}$-module in this inner product. 
It is appropriate to think of $X^{S}$ as a degree $-1$ 
Sobolev space associated to $S$, as the following observation shows.
\begin{lemma} 
\label{Soboweird}
For $x\in X$ the maps $x\mapsto (S+i)^{-1}x,$ $x\mapsto (S-i)^{-1}x$ can be viewed as densely defined
maps $X^{S}\to X$ and these 
extend to unitary isomorphisms $X^{S}\to X$.
\end{lemma}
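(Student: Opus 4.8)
The plan is to show that $(S\pm i)^{-1}$ is an isometric bijection from a dense subspace of $X^S$ onto a dense subspace of $X$, and then invoke continuity and density to conclude the map extends to a unitary isomorphism of Hilbert $C^*$-modules.

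First I would note that the defining inner product on $X^S$ is exactly $\langle x,y\rangle^S = \langle (S+i)^{-1}x,(S+i)^{-1}y\rangle$, so the map $U_+\colon x\mapsto (S+i)^{-1}x$ literally satisfies $\langle U_+x,U_+y\rangle = \langle x,y\rangle^S$ for all $x,y\in X$. Thus $U_+$ is a $C^*$-module isometry from $X$ (with its $X^S$-inner product) into $X$ (with its original inner product). It therefore extends uniquely to an isometric adjointable map $X^S\to X$ (isometries of Hilbert $C^*$-modules are automatically adjointable, with adjoint their inverse on the range). Here I should be a little careful about which direction we are completing: $X^S$ is the completion of $X$ in $\|\cdot\|^S$, and $U_+$ is defined on the dense subspace $X\subset X^S$ with values in $X$, so continuity of $U_+$ with respect to $\|\cdot\|^S$ (which is automatic since $\|U_+x\| = \|x\|^S$) gives the extension to all of $X^S$.

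Second, for surjectivity I would use regularity of $S$. Since $S$ is self-adjoint and regular, the resolvent $(S+i)^{-1}$ is a bounded adjointable operator on $X$ with dense range — indeed $\operatorname{ran}(S+i)^{-1} = \Dom S$, which is dense in $X$ by essential self-adjointness. Hence $U_+(X)= \Dom S$ is dense in $X$, so the isometric extension $X^S\to X$ has dense range; an isometry between Hilbert $C^*$-modules with dense range is a unitary isomorphism. The same argument applies verbatim to $U_-\colon x\mapsto (S-i)^{-1}x$, using $\|(S+i)^{-1}x\| = \|(S-i)^{-1}x\|$ (which holds because $S$ is self-adjoint, so that the two resolvents differ by the unitary Cayley-type relation and the norm $\|\cdot\|^S$ is genuinely well-defined regardless of sign choice).

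The main obstacle — really the only subtlety — is keeping straight the two roles of $X$: as a dense subspace of $X^S$ equipped with the weaker norm $\|\cdot\|^S$, versus as the target Hilbert $C^*$-module with its original norm. One must check that the formula $\langle x,y\rangle^S$ does define a genuine $C^*$-valued inner product (positivity and non-degeneracy follow from injectivity of $(S+i)^{-1}$, which holds since $S+i$ has trivial kernel by self-adjointness) so that $X^S$ is indeed a Hilbert $C^*$-module, as asserted just before the lemma. Granting that, everything reduces to the standard fact that an inner-product-preserving map with dense domain and dense range between Hilbert $C^*$-modules extends to a unitary. I would write this out in two or three lines once the bookkeeping is set up.
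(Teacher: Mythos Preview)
Your argument is correct and complete. The paper actually states Lemma~\ref{Soboweird} without proof, so there is nothing to compare against; your write-up supplies exactly the routine verification the authors omitted. One small slip worth cleaning up: the parenthetical claim that ``isometries of Hilbert $C^*$-modules are automatically adjointable'' is false in general (closed submodules need not be complemented), but you never actually use it. What you do use, and what is true, is that an inner-product-preserving map has closed range, so dense range forces surjectivity, and a surjective inner-product-preserving map between Hilbert $C^*$-modules is unitary. With that wording adjusted the proof is clean.
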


\begin{rmk}
\label{rmk:sobo-rmk}
One naturally expects that defining second derivatives 
would require two
Sobolev spaces, normally $W^{2}_{2}\to W^{2}_{1}\to L^2$. In order
to accommodate countably generated modules and Kasparov 
modules with only a $C^{1}$-structure, we take a slightly different route and introduce a space $X_{\nabla_T}^S$ that behaves like a $-1$ Sobolev space in the vertical direction but a $+1$ Sobolev in the horizontal direction. 

%it turns out that it is technically advantageous for us to initially consider
%the steps $W^{2}_{(1,2)}\to W^{2}_{(-1,1)}\to W^{2}_{(-1,0)}$ when defining curvature. Here we have used the suggestive notation
%$W^{2}_{(n,k)}$ to indicate a Sobolev space that accommodates $k$ horizontal  and $n$ vertical derivatives.

%So as well as our analogue of $W^{2}_{(0,1)}$ defined by $\X_{\nabla_T}$
%in Equation \eqref{eq:ex-nabla-tee}, we need an analogue of $W^{2}_{(-1,1)}$.
\end{rmk}

%{\bf BM: Changed the above after understanding what is going on. The previous analogy was confusing because it was wrong. The space $\mathcal{X}^{S}_{\nabla_{T}}$ is a $-1$ Sobolev 
%space in the vertical direction but a $+1$ Sobolev in the horizontal direction.} 

We define $\mathcal{X}^{S}_{\nabla_{T}}$ to be the completion of 
$\mathcal{X}$ in the operator space topology induced by the norm
\begin{equation}
\|x\|_{\nabla_{T}}^{S}:=\max\left\{\|(S+i)^{-1}x\|_{X},\|
(S+i)^{-1}\nabla_{T} (x)\|_{X\otimes^{h}_{B}\Omega^{1}_{T}(B,\B_1)}\right\}.
\label{eq:ex-nabla-tee-ess}
\end{equation}
Since the norm on $\mathcal{X}_{\nabla_{T}}$ dominates the norm on $\mathcal{X}_{\nabla_{T}}^{S}$, 
the identity map on $\mathcal{X}$ extends to a complete contraction $\iota_{S}:\mathcal{X}_{\nabla_{T}}\to \mathcal{X}_{\nabla_{T}}^{S}$. 
To define the appropriate notion of $C^{2}$-connection, we need the operators $S\otimes 1$ and $1\otimes_{\nabla}T$ to
be compatible in a more precise way. We introduce the following definition.
\begin{defn}[cf. \cite{KaLe, LM, MR}]
\label{wac}
Let $(s,t)$ be self-adjoint regular operators in 
the Hilbert $C^{*}$-module $E$. We say that 
$(s,t)$ is a \emph{vertically anticommuting pair} if
\begin{enumerate}
\item $(s\pm i)^{-1}:\Dom t \to \mathcal{F}(s,t)
:=\{e\in\Dom s\cap \Dom t:\, se\in\Dom t,\  te\in\Dom s\}$;
\item $st+ts:\mathcal{F}(s,t)\to E$ extends to $\Dom s$.
\end{enumerate}
\end{defn}
\begin{rmk}
The definition of vertically anticommuting pair is an 
asymmetric version of \cite[Definition 2.1]{LM} 
and was used in \cite{KaLe, MR}. The main result is that 
$s+t$ is self-adjoint and regular on $\Dom s\cap \Dom t$.
In this paper we require the more 
restrictive asymmetric version, which is sufficient to cover many geometric examples, is 
compatible with the unbounded Kasparov product and seems to
be necessary for technical reasons.
%\footnote{precision needed - \color{red} tough call: I know there is something geometric about $[s,t]=[s,\nabla]$ 
%being "vertical" but I don't know exactly what that is.\color{black}} 
\end{rmk}

%{\bf  technical  reasons},

\begin{lemma}
\label{lem: XSinclusion}
Let $(\mathcal{X},S)$ be a $C^{1}$-module over the Kasparov module $(\mathcal{B},Y, T)$ and  $\nabla:\mathcal{X}\to X\otimes^{h}_{B}\Omega^{1}_{T}(B,\mathcal{B}_{1})$ a connection such that $(S\otimes 1,1\otimes_{\nabla}T)$ is a vertically anticommuting pair.
Then the identity map extends to a completely contractive injection $\mathcal{X}^{S}_{\nabla_{T}}\to X^{S}$.
\end{lemma}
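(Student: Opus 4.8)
The goal is to show that the formal identity on $\mathcal{X}$ extends continuously from the $\mathcal{X}^S_{\nabla_T}$-topology to the $X^S$-topology, i.e.\ that
\[
\|x\|^{S} = \|(S+i)^{-1}x\|_X \leq C\,\|x\|_{\nabla_T}^{S}
\]
for all $x\in\mathcal{X}$, with complete boundedness (in fact contractivity). Since $\|x\|_{\nabla_T}^{S} = \max\{\|(S+i)^{-1}x\|_X, \|(S+i)^{-1}\nabla_T(x)\|_{X\otimes^h_B\Omega^1_T(B,\B_1)}\}$ already dominates $\|(S+i)^{-1}x\|_X$ term by term, the inequality with constant $1$ is immediate at the level of the dense submodule $\mathcal{X}$, and the content of the statement is that this estimate is compatible with passing to completions, i.e.\ that the map is well-defined on $\mathcal{X}^S_{\nabla_T}$ and lands in $X^S$ rather than merely in some abstract completion. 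So the real work is to check that if $(x_n)\subset\mathcal{X}$ is Cauchy in $\|\cdot\|_{\nabla_T}^S$ then $((S+i)^{-1}x_n)$ is Cauchy in $X$ (clear) and that its limit only depends on the class of $(x_n)$ in $\mathcal{X}^S_{\nabla_T}$, not on the representing sequence — equivalently, that a $\|\cdot\|_{\nabla_T}^S$-null sequence in $\mathcal{X}$ is $\|\cdot\|^S$-null, which is again immediate from the term-by-term domination.

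The reason the vertically anticommuting hypothesis is needed — and this is where I expect the actual subtlety to lie — is not in the norm estimate itself but in ensuring the map is \emph{injective}, as the statement claims. One must rule out the possibility that a nonzero element of $\mathcal{X}^S_{\nabla_T}$ maps to $0$ in $X^S$: that is, a sequence $x_n\in\mathcal{X}$ with $(S+i)^{-1}x_n\to 0$ in $X$ but $(S+i)^{-1}\nabla_T(x_n)$ converging to something nonzero. The plan here is to exploit Lemma~\ref{symmetric} and Proposition~\ref{prop: nablaclosure}: for $x\in\mathcal{X}_{\nabla_T}$ the operator $|x\rangle\colon Y\to X\otimes_B Y$ maps $\Dom T\to\Dom(1\otimes_\nabla T)$ with $|\gamma(x)\rangle T - (1\otimes_\nabla T)|x\rangle$ bounded. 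The vertically anticommuting condition on $(S\otimes 1, 1\otimes_\nabla T)$ then controls how $(S\otimes 1 + i)^{-1}$ interacts with $1\otimes_\nabla T$, and in particular shows that $\nabla_T(x)$, suitably interpreted, is determined by $x$ together with the compatibility of the two differential structures. Concretely, I would write $(S\otimes 1 + i)^{-1}(1\otimes_\nabla T) = (1\otimes_\nabla T)(S\otimes 1 + i)^{-1} + (S\otimes1+i)^{-1}[\,1\otimes_\nabla T, S\otimes 1\,](S\otimes1+i)^{-1}$ on the appropriate core, where the commutator term is bounded by Definition~\ref{wac}(2); applying this with the sequence $x_n$ shows that $(S+i)^{-1}\nabla_T(x_n)$ is forced to $0$ as well once $(S+i)^{-1}x_n\to 0$, giving injectivity.

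For complete contractivity, I would run the same estimates at the level of matrices $M_n$: the norm $\|\cdot\|_{\nabla_T}^S$ is by construction an operator space norm (it comes from an explicit ampliated representation), and the map in question is the identity on $M_n(\mathcal{X})$, so the term-by-term domination $\|(S+i)^{-1}x\|_X\leq\|x\|_{\nabla_T}^S$ holds verbatim at each matrix level with constant $1$, yielding a complete contraction. The one place to be careful is that $X^S$ must be given its operator space structure as a Hilbert $C^*$-module (via its linking algebra, as in the discussion preceding Theorem~\ref{thm: Blecher}), and one should check that the norm $\|(S+i)^{-1}x\|_X$ computed on $M_n$ agrees with the ampliated $X^S$-norm — this follows because, by Lemma~\ref{Soboweird}, $(S+i)^{-1}\colon X^S\to X$ is a unitary of Hilbert $C^*$-modules and hence completely isometric. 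Assembling: the identity map $\mathcal{X}^S_{\nabla_T}\to X^S$ is the composition of the completely contractive inclusion into the $\|\cdot\|^S$-completion with the completely isometric identification of that completion with a subspace of $X^S$, and the anticommuting hypothesis guarantees injectivity. The main obstacle, as flagged, is the injectivity argument, which is the only step genuinely using Definition~\ref{wac} rather than just the term-by-term norm bound.
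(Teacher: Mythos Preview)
Your overall strategy is the same as the paper's: the norm $\|\cdot\|^{S}_{\nabla_T}$ dominates $\|\cdot\|^{S}$ term by term, so complete contractivity is immediate, and the substance lies in injectivity, which you correctly locate as the place where the vertically anticommuting hypothesis enters. Your resolvent--commutator identity is the right tool.

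However, the injectivity argument as sketched has a genuine gap. After writing
\[
(S+i)^{-1}(1\otimes_\nabla T)(x_n\otimes y)
= (1\otimes_\nabla T)(S\pm i)^{-1}(x_n\otimes y) + (\text{bounded})(S\pm i)^{-1}(x_n\otimes y),
\]
you assert that this ``forces $(S+i)^{-1}\nabla_T(x_n)\to 0$ once $(S+i)^{-1}x_n\to 0$''. That does not follow directly: the term $(1\otimes_\nabla T)(S\pm i)^{-1}(x_n\otimes y)$ involves the \emph{unbounded} operator $1\otimes_\nabla T$ applied to a sequence going to $0$, which by itself says nothing. The paper's argument uses one more ingredient you have not invoked: since $(S+i)^{-1}\nabla_T(x_n)$ is assumed Cauchy (this is part of the $\|\cdot\|^{S}_{\nabla_T}$-Cauchy hypothesis), the identity shows $(1\otimes_\nabla T)(S-i)^{-1}(x_n\otimes y)$ is convergent; then the \emph{closedness} of $1\otimes_\nabla T$, together with $(S-i)^{-1}(x_n\otimes y)\to 0$, forces that limit to be $0$. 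Only then does one read the identity backwards to conclude $(S+i)^{-1}\nabla_T(x_n)y\to 0$ for each $y\in\Dom T$, and hence in operator norm since the sequence was already convergent there. You should make this closedness step explicit.

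A minor technical point: since $S$ and $1\otimes_\nabla T$ are both odd, the graded commutator is an anticommutator, and commuting $(S+i)^{-1}$ past the odd operator produces $(S-i)^{-1}$ on the other side (via $S\gamma=-\gamma S$). Your displayed identity has $(S+i)^{-1}$ on both sides; compare with the paper, which tracks this sign carefully and also handles the $\gamma(x_n)\otimes Ty$ term coming from $\nabla_T(x)y = (1\otimes_\nabla T)(x\otimes y) - \gamma(x)\otimes Ty$.
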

\begin{proof}
Since the norm on $\X^S_{\nabla_{T}}$ dominates the norm on $X^{S}$, 
the fact that the identity map on $\mathcal{X}$ extends to a complete contraction is immediate. To see that this map is injective,
let $x_{n}\in\mathcal{X}$ be a sequence such that $(S+i)^{-1}x_{n}\to 0$ and $(S+i)^{-1}\nabla_{T}(x_{n})$ is convergent. For
$x\in\mathcal{X}$ and $y\in\Dom T$  it holds that 
\[\nabla_{T}(x)y=(1\otimes_{\nabla}T)(x\otimes y)-\gamma(x)\otimes Ty.\]
Using this and the fact that $(S\otimes 1,1\otimes_{\nabla}T)$ is a vertically anticommuting pair we write
\begin{align*}
(S+i)^{-1}\nabla_{T}(x_n)y &=(S+i)^{-1}(1\otimes_{\nabla}T)(x_n\otimes y)-(S+i)^{-1}\gamma(x_n)\otimes Ty\\
&=-(1\otimes_{\nabla}T)(S-i)^{-1}(x_n)\otimes y+\gamma ((S-i)^{-1}x_n)\otimes Ty\\
&\quad\quad\quad + (S+i)^{-1}[S\otimes 1,1\otimes_{\nabla}T](S-i)^{-1}x_n\otimes y.
\end{align*}
Since $(S-i)^{-1}x_n$ is convergent to $0$ and $(S+i)^{-1}[S\otimes 1,1\otimes_{\nabla}T]$ is bounded, it follows that $(1\otimes_{\nabla}T)(S-i)^{-1}(x_n)\otimes y$ is convergent. Since $1\otimes_{\nabla}T$ is closed, the limit of the latter sequence must be $0$. Thus $(S+i)^{-1}\nabla_{T}(x_n)y $ converges to $0$ for $y\in\Dom T$. Since $(S+i)^{-1}\nabla_{T}(x_n)$ is convergent in $\mathbb{B}(Y, X\otimes_{B}Y)$, its limit must be $0$.
\end{proof}
Before introducing the curvature operator we require some 
technical domain results.

\begin{lemma} 
\label{domaineq}
Let $(\mathcal{X},S)$ be a $C^{1}$-module over the Kasparov module $(\mathcal{B},Y, T)$ and  let $\nabla~:\mathcal{X}\to X\otimes^{h}_{B}\Omega^{1}_{T}(B,\mathcal{B}_{1})$ be a connection such that $(S\otimes 1,1\otimes_{\nabla}T)$ is a vertically anticommuting pair. Then: 
\begin{enumerate}
\item 
$(1\otimes_{\nabla}T) (S+i)^{-1}\ox1$ is defined on 
$\Dom 1\otimes_{\nabla}T$ and closable. There is an equality of domains
\begin{align*}
\Dom\big( (1\otimes_{\nabla}T)(S\pm i)^{-1}\ox1\big)
&=\Dom\big( (S\mp i)^{-1}\ox1(1\otimes_{\nabla}T)\big),
\end{align*}
of closed operators in $X\otimes_{B}Y$.
\item $\nabla:\mathcal{X}\to X\otimes_{B}\Omega^{1}_{T}(B,\mathcal{B}_{1})$ extends to a map $\nabla:\mathcal{X}^{S}_{\nabla_{T}}\to X^{S}\otimes_{B}\Omega^{1}_{T}(B,\mathcal{B}_{1})$.
\item $(S+i)^{-1}\ox1:X^{S}\otimes_{B}Y\to X\otimes_{B}Y$ restricts to a map $(S+i)^{-1}\ox1:\mathcal{X}^{S}_{\nabla_{T}}\otimes^{h}_{\mathcal{B}_{1}}\Dom T\to\Dom (1\otimes_{\nabla}T)$.
\end{enumerate}
\end{lemma}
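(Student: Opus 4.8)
The strategy is to reduce everything to the vertically anticommuting pair structure of $(S\otimes 1, 1\otimes_\nabla T)$ together with the boundedness of the commutator $(S+i)^{-1}\otimes 1\,[S\otimes 1, 1\otimes_\nabla T]$ (equivalently $[S\otimes 1, 1\otimes_\nabla T](S-i)^{-1}\otimes 1$) that comes for free from Definition \ref{wac}. For part (1), I would first observe that on the core $\mathcal X\otimes^{\alg}_{\mathcal B}\Dom T$ one has the algebraic identity
\[
(1\otimes_\nabla T)\big((S+i)^{-1}\otimes 1\big)
=\big((S-i)^{-1}\otimes 1\big)(1\otimes_\nabla T)
+\big((S+i)^{-1}\otimes 1\big)[S\otimes 1,1\otimes_\nabla T]\big((S-i)^{-1}\otimes 1\big),
\]
using that $(S+i)^{-1}\otimes 1$ maps $\Dom(1\otimes_\nabla T)$ into itself by Definition \ref{wac}(1) and Proposition \ref{prop: nablaclosure}. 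Since the last term is everywhere-defined and bounded, the two unbounded operators $(1\otimes_\nabla T)((S+i)^{-1}\otimes 1)$ and $((S-i)^{-1}\otimes 1)(1\otimes_\nabla T)$ differ by a bounded operator, hence have the same domain and are simultaneously closable; closability of the right-hand composition is clear because $1\otimes_\nabla T$ is (essentially) self-adjoint and $(S-i)^{-1}\otimes 1$ is bounded. The same argument with $S+i$ and $S-i$ interchanged gives the stated equality of domains; the only care needed is to check the identity first on the core and then extend by continuity/closedness, which is routine.

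For part (2), the space $\mathcal X^S_{\nabla_T}$ is by definition \eqref{eq:ex-nabla-tee-ess} the completion of $\mathcal X$ in the norm $\max\{\|(S+i)^{-1}x\|,\|(S+i)^{-1}\nabla_T(x)\|\}$, so the map $x\mapsto (S+i)^{-1}\nabla_T(x)$ is by construction a contraction from $\mathcal X$ (with the $\|\cdot\|^S_{\nabla_T}$-norm) into $X\otimes^h_B\Omega^1_T(B,\mathcal B_1)$. Writing $\nabla(x) = (S+i)\otimes 1\big((S+i)^{-1}\otimes 1\,\nabla(x)\big)$ and using Lemma \ref{Soboweird} (that $(S+i)^{-1}$ is a unitary $X^S\to X$), this extends to a bounded map $\mathcal X^S_{\nabla_T}\to X^S\otimes_B\Omega^1_T(B,\mathcal B_1)$; the point is simply that the defining norm of $\mathcal X^S_{\nabla_T}$ is precisely the one making $\nabla$ continuous into the $-1$ Sobolev space in the vertical direction, as advertised in Remark \ref{rmk:sobo-rmk}. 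One should check compatibility of $(S+i)^{-1}\otimes 1$ with the Haagerup balancing over $B$, which follows since $S$ is $B$-linear.

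For part (3), given $x\in\mathcal X^S_{\nabla_T}$ and $y\in\Dom T$, I want to show $(S+i)^{-1}\otimes 1\,(x\otimes y)\in\Dom(1\otimes_\nabla T)$. Pick $x_n\in\mathcal X$ with $x_n\to x$ in $\mathcal X^S_{\nabla_T}$, so $(S+i)^{-1}x_n\to (S+i)^{-1}x$ in $X$ and $(S+i)^{-1}\nabla_T(x_n)$ converges in $X\otimes^h_B\Omega^1_T(B,\mathcal B_1)$. Each $(S+i)^{-1}x_n\in\mathcal X$, and by Proposition \ref{prop: nablaclosure} (or directly by the formula for $1\otimes_\nabla T$) we have $(S+i)^{-1}x_n\otimes y\in\Dom(1\otimes_\nabla T)$ with
\[
(1\otimes_\nabla T)\big((S+i)^{-1}x_n\otimes y\big)
=\gamma\big((S+i)^{-1}x_n\big)\otimes Ty+\nabla_T\big((S+i)^{-1}x_n\big)y.
\]
The first term converges because $(S+i)^{-1}x_n\to(S+i)^{-1}x$; for the second I use that $\nabla_T((S+i)^{-1}x_n)y$ can be rewritten, exactly as in the proof of Lemma \ref{lem: XSinclusion}, using the vertically anticommuting pair identity, in terms of $(1\otimes_\nabla T)((S-i)^{-1}\otimes 1)$ applied to $x_n\otimes y$ plus bounded-operator terms, and this converges by part (1) together with the convergence of $(S+i)^{-1}\nabla_T(x_n)$. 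Since $1\otimes_\nabla T$ is closed, the limit $(S+i)^{-1}x\otimes y$ lies in its domain, which is the claim; linearity and density then promote this to all of $\mathcal X^S_{\nabla_T}\otimes^h_{\mathcal B_1}\Dom T$.

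The main obstacle is bookkeeping: making sure that every manipulation is first carried out on the algebraic core $\mathcal X\otimes^{\alg}_{\mathcal B}\Dom T$, where all the operators genuinely compose, before extending by closedness — in particular justifying that $(S+i)^{-1}\otimes 1$ really does preserve $\Dom(1\otimes_\nabla T)$ (this is where Definition \ref{wac}(1) is essential) and that the commutator term is bona fide bounded and everywhere defined. Once that is set up cleanly, parts (1)–(3) are variations on the single identity relating $(1\otimes_\nabla T)$, $(S\pm i)^{-1}\otimes 1$, and their commutator, mirroring the computation already performed in Lemma \ref{lem: XSinclusion}.
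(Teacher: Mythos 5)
Your overall strategy is the same as the paper's: everything is reduced to the single identity expressing $(1\otimes_{\nabla}T)\big((S+i)^{-1}\otimes 1\big)$ in terms of $\big((S-i)^{-1}\otimes 1\big)(1\otimes_{\nabla}T)$ plus a bounded term built from the anticommutator, together with closedness of $1\otimes_{\nabla}T$. For part (1) this works, modulo a sign/placement slip: since $S\otimes 1$ and $1\otimes_{\nabla}T$ are odd and $[\cdot,\cdot]$ is graded, the correct identity on $\Dom(1\otimes_{\nabla}T)$ is $(1\otimes_{\nabla}T)(S+i)^{-1}=-(S-i)^{-1}(1\otimes_{\nabla}T)+(S-i)^{-1}[1\otimes_{\nabla}T,S](S+i)^{-1}$ (this is the identity the paper uses); the sign is immaterial for the equality of domains, so your conclusion stands. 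Part (2) via extension-by-continuity to the completion (after the unitary identification of Lemma \ref{Soboweird}) is correct and slightly more economical than the paper, which instead invokes Lemma \ref{lem: XSinclusion} because it regards $\X^{S}_{\nabla_T}$ as a concrete submodule of $X^{S}$; if you want that concrete reading — and part (3) needs it — you must still appeal to Lemma \ref{lem: XSinclusion} at this point.

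In part (3) there are two genuine problems. First, the claim that $(S+i)^{-1}x_n\in\X$ is unjustified (nothing forces the resolvent of $S$ to preserve $\X$), so the formula $(1\otimes_{\nabla}T)\big((S+i)^{-1}x_n\otimes y\big)=\gamma\big((S+i)^{-1}x_n\big)\otimes Ty+\nabla_T\big((S+i)^{-1}x_n\big)y$ is not available: $\nabla_T\big((S+i)^{-1}x_n\big)$ has no meaning. The repair — and this is what the paper does — is to obtain domain membership of $(S+i)^{-1}x_n\otimes y$ from condition 1 of Definition \ref{wac}, and then to use the anticommutation identity to rewrite $(1\otimes_{\nabla}T)\big((S+i)^{-1}\otimes 1\big)(x_n\otimes y)$ as $-(S-i)^{-1}\big(\gamma(x_n)\otimes Ty+\nabla_T(x_n)y\big)$ plus a bounded operator applied to $(S+i)^{-1}x_n\otimes y$; each term then converges under your hypotheses and closedness finishes the elementary-tensor case. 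Second, ``linearity and density'' does not promote the statement to all of $\X^{S}_{\nabla_T}\otimes^{h}_{\B_1}\Dom T$: a general element of the completed Haagerup module tensor product is a limit of finite sums, and to push it into $\Dom(1\otimes_{\nabla}T)$ by closedness you need a uniform estimate $\|(1\otimes_{\nabla}T)\big((S+i)^{-1}\otimes 1\big)z\|\leq C\,\|z\|_{\X^{S}_{\nabla_T}\otimes^{h}_{\B_1}\Dom T}$ on finite rows and columns. That row--column estimate is precisely the content of the paper's proof of (3) and is the step your sketch elides under the heading of ``bookkeeping''; once you prove it (your rewriting supplies all the bounded ingredients), the argument is complete.
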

\begin{proof}
In the following we will frequently write $S$ for $S\ox 1$.
We first prove that 
\[
\Dom\big( (1\otimes_{\nabla}T)(S+i)^{-1}\big)
=\Dom\big( (S-i)^{-1}(1\otimes_{\nabla}T)\big).
\] 
Since $(S\otimes 1,1\otimes_{\nabla}T)$ is a vertically
anticommuting pair, both operators are initially defined on $\Dom 1\otimes_{\nabla}T$ 
and 
\[
(S-i)^{-1}(1\otimes_{\nabla}T)+(1\otimes_{\nabla}T)(S+i)^{-1}
=(S-i)^{-1}[1\otimes_{\nabla}T,S](S+i)^{-1},
\]
is a bounded operator. Hence if $\xi_{n}\to \xi$ in 
$X\otimes_{B}Y$ then $(1\otimes_{\nabla}T)(S+i)^{-1}\xi_{n}$ 
converges if and only if $(S-i)^{-1}(1\otimes_{\nabla}T)\xi_{n}$ 
converges. So both operators are closable and have the same closure. 

%\color{red}
2. Suppose that $x_{n}\in\mathcal{X}$ is a sequence converging to $x\in\mathcal{X}_{\nabla_{T}}^{S}$, that is $(S+i)^{-1}x_{n}$ and $(S+i)^{-1}\nabla_{T}(x_n)$ are both Cauchy sequences. Thus $\nabla_{T}(x_{n})$ converges to an element $z\in X^{S}\otimes^{h}_{B}\Omega^{1}_{T}(B,\mathcal{B}_{1})$. By Lemma \ref{lem: XSinclusion}, if $x_n\to 0\in\mathcal{X}_{\nabla_{T}}^{S}$ then $z=0$ and the map $x\mapsto \nabla_{T}(x):=z\in X^{S}\otimes_{B}\Omega^{1}_{T}(B,\mathcal{B}_{1})$ is well defined for $x\in \mathcal{X}_{\nabla_{T}}^{S}$.
%\color{black}

3. We use that $(S\otimes 1,1\otimes_{\nabla}T)$ is a vertically anticommuting pair. Suppose that $x\in \mathcal{X}^{S}_{\nabla_{T}}$, which by construction is a submodule of $X^{S}$, so that $(S+i)^{-1}x\in X$. 

Take a finite row $\xi=(x_{1},\dots, x_{n})$ with $x_{i}\in\mathcal{X}_{\nabla_{T}}$ and a finite column $\eta=(y_{1},\dots, y_{n})^{t}$ with $y_{j}\in\Dom T$. Observe that 
\[
(S+i)^{-1}x_{i}\otimes y_{i}\in \mathcal{F}(S\otimes 1, 1\otimes_{\nabla}T)\subset  \Dom 1\otimes_{\nabla}T,\quad i=1,\dots, n,
\]
and write $\xi\otimes\eta=\sum_{i=1}^{n} x_{i}\otimes y_{i}\in  \mathcal{X}_{\nabla_{T}}^{S}\otimes^{\alg}_{\mathcal{B}_{1}}\Dom T$.
Now consider
\begin{align*}
1\otimes_{\nabla}T(S+i)^{-1}\xi\otimes \eta &=(S-i)^{-1}[S\otimes 1,1\otimes_{\nabla}T](S+i)^{-1}\xi\otimes \eta \\
&\quad\quad\quad\quad\quad\quad\quad\quad -(S-i)^{-1}(1\otimes_{\nabla}T)(\xi\otimes \eta)\\
&=(S-i)^{-1}[S\otimes 1,1\otimes_{\nabla}T](S+i)^{-1}\xi\otimes \eta\\
&\quad\quad\quad\quad\quad\quad\quad\quad -(S-i)^{-1}\gamma(\xi)\otimes T\eta-(S-i)^{-1}\nabla_{T}(\xi)\eta\\
&=(S-i)^{-1}[S\otimes 1,1\otimes_{\nabla}T](S+i)^{-1}\xi\otimes \eta\\
&\quad\quad\quad\quad\quad\quad\quad\quad +\gamma((S+i)^{-1}\xi)\otimes T\eta-(S-i)^{-1}\nabla_{T}(\xi)\eta,
\end{align*}
from which, using Theorem \ref{thm: Blecher}, we obtain the estimate
\begin{align}
\left\|1\otimes_{\nabla}T(S+i)^{-1}\xi\otimes \eta \right\|_{X\otimes_{B}Y}\leq C\|(S+&i)^{-1}\xi\|_{X}\|\eta\|_{Y} +\|(S+i)^{-1}\xi\|_{X}\|T\eta\|_{Y}\nonumber\\
&+\|(S-i)^{-1}\nabla_{T}(\xi)\|_{X\otimes^{h}_{B}\Omega^{1}_{T}(B,\mathcal{B}_{1})}\|\eta\|_{Y}.
\label{eq:this-is-this-thisly}
\end{align}
The estimate \eqref{eq:this-is-this-thisly} implies that
\begin{align*}
\left\|1\otimes_{\nabla}T(S+i)^{-1}\xi\otimes \eta \right\|_{X\otimes_{B}Y}\leq C\|\xi\|_{\mathcal{X}^{S}_{\nabla_{T}}}\|\eta\|_{\Dom T},
\end{align*}
and thus that 
\begin{align*}
\left\|1\otimes_{\nabla}T(S+i)^{-1}\xi\otimes \eta \right\|\leq C\|\xi\otimes \eta\|_{\mathcal{X}^{S}_{\nabla_{T}}\otimes^{h}_{\mathcal{B}_{1}}\Dom T}.
\end{align*}

Since $\mathcal{X}_{{\nabla}_{T}}\otimes^{\alg}_{\mathcal{B}_{1}}\Dom T$ is dense in $\mathcal{X}^{S}_{\nabla_{T}}\otimes^{h}_{\mathcal{B}_{1}}\Dom T$, the result follows.\end{proof}
Suppose now that we are given a map
\[
\nabla^{S}:\mathcal{X}\to \mathcal{X}^{S}_{\nabla_{T}}
\otimes^{h}_{\mathcal{B}_{1}}\Omega^{1}_{u}(\mathcal{B}_{1},\mathcal{B}_{2}),
\]
satisfying the Leibniz rule 
$\nabla^{S}(xb)=\nabla^{S}(x)b+\gamma(x)\otimes \delta (b)$ 
for all $b\in\mathcal{B}_{2}$. Then we obtain a well-defined operator
\[
1\otimes_{\nabla^{S}}T:\mathcal{X}\otimes^{\alg}_{\mathcal{B}_{1}}\Dom T 
\to \mathcal{X}^{S}_{\nabla_{T}}\otimes^{h}_{\mathcal{B}_{1}}Y
\to  X^{S}\otimes_{B}Y,\quad x\otimes y\mapsto \gamma(x)\otimes Ty
+\nabla_{T}^{S}(x)y.
\]
By composing $1\otimes_{\nabla^{S}}T$ 
with the resolvent $(S+i)^{-1}:X^{S}\to X$, 
which is defined on all of $X^{S}$ by Lemma \ref{Soboweird}, 
we obtain a well-defined map
\[
(S+i)^{-1}\cdot 1\otimes_{\nabla^{S}}T:
\mathcal{X}\otimes^{\alg}_{\mathcal{B}_{1}}\Dom T\to X\otimes_{B} Y.
\]
\begin{defn} 
\label{def: connpair}
Let $(\mathcal{X},S)$ be a $C^{1}$-module over the $C^1$-Kasparov module $(\mathcal{B},Y,T)$. 
By a \emph{$C^{2}$-connection} on $\mathcal{X}$ we mean a pair 
$(\nabla,\nabla^{S})$ of connections
\[
\nabla:\mathcal{X}\to X\otimes^{h}_{B}\Omega^{1}_{u}(B,\mathcal{B}_{1}),
\quad \nabla^{S}:\mathcal{X}\to \mathcal{X}^{S}_{\nabla_{T}}
\otimes^{h}_{\mathcal{B}_{1}}
\Omega^{1}_{u}(\mathcal{B}_{1}),
\]
with $\nabla$ a Hermitian connection and $\nabla^{S}$ a connection, 
%that are Hermitian with respect to the inner product on $X$,
such that
\begin{enumerate}
\item $(S\otimes 1, 1\otimes_{\nabla}T)$ is a vertically anticommuting pair;
\item for all $x\in\mathcal{X}$ and $y\in\Dom T$ we have
\[
(S+i)^{-1}\nabla_{T}(x)y
=(S+i)^{-1}\cdot \nabla^{S}_{T}(x)y \in X\otimes_{B}Y.
\]
%for all $x\in\mathcal{X}$ and $y\in\Dom T$ we have
%\[
%(S+i)^{-1}(1\otimes_{\nabla}T)(x\otimes y)
%=(S+i)^{-1}\cdot (1\otimes_{\nabla^{S}}T)(x\otimes y).
%\]
\end{enumerate}
\end{defn}
Note that this definition implies that for all $x\in\mathcal{X}$ and $y\in\Dom T$ it holds that 
$(S+i)^{-1}\cdot (1\otimes_{\nabla^{S}}T)(x\otimes y)\in\Dom S$ 
and thus that 
$1\otimes_{\nabla^{S}}T(x\otimes y)=1\otimes_{\nabla}T(x\otimes y)$ 
is in fact an element of $X\otimes_{B}Y$ viewed as a subspace of 
$X^{S}\otimes_{B}Y$ via the dense inclusion $X\to X^{S}$.
%{\bf BM: An equivalent definition would be that of a connection $\nabla^{S}:\mathcal{X}\to \mathcal{X}^{S}_{\nabla}}

\subsection{The represented curvature of a $C^{2}$-connection on $C^{1}$-module}

Now that we have a clear picture of how represented $C^{2}$-connections determine the domains of induced operators, we can 
introduce the %second covariant derivatives which define 
represented curvature of a $C^{2}$-connection on a $C^{1}$-module $(\mathcal{X},S)$. To make appropriate sense of $\pi_{T}(\nabla^{2})$ a little care is required and this operator will more correctly be written $\pi_{T}((1\otimes_{\nabla_{T}}\delta)\circ\nabla^{S}),$ which yields a well-defined operator. %\color{blue}---loosely referred to as ``$\pi_{T}(\nabla^2)$''--- \color{black}. From this identification we derive the main properties of $R_{\nabla_{T}}$.

\begin{prop} 
\label{nablassquare}
Let $(\nabla,\nabla^{S})$ be a $C^{2}$-connection on a $C^{1}$-module $(\mathcal{X},S)$ over a $C^{1}$-Kasparov module $(\mathcal{B},Y,T)$. The map
\begin{align*}
1\otimes_{\nabla_{T}}\delta:\mathcal{X}^{S}_{\nabla}\otimes^{h}_{\mathcal{B}_{1}}\Omega^{1}_{u}(\mathcal{B}_{1})&\to X^{S}\otimes_{B}^{h}\Omega^{1}_{T}(B,\mathcal{B}_{1})\otimes^{h}_{\mathcal{B}_{1}}\Omega^{1}_{u}(\mathcal{B}_{1}),\\ x\otimes\omega &\mapsto \nabla_{T}(x)\otimes \omega+\gamma(x)\otimes(\pi_{T}\otimes 1)(\delta\omega),
\end{align*}
is well-defined and satisfies $(1\otimes_{\nabla_{T}}\delta)(\eta b)=(1\otimes_{\nabla_{T}}\delta)(\eta)b-(\gamma\otimes\pi_{T})(\eta)\otimes\delta(b)$, for all $\eta\in \mathcal{X}^{S}_{\nabla}\otimes^{h}_{\mathcal{B}_{1}}\Omega^{1}_{u}(\mathcal{B}_{1})$ and $b\in \mathcal{B}_{1}$.
\end{prop}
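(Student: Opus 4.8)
The plan is to define $1\otimes_{\nabla_T}\delta$ by the stated formula first on the algebraic module tensor product $\mathcal{X}\otimes^{\alg}_{\mathcal{B}_1}\Omega^1_u(\mathcal{B}_1)$, to verify there that it respects the $\mathcal{B}_1$-balancing and obeys the twisted Leibniz rule, and then to extend it to the completion by a Haagerup norm estimate; the norm estimate is the real content. First I would check balancing. For $b\in\mathcal{B}_1$ the Leibniz rule for the Hermitian connection $\nabla$ gives $\nabla_T(xb)=\nabla_T(x)b+\gamma(x)\otimes[T,b]$, while the graded derivation property of the universal differential (with $b$ even and $\delta$ odd, as fixed in Section~\ref{sec:two}) gives $\delta(b\omega)=\delta(b)\omega+b\,\delta\omega$, hence $(\pi_T\otimes1)(\delta(b\omega))=[T,b]\otimes\omega+b\,(\pi_T\otimes1)(\delta\omega)$ using $\pi_T(\delta(b))=[T,b]$. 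Comparing the images of $xb\otimes\omega$ and $x\otimes b\omega$, the two spurious terms $\pm\gamma(x)\otimes[T,b]\otimes\omega$ cancel, so the formula descends to $\mathcal{X}\otimes^{\alg}_{\mathcal{B}_1}\Omega^1_u(\mathcal{B}_1)$. The same computation, now moving $b$ to the right of $\omega$ and using $\delta(\omega b)=\delta(\omega)b-\omega\,\delta(b)$ (with $\omega$ odd), produces exactly the identity $(1\otimes_{\nabla_T}\delta)(\eta b)=(1\otimes_{\nabla_T}\delta)(\eta)b-(\gamma\otimes\pi_T)(\eta)\otimes\delta(b)$ on simple tensors, and hence by linearity on the whole algebraic module tensor product; it passes to the completion by continuity once the estimate below is established.

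For the estimate I would use two structural inputs. By Lemma~\ref{domaineq}.2, together with Lemma~\ref{Soboweird} tensored with the identity (which makes $(S+i)^{-1}\otimes1$ a complete isometry), the connection extends to a complete contraction $\nabla_T\colon\mathcal{X}^S_{\nabla_T}\to X^S\otimes^h_B\Omega^1_T(B,\mathcal{B}_1)$; and $(\pi_T\otimes1)\circ\delta\colon\Omega^1_u(\mathcal{B}_1)\to\Omega^1_T(B,\mathcal{B}_1)\otimes^h_{\mathcal{B}_1}\Omega^1_u(\mathcal{B}_1)$ is completely bounded, since $\pi_T$ is completely contractive and the universal differential is completely bounded on the Haagerup-completed calculus (by the argument already used in the proof of Proposition~\ref{ajunkie}), and complete boundedness is preserved by $\otimes^h$. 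Given $\eta$ in the module Haagerup tensor product with $\|\eta\|_h<1$, I would write it via Proposition~\ref{Haaconvergent}.1 as a norm-convergent series $\eta=\sum_k x_k\otimes\omega_k$ with $\sum_k x_k x_k^*$ and $\sum_k\omega_k^*\omega_k$ norm-convergent of norm $<1$, and apply the formula termwise: $(1\otimes_{\nabla_T}\delta)(\eta)=\sum_k\nabla_T(x_k)\otimes\omega_k+\sum_k\gamma(x_k)\otimes(\pi_T\otimes1)(\delta\omega_k)$. Complete boundedness of $\nabla_T$ makes $(\nabla_T(x_k))_k$ a row with uniformly bounded partial sums; $(\gamma(x_k))_k$ is likewise a bounded row since $\gamma$ is unitary; and complete boundedness of $(\pi_T\otimes1)\circ\delta$ makes $\sum_k\big((\pi_T\otimes1)(\delta\omega_k)\big)^*\big((\pi_T\otimes1)(\delta\omega_k)\big)$ convergent. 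Proposition~\ref{Haaconvergent}.2 then shows both series converge in $X^S\otimes^h_B\Omega^1_T(B,\mathcal{B}_1)\otimes^h_{\mathcal{B}_1}\Omega^1_u(\mathcal{B}_1)$, with norms controlled by $\|\nabla_T\|_{\textnormal{cb}}$ and $\|(\pi_T\otimes1)\circ\delta\|_{\textnormal{cb}}$ respectively, giving a uniform bound on $\|(1\otimes_{\nabla_T}\delta)(\eta)\|_h$; amplifying to matrices over $\mathcal{B}_1$ and $\mathcal{B}_2$ gives complete boundedness, so the map extends to $\mathcal{X}^S_{\nabla}\otimes^h_{\mathcal{B}_1}\Omega^1_u(\mathcal{B}_1)$.

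The main obstacle, I expect, is reconciling the ``apply the formula termwise'' step with genuine well-definedness on the $\mathcal{B}_1$-balanced (module, rather than plain) Haagerup tensor product: one must check that $(1\otimes_{\nabla_T}\delta)(\eta)$ does not depend on the chosen series representative of $\eta$. This is exactly what the balancing check of the first paragraph supplies — the identity $(1\otimes_{\nabla_T}\delta)(xb\otimes\omega)=(1\otimes_{\nabla_T}\delta)(x\otimes b\omega)$ shows that any two representatives agree on finite partial sums, and the uniform Haagerup bound lets one pass to the limit. Cleanly packaged, one shows the formula is completely bounded directly on the algebraic module tensor product and extends uniquely to the completion, the extension inheriting both the balancing and the Leibniz rule automatically by continuity. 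A secondary point worth stating carefully is that $\nabla_T$ genuinely extends to $\mathcal{X}^S_{\nabla_T}$ with values in $X^S\otimes^h_B\Omega^1_T(B,\mathcal{B}_1)$ rather than merely $X^S\otimes_B\Omega^1_T(B,\mathcal{B}_1)$; this is Lemma~\ref{domaineq}.2 and it relies on the vertically-anticommuting-pair hypothesis built into the definition of a $C^{2}$-connection.
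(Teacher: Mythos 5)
Your proposal is correct and follows essentially the same route as the paper: both rest on Lemma \ref{domaineq}.2 to extend $\nabla_{T}$ to $\mathcal{X}^{S}_{\nabla_{T}}$ with values in $X^{S}\otimes^{h}_{B}\Omega^{1}_{T}(B,\mathcal{B}_{1})$, on complete boundedness of $\delta$ and $\pi_{T}\otimes 1$, and on the same balancing and graded-Leibniz computations, with the map defined on an unreduced tensor product and descended to the $\mathcal{B}_{1}$-balanced one. Your explicit series estimate via Proposition \ref{Haaconvergent} merely spells out by hand the functoriality of the Haagerup tensor product for completely bounded maps that the paper invokes implicitly, so the difference is presentational rather than substantive.
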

\begin{proof} 
The map $\nabla_{T}:\mathcal{X}
\to X\otimes^{h}_{B}\Omega^{1}_{T}(B, \mathcal{B}_{1})$ extends to a map 
$$
\nabla_{T}:\mathcal{X}_{\nabla_{T}}^{S}
\to X^{S}\otimes_{B}^{h}\Omega^{1}_{T}(B,\mathcal{B}_{1}),
$$
by part 2 of Lemma \ref{domaineq}.
%This follows from the estimate
%\begin{align*}
%\|\nabla_{T}(x)\|_{X^{S}\otimes_{B}^{h}\Omega^{1}_{T}(\mathcal{B}_{1})}&=\|(S+i)^{-1}(\nabla_{T}(x))\|
%\\
%&\leq \max\{\|(S+i)^{-1}x\|,\|(S+i)^{-1}(\nabla_{T}(x))\|\}=\|x\|^{S}_{\nabla}.
%\end{align*}
The maps
$$\delta:\Omega^{1}_{u}(\mathcal{B}_{1})
\to \Omega^{1}_{u}(B,\mathcal{B}_{1})\otimes_{\mathcal{B}_{1}}^{h}\Omega^{1}_{u}(\mathcal{B}_{1})$$ and 
\begin{align*}\pi_{T}\otimes 1:\Omega^{1}_{u}(B,\mathcal{B}_{1})\otimes_{\mathcal{B}_{1}}^{h}\Omega^{1}_{u}(\mathcal{B}_{1})\to \Omega^{1}_{T}(B,\mathcal{B}_{1})\otimes^{h}_{\mathcal{B}_{1}}\Omega^{1}_{u}(\mathcal{B}_{1})\end{align*} are completely bounded, and therefore
\begin{align}
(1\otimes \pi_{T}\otimes 1)\circ(\nabla\otimes 1+ \gamma\otimes \delta):\mathcal{X}^{S}_{\nabla}\otimes^{h}_{\mathbb{C}}\Omega^{1}_{u}(\mathcal{B}_{1})&\to X^{S}\otimes_{B}^{h}\Omega^{1}_{T}(B,\mathcal{B}_{1})\otimes^{h}_{\mathcal{B}_{1}}\Omega^{1}_{u}(\mathcal{B}_{1})\nonumber\\
x\otimes\omega &\mapsto \nabla_{T}(x)\otimes \omega+\gamma(x)\otimes(\pi_{T}\otimes 1)(\delta\omega),
\label{eq:the-map}
\end{align}
is well-defined and completely bounded. Since also 
\begin{align*}
\nabla_{T}(xb)\otimes \omega+\gamma(x)\otimes(\pi_{T}\otimes 1)( b\delta\omega)
&=\nabla_{T}(x)b\omega+\gamma(x)\otimes(\delta_{T} b)\otimes\omega\\ &\quad\quad\quad\quad\quad\quad +\gamma(x)\otimes (\pi_{T}\otimes 1)(b\delta\omega)\\
&=\nabla_{T}(x)b\omega+\gamma(x)\otimes(\pi_{T}\otimes 1)( \delta (b\omega)),
\end{align*}
the map \eqref{eq:the-map} is compatible with the balancing relation and descends to a completely bounded map
\[
1\otimes_{\nabla_{T}}\delta:\mathcal{X}^{S}_{\nabla_{T}}\otimes^{h}_{\mathcal{B}_{1}}\Omega^{1}_{u}(\mathcal{B}_{1},\mathcal{B}_{k})
\to X^{S}\otimes_{B}^{h}\Omega^{1}_{T}(B,\mathcal{B}_{1})\otimes^{h}_{\mathcal{B}_{1}}\Omega^{1}_{u}(\mathcal{B}_{1}),
\]
which is the desired statement. The (graded) Leibniz rule for $1\otimes_{\nabla_{T}}\delta$ follows directly from the defining formula
and the graded Leibniz rule for $\delta$, $\delta(\omega b)=\delta(\omega)b-\omega\otimes \delta(b)$, for $\omega\in\Omega^{1}_{u}(\mathcal{B}_{1})$ and $b\in\mathcal{B}_{1}$.
\end{proof}

By Proposition \ref{nablassquare}, we can consider the composition 
\[
(1\otimes_{\nabla_{T}}\delta) \circ\nabla^{S}:\mathcal{X}
\to X^{S}\otimes_{B}^{h}\Omega^{1}_{T}(\mathcal{B}_{1})\otimes^{h}_{\mathcal{B}_{1}}\Omega^{1}_{u}(\mathcal{B}_{1}).
\]
The map $(1\otimes_{\nabla_{T}}\delta) \circ\nabla^{S}$ is 
right $\mathcal{B}_{1}$-linear. This follows from the computation
\begin{align*}
(1\otimes_{\nabla_{T}}\delta) \circ\nabla^{S}(xb)&=(1\otimes_{\nabla_{T}}\delta) (\nabla^{S}(x) b) +(1\otimes_{\nabla_{T}}\delta) (\gamma(x)\otimes \delta(b))\\
&=(1\otimes_{\nabla_{T}}\delta) (\nabla^{S}(x) b) +\nabla_{T} (\gamma(x))\otimes \delta(b)\\
&=(1\otimes_{\nabla_{T}}\delta) (\nabla^{S}(x) )b-(\gamma\otimes\pi_{T}\otimes 1)(\nabla^{S}(x) \otimes\delta b)+\nabla_{T} (\gamma(x))\otimes \delta(b)\\
&=(1\otimes_{\nabla_{T}}\delta) (\nabla^{S}(x) )b,
\end{align*}
where the last line holds since 
$\nabla_{T}(x)=\nabla^{S}_{T}(x)$ in $X^{S}\otimes_{B}^{h}\Omega^{1}_{T}(B,\mathcal{B}_{1})$.
\begin{defn}
\label{def: repcurv}
Let $(\mathcal{X},S)$ be a $C^{1}$-module over a $C^{1}$-Kasparov module $(\mathcal{B},Y,T)$ and $(\nabla,\nabla^{S})$ a $C^{2}$-connection. We define the \emph{represented curvature} of $(\nabla,\nabla^{S})$ to be the operator
$\pi_{T}(\nabla\circ\nabla^{S}):\mathcal{X}\otimes^{\alg}_{\mathcal{B}_{1}}Y\to X^{S}\otimes_{B}Y$ defined on $x\ox y\in \mathcal{X}\otimes^{\alg}_{\mathcal{B}_{1}}Y$ by
\begin{equation}
\label{eq: repcurv}
\pi_{T}(\nabla\circ\nabla^{S})(x\otimes y):= (1\otimes m)(1\otimes 1\otimes\pi_{T})((1\otimes_{\nabla_{T}}\delta) \circ\nabla^{S})(x)y.
\end{equation}
\end{defn}
The notation $\pi_{T}(\nabla\circ\nabla^{S})$ is a convenient shorthand for $(1\otimes m)(1\otimes 1\otimes\pi_{T})((1\otimes_{\nabla_{T}}\delta)\circ\nabla^{S})$.
%By Lemma \ref{computesquare} for $y\in \Dom T$ we thus introduce the shorthand notation\pi_{T}(\nabla\circ\nabla^{S})
%$$\pi_{T}(\nabla\circ\nabla^{S})(x\otimes y):=(1\otimes m)(1\otimes 1\otimes\pi_{T})((1\otimes_{\nabla_{T}}\delta)\circ\nabla^{S}(x))y\in X^{S}\otimes_{B}Y,$$
%which we view as defining an operator
%\begin{align*}
%\pi_{T}(\nabla\circ\nabla^{S}):\mathcal{X}\otimes^{\alg}_{\mathcal{B}_{1}}\Dom T &\to X^{S}\otimes_{B}Y 
%\\
%x\otimes y &\mapsto (1\otimes m)(1\otimes 1\otimes\pi_{T})((1\otimes_{\nabla_{T}}\delta)\circ\nabla^{S}(x))y.
%\end{align*}

%\subsubsection{Comparison with $\pi_{T}(\nabla^2)$}
\subsubsection{The curvature operator of a $C^{2}$-connection on a $C^{(1,2)}$-module}
\label{sect:curv-nabla2}
Additional smoothness simplifies the domain considerations of the previous section. In order to denote the differentiability properties of modules $(\mathcal{X},S)$ in the horizontal and vertical directions we use pairs $(n,k)$ where $k$ corresponds to the horizontal and $n$ to the vertical direction.
\begin{defn}
\label{horverC2mod}
Let $(\mathcal{B},Y,T)$ be a $C^{2}$-Kasparov module. 
A $C^{(1,2)}$-\emph{module} for $(\mathcal{B},Y,T)$ 
is a pair $(\mathcal{X},S)$ such that
%with a pair of connections $(\nabla,\nabla^{1})$ where $\nabla:\mathcal{X}\to X\otimes^{h}_{B}\Omega^{1}(B,\mathcal{B}_{1})$ and $\nabla^{1}:\mathcal{X}\to \mathcal{X}_{\nabla_{T}}\otimes_{\mathcal{B}_{1}}^{h}\Omega^{1}(\mathcal{B}_{1},\mathcal{B}_{2})$ 
such that
%$C^{2}$-connection $(\nabla,\nabla^{S})$ such that
\begin{enumerate}
\item $\mathcal{X}$ is a horizontally differentiable $C^{2}$-module with $C^{*}$-closure $X$;
\item $\mathcal{X}\subset\Dom S$.
%\item $\nabla=\iota_{T}\circ\nabla^{1}$ 
%\item $(S\otimes 1,1\otimes_{\nabla}T)$ is a vertically anticommuting pair.
%$\nabla^{S}:\mathcal{X}\to \mathcal{X}_{\nabla_{T}}\otimes^{h}_{\mathcal{B}_{1}}\Omega^{1}(\mathcal{B}_{1},\mathcal{B}_{2})\to\mathcal{X}_{\nabla_{T}}^{S}\otimes^{h}_{\mathcal{B}_{1}}\Omega^{1}(\mathcal{B}_{1},\mathcal{B}_{2})$ makes $(\nabla,\nabla^{S})$.
%\item $\mathcal{X}\otimes^{\alg}_{\mathcal{B}_{2}}\Dom T^{2}\subset \Dom (S^{2}\otimes 1)\cap \Dom (1\otimes_{\nabla}T)^{2}$
\end{enumerate}
A $C^{2}$-\emph{connection} on a $C^{(1,2)}$-module $(\mathcal{X},S)$ is a pair $(\nabla,\nabla^{S})$ of connections
%By a \emph{$C^{2}$-connection} on $\mathcal{X}$ we mean a pair 
%$(\nabla,\nabla^{S})$ of connections
\[
\nabla:\mathcal{X}\to X\otimes^{h}_{B}\Omega^{1}_{u}(B,\mathcal{B}_{2}),
\quad \nabla^{S}:\mathcal{X}\to \mathcal{X}_{\nabla_{T}}^{S}
\otimes^{h}_{\mathcal{B}_{1}}
\Omega^{1}_{u}(\mathcal{B}_{1},\mathcal{B}_{2}),
\]
such that $(S\otimes 1,1\otimes_{\nabla}T)$ is a vertically anticommuting pair.
\end{defn}
First observe that if $(\mathcal{X},S)$ is a $C^{(1,2)}$-module then $(\mathcal{X}\otimes_{\mathcal{B}_{2}}^{\alg}\mathcal{B}_{1}, S)$ 
is a $C^{1}$-module for $(\mathcal{B},Y,T)$. For a $C^{2}$-connection on a $C^{(1,2)}$-module $(\mathcal{X},S)$, we wish 
to compute $\pi_{T}(\nabla\circ\nabla^{S})$, for $y\in\Dom T^{2}$.
By Lemmas \ref{indc2} and \ref{lem: XSinclusion} we may define
\begin{align*}
1\otimes_{\nabla^{S}}T^{2}:\mathcal{X}\otimes_{\mathcal{B}_{2}}^{\alg}\Dom T^{2}\to \mathcal{X}^{S}_{\nabla_{T}}\otimes^{h}_{\mathcal{B}_{1}} Y\subset X^{S}\otimes_{B}Y,\quad 
x\otimes y \mapsto x\otimes T^{2}(y)+\nabla_{T^{2}}^{S}(x)y,
\end{align*}
in the spirit of Lemma \ref{symmetric}. Since $T^{2}$ is an even operator, 
the grading $\gamma$ does not appear in this formula. 

\begin{lemma} 
\label{domainproperties}
Let $(\mathcal{B},Y,T)$ be a $C^{2}$ Kasparov module, $(\mathcal{X},S)$ a $C^{(1,2)}$-module and $(\nabla,\nabla^{S})$ a $C^{2}$-connection. Then the operator $$1\otimes_{\nabla}T:\mathcal{X}\otimes_{\mathcal{B}_{1}}^{\alg}\Dom T\to X\otimes_{B}Y,$$ maps $\mathcal{X}\otimes^{\alg}_{\mathcal{B}_{2}}\Dom T^{2}$ into $\Dom (S+i)^{-1}(1\otimes_{\nabla}T)$.
%\begin{enumerate}
%\item $\nabla:\mathcal{X}\to X\otimes_{B}\Omega^{1}_{u}(B,\mathcal{B}_{1})$ extends to a map $\nabla:\mathcal{X}^{S}_{\nabla_{T}}\to X^{S}\otimes_{B}\Omega^{1}_{u}(B,\mathcal{B}_{1})$;
%\item $(S+i)^{-1}:X^{S}\otimes_{B}Y\to X\otimes_{B}Y$ restricts to a map $(S+i)^{-1}:\mathcal{X}^{S}_{\nabla_{T}}\otimes^{h}_{\mathcal{B}_{1}}\Dom T\to\Dom (1\otimes_{\nabla}T)$;
%\item The operator $1\otimes_{\nabla}T:\mathcal{X}_{\nabla_{T}}\otimes_{\mathcal{B}_{1}}^{\alg}\Dom T\to X\otimes_{B}Y$ maps 
% $\mathcal{X}\otimes^{\alg}_{\mathcal{B}_{2}}\Dom T^{2}$ 
%into $\Dom (S+i)^{-1}(1\otimes_{\nabla}T)$.
%\end{enumerate}
\end{lemma}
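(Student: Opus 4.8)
The strategy is to localise to a single elementary tensor $\zeta=x\otimes y$ with $x\in\mathcal{X}$ and $y\in\Dom T^{2}$, to identify $(1\otimes_{\nabla}T)\zeta$ explicitly, and to show it lands in the subspace $\mathcal{X}^{S}_{\nabla_{T}}\otimes^{h}_{\mathcal{B}_{1}}\Dom T$ of $X^{S}\otimes_{B}Y$; the domain bookkeeping in Lemma \ref{domaineq} then converts this into the asserted membership. First I would write
\[
(1\otimes_{\nabla}T)(x\otimes y)=\gamma(x)\otimes Ty+\nabla^{S}_{T}(x)y ,
\]
where $\nabla^{S}_{T}=(1\otimes\pi_{T})\circ\nabla^{S}$; this uses the compatibility of the pair $(\nabla,\nabla^{S})$ (the $C^{(1,2)}$-analogue of condition 2 of Definition \ref{def: connpair}, identifying $1\otimes_{\nabla^{S}}T$ with $1\otimes_{\nabla}T$ on $\mathcal{X}\otimes^{\alg}_{\mathcal{B}_{2}}\Dom T$), and it is precisely here that the hypothesis $y\in\Dom T^{2}$ enters, since it guarantees $Ty\in\Dom T$.

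Next I would check that each summand lies in $\mathcal{X}^{S}_{\nabla_{T}}\otimes^{h}_{\mathcal{B}_{1}}\Dom T$. For the first this is immediate: $\gamma(x)\otimes Ty\in\mathcal{X}\otimes^{\alg}_{\mathcal{B}_{2}}\Dom T$, which maps naturally into $\mathcal{X}^{S}_{\nabla_{T}}\otimes^{h}_{\mathcal{B}_{1}}\Dom T$. For the second, recall that $\nabla^{S}(x)\in\mathcal{X}^{S}_{\nabla_{T}}\otimes^{h}_{\mathcal{B}_{1}}\Omega^{1}_{u}(\mathcal{B}_{1},\mathcal{B}_{2})$ by Definition \ref{horverC2mod}, that $\pi_{T}$ maps $\Omega^{1}_{u}(\mathcal{B}_{1},\mathcal{B}_{2})$ completely boundedly into $\mathbb{B}(\Dom T)$ by Lemma \ref{indc2}, and that evaluation at the fixed vector $y\in\Dom T$ is a completely bounded left-$\mathcal{B}_{1}$-module map $\mathbb{B}(\Dom T)\to\Dom T$ for the graph-norm Hilbert space structure on $\Dom T$. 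Composing these maps, and using Proposition \ref{Haaconvergent} to see that the Haagerup-convergent expansion of $\nabla^{S}(x)$ is respected at each step, gives $\nabla^{S}_{T}(x)y\in\mathcal{X}^{S}_{\nabla_{T}}\otimes^{h}_{\mathcal{B}_{1}}\Dom T$. Hence $\eta:=(1\otimes_{\nabla}T)\zeta$ lies in $\mathcal{X}^{S}_{\nabla_{T}}\otimes^{h}_{\mathcal{B}_{1}}\Dom T$.

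To conclude, I would apply part 3 of Lemma \ref{domaineq} — in the version with $(S-i)^{-1}$, which is legitimate because the conditions defining a vertically anticommuting pair are symmetric in $(S\pm i)^{-1}$ — to deduce $((S-i)^{-1}\otimes 1)\eta\in\Dom(1\otimes_{\nabla}T)$. Since $\eta$ then lies in the obvious domain of $(1\otimes_{\nabla}T)((S-i)^{-1}\otimes 1)$, part 1 of Lemma \ref{domaineq}, which identifies $\Dom\big((1\otimes_{\nabla}T)(S-i)^{-1}\ox1\big)$ with $\Dom\big((S+i)^{-1}\ox1\,(1\otimes_{\nabla}T)\big)$, gives $\eta\in\Dom\big((S+i)^{-1}(1\otimes_{\nabla}T)\big)$. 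Extending by linearity over finite sums in $\mathcal{X}\otimes^{\alg}_{\mathcal{B}_{2}}\Dom T^{2}$ finishes the proof.

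The main obstacle is the first step: a priori $\nabla_{T}(x)=\pi_{T}\nabla(x)$ lives in $X\otimes^{h}_{B}\Omega^{1}_{T}(B,\mathcal{B}_{1})$, whose first leg is the full module $X$, so one cannot directly locate $\nabla_{T}(x)y$ in the smaller space $\mathcal{X}^{S}_{\nabla_{T}}\otimes^{h}_{\mathcal{B}_{1}}\Dom T$; it is only by passing to $\nabla^{S}$ — whose first leg already lies in $\mathcal{X}^{S}_{\nabla_{T}}$ and whose form part lies in $\Omega^{1}_{u}(\mathcal{B}_{1},\mathcal{B}_{2})$, so that $\pi_{T}$ of it preserves $\Dom T$ via Lemma \ref{indc2} — that the required membership becomes visible, and the extra $C^{2}$-smoothness built into Definition \ref{horverC2mod} is exactly what makes this passage possible. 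The check that the Haagerup sums behave well under the completely bounded maps involved is routine given Proposition \ref{Haaconvergent}.
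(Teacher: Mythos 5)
Your argument is correct and follows essentially the same route as the paper: decompose $(1\otimes_{\nabla}T)(x\otimes y)$ as $\gamma(x)\otimes Ty+\nabla^{S}_{T}(x)y$ using the compatibility of the pair $(\nabla,\nabla^{S})$, locate the result in $\mathcal{X}^{S}_{\nabla_{T}}\otimes^{h}_{\mathcal{B}_{1}}\Dom T$, and then feed it through parts 3 and 1 of Lemma \ref{domaineq}. The only (harmless) differences are that you push both summands through Lemma \ref{domaineq} instead of treating $\gamma(x)\otimes Ty$ directly as an element of $\Dom(1\otimes_{\nabla}T)$, and you are more explicit than the paper about the $\pm i$ sign conventions in Lemma \ref{domaineq} and about the completely bounded/Haagerup-functoriality argument showing $\nabla^{S}_{T}(x)y\in\mathcal{X}^{S}_{\nabla_{T}}\otimes^{h}_{\mathcal{B}_{1}}\Dom T$, which the paper simply asserts.
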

\begin{proof} 

For $x\in\mathcal{X}$ and $y\in\Dom T^{2}$ we have
\[
(1\otimes_{\nabla}T)(x\otimes y)=\gamma(x)\otimes Ty+\nabla_{T}(x)y,
\]
and clearly $\gamma(x)\otimes Ty\in \mathcal{X}\otimes^{\alg}_{\mathcal{B}_{1}}\Dom T\subset \Dom (1\otimes_{\nabla}T)\subset \Dom (S+i)^{-1}(1\otimes_{\nabla}T)$. 
Consider 
$$
(S+i)^{-1}\nabla_{T}(x)y=(S+i)^{-1}\cdot\nabla^{S}_{T}(x)y,
$$ 
and observe that 
$\nabla^{S}_{T}(x)y\in \mathcal{X}^{S}_{\nabla_{T}}
\otimes^{h}_{\mathcal{B}_{1}}\Dom T.$
By part 3 of Lemma \ref{domaineq}, it follows that 
\[
(S+i)^{-1}\nabla^{S}_{T}(x)y\in  \Dom 1\otimes_{\nabla}T,
\]
from which we conclude that 
$(S+i)^{-1}\nabla_{T}(x)y\in\Dom 1\otimes_{\nabla}T$. Thus 
$$
\nabla_{T}(x)y\in \Dom (1\otimes_{\nabla}T)(S+i)^{-1}
=\Dom  (S+i)^{-1}(1\otimes_{\nabla}T),
$$
by Lemma \ref{domaineq}.
\end{proof}

It follows by Lemma 
\ref{domaineq}.2 that the operator
\[1\otimes_{\nabla^{S}}T:\mathcal{X}\otimes_{\mathcal{B}_{2}}^{\alg}\Dom T^{2}\to \mathcal{X}^{S}_{\nabla_{T}}\otimes^{h}_{\mathcal{B}_{1}}\Dom T\subset \Dom (1\otimes_{\nabla}T)\subset X^{S}\otimes_{B}Y,\]
is well defined and maps $\mathcal{X}\otimes^{\alg}_{\mathcal{B}_{2}}\Dom T^{2}$ into $\Dom (1\otimes_{\nabla}T)\subset X^{S}\otimes_{B}Y$. 
\begin{defn}
\label{def: curv-operator}
Let $(\mathcal{B},Y,T)$ be a $C^{2}$-Kasparov module, $(\mathcal{X},S)$ a $C^{(1,2)}$-module and $(\nabla,\nabla^{S})$ a $C^{2}$-connection. The \emph{curvature operator} of $(\nabla,\nabla^{S})$ is defined to be the map
\[R_{\nabla_{T}}:\mathcal{X}\otimes^{\alg}_{\mathcal{B}_{2}}\Dom T^{2}\to X^{S}\otimes_{B}Y,\quad R_{\nabla_{T}}:=(1\otimes_{\nabla}T)\circ (1\otimes_{\nabla^{S}}T)-1\otimes_{\nabla^{S}}T^{2}.\]
\end{defn}
%Note that $R_{\nabla_{T}}$ is well-defined since by Lemma \ref{indc2}
%\[1\otimes_{\nabla^{S}}T^{2}:\mathcal{X}\otimes_{\mathcal{B}_{2}}^{\alg}\Dom T^{2}\to \mathcal{X}^{S}_{\nabla_{T}}\otimes^{h}_{\mathcal{B}_{1}} Y\subset X^{S}%\otimes_{B}Y,\] 
%and 
%\[1\otimes_{\nabla^{S}}T:\mathcal{X}\otimes_{\mathcal{B}_{2}}^{\alg}\Dom T^{2}\to \mathcal{X}^{S}_{\nabla_{T}}\otimes^{h}_{\mathcal{B}_{1}}\Dom T\subset \Dom (1\otimes_{\nabla}T)\subset X^{S}\otimes_{B}Y.\]
By Lemma \ref{domainproperties}, composition with the resolvent $(S+i)^{-1}:X^{S}\to X$ yields the map 
\begin{align*}
(S+i)^{-1}R_{\nabla_{T}}:\mathcal{X}\otimes^{\alg}_{\mathcal{B}_{2}}\Dom T^{2}\to X\otimes_{B}Y,
\end{align*}
which admits the expression $(S+i)^{-1}R_{\nabla_{T}}=(S+i)^{-1}((1\otimes_{\nabla}T)^{2}-1\otimes_{\nabla^{S}}T^{2})$.

Our goal is to identify the curvature operator $R_{\nabla_{T}}$ from Definition \ref{def: curv-operator} with the represented curvature $\pi_{T}(\nabla_{T}\circ\nabla^{S})$ of Equation \eqref{eq: repcurv} in Definition \ref{def: repcurv}.% obtained from the usual operation of squaring the universal connection $\nabla$. 
\begin{lemma} 
\label{computesquare}
Let $(\nabla,\nabla^{S})$ be a $C^{2}$-connection on a $C^{(1,2)}$-module $(\mathcal{X},S)$.  For $\eta\in \mathcal{X}_{\nabla_{T}}^{S}\otimes^{h}_{\mathcal{B}_{1}}\Omega^{1}_{u}(\mathcal{B}_{1},\mathcal{B}_{2})$ and $y\in\Dom T$ it holds that
\begin{align*}
(1\otimes m)(1\otimes 1\otimes\pi_{T})((1\otimes_{\nabla_{T}}\delta)(\eta))y&=(1\otimes_{\nabla}T)(1\otimes \pi_{T})(\eta)y\\
 &\quad\quad\quad\quad +(\gamma\otimes \pi_{T})(\eta)Ty-(\gamma\otimes \pi_{T^{2}})(\eta)y,
\end{align*}
as elements of $X^{S}\otimes_{B}Y$.
\end{lemma}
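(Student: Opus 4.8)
The plan is to prove the identity by direct computation on elementary tensors $\eta = x \otimes \omega$ with $x \in \mathcal{X}^S_{\nabla_T}$ and $\omega \in \Omega^1_u(\mathcal{B}_1,\mathcal{B}_2)$, using density and complete boundedness of all maps involved to extend from such elementary tensors (and finite sums) to general $\eta$ in the Haagerup module tensor product. First I would unpack the left-hand side using the defining formula for $1 \otimes_{\nabla_T} \delta$ from Proposition \ref{nablassquare}: applied to $x \otimes \omega$ this gives $\nabla_T(x) \otimes \omega + \gamma(x) \otimes (\pi_T \otimes 1)(\delta\omega)$, living in $X^S \otimes^h_B \Omega^1_T(B,\mathcal{B}_1) \otimes^h_{\mathcal{B}_1} \Omega^1_u(\mathcal{B}_1,\mathcal{B}_2)$. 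Applying $(1 \otimes 1 \otimes \pi_T)$ and then $(1 \otimes m)$, and finally pairing with $y \in \Dom T$, the first term becomes $\nabla_T(x) \otimes \pi_T(\omega) y$ contracted via $m$, i.e.\ it produces $(1 \otimes \pi_T \pi_T)$-type expressions. The crucial identity to invoke here is Proposition \ref{ajunkie}: $m \circ (\pi_T \otimes \pi_T)(\delta\omega) = \delta_T \circ \pi_T(\omega) - \pi_{T^2}(\omega)$, which rewrites the junk-form term $\gamma(x) \otimes m(\pi_T \otimes \pi_T)(\delta\omega) y$ as $\gamma(x) \otimes (\delta_T \pi_T(\omega) - \pi_{T^2}(\omega)) y$.

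Next I would expand $\delta_T \pi_T(\omega)$ using its definition \eqref{eq:delta-tee}, namely $\delta_T(a[T,b]) = T(a[T,b]) + (a[T,b])T$ for $\omega = \sum_i a_i \otimes b_i$, so that $\gamma(x) \otimes \delta_T\pi_T(\omega) y = \gamma(x) \otimes T \pi_T(\omega) y + \gamma(x) \otimes \pi_T(\omega) T y$. The term $\gamma(x) \otimes T\pi_T(\omega)y$ should combine with the $\nabla_T(x) \otimes \pi_T(\omega) y$ term to reconstruct $(1 \otimes_\nabla T)$ applied to $(1 \otimes \pi_T)(\eta) y = (x \otimes \pi_T(\omega)) y = \nabla$-object; indeed by the very definition of $1 \otimes_\nabla T$ (Lemma \ref{symmetric}), $(1 \otimes_\nabla T)(z \otimes \pi_T(\omega)y) = \gamma(z) \otimes T(\pi_T(\omega)y) + \nabla_T(z) \pi_T(\omega) y$ — so I need to be careful that the element $(1 \otimes \pi_T)(\eta)$ lies in the right domain ($\mathcal{X}^S_{\nabla_T} \otimes^h_{\mathcal{B}_1} \Dom T$, as guaranteed by part 3 of Lemma \ref{domaineq} after composing with $(S+i)^{-1}$, or rather directly since $\pi_T(\omega)y \in \Dom T$ when $y \in \Dom T$ and the forms are built from $\mathcal{B}_1$). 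The remaining two terms $\gamma(x) \otimes \pi_T(\omega) Ty$ and $-\gamma(x) \otimes \pi_{T^2}(\omega) y$ are then exactly $(\gamma \otimes \pi_T)(\eta) Ty - (\gamma \otimes \pi_{T^2})(\eta) y$, matching the claimed right-hand side.

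The main obstacle I anticipate is the domain and convergence bookkeeping rather than the algebra. On elementary tensors everything is a finite manipulation of bounded and densely-defined operators, but to conclude the identity for general $\eta \in \mathcal{X}^S_{\nabla_T} \otimes^h_{\mathcal{B}_1} \Omega^1_u(\mathcal{B}_1,\mathcal{B}_2)$ I must check that each of the four maps appearing — $(1 \otimes m)(1 \otimes 1 \otimes \pi_T)(1 \otimes_{\nabla_T}\delta)(\cdot)y$, $(1 \otimes_\nabla T)(1 \otimes \pi_T)(\cdot)y$, $(\gamma \otimes \pi_T)(\cdot)Ty$, and $(\gamma \otimes \pi_{T^2})(\cdot)y$ — is well-defined and continuous on the Haagerup module tensor product (with values in $X^S \otimes_B Y$), so that agreement on the dense subspace of finite sums of elementary tensors with $\omega \in (\mathcal{B}_1 \otimes^{\alg} \mathcal{B}_2) \cap \Omega^1_u$ propagates. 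Complete boundedness of $\pi_T$, $\pi_{T^2}$, and $1 \otimes_{\nabla_T}\delta$ comes from Lemma \ref{indc2} and Proposition \ref{nablassquare}; the subtlety is that $Ty$ for fixed $y \in \Dom T$ is a fixed vector in $Y$, and the "evaluation at $y$" maps $X^S \otimes^h_B \Omega \to X^S \otimes_B Y$ are bounded because $|y\rangle$ and $|Ty\rangle$ are bounded module maps $C \to Y$ (here $y$ ranges in $\Dom T$ so $Ty$ makes sense). Once all four maps are seen to be completely bounded on $\mathcal{X}^S_{\nabla_T} \otimes^h_{\mathcal{B}_1} \Omega^1_u(\mathcal{B}_1,\mathcal{B}_2)$, Proposition \ref{Haaconvergent}.1 lets us approximate $\eta$ by finite sums, the identity holds on each finite sum by the elementary-tensor computation above (extended bilinearly and using the Leibniz/balancing relations to handle the $\mathcal{B}_1$-balancing, consistent with the Leibniz rule stated at the end of Proposition \ref{nablassquare}), and passing to the limit gives the result in $X^S \otimes_B Y$.
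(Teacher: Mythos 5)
Your proposal is correct and follows essentially the same route as the paper: compute on elementary tensors using Proposition \ref{ajunkie} and the formula \eqref{eq:delta-tee} to regroup the anticommutator into the $(1\otimes_\nabla T)(1\otimes\pi_T)$, $(\gamma\otimes\pi_T)(\cdot)T$ and $(\gamma\otimes\pi_{T^2})$ pieces, and then extend to general $\eta$ by complete boundedness of $\pi_T$, $\pi_{T^2}$ and $1\otimes_{\nabla_T}\delta$ together with Haagerup-norm approximation (Lemmas \ref{indc2} and \ref{lem: XSinclusion}). The only cosmetic difference is that the paper first performs the computation for $y\in\Dom T^2$ and then passes to $y\in\Dom T$ using that $\Dom T^2$ is a core for $T$, whereas you work with $y\in\Dom T$ directly; this is harmless given that $\pi_T(\omega)\in\mathbb{B}(\Dom T)$.
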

\begin{proof} Let $\eta:=x\otimes \omega \in \mathcal{X}_{\nabla_{T}}^{S}\otimes^{h}_{\mathcal{B}_{1}}\Omega^{1}_{u}(\mathcal{B}_{1},\mathcal{B}_{2})$ be an elementary tensor.
By Proposition \ref{ajunkie} 
\[m\circ (\pi_{T}\otimes\pi_{T})(\delta\omega)=\delta_{T}\circ\pi_{T}(\omega)-\pi_{T^{2}}(\omega)=[T,\omega_{T}]-\omega_{T^{2}},\] 
and we compute for $y\in \Dom T^{2}$:
\begin{align}
\nonumber (1\otimes m)&(1\otimes 1\otimes\pi_{T})((1\otimes_{\nabla_{T}}\delta)(x\otimes\omega))y 
= (1\otimes m)(\nabla_{T}(x)\omega_{T}
+\gamma(x)\otimes(\pi_{T}\otimes \pi_{T})(\delta\omega))y\\
\nonumber &=\nabla_{T}(x)\omega_{T}y
+\gamma(x)\otimes [T,\omega_{T}]y-\gamma(x)\otimes \omega_{T^{2}}y\\
\nonumber &=\nabla_{T}(x)\omega_{T}y+\gamma(x)\otimes T\omega_{T}y
+\gamma(x)\otimes \omega_{T}Ty-\gamma(x)\otimes \omega_{T^{2}}y\\
\nonumber &=(1\otimes_{\nabla}T)(x\otimes\omega_{T}y)
+\gamma(x)\otimes \omega_{T}Ty-\gamma(x)\otimes \omega_{T^{2}}y\\
\label{nablasquarerep} &=(1\otimes_{\nabla}T)(1\otimes \pi_{T})(x\otimes\omega)y
+(\gamma\otimes\pi_{T})(x\otimes\omega)Ty
-(\gamma\otimes \pi_{T^{2}})(x\otimes\omega)y.
\end{align}
By Lemma \ref{indc2}, $\pi_{T}$ defines a continuous map
\[
1\otimes \pi_{T}:\mathcal{X}^{S}_{\nabla_{T}}\otimes^{h}_{\mathcal{B}_{1}}\Omega^{1}_{u}(\mathcal{B}_{1},\mathcal{B}_{2})\to \mathbb{B}(\Dom T,\mathcal{X}^{S}_{\nabla_{T}}\otimes_{\mathcal{B}_{1}}^{h}\Dom T)\subset \mathbb{B}(\Dom T ,\Dom 1\otimes_{\nabla}T),
\]
and a continuous map
\[
\gamma\otimes \pi_{T}:\mathcal{X}^{S}_{\nabla_{T}}\otimes^{h}_{\mathcal{B}_{1}}\Omega^{1}_{u}(\mathcal{B}_{1},\mathcal{B}_{2})\to \mathbb{B}(\Dom T, X\otimes_{B}Y).
\]
Invoking Lemma \ref{lem: XSinclusion} as well, we see that $\pi_{T^{2}}$ defines a continuous map
\[
\gamma\otimes \pi_{T^{2}}:\mathcal{X}^{S}_{\nabla_{T}}\otimes^{h}_{\mathcal{B}_{1}}\Omega^{1}_{u}(\mathcal{B}_{1},\mathcal{B}_{2})\to\mathbb{B}(\Dom T,\mathcal{X}^{S}_{\nabla_{T}}\otimes_{\mathcal{B}_{1}}^{h} Y)\to \mathbb{B}(\Dom T,X^{S}\otimes_{B} Y).
\]
Therefore by Equation \eqref{nablasquarerep} the operator $ (1\otimes 1\otimes\pi_{T})(1\otimes_{\nabla_{T}}\delta)$
extends by continuity to all $\eta\in \mathcal{X}^{S}_{\nabla_{T}}\otimes_{\mathcal{B}_{1}}^{h}\Omega^{1}_{u}(\mathcal{B}_{1},\mathcal{B}_{2})$ and, as $\Dom T^{2}$ is a core for $T$, to all $y\in\Dom T$.
\end{proof}
For a $C^{2}$-connection $(\nabla,\nabla^{S})$ on a $C^{(1,2)}$-module $(\mathcal{X},S)$ and $x\in\mathcal{X}$ we have $\nabla_{T}^{S}(x)\in\mathcal{X}^{S}_{\nabla_{T}}\otimes^{h}_{\mathcal{B}_{1}}\Omega^{1}_{u}(\mathcal{B}_{1},\mathcal{B}_{2})$, which maps completely contractive to $\mathcal{X}^{S}_{\nabla_{T}}\otimes^{h}_{\mathcal{B}_{1}}\Omega^{1}_{u}(\mathcal{B}_{1})$. Hence by Lemma \ref{computesquare} the operator $\pi_{T}(\nabla\circ\nabla^{S}):\mathcal{X}\otimes^{\alg}_{\mathcal{B}_{1}}Y\to X^{S}\otimes_{B}Y$ is defined.

We then come to our main result.
\begin{thm}
\label{thm: Ristwoformdetermineduptojunk}
Let $(\mathcal{B},Y,T)$ be a $C^{2}$-Kasparov module, $(\mathcal{X},S)$ a $C^{(1,2)}$-module and $(\nabla,\nabla^{S})$ a $C ^{2}$-connection.
Let $\pi_{T}(\nabla\circ\nabla^{S})$ be the represented curvature from Definition \ref{def: repcurv} and $R_{\nabla_T}$ be the curvature operator from Definition \ref{def: curv-operator}.
Then there is an equality 
\[
R_{\nabla_{T}}
=\pi_{T}(\nabla\circ\nabla^{S}):\mathcal{X}\otimes^{\alg}_{\mathcal{B}_{2}}\Dom T^{2}\to X^{S}\otimes_{B}Y.
\]
Consequently, $R_{\nabla_{T}}(x)\in X^{S}\otimes_{B}^{h}\Omega^{2}_{T}(\B_{1})\subset\mathbb{B}(Y,X^{S}\otimes_{B}Y)$ for all $x\in\mathcal{X}$ and $R_{\nabla_{T}}$ extends to $\mathcal{X}\otimes^{\alg}_{\mathcal{B}_{2}}Y$. If $(\nabla',\nabla'^{S})$ is another $C^{2}$-connection with 
$\nabla_{T}=\nabla'_{T}$, then the difference
$(R_{\nabla_{T}}-R_{\nabla'_{T}})(x)\in X^{S}\otimes^{h}_{B}\mathcal{J}^{2}_{T}(\B_{1})$.
\end{thm}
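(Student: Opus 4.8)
The plan is to prove the operator identity $R_{\nabla_{T}}=\pi_{T}(\nabla\circ\nabla^{S})$ on $\mathcal{X}\otimes^{\alg}_{\mathcal{B}_{2}}\Dom T^{2}$ by unwinding both definitions on an elementary tensor $x\otimes y$, with $x\in\mathcal{X}$ homogeneous and $y\in\Dom T^{2}$, and then to read off the remaining assertions. On the one hand, Definition \ref{def: curv-operator} gives $R_{\nabla_{T}}(x\otimes y)=(1\otimes_{\nabla}T)(1\otimes_{\nabla^{S}}T)(x\otimes y)-1\otimes_{\nabla^{S}}T^{2}(x\otimes y)$; substituting the defining formulas for $1\otimes_{\nabla^{S}}T$ and $1\otimes_{\nabla^{S}}T^{2}$ and using that $y\in\Dom T^{2}$ forces $\gamma(x)\otimes Ty\in\mathcal{X}\otimes^{\alg}_{\mathcal{B}_{1}}\Dom T$ with $(1\otimes_{\nabla}T)(\gamma(x)\otimes Ty)=x\otimes T^{2}y+\nabla_{T}(\gamma(x))Ty$, the diagonal terms $x\otimes T^{2}y$ cancel and one is left with $R_{\nabla_{T}}(x\otimes y)=\nabla_{T}(\gamma(x))Ty+(1\otimes_{\nabla}T)(\nabla^{S}_{T}(x)y)-\nabla^{S}_{T^{2}}(x)y$. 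On the other hand, Definition \ref{def: repcurv} and Lemma \ref{computesquare} applied to $\eta=\nabla^{S}(x)$ give $\pi_{T}(\nabla\circ\nabla^{S})(x\otimes y)=(1\otimes_{\nabla}T)(1\otimes\pi_{T})(\nabla^{S}(x))y+(\gamma\otimes\pi_{T})(\nabla^{S}(x))Ty-(\gamma\otimes\pi_{T^{2}})(\nabla^{S}(x))y$. It remains to match the three pairs of terms using $(1\otimes\pi_{T})\circ\nabla^{S}=\nabla^{S}_{T}$ (definition), $(\gamma\otimes\pi_{T})\circ\nabla^{S}=\nabla^{S}_{T}\circ\gamma=\nabla_{T}\circ\gamma$ (grading-compatibility of $\nabla^{S}$, together with the defining compatibility $\pi_{T}\circ\nabla^{S}=\pi_{T}\circ\nabla$ of a $C^{2}$-connection, Definitions \ref{def: connpair} and \ref{horverC2mod}), and $(\gamma\otimes\pi_{T^{2}})\circ\nabla^{S}=\nabla^{S}_{T^{2}}$, the last of which is where the $\Z_{2}$-grading sign carried by $\pi_{T^{2}}$ enters.

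Granting this identity, the other assertions follow quickly. By Proposition \ref{nablassquare}, $(1\otimes_{\nabla_{T}}\delta)\circ\nabla^{S}$ is completely bounded with values in $X^{S}\otimes^{h}_{B}\Omega^{1}_{T}(B,\mathcal{B}_{1})\otimes^{h}_{\mathcal{B}_{1}}\Omega^{1}_{u}(\mathcal{B}_{1})$; composing with $(1\otimes m)(1\otimes 1\otimes\pi_{T})$ and invoking Lemma \ref{lem:junk-range} (which identifies the closed range of $m\circ(\pi_{T}\otimes\pi_{T})$ with $\Omega^{2}_{T}(\mathcal{B}_{1})$) shows $\pi_{T}(\nabla\circ\nabla^{S})(x)\in X^{S}\otimes^{h}_{B}\Omega^{2}_{T}(\mathcal{B}_{1})\subset\mathbb{B}(Y,X^{S}\otimes_{B}Y)$. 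By the identity, $R_{\nabla_{T}}(x)$ is this bounded operator; and since $x\mapsto(1\otimes_{\nabla_{T}}\delta)(\nabla^{S}(x))$ is right $\mathcal{B}_{1}$-linear (the discussion before Definition \ref{def: repcurv}), $R_{\nabla_{T}}$ is right $\mathcal{B}_{2}$-linear and so descends to, and extends over, $\mathcal{X}\otimes^{\alg}_{\mathcal{B}_{2}}Y$.

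For the final statement, take a second $C^{2}$-connection $(\nabla',\nabla'^{S})$ with $\nabla_{T}=\nabla'_{T}$. Since $1\otimes_{\nabla_{T}}\delta$ depends only on $\nabla_{T}$, one has $R_{\nabla_{T}}(x)-R_{\nabla'_{T}}(x)=(1\otimes m)(1\otimes 1\otimes\pi_{T})\big((1\otimes_{\nabla_{T}}\delta)(A(x))\big)$, where $A:=\nabla^{S}-\nabla'^{S}$ is $\mathcal{B}_{2}$-linear (the Leibniz terms cancel) and $(1\otimes\pi_{T})(A(x))=\nabla^{S}_{T}(x)-\nabla'^{S}_{T}(x)=\nabla_{T}(x)-\nabla'_{T}(x)=0$, so $A(x)\in\ker(1\otimes\pi_{T})$. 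Now Proposition \ref{ajunkie} lets one write $(1\otimes m)(1\otimes 1\otimes\pi_{T})(1\otimes_{\nabla_{T}}\delta)$ on $z\otimes\omega$ as $\big(\nabla_{T}(z)\pi_{T}(\omega)+\gamma(z)\otimes\delta_{T}(\pi_{T}(\omega))\big)-\gamma(z)\otimes\pi_{T^{2}}(\omega)$; the bracketed expression defines a completely bounded map of $\mathcal{X}^{S}_{\nabla_{T}}\otimes^{h}_{\mathcal{B}_{1}}\Omega^{1}_{T}(\mathcal{B}_{1})$ (the balancing over $\mathcal{B}_{1}$ is a short Leibniz check with $\delta_{T}(wb)=\delta_{T}(w)b-w\delta_{T}(b)$) which annihilates $(1\otimes\pi_{T})(A(x))=0$, so $R_{\nabla_{T}}(x)-R_{\nabla'_{T}}(x)=-(\gamma\otimes\pi_{T^{2}})(A(x))$. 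Finally Proposition \ref{ajunkie}, combined with the observation that $\pi_{T}(\omega)=0$ and $\sum a_{i}b_{i}=0$ force $\sum[T,a_{i}]b_{i}=0$, gives $\pi_{T^{2}}(\ker\pi_{T})\subset\mathcal{J}^{2}_{T}(\mathcal{B}_{1})$ (Corollary \ref{cor:junk-squared}); transporting this along the Haagerup module tensor product --- using that $\ker(1\otimes\pi_{T})$ is the closed span of $\mathcal{X}^{S}_{\nabla_{T}}\otimes^{h}_{\mathcal{B}_{1}}\ker\pi_{T}$, and that $X^{S}\otimes^{h}_{B}\mathcal{J}^{2}_{T}(\mathcal{B}_{1})$ is closed in $X^{S}\otimes^{h}_{B}\Omega^{2}_{T}(\mathcal{B}_{1})$ by injectivity of $\otimes^{h}$ --- yields $(\gamma\otimes\pi_{T^{2}})(A(x))\in X^{S}\otimes^{h}_{B}\mathcal{J}^{2}_{T}(\mathcal{B}_{1})$, as required.

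The real work lies not in the algebra but in the domain bookkeeping: at every step of the first computation one must verify that the manipulations are legitimate on $\mathcal{X}\otimes^{\alg}_{\mathcal{B}_{2}}\Dom T^{2}$ --- this is the purpose of Lemmas \ref{domaineq} and \ref{domainproperties} and of Proposition \ref{nablassquare} --- and one must be vigilant about the $\Z_{2}$-grading signs, especially those attached to $\pi_{T^{2}}$ and to $\nabla^{S}_{T^{2}}$. For the dependence-up-to-junk statement the genuinely delicate point is the behaviour of the Haagerup module tensor product under $1\otimes\pi_{T}$, namely that its kernel is exhausted by $\mathcal{X}^{S}_{\nabla_{T}}\otimes^{h}_{\mathcal{B}_{1}}\ker\pi_{T}$, which rests on the projectivity and injectivity properties of $\otimes^{h}$.
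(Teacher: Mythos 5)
Your proposal is correct and follows essentially the same route as the paper: unwind $\pi_{T}(\nabla\circ\nabla^{S})$ on elementary tensors via Lemma \ref{computesquare}, cancel the $x\otimes T^{2}y$ terms against $1\otimes_{\nabla^{S}}T^{2}$ using the compatibility $\nabla_{T}(x)=\nabla^{S}_{T}(x)$, and for two connections with the same represented symbol reduce the difference to $(1\otimes\pi_{T^{2}})(\nabla^{S}-\nabla'^{S})(x)$ and invoke Corollary \ref{cor:junk-squared}. Your additional care with the $\Z_{2}$-grading signs and with the passage from $\ker\pi_{T}$ to $\ker(1\otimes\pi_{T})$ across the Haagerup tensor product only spells out steps the paper leaves implicit.
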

\begin{proof}
For $x\in\mathcal{X}$, $\nabla^{S}(x)\in\mathcal{X}^{S}_{\nabla_{T}}\otimes^{h}_{\mathcal{B}_{1}}\Omega^{1}_{u}(\mathcal{B}_{1},\mathcal{B}_{2})$, so we compute using Lemma \ref{computesquare}, with $y\in\Dom T^{2}$:
\begin{align}
\nonumber\pi_{T}(\nabla\circ\nabla^{S})(x\otimes y)&=(1\otimes m)(1\otimes 1\otimes\pi_{T})((1\otimes_{\nabla_{T}}\delta)\circ \nabla^{S})(x)y\\
\label{reprep}&=(1\otimes_{\nabla}T)\nabla_{T}^{S}(x)y+\nabla_{T}^{S}(x)Ty-\nabla_{T^{2}}^{S}(x)y\\
\nonumber&=(1\otimes_{\nabla}T)\nabla_{T}^{S}(x)y+\nabla_{T}^{S}(x)Ty+x\otimes T^{2}y -\nabla_{T^{2}}^{S}(x)y-x\otimes T^{2}y\\
\nonumber&=(1\otimes_{\nabla}T)(\nabla_{T}^{S}(x)y + x\otimes Ty)-(1\otimes_{\nabla^{S}}T^{2})(x\otimes y)\\
\nonumber&=( (1\otimes_{\nabla}T)\circ (1\otimes_{\nabla^{S}}T)-1\otimes_{\nabla^{S}}T^{2})(x\otimes y)\\
\nonumber&=R_{\nabla_{T}}(x) y
%&=\sum\nabla(x_{i}\otimes \delta\langle x_{i},x\rangle 
%+ x_{i}\otimes\langle x_i,\omega(x)\rangle)\\
%&=\sum\nabla(x_{i})\otimes \delta\langle x_{i},x\rangle 
%+ \nabla(x_{i})\otimes\langle x_i,\omega(x)\rangle \\ 
%&\quad\quad\quad\quad\quad\quad\quad\quad\quad\quad\quad\quad\quad\quad+x_{i}\otimes \delta\langle x_i,\omega(x)\rangle
\end{align}

as desired. Now if $(\nabla',\nabla'^{S})$ is such that $\nabla_{T}=\nabla'_{T}$ then Equation \eqref{reprep} gives
\begin{align*}
(R_{\nabla_{T}}-R_{\nabla'_{T}})(x\otimes y)=(\nabla_{T^{2}}^{S}-\nabla'^{S}_{T^{2}})(x)y=(1\otimes\pi_{T^{2}})(\nabla^{S}-\nabla'^{S})(x)y.
\end{align*}
By Corollary \ref{cor:junk-squared} $(1\otimes\pi_{T^{2}})(\nabla^{S}-\nabla'^{S})(x)\in\mathcal{X}^{S}_{\nabla_{T}}\otimes^{h}_{\mathcal{B}_{1}}\mathcal{J}^{2}_{T}(\mathcal{B}_{1})=X^{S}\otimes^{h}_{B}\mathcal{J}^{2}_{T}(\mathcal{B}_{1})$.
\end{proof}

\subsection{The curvature of a $C^{2}$-correspondence}
\label{sect:curv-corresp}
%{\bf BM: Although it is not necessary in all steps, I wrote this section in under full-on $C^{2}$-conditions, instead of carefully adding more structure as we need it.
%As the notion of correspondence is still not stable in the literature and gets adapted to each specific setting anywat, I think this is ok.}

%\color{blue}
So far we have focused on giving meaning to the curvature of a $C^{2}$-connection on $X$ under minimal differentiability assumptions. 
The represented curvature exists on $C^{1}$-modules, and it coincides with the curvature operator on $C^{(1,2)}$-modules.
It is now time to give meaning to Equation \eqref{correspondencecurve-intro} and define the curvature associated to a correspondence. 
Briefly, a correspondence is a Kasparov module together with a 
connection on the module, and we define this in detail below. Before
doing so, we describe how we impose additional $C^2$-conditions on our modules and connections.

%\color{black}
%In addition to our fixed $C^2$-unbounded Kasparov module $(\B,Y,T)$, we now also consider a $C^{2}$-unbounded $(A,C)$-Kasparov module $(\A, E,D)$ and we impose further $C^{2}$-conditions on our modules and connections.
\begin{defn}
\label{horverC2}
Let $(\mathcal{B},Y,T)$ be a $C^{2}$-Kasparov module. 
A $C^{2}$-\emph{module} for $(\mathcal{B},Y,T)$ 
is a pair $(\mathcal{X},S)$ 
%with a pair of connections $(\nabla,\nabla^{1})$ where $\nabla:\mathcal{X}\to X\otimes^{h}_{B}\Omega^{1}(B,\mathcal{B}_{1})$ and $\nabla^{1}:\mathcal{X}\to \mathcal{X}_{\nabla_{T}}\otimes_{\mathcal{B}_{1}}^{h}\Omega^{1}(\mathcal{B}_{1},\mathcal{B}_{2})$ 
such that
%$C^{2}$-connection $(\nabla,\nabla^{S})$ such that
\begin{enumerate}
\item $\mathcal{X}$ is a horizontally differentiable $C^{2}$-module with $C^{*}$-closure $X$;
\item $\mathcal{X}\subset\Dom S^{2}$.
%\item $\nabla=\iota_{T}\circ\nabla^{1}$ 
%\item $(S\otimes 1,1\otimes_{\nabla}T)$ is a vertically anticommuting pair.
%$\nabla^{S}:\mathcal{X}\to \mathcal{X}_{\nabla_{T}}\otimes^{h}_{\mathcal{B}_{1}}\Omega^{1}(\mathcal{B}_{1},\mathcal{B}_{2})\to\mathcal{X}_{\nabla_{T}}^{S}\otimes^{h}_{\mathcal{B}_{1}}\Omega^{1}(\mathcal{B}_{1},\mathcal{B}_{2})$ makes $(\nabla,\nabla^{S})$.
%\item $\mathcal{X}\otimes^{\alg}_{\mathcal{B}_{2}}\Dom T^{2}\subset \Dom (S^{2}\otimes 1)\cap \Dom (1\otimes_{\nabla}T)^{2}$
\end{enumerate}
A $C^{2}$-\emph{connection} on a $C^{2}$-module $(\mathcal{X},S)$ is a pair $(\nabla,\nabla^{1})$ of connections
%By a \emph{$C^{2}$-connection} on $\mathcal{X}$ we mean a pair 
%$(\nabla,\nabla^{S})$ of connections
\[
\nabla:\mathcal{X}\to X\otimes^{h}_{B}\Omega^{1}_{u}(B,\mathcal{B}_{2}),
\quad \nabla^{1}:\mathcal{X}\to \mathcal{X}_{\nabla_{T}}
\otimes^{h}_{\mathcal{B}_{1}}
\Omega^{1}_{u}(\mathcal{B}_{1},\mathcal{B}_{2}),
\]
such that $(S\otimes 1,1\otimes_{\nabla}T)$ is a vertically anticommuting pair.
\end{defn}
Let us clarify the modifications of the above definition relative to Definitions \ref{def: connpair} and \ref{horverC2}. The content of 
conditions 1. and 2. is that the module $\mathcal{X}$ is now assumed to be both horizontally and vertically $C^{2}$ as opposed to just vertically $C^1$. The curvature of the
$C^{2}$-connection is now viewed, using our Sobolev space analogy from Remark \ref{rmk:sobo-rmk}, as a map $W^{2}_{(2,2)}\to L^{2}$, whereas with only a $C^{1}$-structure it is a map $W^{2}_{(1,2)}\to W_{(-1,0)}^{2}$. The next Lemma makes this statement precise.
%as well as vertically $C^{2}$.
\begin{lemma}  Let $(\mathcal{X},S)$ be a $C^{2}$-module and $(\nabla,\nabla^{1})$ a $C^{2}$-connection. The connection \[\nabla^{S}:=(\iota_{S}\otimes 1)\circ\nabla^{1}:\mathcal{X}\to\mathcal{X}_{\nabla_{T}}^{S}\otimes^{h}_{\mathcal{B}_{1}}\Omega^{1}_{u}(\mathcal{B}_{1},\mathcal{B}_{2}),\] makes $(\nabla,\nabla^{S})$ into a $C^{2}$-connection on $(\mathcal{X},S)$ viewed as a $C^{(1,2)}$-module.
\end{lemma}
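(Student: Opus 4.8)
The plan is to go through each clause in the definition of a $C^{2}$-connection on a $C^{(1,2)}$-module (Definition~\ref{horverC2mod}, together with the compatibility condition~(2) of Definition~\ref{def: connpair}) and verify it, using that all the delicate analytic input has already been supplied by Lemma~\ref{Soboweird}, Lemma~\ref{lem: XSinclusion}, Lemma~\ref{domaineq}, and the existence of the complete contraction $\iota_{S}\colon\mathcal{X}_{\nabla_{T}}\to\mathcal{X}^{S}_{\nabla_{T}}$.

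First I would observe that $(\mathcal{X},S)$ is a $C^{(1,2)}$-module for free: horizontal $C^{2}$-differentiability of $\mathcal{X}$ is already part of being a $C^{2}$-module, and $\mathcal{X}\subset\Dom S^{2}\subset\Dom S$ supplies what remains. The connection $\nabla$ is left untouched, so it is still a Hermitian connection into $X\otimes^{h}_{B}\Omega^{1}_{u}(B,\mathcal{B}_{2})$ and $(S\otimes1,1\otimes_{\nabla}T)$ is still a vertically anticommuting pair; both are inherited verbatim from the hypothesis that $(\nabla,\nabla^{1})$ is a $C^{2}$-connection on the $C^{2}$-module $(\mathcal{X},S)$.

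Next I would check that $\nabla^{S}:=(\iota_{S}\otimes1)\circ\nabla^{1}$ is a connection with the correct target. Since $\iota_{S}$ is a grading-preserving, completely contractive $\mathcal{B}_{1}$-bimodule map, functoriality of the Haagerup module tensor product produces a grading-preserving complete contraction $\iota_{S}\otimes1\colon\mathcal{X}_{\nabla_{T}}\otimes^{h}_{\mathcal{B}_{1}}\Omega^{1}_{u}(\mathcal{B}_{1},\mathcal{B}_{2})\to\mathcal{X}^{S}_{\nabla_{T}}\otimes^{h}_{\mathcal{B}_{1}}\Omega^{1}_{u}(\mathcal{B}_{1},\mathcal{B}_{2})$, which is exactly the codomain required of $\nabla^{S}$. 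Because $\iota_{S}$ restricts to the identity on $\mathcal{X}$, applying $\iota_{S}\otimes1$ to the Leibniz rule and the grading rule for $\nabla^{1}$ immediately gives $\nabla^{S}(xb)=\nabla^{S}(x)b+\gamma(x)\otimes\delta(b)$ and $\nabla^{S}(\gamma(x))=(\gamma\otimes1)\nabla^{S}(x)$, so $\nabla^{S}$ is indeed a connection.

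The step that needs genuine care, and which I expect to be the main obstacle, is the compatibility condition~(2) of Definition~\ref{def: connpair}: for $x\in\mathcal{X}$ and $y\in\Dom T$, that $(S+i)^{-1}\nabla_{T}(x)y=(S+i)^{-1}\cdot\nabla^{S}_{T}(x)y$ in $X\otimes_{B}Y$. The plan here is to unwind the identifications. Since $1\otimes\pi_{T}$ commutes with $\iota_{S}\otimes1$, one has $\nabla^{S}_{T}(x)=(\iota_{S}\otimes1)\nabla^{1}_{T}(x)$ with $\nabla^{1}_{T}(x)\in\mathcal{X}_{\nabla_{T}}\otimes^{h}_{\mathcal{B}_{1}}\Omega^{1}_{T}(\mathcal{B}_{1})$, so $\nabla^{1}_{T}(x)y$ already lies in $\mathcal{X}_{\nabla_{T}}\otimes_{\mathcal{B}_{1}}Y\subset X\otimes_{B}Y$; and the unitary $(S+i)^{-1}\colon X^{S}\to X$ of Lemma~\ref{Soboweird}, precomposed with the dense inclusion $X\hookrightarrow X^{S}$, is just the ordinary bounded resolvent, so that $(S+i)^{-1}\cdot\nabla^{S}_{T}(x)y=((S+i)^{-1}\otimes1)(\nabla^{1}_{T}(x)y)$. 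Using that $\nabla^{1}$ refines $\nabla$ — the compatibility built into $(\nabla,\nabla^{1})$ being a $C^{2}$-connection, i.e.\ $\nabla^{1}_{T}(x)=\nabla_{T}(x)$ under $\mathcal{X}_{\nabla_{T}}\otimes^{h}_{\mathcal{B}_{1}}\Omega^{1}_{T}(\mathcal{B}_{1})\hookrightarrow X\otimes^{h}_{B}\Omega^{1}_{T}(B,\mathcal{B}_{1})$ — the right-hand side equals $((S+i)^{-1}\otimes1)(\nabla_{T}(x)y)=(S+i)^{-1}\nabla_{T}(x)y$, as needed. The only real work is this bookkeeping of the two distinct resolvent maps (the operator-space isomorphism $X^{S}\to X$ versus the honest bounded operator $(S+i)^{-1}$ on $X$) and of the inclusion chain $\mathcal{X}\hookrightarrow\mathcal{X}_{\nabla_{T}}\hookrightarrow X$, $\mathcal{X}^{S}_{\nabla_{T}}\hookrightarrow X^{S}$; once these are pinned down, the identity is formal.
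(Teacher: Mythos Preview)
Your proposal is correct and follows exactly the same route as the paper's proof, only with far more detail: the paper simply records that $\iota_{S}:\mathcal{X}_{\nabla_{T}}\to\mathcal{X}^{S}_{\nabla_{T}}$ is a complete contraction and then asserts that the compatibility condition $(S+i)^{-1}\nabla_{T}(x)=(S+i)^{-1}\cdot\nabla^{S}_{T}(x)$ ``holds automatically,'' whereas you actually unpack what ``automatically'' means by tracing through the inclusions $\mathcal{X}_{\nabla_{T}}\hookrightarrow X$ and $\mathcal{X}^{S}_{\nabla_{T}}\hookrightarrow X^{S}$ and the two incarnations of the resolvent. Your observation that the compatibility $\nabla^{1}_{T}=\nabla_{T}$ under the natural inclusion is being used is exactly the content the paper sweeps into ``automatically.''
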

\begin{proof} The identity map on $\mathcal{X}$ extends to a complete contraction $\iota_{S}:\mathcal{X}_{\nabla_{T}}\to \mathcal{X}_{\nabla_{T}}^{S}$.
The only thing to check from Definition \ref{def: connpair} is the compatibility $(S+i)^{-1}\nabla_{T}(x)=(S+i)^{-1}\cdot\nabla^{S}_{T}(x)$, which holds automatically.
\end{proof}
%\[\mathcal{X}\otimes^{\alg}_{\mathcal{B}}\Dom T^{2}\subset \Dom (1\otimes_{\nabla}T)^{2},\]
%and the operator 
%\[1\otimes_{\nabla}T^{2}:\mathcal{X}\otimes^{\alg}_{\mathcal{B}}\Dom T^{2}\to X\otimes_{B}Y,\quad x\otimes y\mapsto x\otimes T^{2}y+\nabla_{T^{2}}(x)y,\]
%is well defined. On $\mathcal{X}\otimes_{\mathcal{B}}^{\alg}\Dom T^{2}$ there is an equality of operators 

The next proposition shows that the curvature operator of
a $C^2$-connection is well-defined.
\begin{prop}
\label{prop: C2curvedomain}
Let $(\mathcal{X},S)$ be a $C^{2}$-module and $(\nabla,\nabla^{1})$ a $C^{2}$-connection. Then 
\[\mathcal{X}\otimes^{\alg}_{\mathcal{B}_{2}}\Dom T^{2}\subset \Dom (S\otimes 1+ 1\otimes_{\nabla}T)^{2}\subset \mathcal{F}(S\otimes 1,1\otimes_{\nabla}T),\]
and $R_{\nabla_{T}}:=(1\otimes_{\nabla}T)^{2}-1\otimes_{\nabla}T^{2}$ is a symmetric operator defined on $\mathcal{X}\otimes_{\mathcal{B}_{2}}^{\alg}\Dom T^{2}$ that extends to $\mathcal{X}\otimes^{\alg}_{\mathcal{B}_{2}}Y$. Here the set $\mathcal{F}$ is as in Definition \ref{wac}.
\end{prop}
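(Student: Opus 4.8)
The plan is to reduce the statement about $C^{2}$-connections on $C^{2}$-modules to the $C^{(1,2)}$ situation already analyzed, and then to exploit the vertically anticommuting pair hypothesis together with the symmetry of the operators involved. First I would use the preceding lemma: given a $C^{2}$-connection $(\nabla,\nabla^{1})$ on the $C^{2}$-module $(\mathcal{X},S)$, we obtain $\nabla^{S}=(\iota_{S}\otimes 1)\circ\nabla^{1}$ making $(\nabla,\nabla^{S})$ a $C^{2}$-connection on $(\mathcal{X},S)$ viewed as a $C^{(1,2)}$-module. The key extra input in the $C^{2}$ setting is the hypothesis $\mathcal{X}\subset\Dom S^{2}$, which means that on elementary tensors $x\otimes y$ with $y\in\Dom T^{2}$ the element $1\otimes_{\nabla^{S}}T^{2}(x\otimes y)=x\otimes T^{2}y+\nabla^{S}_{T^{2}}(x)y$ lands in $X\otimes_{B}Y$ (not merely $X^{S}\otimes_{B}Y$), since now $(S+i)^{-1}\cdot\nabla^{S}_{T^{2}}(x)y$ can be multiplied back by $(S+i)$: the vertical $C^{2}$-regularity removes the resolvent that appeared in the $C^{(1,2)}$ analysis. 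Thus $R_{\nabla_{T}}$ takes values in $X\otimes_{B}Y$ rather than $X^{S}\otimes_{B}Y$, and by Theorem \ref{thm: Ristwoformdetermineduptojunk} it equals $\pi_{T}(\nabla\circ\nabla^{S})$, hence extends to $\mathcal{X}\otimes^{\alg}_{\mathcal{B}_{2}}Y$ with values in $X\otimes_{B}^{h}\Omega^{2}_{T}(\mathcal{B}_{1})$.

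For the domain inclusions: I would first show $\mathcal{X}\otimes^{\alg}_{\mathcal{B}_{2}}\Dom T^{2}\subset\Dom(1\otimes_{\nabla}T)^{2}$. By Lemma \ref{domainproperties}, $1\otimes_{\nabla}T$ maps $\mathcal{X}\otimes^{\alg}_{\mathcal{B}_{2}}\Dom T^{2}$ into $\Dom (S+i)^{-1}(1\otimes_{\nabla}T)=\Dom (1\otimes_{\nabla}T)(S+i)^{-1}$. The upgrade in the $C^{2}$ case is that, because of the vertically anticommuting pair property together with $\mathcal{X}\subset\Dom S^{2}$, one can commute the resolvents through and conclude that the image actually lies in $\Dom(1\otimes_{\nabla}T)$ itself, so that $(1\otimes_{\nabla}T)^{2}$ is defined on $\mathcal{X}\otimes^{\alg}_{\mathcal{B}_{2}}\Dom T^{2}$; this uses that $S\otimes 1$ preserves $\mathcal{X}\otimes^{\alg}_{\mathcal{B}_2}\Dom T^2$ up to lower order. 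For the second inclusion, $\Dom(S\otimes 1+1\otimes_{\nabla}T)^{2}\subset\mathcal{F}(S\otimes 1,1\otimes_{\nabla}T)$, I would invoke that $(S\otimes1,1\otimes_\nabla T)$ being a vertically anticommuting pair gives (by \cite{LM,MR}) that $S\otimes 1+1\otimes_\nabla T$ is self-adjoint regular on $\Dom S\otimes1\cap\Dom 1\otimes_\nabla T$; then for $\xi$ in the domain of the square, $(S\otimes1+1\otimes_\nabla T)\xi\in\Dom S\otimes1\cap\Dom 1\otimes_\nabla T$, and a short computation using the relative boundedness of the anticommutator shows $\xi\in\mathcal{F}$ as defined in Definition \ref{wac}: one verifies $(S\otimes1)\xi\in\Dom 1\otimes_\nabla T$ and $(1\otimes_\nabla T)\xi\in\Dom S\otimes1$ separately.

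Finally, symmetry of $R_{\nabla_{T}}=(1\otimes_{\nabla}T)^{2}-1\otimes_{\nabla}T^{2}$ on $\mathcal{X}\otimes^{\alg}_{\mathcal{B}_{2}}\Dom T^{2}$ follows because both $(1\otimes_{\nabla}T)^{2}$ and $1\otimes_{\nabla^{S}}T^{2}$ are symmetric on this domain: the first since $1\otimes_{\nabla}T$ is symmetric by Lemma \ref{symmetric} and its square is symmetric on the domain where it is defined, and the second directly from the Leibniz rule and Hermitian-ness of $\nabla$ via a polarization argument in the spirit of Lemma \ref{symmetric} applied to the even operator $T^{2}$ (the form-valued inner product identities \eqref{eq:do-we-need-to-refer-to-this} and compatibility of $\pi_{T^{2}}$ with $*$-structures are what is needed here). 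The extension to $\mathcal{X}\otimes^{\alg}_{\mathcal{B}_{2}}Y$ is then immediate from the corresponding statement in Theorem \ref{thm: Ristwoformdetermineduptojunk} once the equality $R_{\nabla_{T}}=\pi_{T}(\nabla\circ\nabla^{S})$ is in place, together with the observation that in the $C^{2}$ case the target simplifies to $X\otimes_{B}^{h}\Omega^{2}_{T}(\mathcal{B}_{1})$. The main obstacle I anticipate is the careful bookkeeping of domains in the first inclusion — showing that $(1\otimes_\nabla T)$ lands in its own domain rather than only in the domain twisted by a resolvent — since this is exactly where the strengthened hypothesis $\mathcal{X}\subset\Dom S^{2}$ must be used, and one has to be careful that no spurious unbounded terms survive when the resolvent is cleared.
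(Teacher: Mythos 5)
Your overall route is the paper's: pass to the $C^{(1,2)}$ picture via $\iota_{S}$, get $\X\otimes^{\alg}_{\B_{2}}\Dom T^{2}\subset\Dom(1\otimes_{\nabla}T)^{2}$ from Lemma \ref{domainproperties}, and obtain symmetry and the extension to $\X\otimes^{\alg}_{\B_{2}}Y$ from Lemma \ref{symmetric} and Theorem \ref{thm: Ristwoformdetermineduptojunk}. (In the $C^{2}$ setting the resolvent manipulations you describe are not needed for this step, since $\nabla^{1}$ already takes values in $\X_{\nabla_{T}}\otimes^{h}_{\B_{1}}\Omega^{1}_{u}(\B_{1},\B_{2})$.)

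The genuine gap is in the two domain statements involving the tensor sum. Both rest on the identity, valid for a vertically anticommuting pair, $\Dom(S\otimes 1+1\otimes_{\nabla}T)^{2}=\Dom(S^{2}\otimes 1)\cap\Dom(1\otimes_{\nabla}T)^{2}\subset\mathcal{F}(S\otimes 1,1\otimes_{\nabla}T)$, which is \cite[Theorem 5.1]{LM} and is exactly what the paper cites. You never invoke it: for the first inclusion you stop at $\Dom(1\otimes_{\nabla}T)^{2}$ and do not say how to enter $\Dom(S\otimes 1+1\otimes_{\nabla}T)^{2}$ --- this is where the hypothesis $\X\subset\Dom S^{2}$ is actually needed, to give $\X\otimes^{\alg}_{\B_{2}}\Dom T^{2}\subset\Dom(S^{2}\otimes 1)$, after which the LM identity finishes the argument; instead you spend that hypothesis on resolvent bookkeeping where it is not required. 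For the second inclusion, the ``short computation using the relative boundedness of the anticommutator'' does not go through: if $\xi$ and $(S\otimes 1+1\otimes_{\nabla}T)\xi$ lie in $\Dom(S\otimes 1)\cap\Dom(1\otimes_{\nabla}T)$, then $(S\otimes 1)\xi\in\Dom(1\otimes_{\nabla}T)$ if and only if $(1\otimes_{\nabla}T)\xi\in\Dom(1\otimes_{\nabla}T)$, and likewise with the roles exchanged, so separating the two memberships amounts precisely to showing $\xi\in\Dom(S^{2}\otimes 1)\cap\Dom(1\otimes_{\nabla}T)^{2}$; that is the nontrivial content of \cite[Theorem 5.1]{LM} and cannot be extracted from the bounded anticommutator alone. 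Relatedly, your auxiliary claim that ``$S\otimes 1$ preserves $\X\otimes^{\alg}_{\B_{2}}\Dom T^{2}$ up to lower order'' has no basis in the hypotheses (nothing forces $S\X\subset\X$) and is unnecessary. Replacing these steps by the citation of \cite[Theorem 5.1]{LM}, and using $\X\subset\Dom S^{2}$ as indicated, closes the argument and reproduces the paper's proof.
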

\begin{proof}
As $(S\otimes 1,1\otimes_{\nabla}T)$ is a vertically anticommuting pair \cite[Theorem 5.1]{LM} gives that $\Dom (S^{2}\otimes 1)\cap \Dom (1\otimes_{\nabla}T)^{2}=\Dom (S\otimes 1+1\otimes_{\nabla}T)^{2}\subset \mathcal{F}(S\otimes 1,1\otimes_{\nabla}T)$. By Lemma \ref{domainproperties} $$\mathcal{X}\otimes^{\alg}_{\mathcal{B}_{2}}\Dom T^{2}\subset \Dom (1\otimes_{\nabla}T)^{2},$$ so  condition 2 of Definition \ref{horverC2} implies that $\mathcal{X}\otimes_{\mathcal{B}_{2}}^{\alg}\Dom T^{2}\subset \Dom (S\otimes 1+1\otimes_{\nabla}T)^{2}$.
\end{proof}
\begin{defn}
A $C^{2}$-\emph{correspondence} between a 
$C^2$-Kasparov $A$-$C$ module $(\A,E,D)$ and a 
$C^2$-Kasparov  $B$-$C$ module
$(\B,Y,T)$ 
is a quintuple $(\A,X,S,(\nabla,\nabla^{1}))$ such that:
\begin{enumerate}
\item $(\A,X,S)$ is an unbounded $(A,B)$ Kasparov module;
\item $(\mathcal{X},S)$ is a $C^{2}$-module;
\item $(\nabla,\nabla^{1})$ is a $C^{2}$-connection;
\item there is a unitary isomorphism $u:E\to X\otimes_{B}Y$ intertwining the $\A$-representations and such that 
$$
\Dom D\cap u^{*}(\Dom S\otimes 1\cap\Dom 1\otimes_{\nabla}T),
$$ 
is dense in $E$ and 
\[
Z:=D-u^{*}(S\otimes 1+1\otimes_{\nabla}T)u:\Dom D\cap 
u^{*}(\Dom S\otimes 1\cap\Dom 1\otimes_{\nabla}T)\to E,
\]
is bounded and preserves $\Dom D$.
\end{enumerate}
\end{defn}
As $Z$ preserves $\Dom D$, $(D,Z)$ is a vertically anticommuting pair. By \cite[Theorem 5.1]{LM}
\[
\Dom D^{2}=\Dom D^{2}\cap\Dom Z^{2}=\Dom (D-Z)^{2}=\Dom u^{*}(S\otimes 1+1\otimes_{\nabla}T)^{2}u.
\]
Then by Definiton \ref{horverC2}, for a $C^2$-correspondence we have $\mathcal{X}\otimes^{\alg}_{\mathcal{B}_{2}}\Dom T^{2}\subset \Dom D^{2}$.

As mentioned, correspondences are more-or-less unbounded Kasparov modules 
with additional connection data. The additional connection data
allows one to construct representatives of the Kasparov product \cite{BMS,KaLe2,LM,Mes,MR}, as
described in the next result.
\begin{prop}
Let $(\A,\mathcal{X},S,(\nabla,\nabla^{1}))$ be a $C^{2}$-correspondence 
for $(\A,E,D)$ and $(\B,Y,T)$. Then
\[
[(\A,E,D)]=[(\A,X,S)]\otimes_B [(\B, Y, T)]\in KK_{0}(A,C),
\]
where $\otimes_B$ denotes the Kasparov product.
\end{prop}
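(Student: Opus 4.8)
## Proof proposal

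The plan is to verify that the quintuple $(\A, X, S, (\nabla, \nabla^1))$, which by hypothesis assembles an unbounded Kasparov module $(\A, X, S)$ together with a connection, satisfies the hypotheses of the standard unbounded Kasparov product theorem (see \cite{BMS, KaLe2, LM, Mes, MR}), and that the operator $u^*(S\otimes 1 + 1\otimes_\nabla T)u$ therefore represents $[(\A, X, S)]\otimes_B [(\B, Y, T)]$. The conclusion then follows by showing $(\A, E, D)$ and $(\A, E, u^*(S\otimes 1 + 1\otimes_\nabla T)u)$ represent the same $KK$-class.

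First I would recall what the unbounded Kasparov product theorem requires: given unbounded Kasparov modules $(\A, X, S)$ and $(\B, Y, T)$ and a Hermitian connection $\nabla$ on a dense submodule $\mathcal{X}$, if (i) $S\otimes 1 + 1\otimes_\nabla T$ is essentially self-adjoint and regular on $\mathcal{X}\otimes^{\alg}_{\mathcal{B}_1}\Dom T$, (ii) the domain condition $|x\rangle : \Dom T \to \Dom(1\otimes_\nabla T)$ holds with $|\gamma(x)\rangle T - (1\otimes_\nabla T)|x\rangle$ bounded for $x$ in a dense submodule, and (iii) the commutator $[S\otimes 1, 1\otimes_\nabla T]$ is relatively $S\otimes 1$-bounded (the ``connection condition''), then $S\otimes 1 + 1\otimes_\nabla T$ represents the internal product. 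Condition (ii) is exactly Proposition \ref{prop: nablaclosure}. Condition (iii) is subsumed by the requirement that $(S\otimes 1, 1\otimes_\nabla T)$ be a vertically anticommuting pair, which is part of the definition of a $C^2$-connection. Condition (i): we must check that $S\otimes 1 + 1\otimes_\nabla T$ is essentially self-adjoint and regular. This is where \cite[Theorem 5.1]{LM} enters — a vertically anticommuting pair has essentially self-adjoint, regular sum — so it follows once we know the pair is vertically anticommuting, which is built into the hypotheses. Additionally one must verify that $(\A, X, S)$ being a Kasparov module together with $(\B, Y, T)$ being one ensures the locally compact resolvent condition for the product operator; this is the standard argument that $a(S\otimes 1 + 1\otimes_\nabla T \pm i)^{-1} \in \mathbb{K}(X\otimes_B Y)$, obtained by combining $a(S\pm i)^{-1}\in\mathbb{K}(X)$, the compactness of $b(T\pm i)^{-1}$, and a resolvent expansion using the relative bound on the commutator.

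The second half of the argument uses hypothesis (4) of the correspondence definition directly. We have a bounded self-adjoint perturbation $Z = D - u^*(S\otimes 1 + 1\otimes_\nabla T)u$ that preserves $\Dom D$. Since $(D, Z)$ is a vertically anticommuting pair (as noted in the excerpt just after the definition, because $Z$ is bounded and preserves $\Dom D$), the operators $D$ and $D - Z = u^*(S\otimes 1 + 1\otimes_\nabla T)u$ have the same domain and their difference is bounded. A bounded perturbation of an unbounded Kasparov module by an odd self-adjoint operator commuting suitably with the algebra does not change the $KK$-class: the straight-line homotopy $t\mapsto D - tZ$, $t\in[0,1]$, passes through unbounded Kasparov modules (the resolvent condition and boundedness of commutators are preserved under bounded perturbation) and gives an operator homotopy, hence $[(\A, E, D)] = [(\A, E, D-Z)]$ in $KK_0(A, C)$. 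Transporting along the unitary $u$, $[(\A, E, D-Z)] = [(\A, X\otimes_B Y, S\otimes 1 + 1\otimes_\nabla T)]$, and by the first half this equals $[(\A, X, S)]\otimes_B[(\B, Y, T)]$.

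The main obstacle I expect is the bookkeeping in verifying the product theorem hypotheses against the precise smoothness framework of this paper: the connection here lands in universal forms over $\mathcal{B}_2$ (resp.\ $\mathcal{B}_1$) rather than in an abstractly given one-form bimodule, so one must check that $\nabla_T = \pi_T\circ\nabla$ supplies a connection in the sense required by \cite{LM} or \cite{MR}, that the domain $\mathcal{X}\otimes^{\alg}_{\mathcal{B}_1}\Dom T$ is a core of the right kind, and that the $C^2$-structure (as opposed to mere $C^1$) is genuinely what is needed for the regularity and self-adjointness statement — here Lemma \ref{domainproperties} and Proposition \ref{prop: C2curvedomain} do the heavy lifting by pinning down $\mathcal{X}\otimes^{\alg}_{\mathcal{B}_2}\Dom T^2 \subset \Dom(S\otimes 1 + 1\otimes_\nabla T)^2$. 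Once these identifications are made, no new analysis is needed: everything reduces to citing \cite[Theorem 5.1]{LM} and the product theorem of \cite{KaLe2, MR}.
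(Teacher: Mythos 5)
Your proposal is correct and follows essentially the same route as the paper: first the bounded perturbation $Z$ is removed so that $D$ and $u^{*}(S\otimes 1+1\otimes_{\nabla}T)u$ define the same class (the paper simply cites \cite[Theorem 4.2]{vdDungen}, where you spell out the standard straight-line homotopy), and then the tensor sum is recognised as a representative of the product because $(S\otimes 1,1\otimes_{\nabla}T)$ is a (weakly/vertically) anticommuting pair with bounded commutators with $\A$ (the paper cites \cite[Theorem 7.2]{LM}, where you unwind the Kucerovsky-type conditions it encapsulates). No gaps; your version just inlines details that the paper delegates to citations.
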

\begin{proof}
Since $Z$ is  bounded it follows from 
\cite[Theorem 4.2]{vdDungen} that $D$ and $D-Z$ 
represent the same $KK$-class. Since 
$(S\otimes 1,1\otimes_{\nabla}T)$ form a weakly 
anti-commuting pair and $1\otimes_{\nabla}T$ commutes 
boundedly with $\A$, it follows from \cite[Theorem 7.2]{LM} 
that the product relation holds.
\end{proof}
The reason for introducing correspondences is the more refined curvature data that becomes available.
\begin{defn}
Let $(\A,E,D)$ and $(\B,Y,T)$ be two $C^{2}$-Kasparov modules and 
let $(\A,\mathcal{X},S,(\nabla,\nabla^{1}))$ a $C^{2}$-correspondence for them. We define the \emph{curvature operator} of 
the correspondence
$(\A,\mathcal{X},S,(\nabla,\nabla^{1}))$ to be the symmetric operator
\begin{equation}
\label{correspondencecurve}
R_{(S,\nabla_{T})}:=(S\otimes 1+1\otimes_{\nabla}T)^{2}-S^{2}\otimes 1
-1\otimes_{\nabla}T^{2}:\mathcal{X}\otimes_{\mathcal{B}_{2}}^{\alg}\Dom T^{2}\to X\otimes_{B} Y.
\end{equation}
\end{defn}
By Proposition \ref{prop: C2curvedomain} it holds that $R_{(S,\nabla_{T})}=R_{\nabla_{T}}+[S\otimes 1,1\otimes_{\nabla}T]_+$, 
by a straightforward algebraic calculation on the domain $\mathcal{X}\otimes_{\mathcal{B}_{2}}^{\alg}\Dom T^{2}$.

Note that it is not necessary to specify $D$ in order to define the curvature operator, since we could just as well take $D$ to be the tensor sum $S \otimes 1 + 1 \otimes_{\nabla} T$ itself. However, in examples it turns out that the bounded operator $Z$ appearing as their difference contains geometric information as well (see for instance Equation \eqref{eq:factorization}).

 \subsection{Universal connections on locally convex modules}
 \label{sec:uni-conn}
%            {\bf BM: Subsection currently written for the universal calculus, maybe it can be done for an arbitrary calculus that maps continuously into the forms of the Kasaprov module. Also, we could consider moving this subsection to Section 5 where it is used. Adam happy with approach and location.}

In the applications of our theory ({\em cf.} Section \ref{sect:riem-subm} below) one typically finds that the modules actually come equipped with more differentiable structure, beyond  the $C^2$-stucture described above. For instance, in the category of smooth manifolds the Hilbert modules are typically based on Fr\'echet modules, and Fr\'echet continuous maps. Let us describe here how to incorporate such locally convex spaces and algebras in the above $C^2$-context.

Let $(\mathcal{B},Y,T)$ be a $C^{2}$-Kasparov module and assume that $\mathcal{B}$ carries the structure of a complete locally convex $m$-$*$-algebra for which there is a continuous inclusion
$\mathcal{B}\to \mathcal{B}_{2}$, see \cite{Frag}. Denote by $B$ the 
$C^{*}$-closure of $\mathcal{B}$ in the norm coming 
from $\mathbb{B}(Y)$. The Haagerup tensor norm is a \emph{cross-norm}, that is, it satisfies $\|x\otimes y\|_{h}= \|x\|\|y\|$
(see \cite[Section 1.5.4]{BleLeM}). Denoting the projective tensor product by $\widehat{\ox}$, \cite[Propositions 43.4 and 43.12.a)]{Treves} 
prove that the identity map on $\mathcal{B}\otimes^{\alg}\mathcal{B}$ 
extends to continuous maps 
\[
\mathcal{B}\widehat{\otimes}\mathcal{B}\to B\widehat{\otimes}\mathcal{B}_{k}\to B\otimes^{h}\mathcal{B}_{k},\quad \mathcal{B}\widehat{\otimes}\mathcal{B}\to \mathcal{B}_{1}\widehat{\otimes}\mathcal{B}_{k}\to \mathcal{B}_{1}\otimes^{h}\mathcal{B}_{k},\quad k=1,\,2.
\]
By continuity of the multiplication maps, these inclusions 
restrict to continuous maps
\begin{equation}
\label{eq: forminclusion}
\Omega^{1}_{u}(\mathcal{B})\to \Omega^{1}_{u}(B,\mathcal{B}_{k}),\quad \Omega^{1}_{u}(\mathcal{B})\to \Omega^{1}_{u}(\mathcal{B}_{1},\mathcal{B}_{2}).
\end{equation}
Let $\mathcal{X}$ be a locally convex topological vector space which is a right $\mathcal{B}$-module such that the module multiplication defines a continuous map $\mathcal{X}\widehat{\otimes}\mathcal{B}\to \mathcal{X}$. Moreover assume that there is a continuous inner product 
\[
\mathcal{X}\times\mathcal{X}\to \mathcal{B},
\]
giving $\mathcal{X}$ the structure of a 
pre-Hilbert $C^{*}$-module over the 
pre-$C^{*}$-algebra $\B$ and denote by $X$ 
the $C^{*}$-module closure of $\X$. The identity on $\mathcal{X}$ 
induces a continuous map $\mathcal{X}\to X$, 
and thus by \cite[Proposition 43.4]{Treves} we obtain  continuous maps
\begin{align}
\iota_{0}:
\mathcal{X}\widehat{\otimes}_{\mathcal{B}}\Omega^{1}_{u}(\mathcal{B})&\to X\widehat{\otimes}_{B}\Omega^{1}_{u}(B,\mathcal{B}_{1})
\to X\otimes_{B}^h\Omega^{1}_{u}(B,\mathcal{B}_{1})\nonumber\\
\iota_{1}:
\mathcal{X}\widehat{\otimes}_{\mathcal{B}}\Omega^{1}_{u}(\mathcal{B})&\to \X\widehat{\otimes}_{\B_1}\Omega^{1}_{u}(\B_1,\mathcal{B}_{2})
\to \X_{\nabla_T}\otimes_{\B_1}^h\Omega^{1}_{u}(\B_1,\mathcal{B}_{2}).
\label{eq:the-map-known-as-iota}
\end{align}
The maps $\iota_{0},\,\iota_1$ are well-defined because the Haagerup norm is a cross-norm and the projective norm is the largest cross-norm \cite[Proposition 43.12.a)]{Treves}. Therefore
%\[\quad \iota_{T}:
%\mathcal{X}\widehat{\otimes}_{\mathcal{B}}\Omega^{1}_{u}(\mathcal{B})
%\to X\otimes_{B}^h\Omega^{1}_{T}(\mathcal{B}_{1})\] 
\begin{equation}
\iota_{T}:=(1\otimes\pi_{T})\circ \iota_0:
\mathcal{X}\widehat{\otimes}_{\mathcal{B}}\Omega^{1}_{u}(\mathcal{B})
\to X\otimes_{B}^h\Omega^{1}_{T}(B,\mathcal{B}_{1}),
\label{eq:iota-tee}
\end{equation} 
is continuous as well. %The continuous maps $\mathcal{X}\to X$ and $\Omega^{1}_{u}(\mathcal{B})\to \Omega^{1}_{u}(B,\mathcal{B}_{1})$ induce a continuous map $\mathcal{X}\widehat{\otimes}_{\mathcal{B}}\Omega^{1}_{u}(\mathcal{B})\to X\widehat{\otimes}_{B}\Omega^{1}(B,\mathcal{B}_{1})$ by \cite[Proposition 43.4]{Treves}. 

\begin{prop}
\label{locconvSconn}
Let $(\mathcal{B},Y,T)$ be a $C^{2}$-Kasparov module.
Assume that $\nabla_{u}:\mathcal{X}\to \mathcal{X}\widehat{\otimes}_{\mathcal{B}}\Omega^{1}_{u}(\mathcal{B})$ is a Hermitian connection, $S:\mathcal{X}\to X$ an essentially self-adjoint and regular operator and $(S\otimes 1, 1\otimes_{\nabla}T)$ is a vertically anticommuting pair. If the map
\[(S+i)^{-1}\circ\iota_{T}\circ \nabla_{u}:\mathcal{X}\to X\otimes^{h}_{B}\Omega^{1}_{T}(B,\mathcal{B}_{1}),\] is continuous, then the identity map $\mathcal{X}\to \mathcal{X}^{S}_{\nabla_{T}}$ is continuous.
% There are inclusions
%\[ \mathcal{X}\to\mathcal{X}_{\nabla_{T}},\quad \mathcal{X}\to \mathcal{X}_{\nabla_{T}}^{S}.\]
Consequently the identity map on the algebraic tensor product extends to a continous map
\[
%\iota_{0}:\mathcal{X}\widehat{\otimes}_{\mathcal{B}}\Omega^{1}_{u}(\mathcal{B})\to X\otimes^{h}_{B}\Omega^{1}_{u}(B,\mathcal{B}_{1}),\quad 
\iota_{S}:\mathcal{X}\widehat{\otimes}_{\mathcal{B}}\Omega^{1}_{u}(\mathcal{B})\to \mathcal{X}^{S}_{\nabla_{T}}\otimes^{h}_{\mathcal{B}_{1}}\Omega^{1}_{u}(\mathcal{B}_{1},\mathcal{B}_{2}).\]
With $\nabla:=\iota_{0}\circ\nabla_{u}$ and $\nabla^{S}:=\iota_{S}\circ\nabla_{u}$, the pair $(\nabla,\nabla^{S})$ is a $C^{2}$-connection on the $C^{(1,2)}$-module $(\mathcal{X}\otimes_{\mathcal{B}}^{\alg}\mathcal{B}_{2},S)$.
\end{prop}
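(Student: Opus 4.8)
The plan is to establish the three conclusions in order. For the continuity of the identity map $\mathcal{X}\to\mathcal{X}^{S}_{\nabla_{T}}$: since $\mathcal{X}^{S}_{\nabla_{T}}$ is by construction the completion of $\mathcal{X}$ in the norm $\|\cdot\|^{S}_{\nabla_{T}}$ of \eqref{eq:ex-nabla-tee-ess}, it suffices to check that this norm restricts to a continuous seminorm on $\mathcal{X}$. As $\|x\|^{S}_{\nabla_{T}}$ is the maximum of $\|(S+i)^{-1}x\|_{X}$ and $\|((S+i)^{-1}\otimes 1)\nabla_{T}(x)\|_{X\otimes^{h}_{B}\Omega^{1}_{T}(B,\mathcal{B}_{1})}$, I would treat the two terms separately. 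For the first, continuity of the inner product $\mathcal{X}\times\mathcal{X}\to\mathcal{B}$ together with the inclusion $\mathcal{B}\to B$ shows that $\mathcal{X}\to X$ is continuous, and composition with the bounded operator $(S+i)^{-1}\in\mathbb{B}(X)$ controls it. For the second, $\nabla_{T}=\pi_{T}\circ\nabla=\pi_{T}\circ\iota_{0}\circ\nabla_{u}=\iota_{T}\circ\nabla_{u}$ by the definition \eqref{eq:iota-tee} of $\iota_{T}$, so the term equals $\|(S+i)^{-1}\circ\iota_{T}\circ\nabla_{u}(x)\|$, which is continuous precisely by hypothesis.

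For the existence of $\iota_{S}$, I would repeat the construction of $\iota_{0}$ and $\iota_{1}$ from \eqref{eq:the-map-known-as-iota}. The continuous maps $\mathcal{X}\to\mathcal{X}^{S}_{\nabla_{T}}$ just obtained and $\Omega^{1}_{u}(\mathcal{B})\to\Omega^{1}_{u}(\mathcal{B}_{1},\mathcal{B}_{2})$ of \eqref{eq: forminclusion} induce, by functoriality of $\widehat{\otimes}$, a continuous map $\mathcal{X}\widehat{\otimes}\Omega^{1}_{u}(\mathcal{B})\to\mathcal{X}^{S}_{\nabla_{T}}\widehat{\otimes}\Omega^{1}_{u}(\mathcal{B}_{1},\mathcal{B}_{2})$. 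Composing with the canonical continuous map into $\mathcal{X}^{S}_{\nabla_{T}}\otimes^{h}\Omega^{1}_{u}(\mathcal{B}_{1},\mathcal{B}_{2})$, which exists because the Haagerup norm is a cross-norm while the projective norm is the largest cross-norm \cite[Proposition 43.12.a)]{Treves}, and using that the inclusion $\mathcal{B}\hookrightarrow\mathcal{B}_{1}$ intertwines the module actions so that the $\mathcal{B}$-balancing relation is sent into the $\mathcal{B}_{1}$-balancing relation, the composite descends to the balanced quotients and gives the continuous map $\iota_{S}:\mathcal{X}\widehat{\otimes}_{\mathcal{B}}\Omega^{1}_{u}(\mathcal{B})\to\mathcal{X}^{S}_{\nabla_{T}}\otimes^{h}_{\mathcal{B}_{1}}\Omega^{1}_{u}(\mathcal{B}_{1},\mathcal{B}_{2})$.

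For the last conclusion, I take $\nabla:=\iota_{0}\circ\nabla_{u}$ (with codomain $X\otimes^{h}_{B}\Omega^{1}_{u}(B,\mathcal{B}_{2})$, using the inclusion $\Omega^{1}_{u}(\mathcal{B})\to\Omega^{1}_{u}(B,\mathcal{B}_{2})$ of \eqref{eq: forminclusion}) and $\nabla^{S}:=\iota_{S}\circ\nabla_{u}$, each extended to $\mathcal{X}\otimes^{\alg}_{\mathcal{B}}\mathcal{B}_{2}$ by the Leibniz rule, and then check the axioms of Definition \ref{horverC2mod}. That $(\mathcal{X}\otimes^{\alg}_{\mathcal{B}}\mathcal{B}_{2},S)$ is a $C^{(1,2)}$-module is a routine algebraic verification: the $\mathcal{B}_{2}$-valued inner product $\langle x\otimes a,x'\otimes a'\rangle=a^{*}\langle x,x'\rangle a'$ lands in $\mathcal{B}_{2}$ since $\langle x,x'\rangle\in\mathcal{B}\subset\mathcal{B}_{2}$, the $C^{*}$-completion is $X$ because $\mathcal{X}$ already sits densely in $X$, and $\mathcal{X}\otimes^{\alg}_{\mathcal{B}}\mathcal{B}_{2}\subset\Dom S$ with $S$ still essentially self-adjoint and regular there. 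The Leibniz rules for $\nabla$ and $\nabla^{S}$ descend from that of $\nabla_{u}$ because $\iota_{0}$, $\iota_{S}$ are module maps for $\mathcal{B}\hookrightarrow\mathcal{B}_{1},\mathcal{B}_{2}$, and $\nabla$ is Hermitian because $\nabla_{u}$ is and $\iota_{0}$ respects both the form-valued inner products and the $*$-structures ($\delta(a)^{*}=-\delta(a^{*})$), the Haagerup tensor product linearising multiplication and $\mathcal{B}\to\mathcal{B}_{1},B$ being $*$-homomorphisms.

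Finally $(S\otimes 1,1\otimes_{\nabla}T)$ is a vertically anticommuting pair by hypothesis, and the compatibility $(S+i)^{-1}\nabla_{T}(x)y=(S+i)^{-1}\cdot\nabla^{S}_{T}(x)y$ of condition 2 of Definition \ref{def: connpair} holds automatically, since both sides are obtained from $\nabla_{u}$ by applying $\pi_{T}$ after $\iota_{0}$ and after $\iota_{S}$ respectively, and $\iota_{S}$ is built to be compatible with $\iota_{0}$ under the dense inclusion $X\to X^{S}$. I expect the real work to reside not in any single estimate but exactly in this bookkeeping: one must keep track that every passage between $\mathcal{B}$, $\mathcal{B}_{1}$, $\mathcal{B}_{2}$ and between the projective and Haagerup tensor products and their balanced quotients respects module actions, balancing relations, and $*$-structures, so that the formal identities above actually hold in the correct completed bimodules and the abstract axioms of Definitions \ref{horverC2mod} and \ref{def: connpair} are genuinely met.
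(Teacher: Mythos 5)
Your proposal is correct and follows essentially the same route as the paper: the hypothesis gives the seminorm estimate making $\mathcal{X}\to\mathcal{X}^{S}_{\nabla_{T}}$ continuous, $\iota_{S}$ is then obtained from functoriality of the projective tensor product together with the fact that the Haagerup norm is dominated by the projective (largest) cross-norm, and condition 2 of Definition \ref{def: connpair} holds automatically because $\nabla$ and $\nabla^{S}$ are both induced from the same universal $\nabla_{u}$. The only difference is that you spell out the routine verifications (the first term of the norm \eqref{eq:ex-nabla-tee-ess}, the $C^{(1,2)}$-module axioms, Hermitian property) that the paper leaves implicit.
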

\begin{rmk}
When $S=0$ the map $\iota_S$ reduces to the map $\iota_1$ defined above.
\end{rmk}
\begin{proof}
Since we have assumed that $(S\otimes 1, 1\otimes_{\nabla}T)$ is a vertically anti-commuting pair, it suffices to define $\nabla^{S}$ and verify that $\nabla^S$ satisfies condition 2. of Definition \ref{def: connpair}.

%Continuity of the inner product implies that the identity map 
%extends to a continuous inclusion 
 Continuity of the map $(S+i)^{-1}\circ \iota_{T}\circ\nabla_{u}$ means that for the Haagerup norm $\|\cdot \|_{X\otimes^{h}_{B}\Omega^{1}_{T}(B,\mathcal{B}_{1})}$ 
% every pair of 
%seminorms $q$ on $\mathcal{X}$ and $r$ on $\Omega^{1}_{u}(\mathcal{B})$
 there is a continuous seminorm $p$ on $\mathcal{X}$ such that
\begin{equation}
\label{locconvcont}
\|(S+i)^{-1}\circ\iota_{T}\circ\nabla_{u}(x)\|_{X\otimes^{h}_{B}\Omega^{1}_{T}(B,\mathcal{B}_{1})} \leq p(x). 
\end{equation}
%As $\iota_{0}:\mathcal{X}\widehat{\otimes}_{\mathcal{B}}\Omega^{1}_{u}(\mathcal{B})\to X\otimes^{h}_{B}\Omega^{1}_{u}(B,\mathcal{B}_{1})$ is continuous, there exist seminorms $q$ on $\mathcal{X}$ and $r$ on $\Omega^{1}_{u}(\mathcal{B})$ such that 
%\begin{align*}
%\|(S+i)^{-1}\nabla_{T}(x)\|_{X\otimes^{h}_{B}\Omega^{1}_{T}(\mathcal{B}_{1})}&\leq \|(S+i)^{-1}\nabla(x)\|_{X\otimes^{h}_{B}\Omega^{1}_{u}(B,\mathcal{B}_{1})}\\
%&\leq\|(S+i)^{-1}\nabla(x)\|_{X\widehat{\otimes}_{B}\Omega^{1}_{u}(B,\mathcal{B}_{1})} \leq (q\otimes r)((S+i)^{-1}\nabla_{u}(x)).
%\end{align*}
%Combining this with the inequality \eqref{locconvcont}, we find $\|(S+i)^{-1}\nabla_{T}(x)\|_{X\otimes^{h}_{B}\Omega^{1}_{T}(B,\mathcal{B}_{1})}\leq p(x)$. 
Thus we obtain a continuous inclusion $\iota_{S}:\mathcal{X}\to \mathcal{X}^{S}_{\nabla_{T}}$. The remaining statements now follow by functoriality of the projective tensor product for continuous maps \cite[Proposition 43.4]{Treves}. For the pair $(\nabla,\nabla^{S}):=(\iota_{0}\circ\nabla_{u}, \iota_{S}\circ\nabla_{u})$ and $x\in\mathcal{X}$ it holds that %Definition \ref{def: connpair} since
\begin{align*}
  (S+i)^{-1}\nabla(x)&=(S+i)^{-1}(\iota_{0}\circ\nabla_{u})(x)\\ &=(S+i)^{-1}\cdot (\iota_{S}\circ\nabla_{u})(x)=(S+i)^{-1}\nabla^{S}(x),
  \end{align*}
  and thus condition 2. of Definition \ref{def: connpair} is satisfied. 
\end{proof}
\begin{corl}
\label{universalsquare}
Assume that 
%\begin{enumerate}
%\item 
$\nabla_{u}:\mathcal{X}\to \mathcal{X}\widehat{\otimes}_{\mathcal{B}}\Omega^{1}_{u}(\mathcal{B})$ is a universal Hermitian connection such that the connection $\iota_{T}\circ\nabla_{u}:\mathcal{X}\to X\otimes^{h}_{B}\Omega^{1}_{T}(B,\mathcal{B}_{1})$ is continuous.
%\end{enumerate}
Applying Proposition \ref{locconvSconn} with $S=0$ yields 
the $C^{2}$-connection 
\[
\nabla^{1}:=\iota_{1}\circ\nabla_{u}:\mathcal{X}\to \mathcal{X}_{\nabla_T}\otimes^{h}_{\mathcal{B}_{1}}\Omega^{1}_{u}(\mathcal{B}_{1},\mathcal{B}_{2}),
\] 
on the $C^{2}$-module $(\mathcal{X},S)$. Assume further that
\begin{enumerate}
\item $\mathcal{X}\subset\Dom S^{2}$; 
\item $(\A,X,S)$ is an unbounded Kasparov module;
\item $(S\otimes 1, 1\otimes_{\nabla}T)$ is a vertically anti-commuting pair;
\item for all $a\in\mathcal{A}$, $a:\Dom 1\otimes_{\nabla}T\to \Dom 1\otimes_{\nabla}T$ and $[1\otimes_{\nabla}T,a]$ is bounded.
\end{enumerate}

Then
$(\A,\mathcal{X}\otimes^{\alg}_{\mathcal{B}}\mathcal{B}_{2}, S, (\iota_{0}\circ\nabla_{u},\iota_{1}\circ\nabla_{u}))$ is a $C^{2}$-correspondence for $(\A, X\otimes_{B}Y, S\otimes 1+1\otimes_{\nabla}T)$ and $(\mathcal{B},Y,T)$. On $\mathcal{X}\otimes_{\mathcal{B}}^{\alg}\Dom T^{2}$ there are equalities
\begin{align}
\label{projectivecurvature}
R_{\nabla_{T}}&=(1\otimes_{\nabla}T)^{2}-1\otimes_{\nabla}T^{2}=(1\otimes m)(1\otimes\pi_{T}\otimes\pi_{T})(\nabla_{u}^{2}),\\ R_{(S,\nabla_{T})}&=R_{\nabla_{T}}+[S\otimes 1,1\otimes_{\nabla}T]_+,
\end{align}
of symmetric operators in $X\otimes_{B}Y$.
\end{corl}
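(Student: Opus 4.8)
The strategy is to verify the four numbered hypotheses assemble into the definition of a $C^2$-correspondence, and then deduce the two displayed equalities by combining results already established in the excerpt. First I would observe that Proposition \ref{locconvSconn} (applied with $S=0$) has already produced $\nabla^1=\iota_1\circ\nabla_u$ as a $C^2$-connection on $(\mathcal{X},S)$ viewed as a $C^2$-module, provided condition 1 of the present Corollary ($\mathcal{X}\subset\Dom S^2$) holds; condition 3 is exactly the vertically anticommuting hypothesis needed in Definition \ref{horverC2}, so $(\nabla,\nabla^1)=(\iota_0\circ\nabla_u,\iota_1\circ\nabla_u)$ is a $C^2$-connection on the $C^2$-module $(\mathcal{X},S)$. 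Condition 2 gives that $(\A,X,S)$ is an unbounded Kasparov module, which is item 1 of the definition of $C^2$-correspondence. The remaining item 4 of that definition (boundedness of the difference operator $Z$ that preserves $\Dom D$) is trivially satisfied here, because we are free to take $D:=S\otimes 1+1\otimes_\nabla T$ itself on $E:=X\otimes_B Y$ with $u=\mathrm{id}$, so $Z=0$; hypothesis 4 of the Corollary (that $a$ preserves $\Dom 1\otimes_\nabla T$ with bounded commutator) is what makes $D=S\otimes 1+1\otimes_\nabla T$ an honest unbounded Kasparov module for $(\A, X\otimes_B Y, D)$, i.e.\ gives the $A$-representation the required regularity. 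Hence $(\A,\mathcal{X}\otimes^{\alg}_{\mathcal{B}}\mathcal{B}_2, S, (\iota_0\circ\nabla_u,\iota_1\circ\nabla_u))$ is a $C^2$-correspondence for $(\A, X\otimes_B Y, S\otimes 1+1\otimes_\nabla T)$ and $(\mathcal{B},Y,T)$.

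For the first equality in \eqref{projectivecurvature}, the left-hand identity $R_{\nabla_T}=(1\otimes_\nabla T)^2-1\otimes_\nabla T^2$ on $\mathcal{X}\otimes^{\alg}_{\mathcal{B}_2}\Dom T^2$ is simply Definition \ref{def: curv-operator} together with the observation, noted just after Definition \ref{def: connpair}, that $1\otimes_{\nabla^S}T=1\otimes_\nabla T$ on this domain (so $(1\otimes_\nabla T)\circ(1\otimes_{\nabla^S}T)=(1\otimes_\nabla T)^2$ there) and that $1\otimes_{\nabla^S}T^2=1\otimes_\nabla T^2$. For the right-hand identity I would invoke Theorem \ref{thm: Ristwoformdetermineduptojunk}, which gives $R_{\nabla_T}=\pi_T(\nabla\circ\nabla^S)$ as operators into $X^S\otimes_B Y$; it then remains to unwind $\pi_T(\nabla\circ\nabla^S)$ in the locally convex setting. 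Since $\nabla=\iota_0\circ\nabla_u$ and $\nabla^S=\iota_S\circ\nabla_u$ both factor through the \emph{single} universal connection $\nabla_u:\mathcal{X}\to\mathcal{X}\widehat\otimes_{\mathcal{B}}\Omega^1_u(\mathcal{B})$, the composite $(1\otimes_{\nabla_T}\delta)\circ\nabla^S$ is, after applying the continuous maps from \eqref{eq: forminclusion} and \eqref{eq:the-map-known-as-iota}, nothing but $\nabla_u^2=(\nabla_u\otimes 1+\gamma\otimes\delta)\circ\nabla_u$ followed by the representation maps; applying $(1\otimes m)(1\otimes\pi_T\otimes\pi_T)$ then yields exactly $(1\otimes m)(1\otimes\pi_T\otimes\pi_T)(\nabla_u^2)$. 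Here one uses that $\delta$ on the universal calculus $\Omega^*_u(\mathcal{B})$ is compatible, under the inclusions \eqref{eq: forminclusion}, with the universal differential $\delta:\Omega^1_u(\B_1,\B_2)\to\Omega^1_u(B,\B_1)\otimes^h_{\B_1}\Omega^1_u(\B_1,\B_2)$ used to build $1\otimes_{\nabla_T}\delta$ in Proposition \ref{nablassquare}; this is a direct check on elementary tensors $a\delta(b)$.

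The second equality, $R_{(S,\nabla_T)}=R_{\nabla_T}+[S\otimes 1,1\otimes_\nabla T]_+$, has already been recorded just after the definition of the correspondence curvature operator: expanding $(S\otimes 1+1\otimes_\nabla T)^2$ on the domain $\mathcal{X}\otimes^{\alg}_{\mathcal{B}_2}\Dom T^2$ (which by Proposition \ref{prop: C2curvedomain} lies in $\Dom(S\otimes 1+1\otimes_\nabla T)^2$) gives $S^2\otimes 1+[S\otimes 1,1\otimes_\nabla T]_+ + (1\otimes_\nabla T)^2$, and subtracting $S^2\otimes 1+1\otimes_\nabla T^2$ produces the claim; symmetry of both operators on that domain follows from Lemma \ref{symmetric} and Proposition \ref{prop: C2curvedomain}. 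The only genuine work, and the step I expect to be the main obstacle, is the careful bookkeeping in the second paragraph: one must track the elementary tensor $x\otimes a\delta(b)\delta(c)$ (or rather $x\otimes a\,\mathrm{d}b\,\mathrm{d}c$ in the universal calculus of $\mathcal{B}$) through the chain of continuous inclusion maps \eqref{eq: forminclusion}–\eqref{eq:iota-tee} and the Leibniz identities for $\nabla_u$, $\delta$ and $1\otimes_{\nabla_T}\delta$, and check that the two routes — ``first square universally, then represent'' versus ``apply Theorem \ref{thm: Ristwoformdetermineduptojunk}'s $\pi_T(\nabla\circ\nabla^S)$'' — land on the same operator; continuity of all the maps involved (guaranteed by Proposition \ref{locconvSconn} and the cross-norm property of the Haagerup norm) lets one reduce this to the algebraic identity on elementary tensors, which is then the computation already performed in Lemma \ref{computesquare} and the proof of Theorem \ref{thm: Ristwoformdetermineduptojunk}.
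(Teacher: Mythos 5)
Your proposal is correct and follows essentially the same route as the paper: the paper's own proof is a one-line reduction of \eqref{projectivecurvature} to the arguments of Lemma \ref{computesquare} and Theorem \ref{thm: Ristwoformdetermineduptojunk}, which is exactly the reduction you carry out (via continuity of the $\iota$-maps and the computation on elementary tensors), and the identity $R_{(S,\nabla_T)}=R_{\nabla_T}+[S\otimes 1,1\otimes_\nabla T]_+$ together with the choice $D=S\otimes 1+1\otimes_\nabla T$, $u=\mathrm{id}$, $Z=0$ is precisely what the paper records after Proposition \ref{prop: C2curvedomain} and the definition of the correspondence curvature. Your verification of the $C^2$-correspondence conditions from hypotheses 1--4 is a reasonable expansion of what the paper leaves implicit.
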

\begin{proof} 
%Lemma \ref{domainproperties} then gives that 
%\[\mathcal{X}\otimes^{\alg}_{\mathcal{B}}\Dom T^{2}\subset \Dom (1\otimes_{\nabla}T)^{2}.\]
%By functoriality of the projective tensor product for continuous maps \cite[Propositions 43.4 and 43.12.a)]{Treves} and Lemma \ref{indc2} we obtain maps
%\begin{align*}
%1\otimes\pi_{T}: \mathcal{X}\widehat{\otimes}_{\mathcal{B}}\Omega^{1}_{u}(\mathcal{B})&\color{red}\to \mathcal{X}\widehat{\otimes}_{\mathcal{B}}\mathbb{B}(\Dom T)\to \mathcal{X}_{\nabla_{T}}\otimes^{h}_{\mathcal{B}_{1}}\mathbb{B}(\Dom T),\\
%\color{black}
%1\otimes \pi_{T^{2}}:\mathcal{X}\widehat{\otimes}_{\mathcal{B}}\Omega^{1}_{u}(\mathcal{B})&\to \mathcal{X}\widehat{\otimes}_{\mathcal{B}}\mathbb{B}(\Dom T, Y)\to X\otimes^{h}_{B}\mathbb{B}(\Dom T, Y).
%\end{align*}
%The map $1\otimes\pi_{T^{2}}$ immediately gives us well-definedness of $1\otimes_{\nabla}T^{2}$ on $\mathcal{X}\otimes_{\mathcal{B}}^{\alg}\Dom T^{2}$. For $x\in\mathcal{X}$ and $y\in\Dom T^{2}$, we have 
%$$\nabla_{T}(x)y\in  \mathcal{X}_{\nabla_{T}}\otimes^{h}_{\mathcal{B}_{1}}\Dom T\subset \Dom 1\otimes_{\nabla}T.$$ Hence $\mathcal{X}\otimes^{\alg}_{\mathcal{B}}\Dom T^{2}\subset \Dom (1\otimes_{\nabla}T)^{2}$. 
The equality \eqref{projectivecurvature} is proved analogously to Lemma \ref{computesquare} and Theorem \ref{thm: Ristwoformdetermineduptojunk}.
\end{proof}
\section{Finitely generated projective modules over spectral triples}
\label{sect:fgp}
In this section we assume that $B$ is a unital $C^{*}$-algebra 
and $X$ is a finitely generated full Hilbert $C^{*}$-module over $B$. 
Then $X$ is algebraically finitely generated and projective. 
We assume that $X$ is $\mathbb{Z}/2$-graded with 
grading $\gamma$.
% and for a homogenous element $x$ 
%we write $\partial x\in\{\pm 1\}$ such that 
%$\gamma(x)=\partial(x)x$. 
We describe the 
construction of the curvature operator explicitly 
in this case.
%, and discuss examples. 
Our results are in complete agreement with the purely algebraic approach described, for instance, in \cite{Landi}.

\subsection{Connections on finite projective $C^{2}$-modules}

We fix a $C^{2}$-spectral triple $(\mathcal{B},H,D)$ 
in the sense of Definition \ref{C2spec}. Consider a $\mathbb{Z}_2$-graded inner product module 
$\mathcal{X}$ which is finitely generated
and projective over $\mathcal{B}_{2}$, together with an inner-product preserving injection $v:\mathcal{X}\to \mathcal{B}_{2}^{2N}$ of right $\mathcal{B}_{2}$-modules. The map $v$ induces a module isomorphism $v:\mathcal{X}\to p\mathcal{B}^{2N}_{2}$, where $p\in M_{2N}(\mathcal{B}_{2})$ is a projection and extends to an isometry $v:X\to pB^{2N}$ on the $C^{*}$-module level. 

The main simplification of the construction in Section \ref{Hilbcurve} is that we take the vertical operator $S=0$. Most importantly, for
a Hermitian connection $\nabla:\mathcal{X}\to X\otimes_{B}^{h}\Omega^{1}_{u}(B,\mathcal{B}_{1})$ 
we have 
$\mathcal{X}^{S}_{\nabla_{D}}=\mathcal{X}_{\nabla_{D}}$. Considering $v$ as a map $v:\mathcal{X}\to \mathcal{B}_{1}^{2N}$ via the inclusion $\mathcal{B}_{2}\to\mathcal{B}_{1}$,
we define the norm  $\|x\|_{v} :=\|v(x)\|_{\mathcal{B}^{2N}_{1}}$  on $\mathcal{X}$ and denote the completion of $\mathcal{X}$ in this norm by $\mathcal{X}_{v}$. %Consequently, the relevant definitions of Section \ref{Hilbcurve} simplify.

Let $e_i$ denote the standard basis of $\mathcal{B}_{2}^{2N}$ and set $x_{i}:=v^{*}(e_i)$. Then the finite set $\{x_i\}_{1\leq |i|\leq N}\subset\X$ is a frame for $\mathcal{X}$, that is  
${\rm Id}_{\mathcal{X}}=\sum_{1\leq |i|\leq N} |x_i\rangle\langle x_i|$. 

The \emph{Grassmann connection}
\[\nabla^{v}:\mathcal{X}\to \mathcal{X}\otimes^{\alg}_{\mathcal{B}_{2}}\Omega^{1}_{u}(\mathcal{B}_{1},\mathcal{B}_{2}),\quad \nabla^{v}(x):=\sum_{1\leq |i|\leq N}\gamma(x_{i})\otimes \delta(\langle x_{i}, x\rangle),\]
is a well-defined Hermitian connection. Given any other connection
$\nabla$ on $\mathcal{X}$ we define the connection one-form
$\omega(x):=\nabla(x)-\nabla^{v}(x)$.

Recall from Equation \eqref{eq:ex-nabla-tee-ess} that for a Hermitian connection $\nabla$ and $S=0$ we have $\|x\|_{\nabla_{D}} =\max\{\|x\|_{X},\|\nabla_{D}(x)\|_{X\otimes^{h}_{B}\Omega^{1}_{D}}\}$.
\begin{lemma} Let $\nabla:\mathcal{X}\to X\otimes_{B}\Omega^{1}_{u}(B,\mathcal{B}_{1})$ be a Hermitian connection on $\mathcal{X}$. Then the operator space norms $\|\cdot\|_{v}$ and $\|\cdot \|_{\nabla_{D}}$  are cb-equivalent. 
\end{lemma}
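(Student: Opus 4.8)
The plan is to reduce to the Grassmann connection $\nabla^{v}$ and then to compute both norms explicitly through the frame $\{x_{i}\}_{1\leq|i|\leq N}$, recalling that $v(x)=(\langle x_{i},x\rangle)_{i}$ since $x_{i}=v^{*}(e_{i})$.

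\emph{Reduction to $\nabla=\nabla^{v}$.} For an arbitrary Hermitian connection $\nabla$, the difference $\omega:=\nabla-\nabla^{v}$, after pushing both maps forward through the natural completely contractive inclusions into $X\otimes^{h}_{B}\Omega^{1}_{u}(B,\mathcal{B}_{1})$, is right $\mathcal{B}_{1}$-linear by the two Leibniz rules, so $\omega(x)=\sum_{i}\omega(x_{i})\langle x_{i},x\rangle$. Since $x\mapsto v(x)$ is completely isometric into the column module $B^{2N}$ (it is an inner-product-preserving injection) and $\big((1\otimes\pi_{D})\omega(x_{i})\big)_{i}$ is a finite row, the represented connection one-form $\omega_{D}:=(1\otimes\pi_{D})\circ\omega\colon X\to X\otimes^{h}_{B}\Omega^{1}_{D}(B,\mathcal{B}_{1})$ is completely bounded. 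As $\nabla_{D}=\nabla^{v}_{D}+\omega_{D}$ and $\|x\|_{X}\leq\|x\|_{\nabla^{v}_{D}}$, the norms $\|\cdot\|_{\nabla_{D}}$ and $\|\cdot\|_{\nabla^{v}_{D}}$ are cb-equivalent; hence it suffices to treat $\nabla=\nabla^{v}$.

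\emph{Columns and the frame.} Using $\mathrm{Id}_{\mathcal{X}}=\sum_{i}|x_{i}\rangle\langle x_{i}|$, the maps
\[
\Phi\colon X\otimes^{h}_{B}\Omega^{1}_{D}\to C_{2N}(\Omega^{1}_{D}),\ \ y\otimes\eta\mapsto(\langle x_{i},y\rangle\eta)_{i},\qquad \Psi\colon C_{2N}(\Omega^{1}_{D})\to X\otimes^{h}_{B}\Omega^{1}_{D},\ \ (\eta_{i})_{i}\mapsto\sum_{i}x_{i}\otimes\eta_{i},
\]
with $C_{2N}(\,\cdot\,)$ the $2N$-fold column operator space, are well defined on the Haagerup module tensor product by Theorem \ref{thm: Blecher} and Proposition \ref{Haaconvergent}, are completely contractive (using $\|\sum_{i}|x_{i}\rangle\langle x_{i}|\|=1$ and that $v$ is completely isometric), and satisfy $\Psi\circ\Phi=\mathrm{id}$. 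Thus $\Phi$ is a completely isometric embedding, so $\|\nabla^{v}_{D}(x)\|_{X\otimes^{h}_{B}\Omega^{1}_{D}}=\|\Phi(\nabla^{v}_{D}(x))\|_{C_{2N}(\Omega^{1}_{D})}$.

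\emph{Computation, conclusion, and the hard part.} Taking the frame homogeneous, write $\gamma(x_{i})=\epsilon_{i}x_{i}$, $\epsilon_{i}\in\{\pm1\}$, and $p_{ji}:=\langle x_{j},x_{i}\rangle$, so $p=(p_{ji})\in M_{2N}(\mathcal{B}_{2})$. The Leibniz rule for $[D,\cdot]$ and the frame identity $\sum_{i}p_{ji}\langle x_{i},x\rangle=\langle x_{j},x\rangle$ give
\[
\Phi(\nabla^{v}_{D}(x))_{j}=\epsilon_{j}\Big([D,\langle x_{j},x\rangle]-\sum_{i}[D,p_{ji}]\langle x_{i},x\rangle\Big),
\]
and $[D,p]$ is bounded because $p\in M_{2N}(\mathcal{B}_{2})\subset M_{2N}(\mathcal{B}_{1})$. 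Together with $\|v(x)\|_{C_{2N}(B)}=\|\langle x,x\rangle\|^{1/2}=\|x\|_{X}$, this shows $\|\nabla^{v}_{D}(x)\|$ is cb-equivalent to $\max\{\|x\|_{X},\|([D,\langle x_{i},x\rangle])_{i}\|\}$. On the other hand $\|x\|_{v}=\|(\pi^{1}_{D}(\langle x_{i},x\rangle))_{i}\|$, and rearranging the lower-triangular $2\times2$ block form of $\pi^{1}_{D}$ exhibits this column norm as cb-equivalent (within a factor of $2$) to $\max\{\|(\langle x_{i},x\rangle)_{i}\|_{C_{2N}(B)},\|([D,\langle x_{i},x\rangle])_{i}\|\}=\max\{\|x\|_{X},\|([D,\langle x_{i},x\rangle])_{i}\|\}$. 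Comparing with $\|x\|_{\nabla^{v}_{D}}=\max\{\|x\|_{X},\|\nabla^{v}_{D}(x)\|\}$ yields $\|\cdot\|_{v}\sim\|\cdot\|_{\nabla^{v}_{D}}$, and with the first paragraph the lemma follows. The only genuine work lies here: verifying that $\Phi,\Psi$ act completely boundedly on the Haagerup module tensor products (which is exactly what Theorem \ref{thm: Blecher} and Proposition \ref{Haaconvergent} supply), correctly tracking the grading signs $\epsilon_{i}$ through the Leibniz manipulation, and checking that every estimate passes to all matrix amplifications; I expect the main obstacle to be nothing more than keeping these equivalences \emph{complete} rather than merely bounded.
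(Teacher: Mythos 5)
Your argument is correct, and its overall shape matches the paper's: reduce to the Grassmann connection by observing that the connection one-form $\omega=\nabla-\nabla^{v}$ is module-linear and hence, by the finite frame, induces a completely bounded $\omega_{D}$, and then compare $\|\cdot\|_{\nabla^{v}_{D}}$ with $\|\cdot\|_{v}$. The difference is in the second step: the paper simply cites \cite[Lemma 3.6]{MR} for the cb-equivalence of $\|\cdot\|_{v}$ and $\|\cdot\|_{\nabla^{v}}$, whereas you reprove that equivalence from scratch by conjugating with the maps $\Phi=v\otimes 1$ and $\Psi=v^{*}\otimes 1$ and computing $\Phi(\nabla^{v}_{D}(x))_{j}=\epsilon_{j}\bigl([D,\langle x_{j},x\rangle]-\sum_{i}[D,p_{ji}]\langle x_{i},x\rangle\bigr)$, which is the content of the cited lemma in this finitely generated setting; your version is self-contained, the paper's is shorter. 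Two small imprecisions, neither fatal: the difference $\omega$ is only $\mathcal{B}_{2}$-linear on $\mathcal{X}$ (as the paper states), not $\mathcal{B}_{1}$-linear, but this suffices since $\langle x_{i},x\rangle\in\mathcal{B}_{2}$ and the frame is finite; and Theorem \ref{thm: Blecher} is not quite the right tool for the complete boundedness of $\Phi,\Psi$, since $\Omega^{1}_{D}(B,\mathcal{B}_{1})$ is only a left operator $B$-module and not a Hilbert module---what you actually need is functoriality of the Haagerup module tensor product under the completely contractive module maps $v$ and $v^{*}$, which is standard and does the job.
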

\begin{proof}
It follows from \cite[Lemma 3.6]{MR} that the norms $\|x\|_{v} =\|v(x)\|_{\mathcal{B}^{2N}_{1}}$ 
and $\|x\|_{\nabla^{v}}$ are cb-equivalent. Since 
$\omega(x):=\nabla(x)-\nabla^{v}(x)$ is $\mathcal{B}_{2}$-linear it is bounded for the $C^{*}$-norm. It follows that the norm $\|x\|_{\nabla_{D}}=\max\{\|x\|_{X},\|\nabla_{D}(x)\|_{X\otimes^{h}_{B}\Omega^{1}_{D}}\}$ is equivalent to the norm $\|x\|_{v}$. 
\end{proof}
This means that for any two connections $\mathcal{X}_{\mathcal{\nabla}}=\mathcal{X}_{\nabla'}=\mathcal{X}_{v}$. The operator module $\mathcal{X}_{v}$ is finitely generated
and projective over $\mathcal{B}_{1}$, as is $X$ over $B$. For any $w\in \mathcal{X}_{v}$ the map 
\[\langle w|:\mathcal{X}_{v}\to \mathcal{B}_{1},\quad x\mapsto \langle w,x\rangle,\]
is completely bounded by \cite[Proposition 3.7.2]{MR}. Hence  for any left operator $\mathcal{B}_{1}$-module $Z$ and 
$\omega\in\mathcal{X}_{v}\otimes^{h}_{\mathcal{B}_{1}}Z$ we then have $\omega=\sum_{1\leq |i|\leq N} x_{i}\otimes \langle x_{i},\omega\rangle$ so $\mathcal{X}_{v}\otimes^{h}_{\mathcal{B}_{1}}Z=\mathcal{X}_{v}\otimes^{\alg}_{\mathcal{B}_{1}}Z$. Similarly $X\otimes^{h}_{B}Z=X\otimes^{\alg}_{B}Z$.
We summarise the simplifications in the following definition.
\begin{defn} 
Let $(\mathcal{B},H,D)$ a $C^{2}$-spectral triple. For $k=1,2$ a $C^{k}$-\emph{submodule} of $X$ is a dense  $\mathcal{B}_{k}$ submodule $\mathcal{X}\subset X$ such that for all $x_{1},x_{2}\in\mathcal{X}$ we have $\langle x_{1},x_{2}\rangle \in \mathcal{B}_{k}$.
 A \emph{$C^{2}$-connection} is a linear map $\nabla:\mathcal{X}\to \mathcal{X}_{v}\otimes_{\mathcal{B}_{1}}^{\alg}\Omega^{1}_{u}(\mathcal{B}_{1},\mathcal{B}_{2})$ satisfying the Leibniz rule. 
 %.$\nabla:\mathcal{X}\to X\otimes_{B}^{h}\Omega^{1}_{u}(B,\mathcal{B}_{1})$, giving rise
%to the completion $\mathcal{X}_{\nabla}$ and a map 
%\color{black}
\end{defn}

%Many of the following statements work equally well (in the finite projective case)
%for $C^1$ or $C^2$ connections. It is only when we come to curvature, and
%so second derivatives, that we really require $C^2$. 
%As already described, the issues are more
%subtle in the countably generated case.

Given a $C^2$-connection $\nabla$ on the finite projective module
$\X$, we obtain the represented connection
$$
\nabla_D:\,\X \to X\otimes_{B}^{\alg}\Omega^{1}_{D}(B, \mathcal{B}_{1}),\quad \nabla_D(x)=(1\ox \pi_D)(\nabla(x)).
$$

Similarly we obtain
\[\nabla_{D^{2}}:\mathcal{X}\to \mathcal{X}_{v}\otimes_{\mathcal{B}_{1}}^{\alg}\Omega^{1}_{D^{2}}(\mathcal{B}_{2},\mathcal{B}_{1}),\]
defined by $\nabla_{D^{2}}(x):=(1\ox\pi_{D^{2}})(\nabla(x))$.

\subsection{The curvature operator for finitely generated projective modules}

%As usual, to study curvature requires second derivatives. 
%As with the represented analogue of the
%differential, there are problems with defining
%the derivative of a represented 
%1-form as a bounded operator. It turns out
%that there is a natural correction term to yield a well-defined 
%``second derivative''.
%We begin by defining the operators we need.

We are now in a position to apply our general formalism to compute the curvature operator of a finite projective module. We first recall the following well-known result.

\begin{prop} 
\label{prop:prod-op}
Suppose that $\mathcal{X}$ is algebraically finitely generated and projective over $\mathcal{B}_{2}$.
The operator
\[
1\otimes_{\nabla}D:\mathcal{X}\otimes^{\alg}_{\mathcal{B}_{2}}\Dom D
\to X\otimes_{B}H,\quad 
x\otimes h\mapsto \gamma(x)\otimes Dh+\nabla_{D}(x)h,
\]
is well-defined and essentially self-adjoint. Moreover 
$\mathcal{X}\otimes_{\mathcal{B}_{2}}^{\alg}\Dom D^{2}
\subset \Dom (1\otimes_{\nabla}D)^{2}$ and the operator 
\[
1\otimes_{\nabla}D^{2}:\mathcal{X}\otimes^{\alg}_{\mathcal{B}_{2}}
\Dom D^{2}\to X\otimes_{B}H,\quad 
x\otimes h\mapsto x\otimes D^{2}h+\nabla_{D^{2}}(x)h,
\]
is well-defined and symmetric. The curvature operator\[
R_{\nabla_{D}}:=1\otimes_{\nabla}D^{2}
-(1\otimes_{\nabla}D)^{2}:\mathcal{X}\otimes^{\alg}_{\mathcal{B}_{2}}\Dom D^{2}
\to X\otimes_{B}H,
\]
is a densely defined symmetric operator.
\end{prop}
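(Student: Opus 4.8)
The plan is to reduce everything to finite matrix computations via the defining projection $p$. Fixing the isometric identifications $\mathcal{X}\cong p\mathcal{B}_{2}^{2N}$, $X\cong pB^{2N}$ and the frame $\{x_{i}\}_{1\le|i|\le N}\subset\mathcal{X}$ from Section \ref{sect:fgp}, and recalling from there that $X\otimes_{B}^{h}Z = X\otimes_{B}^{\alg}Z$ and $\mathcal{X}_{v}\otimes_{\mathcal{B}_{1}}^{h}Z = \mathcal{X}_{v}\otimes_{\mathcal{B}_{1}}^{\alg}Z$ for every left operator module $Z$, all tensor expressions below become \emph{finite} sums. In particular $\nabla_{D}(x)=(1\otimes\pi_{D})(\nabla(x))=\sum_{i}x_{i}\otimes\omega_{i}$ with $x_{i}\in\mathcal{X}$ and, by Lemma \ref{indc2} (which gives $\Omega^{1}_{D}(\mathcal{B}_{1},\mathcal{B}_{2})\subset\mathbb{B}(\Dom D)$), each $\omega_{i}$ maps $\Dom D$ into $\Dom D$; likewise $\nabla_{D^{2}}(x)=(1\otimes\pi_{D^{2}})(\nabla(x))$ is a finite sum $\sum_{i}x_{i}\otimes\mu_{i}$ with $\mu_{i}\in\mathbb{B}(\Dom D,H)$.

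\emph{Well-definedness and symmetry.} That $x\otimes h\mapsto\gamma(x)\otimes Dh+\nabla_{D}(x)h$ respects the balancing over $\mathcal{B}_{2}$ follows from a short computation using $\pi_{D}(\delta(b))=[D,b]$, the Leibniz rule for $\nabla$, the inclusion $b\colon\Dom D\to\Dom D$, and the $B$-balancing of $X\otimes_{B}H$: the term $\gamma(x)b\otimes Dh$ absorbs $\gamma(x)\otimes[D,b]h$ into $\gamma(x)\otimes D(bh)$, matching the other side. The image lies in $X\otimes_{B}H$ since each $\omega_{i}$ preserves $\Dom D$. Symmetry of $1\otimes_{\nabla}D$ on $\mathcal{X}\otimes^{\alg}_{\mathcal{B}_{2}}\Dom D$ is then Lemma \ref{symmetric} with $(\mathcal{B},Y,T)=(\mathcal{B},H,D)$. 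The operator $1\otimes_{\nabla}D^{2}$ is handled the same way, now with $\pi_{D^{2}}(\delta(b))=[D^{2},b]$ and $b\colon\Dom D^{2}\to\Dom D^{2}$; its symmetry on $\mathcal{X}\otimes^{\alg}_{\mathcal{B}_{2}}\Dom D^{2}$ follows by repeating the computation of Lemma \ref{symmetric} with the even self-adjoint operator $D^{2}$ in place of $D$ (so no grading enters) together with the compatibility of $\nabla$ read through $\pi_{D^{2}}$.

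\emph{Essential self-adjointness of $1\otimes_{\nabla}D$.} First take $\nabla=\nabla^{v}$, the Grassmann connection. Under $v$, the operator $1\otimes_{\nabla^{v}}D$ corresponds to the compression $p\widetilde D p$ to $pH^{2N}\cong X\otimes_{B}H$ of the self-adjoint operator $\widetilde D$ on $H^{2N}$ built from $D$ and the $\mathbb{Z}/2$-grading, with $\Dom\widetilde D=(\Dom D)^{2N}$. Since the matrix entries of $p$ lie in $\mathcal{B}_{2}$, the commutator $[\widetilde D,p]$ is bounded, hence $p\widetilde D p\oplus(1-p)\widetilde D(1-p)=\widetilde D+[\widetilde D,p](1-2p)$ is self-adjoint on $(\Dom D)^{2N}$ and commutes with $p$; its restriction to $pH^{2N}$ is therefore self-adjoint with domain $p(\Dom D)^{2N}$. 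Finite generation and projectivity of $\mathcal{X}$ over $\mathcal{B}_{2}$ give $\mathcal{X}\otimes^{\alg}_{\mathcal{B}_{2}}\Dom D=p\mathcal{B}_{2}^{2N}\otimes^{\alg}_{\mathcal{B}_{2}}\Dom D=p(\Dom D)^{2N}$, so this restriction is exactly the closure of $1\otimes_{\nabla^{v}}D$, which is thus essentially self-adjoint. For an arbitrary Hermitian $\nabla$, the connection one-form $\omega:=\nabla-\nabla^{v}\colon\mathcal{X}\to\mathcal{X}_{v}\otimes^{\alg}_{\mathcal{B}_{1}}\Omega^{1}_{u}(\mathcal{B}_{1},\mathcal{B}_{2})$ is $\mathcal{B}_{2}$-linear, hence bounded for the $C^{*}$-norm (cf.\ the cb-equivalence of $\|\cdot\|_{v}$ and $\|\cdot\|_{\nabla_{D}}$ above), so $A:=(1\otimes\pi_{D})(\omega)$ extends to a bounded operator on $X\otimes_{B}H$, self-adjoint because $\nabla$ and $\nabla^{v}$ are Hermitian; since $1\otimes_{\nabla}D=1\otimes_{\nabla^{v}}D+A$ it is essentially self-adjoint, and regular because $X\otimes_{B}H$ is a Hilbert space.

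\emph{The square and the curvature operator.} For $h\in\Dom D^{2}$ we have $(1\otimes_{\nabla}D)(x\otimes h)=\gamma(x)\otimes Dh+\sum_{i}x_{i}\otimes\omega_{i}h$ with $\gamma(x),x_{i}\in\mathcal{X}$ and $Dh,\omega_{i}h\in\Dom D$, so this vector lies in the span of the elementary tensors defining $\Dom(1\otimes_{\nabla}D)=\mathcal{X}\otimes^{\alg}_{\mathcal{B}_{2}}\Dom D$; hence $\mathcal{X}\otimes^{\alg}_{\mathcal{B}_{2}}\Dom D^{2}\subset\Dom(1\otimes_{\nabla}D)^{2}$. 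Since $1\otimes_{\nabla}D$ is symmetric and carries $\mathcal{X}\otimes^{\alg}_{\mathcal{B}_{2}}\Dom D^{2}$ into its own domain, $(1\otimes_{\nabla}D)^{2}$ is symmetric on $\mathcal{X}\otimes^{\alg}_{\mathcal{B}_{2}}\Dom D^{2}$; together with the symmetry of $1\otimes_{\nabla}D^{2}$ this exhibits $R_{\nabla_{D}}=1\otimes_{\nabla}D^{2}-(1\otimes_{\nabla}D)^{2}$ as a difference of symmetric operators on the common dense domain $\mathcal{X}\otimes^{\alg}_{\mathcal{B}_{2}}\Dom D^{2}$, hence densely defined and symmetric. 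I expect the main obstacle to be the essential self-adjointness step: identifying $1\otimes_{\nabla^{v}}D$ with the restriction to $pH^{2N}$ of the bounded perturbation of $\widetilde D$, so that its domain is already $\mathcal{X}\otimes^{\alg}_{\mathcal{B}_{2}}\Dom D$ with no further closure needed, and verifying that $A=\pi_{D}(\omega)$ genuinely extends to a bounded self-adjoint operator; everything else is routine bookkeeping with the Leibniz rule and the frame. (Alternatively one can note that the present setting is the special case $S=0$ of Section \ref{Hilbcurve}, with $(\mathcal{X},0)$ a $C^{(1,2)}$-module and $(0,1\otimes_{\nabla}D)$ a vertically anticommuting pair, and deduce the statements about squares from Proposition \ref{prop: C2curvedomain} and Theorem \ref{thm: Ristwoformdetermineduptojunk} — but the essential self-adjointness of $1\otimes_{\nabla}D$ is still required as input, being part of the definition of a $C^{1}$-module.)
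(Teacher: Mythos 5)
Your route is genuinely different from the paper's: the paper disposes of the first statement by citing the literature, and deduces the second and third statements from Lemma \ref{domainproperties} and Theorem \ref{thm: Ristwoformdetermineduptojunk} specialised to $S=0$, whereas you argue everything by hand --- identifying $1\otimes_{\nabla^{v}}D$ with the compression $p\widetilde{D}p$ of the bounded symmetric perturbation $\widetilde{D}+[\widetilde{D},p](1-2p)$, treating a general Hermitian $\nabla$ as a bounded self-adjoint perturbation by the connection one-form, and obtaining the domain inclusion by frame bookkeeping via Lemma \ref{indc2}. Those parts are sound (your perturbation argument is essentially the one in the references the paper cites), with the small caveat that, like the paper, you implicitly assume $\nabla$ is Hermitian --- it is needed for Lemma \ref{symmetric} and for self-adjointness of the one-form $A$, and the symmetry claims are false without it.

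There is, however, one step that fails as written: the assertion that $1\otimes_{\nabla}D^{2}$ is ``handled the same way'' and that ``no grading enters''. The grading does enter, through the Leibniz rule $\nabla(xb)=\nabla(x)b+\gamma(x)\otimes\delta(b)$: comparing the proposed formula on $xb\otimes h$ and on $x\otimes bh$ leaves the discrepancy $(\gamma(x)-x)\otimes[D^{2},b]h$, which is nonzero on odd $x$, so the balancing check is \emph{not} the same as for the odd operator $D$, where the term $\gamma(x)\otimes Dh$ compensates; for the same reason the computation of Lemma \ref{symmetric} does not transcribe verbatim with $D^{2}$ in place of $D$, since on odd vectors the Hermitian identity enters with the wrong sign. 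The repair is to insert the grading into the zeroth-order part, i.e.\ to use $(\gamma\otimes 1)\circ\nabla_{D^{2}}=\nabla_{D^{2}}\circ\gamma$ (this is exactly the $\gamma\otimes\pi_{T^{2}}$ term appearing in Lemma \ref{computesquare}); with that reading, balancing holds, symmetry follows from the Hermitian property together with $\gamma=\gamma^{*}$, and the rest of your argument --- the domain inclusion, the symmetry of $(1\otimes_{\nabla}D)^{2}$ on $\mathcal{X}\otimes^{\alg}_{\mathcal{B}_{2}}\Dom D^{2}$, and hence of $R_{\nabla_{D}}$ --- goes through unchanged. To be fair, the displayed formula in the Proposition elides the same grading, so this is partly a defect of the statement; but since your proof explicitly claims the verification is a routine grading-free repetition, this is the gap you need to close.
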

\begin{proof} 
The first statement is proved in \cite{BRB,ConnesGrav,LRV} and several subsequent 
works \cite{BMS,KaLe2, Mes, MR}. The second statement follows from Lemma \ref{domainproperties} and the third statement from Theorem \ref{thm: Ristwoformdetermineduptojunk}  both with $S=0$.
%We prove the latter statement. Since for $x\in\X$ we have
%$\nabla_{D}(x)\in 
%\mathcal{X}_{\nabla}\otimes_{\mathcal{B}_{1}}^{h} \mathbb{B}(\Dom D)$, 
%it follows that for $h\in \Dom D$ we have
%$\nabla_{D}(x)h\in \mathcal{X}_{\nabla}\otimes^{h}_{\mathcal{B}_{1}}\Dom D
%\subset \Dom (1\otimes_{\nabla}D)$. 
\end{proof}

%We record the following observation whose proof is a 
%simple computation.
%\begin{prop} 
%\label{prop:prod-op}
%The operator 
%\[
%1\otimes_{\nabla}D^{2}:\mathcal{X}\otimes^{\alg}_{\mathcal{B}_{2}}
%\Dom D^{2}\to X\otimes_{B}H,\quad 
%x\otimes h\mapsto x\otimes D^{2}h+\nabla_{D^{2}}(x)h,
%\]
%is well-defined and symmetric.
%\end{prop}

%We have now assembled the ingredients required to examine the curvature operator. In this finitely generated case it is a difference of 
%two densely defined symmetric operators
%\[
%R_{\nabla_{D}}:=1\otimes_{\nabla}D^{2}
%-(1\otimes_{\nabla}D)^{2}:\mathcal{X}\otimes^{\alg}_{\mathcal{B}_{2}}\Dom D^{2}
%\to X\otimes_{B}H.
%\]

\begin{prop}
Suppose that $\mathcal{X}$ is finitely generated and projective over $\mathcal{B}_{2}$ and that $\nabla:\mathcal{X}\to \mathcal{X}_{v}\otimes^{\alg}_{\mathcal{B}_{1}}\Omega^{1}_{u}(\mathcal{B}_{1},\mathcal{B}_{2})$ is a $C^{2}$-connection. Then $R_{\nabla}$ extends to a bounded operator on $X\otimes_{B}H$. Moreover if $\nabla^{v}$ is the Grassmann connection of the frame $\{x_{i}\}$ and 
\[\omega:\mathcal{X}\to \mathcal{X}\otimes^{\alg}_{\mathcal{B}_{1}}\Omega^{1}_{u}(\mathcal{B}_{1},\mathcal{B}_{2}),\quad \omega(x):=\nabla(x)-\nabla^{v}(x),\]
the connection form of $\nabla$, then $R_{\nabla_{D}}=v^{*}[D,p][D,p]v+\omega_{D}^{2}+v^{*}\pi_{D}(\delta(v \omega v^{*}))v$.
\end{prop}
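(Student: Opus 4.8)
The plan is to compute $R_{\nabla_D}=1\otimes_\nabla D^2-(1\otimes_\nabla D)^2$ directly on an elementary tensor $x\otimes h\in\mathcal{X}\otimes^{\alg}_{\mathcal{B}_2}\Dom D^2$, first for the Grassmann connection $\nabla^v$ and then adding the correction by the connection form $\omega$. For the Grassmann connection, write $\nabla^v_D(x)=\sum_i\gamma(x_i)\otimes[D,\langle x_i,x\rangle]$, which under $v$ becomes $p[D,p]v(x)$ after using the frame relation and the fact that $v$ is isometric. Then expand $(1\otimes_{\nabla^v}D)^2(x\otimes h)$: applying $1\otimes_{\nabla^v}D$ twice produces the terms $x\otimes D^2h$, $\gamma(x_i)\otimes[D,\langle x_i,x\rangle]Dh$ (twice, with a sign from the grading that cancels), and the genuinely second-order term $\sum_{i,j}\gamma(x_j)\otimes[D,\langle x_j,\gamma(x_i)\rangle][D,\langle x_i,x\rangle]h$. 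Comparing with $1\otimes_{\nabla^v}D^2(x\otimes h)=x\otimes D^2h+\sum_i x_i\otimes[D^2,\langle x_i,x\rangle]h$ and using $[D^2,b]=D[D,b]+[D,b]D=[D,b]D+[D,D][\cdot]$ type identities (i.e. $[D^2,b]=[D,[D,b]]_+$ as in Equation \eqref{eq:delta-tee}), the first-order terms cancel and what survives is $-\sum_{i,j}\gamma(x_j)\otimes[D,\langle x_j,\gamma(x_i)\rangle][D,\langle x_i,x\rangle]h$ together with $\sum_i x_i\otimes\pi_D(\delta\langle x_i,x\rangle)\cdot(\text{junk piece})$. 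The key combinatorial identity is that $\sum_{i,j}[D,\langle x_j,x_i\rangle][D,\langle x_i,\,\cdot\,\rangle]$ reassembles, after applying $v$, into $v^*[D,p][D,p]v$, which one checks by inserting $v v^*=p$ and using $p[D,p]p=0$ (equivalently $p[D,p]=[D,p](1-p)$). This is where the bulk of the bookkeeping lives.

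Next I would handle the general connection by writing $\nabla=\nabla^v+\omega$ with $\omega\in\mathcal{X}\otimes^{\alg}_{\mathcal{B}_1}\Omega^1_u(\mathcal{B}_1,\mathcal{B}_2)$, so $\nabla_D=\nabla^v_D+\omega_D$ and $\nabla_{D^2}=\nabla^v_{D^2}+\omega_{D^2}$. By the general theory (Theorem \ref{thm: Ristwoformdetermineduptojunk} with $S=0$, or equivalently Proposition \ref{nablassquare} and Definition \ref{def: repcurv}), the curvature is represented by $\pi_D(\nabla^2)$ where $\nabla^2=(1\otimes_{\nabla_D}\delta)\circ\nabla$, and $\nabla^2=(\nabla^v)^2+\nabla^v\omega+\omega\nabla^v+\omega^2$ in the usual algebraic sense; applying $\pi_D$ and using that $\omega$ is $\mathcal{B}_2$-linear (hence its represented version behaves like a genuine endomorphism-valued one-form) gives $R_{\nabla_D}=R_{\nabla^v_D}+(\text{the }\nabla^v\omega+\omega\nabla^v\text{ terms})+\omega_D^2$. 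The cross terms $\nabla^v\omega+\omega\nabla^v$ together with the derivative of $\omega$ along $\nabla^v$ are precisely what is packaged as $v^*\pi_D(\delta(v\omega v^*))v$: one lifts $v\omega v^*\in M_{2N}(\Omega^1_T(\mathcal{B}_1,\mathcal{B}_2))$ to the universal calculus, applies $\delta$, represents via $\pi_D$, and conjugates back by $v$; the independence of the lift up to junk forms is exactly Corollary \ref{cor:junk-squared}. The point is that $\d\omega:=\pi_D(\delta(v\omega v^*))$ absorbs both the $[D,-]$ acting on the coefficients of $\omega$ and the $\nabla^v$-correction coming from the frame.

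Finally I would assemble the three pieces: $R_{\nabla_D}=v^*[D,p][D,p]v+\omega_D^2+v^*\pi_D(\delta(v\omega v^*))v$, noting that boundedness of $R_{\nabla_D}$ on $X\otimes_B H$ is immediate since each summand is bounded ($[D,p]\in M_{2N}(\mathbb{B}(H))$, $\omega_D\in\mathbb{B}(X\otimes_B H)$ because $\omega$ is $\mathcal{B}_2$-linear, and $\pi_D(\delta(v\omega v^*))$ is bounded by Lemma \ref{indc2} applied to the entries of $v\omega v^*\in\Omega^1_u(\mathcal{B}_1,\mathcal{B}_2)$ lifted suitably). The main obstacle I anticipate is the Grassmann computation: getting the signs right in the graded setting (the grading $\gamma$ intertwining with $\delta$ being odd), and carefully justifying that the first-order/unbounded terms $a[D^2,b]$ cancel against the $[D,a][D,b]$ terms exactly as in Proposition \ref{ajunkie}, so that only the stated bounded two-form survives. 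The conjugation identities $p[D,p]p=0$ and $v^*v=\mathrm{Id}_\mathcal{X}$, $vv^*=p$ are the workhorses for collapsing the frame sums into the clean projector expression.
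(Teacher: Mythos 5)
Your proposal follows the same basic strategy as the paper: decompose $\nabla=\nabla^{v}+\omega$, recognise the cross/differential term as $v^{*}\pi_{D}(\delta(v\omega v^{*}))v$ via Proposition \ref{ajunkie} (with Corollary \ref{cor:junk-squared} controlling the choice of lift), and collapse the Grassmann part using $vv^{*}=p$ and $p[D,p]p=0$. The execution differs, though, and the paper's is much lighter: instead of expanding $(1\otimes_{\nabla^{v}}D)^{2}$ on elementary tensors and reassembling frame sums (the ``bulk of the bookkeeping'' you anticipate), it uses the operator identities $1\otimes_{\nabla}D=v^{*}Dv+\omega_{D}$ and $1\otimes_{\nabla}D^{2}=v^{*}D^{2}v+\omega_{D^{2}}$, so that squaring gives $v^{*}Dvv^{*}Dv+v^{*}[D,v\omega_{D}v^{*}]v+\omega_{D}^{2}$; then $v^{*}Dvv^{*}Dv-v^{*}D^{2}v=v^{*}[D,p][D,p]v$ is a two-line consequence of $p[D,p]p=0$, and $v^{*}[D,\pi_{D}(v\omega v^{*})]v-\omega_{D^{2}}=v^{*}\pi_{D}(\delta(v\omega v^{*}))v$ is exactly Proposition \ref{ajunkie}. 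The paper also establishes boundedness \emph{before} the formula: Theorem \ref{thm: Ristwoformdetermineduptojunk} with $S=0$ gives $R_{\nabla_{D}}(x_{i})\in X\otimes^{h}_{B}\Omega^{2}_{D}$ for the finitely many frame elements, and $\mathcal{B}_{2}$-linearity yields $R_{\nabla_{D}}(x)=\sum_{i}R_{\nabla_{D}}(x_{i})\langle x_{i},x\rangle$; your reading of boundedness off the final formula is also legitimate. One slip to correct: the represented Grassmann connection does not become $p[D,p]v(x)$ under $v$ --- that expression is identically zero, since $p[D,p]v(x)=p[D,p]pv(x)=0$. The correct statement is $1\otimes_{\nabla^{v}}D=v^{*}Dv$, i.e.\ $v$ conjugates the Grassmann product operator to the compression $pDp$, and it is precisely this identity that removes the frame-sum bookkeeping from your first paragraph.
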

\begin{proof}
%We can regard $\mathcal{X}$ as a correspondence with $S=0$.
Theorem \ref{thm: Ristwoformdetermineduptojunk} with $S=0$ gives us that $R_{\nabla_{D}}(x)\in X\otimes^{h}_{B}\Omega^{2}_{D}$. 
%As 
%$\mathcal{X}$ is finitely generated
%and projective over $\mathcal{B}_{2}$, we can choose 
%a finite frame\footnote{The finite set $\{x_i\}_{i=1}^{N}\subset\X$ is a frame for $X$ if 
%${\rm Id}_X=\sum_{i=1}^{N} |x_i\rangle\langle x_i|$.} $\{x_{i}\}_i$ for $\mathcal{X}$. 
By 
$\mathcal{B}_{2}$-linearity of $R_{\nabla_{D}}$ we then find
\begin{align*}
R_{\nabla_{D}}(x)=\sum_{i}R_{\nabla_{D}}(x_{i})\langle x_{i},x\rangle,
\end{align*}
and since there are finitely many elements $R_{\nabla_{D}}(x_{i})\in X\otimes_B\Omega^{2}_{D}$,
it follows that $R_{\nabla_{D}}$ is a bounded operator. Now we have 
\[1\otimes_{\nabla}D=v^{*}Dv+\omega_{D},\quad 1\otimes_{\nabla}D^{2}=v^{*}D^{2}v+\omega_{D^{2}},\]
and
\[
(1\otimes_{\nabla}D)^{2}=(v^{*}Dv+\omega_{D})^{2}=v^{*}Dvv^{*}Dv+v^{*}[{D},v\omega_{D}v^{*}]v+ \omega_{D}^{2}.
\]
Using $p[\D,p]p=0$ we compute $v^{*}Dvv^{*}Dv-v^{*}D^{2}v=v^{*}[D,p][D,p]v$ where $p=vv^{*}$, and 
\[
v^{*}[{D},v\omega_{D}v^{*}]v-\omega_{D^{2}}=v^{*}([{D},\pi_{D}(v\omega v^{*})]-\pi_{D^{2}}(v\omega v^{*}))v=v^{*}\pi_{D}(\delta(v\omega v^{*}))v ,
\] 
by Proposition \ref{ajunkie}. Thus it now follows that 
\[
R_{\nabla_{D}}=v^{*}[D,p][D,p]v+v^{*}\pi_{D}(\delta(v\omega v^{*}))v+ \omega_{D}^{2},
\]
as claimed.
\end{proof}
Although the curvature operator of a finitely generated projective module is bounded, there is no uniform bound on its norm, in the following sense. 
As an illustrative example, consider the module $\mathcal{L}_{1}$ of sections of the tautological line bundle $L_1\to \mathbb{P}^{1}(\mathbb{C})$ over the two sphere and define $\mathcal{L}^{n}:=\mathcal{L}_{1}^{\otimes^{n}}$. Then the calculations in \cite[Section 6]{BMS} show that $\|R_{\nabla^{n}}\|\sim n$ for a natural family of Grassmann connections $\nabla^{n}$.
The infinite direct sum $\bigoplus_{n=1}^{\infty} \mathcal{L}_{n}$ can be given the structure of a $C^{1}$-module, whose curvature operator is unbounded.
%\color{red}
%Comment on infinite direct sums of finite projectives.
%\color{black}

\section{Grassmann connections}
\label{sec:graaaaaaaaaa}
The Kasparov stabilisation theorem shows that every countably generated
Hilbert module is a complemented submodule of the standard module and thus admits a frame.
Modulo differentiability, every isometric inclusion in the standard
module yields a connection, called a Grassmann connection. %\color{blue}
In purely noncommutative settings, one often has access to a frame but not necessarily much else. In this section 
we provide sufficient differentiability conditions on frames and modules for $C^{2}$-Grassmann and their curvature to exist.
A range of examples comes from Cuntz-Pimsner algebras \cite{BMS, GM, GMR, Seniorthesis} including the 
$\theta$ and $q$-deformed $3$-spheres, as well as abstract constructions of connections 
in $KK$-theory \cite{K, MR}. Interestingly, we will see in Section \ref{sect:riem-subm} that our
sufficient conditions are met for Riemannian submersions as well.
%\color{black}

%{\bf Say what this section is for...existence? formula for curvature? AR}

To handle $\Z_2$-graded modules we need a $\Z_2$-graded standard module, which we take to be (the completion of) $H_B=H\ox B=\ell^2(\hat{\mathbb{Z}})\ox B$. Here 
$\hat{\mathbb{Z}}=\Z\setminus\{0\}$, and the $\pm$-homogenous subspaces are those indexed by positive $n\in\hat{\mathbb{Z}}$ and 
those indexed by negative $n\in\hat{\mathbb{Z}}$. A basis $\{e_i\}_{i\in\hat{\mathbb{Z}}}$ 
for $H_B$ is
homogenous if $e_i$ has positive degree if and only if $i$ is positive.

Recall that a \emph{countable frame} for a Hilbert module
$X$ is a sequence $\{x_i\}\subset X$ such that for all $x\in X$, $x=\sum_{i}x_{i}\langle x_{i},x\rangle$ as a norm convergent series. That is,  the sum ${\rm Id}_X=\sum_i |x_i\rangle\langle x_i|$ converges strictly.

\subsection{Differentiable stabilisation}
Recall from Definition \ref{diffconn} that for $k=1,\,2$, 
a horizontally $C^{k}$-submodule is a $\mathcal{B}_{k}$-submodule $\mathcal{X}\subset X$ for which $\langle x,y\rangle\in\mathcal{B}_{k}$.

\begin{defn} 
\label{diffframe}
Let $k=1,2$ and $X$ a $C^{*}$-module over $B$ and $\mathcal{X}\subset X$ a horizontally $C^{k}$-submodule. An even stabilisation isometry $v:X\to H_{B}$  \emph{horizontally $C^{k}$-differentiable} with respect to $(\mathcal{B},Y, T)$ if 
\begin{enumerate}
\item $v:X\to H_{B}$ restricts to a map $v:\mathcal{X}\to H\otimes^{h}\mathcal{B}_{k}$; 
\item there is a dense graded subspace $\mathcal{H}\subset H$ such that  $v^{*}:H_{B}\to X$ restricts to an even map $v^{*}:\mathcal{H}\otimes^{\alg}\mathcal{B}_{k}\to \mathcal{X}$.
\end{enumerate}
Given a homogenous orthonormal basis $\{e_i\}\subset\H\subset H$,
we say that the frame $\{x_i=v^*(e_i\ox1)\}$ is a $C^k$-frame.
\end{defn}
That  $\{x_i=v^*(e_i\ox1)\}$ is a frame for the 
Hilbert $C^{*}$-module $X$ is a short computation. 
\begin{prop}
\label{Ckframechoice}
Let $(\mathcal{B},Y,T)$ be a $C^{k}$ unbounded Kasparov module, $X$ a $C^{*}$-$B$-module, $\mathcal{X}$ a horizontal $C^{k}$-submodule and $v:X\to H\otimes^{h}\mathcal{B}_{k}$ a $C^{k}$-stabilisation. For any countable homogenous basis $\{e_{i}\}\subset\mathcal{H}$ for the Hilbert space $H$, the elements $x_{i}:=v^{*}(e_i\otimes 1)\in \mathcal{X}$ form a frame for $X$ with the property that for all $x\in\mathcal{X}$ the series
\[
\sum_{i\in\hat{\mathbb{Z}}}[T,\langle x_{i},x\rangle]^{*}[T,\langle x_{i},x\rangle],\quad \sum_{i\in\hat{\mathbb{Z}}}(T-i)^{-1}[T^{2},\langle x_{i},x\rangle]^{*}[T^{2},\langle x_{i},x\rangle](T+i)^{-1},
\]
are norm convergent in $\End^{*}_{C}(Y)$ (if $k=1$, only the first series converges).
\end{prop}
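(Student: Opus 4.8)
The plan is to verify first that $x_i := v^*(e_i \otimes 1) \in \mathcal{X}$ indeed forms a frame for $X$, which amounts to the computation that for $x \in X$,
\[
\sum_i x_i \langle x_i, x\rangle = \sum_i v^*(e_i\otimes1)\langle v^*(e_i\otimes1), x\rangle = v^*\Big(\sum_i (e_i\otimes1)\langle e_i\otimes1, vx\rangle\Big) = v^*vx = x,
\]
using that $\{e_i\otimes1\}$ is a frame for $H_B$ (coming from the orthonormal basis $\{e_i\}$ of $H$) and that $v$ is an isometry so $v^*v = \mathrm{Id}_X$. The homogeneity requirement follows since $v$ and $v^*$ are even maps and the basis is homogeneous.

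The substance of the statement is the norm convergence of the two operator series. First I would recast both sums as applications of the completely bounded maps $\delta_T$ and $\delta_{T^2}$ from Lemma \ref{indc2}. Fix $x \in \mathcal{X}$. Since $v$ restricts to a map $\mathcal{X} \to H \otimes^h \mathcal{B}_k$, we can write $vx = \sum_i e_i \otimes b_i$ with $b_i = \langle x_i, x\rangle \in \mathcal{B}_k$, and by part 1 of Proposition \ref{Haaconvergent} (the Haagerup convergence characterisation) the series $\sum_i e_i e_i^* $ and $\sum_i b_i^* b_i$ are norm convergent, with $\sum_i b_i^* b_i$ convergent in $\mathcal{B}_k$ (hence its image in each coarser algebra). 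The key point is that convergence of $\sum_i b_i^* b_i$ in the $\mathcal{B}_k$-norm, together with complete boundedness of $\delta_T : \mathcal{B}_2 \to \mathbb{B}(\Dom T)$ (for $k \geq 1$) and $\delta_{T^2} : \mathcal{B}_2 \to \mathbb{B}(\Dom T, Y)$ (for $k = 2$), upgrades to norm convergence of
\[
\sum_i [T, b_i]^*[T, b_i] \quad\text{and}\quad \sum_i (T-i)^{-1}[T^2, b_i]^*[T^2, b_i](T+i)^{-1}
\]
in $\End^*_C(Y)$. Concretely: for a tail $\sum_{N \le |i| \le M}$, the operator norm of $\big(\sum [T,b_i]^*[T,b_i]\big)^{1/2}$ equals $\|(\,[T,b_i]\,)_{N \le |i| \le M}\|$ as a column, which by complete boundedness of $\delta_T$ (applied to the column $(b_i)$) is bounded by $\|\delta_T\|_{\mathrm{cb}} \cdot \|(b_i)_{N \le |i| \le M}\|_{\mathcal{B}_k}$; and $\|(b_i)_{N\le|i|\le M}\|_{\mathcal{B}_k}^2 = \|\sum_{N \le |i| \le M} b_i^* b_i\|_{\mathcal{B}_k} \to 0$ as $N \to \infty$ by convergence of $\sum b_i^* b_i$. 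The same estimate with $\delta_{T^2}$ in place of $\delta_T$, using $\|R\|_{\mathbb{B}(\Dom T, Y)}$ bounds $\|(T-i)^{-1}[T^2,b_i]^*[T^2,b_i](T+i)^{-1}\|^{1/2}$-type quantities appropriately (via $\|[T^2,b_i](T+i)^{-1}\| = \|[T^2,b_i]\|_{\mathbb{B}(\Dom T, Y)}$ from the graph-norm identity), handles the second series when $k = 2$; when $k = 1$ only the first estimate is available since $[T^2, b]$ need not be controlled, which is exactly the parenthetical caveat.

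The main obstacle I anticipate is bookkeeping the passage between the Haagerup-norm convergence of $\sum b_i^* b_i$ in the operator-space sense and the genuine $C^*$-norm (i.e. $\End^*_C(Y)$-norm) convergence of the resulting operator series: one must be careful that $x \in \mathcal{X}$ (not merely $X$) is needed so that $vx$ lands in $H \otimes^h \mathcal{B}_k$ rather than just $H_B$, and that the inner products $b_i = \langle x_i, x\rangle$ genuinely lie in $\mathcal{B}_k$ with $\sum b_i^* b_i$ convergent in the $\mathcal{B}_k$-topology — this is where part 1 of Definition \ref{diffframe} is used. Once this is in place, the complete boundedness of $\delta_T$ and $\delta_{T^2}$ does the rest, the argument being essentially the Haagerup-estimate manipulation already carried out in the proofs of Lemma \ref{indc2} and Proposition \ref{ajunkie}, now applied to an infinite (column) summation rather than a finite matrix.
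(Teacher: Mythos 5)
Your argument is correct and takes essentially the same route as the paper: identify $\langle x_i,x\rangle=\langle e_i\otimes 1,v(x)\rangle$, use $v(x)\in H\otimes^h\mathcal{B}_k$ to get column-norm convergence of the coefficient column in $\mathcal{B}_k$, and transfer this to the two operator series by complete boundedness, which the paper phrases directly through the representations $\pi^1_T$ and $\pi^2_T$ rather than through $\delta_T,\delta_{T^2}$. One cosmetic point: for $k=1$ the first series needs only the complete contractivity of $b\mapsto[T,b]$ on $\mathcal{B}_1$ (built into $\pi^1_T$), not the map $\delta_T:\mathcal{B}_2\to\mathbb{B}(\Dom T)$ of Lemma \ref{indc2}, and the column norm on $\mathcal{B}_k$ should be computed in its defining representation rather than as $\|\sum b_i^*b_i\|_{\mathcal{B}_k}$.
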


\begin{proof}
Let $\{e_{i}\}\subset\mathcal{H}$ be any countable homogenous basis for the Hilbert space $H$. 
Then $x_{i}:=v^{*}(e_i\otimes 1)\in \mathcal{X}$ by condition 2 of Definition \ref{diffframe}. 
%Moreover, the basis $e_{i}$ induces a unitary isomorphism 
%\[
%u:H\to \ell^{2}(\hat{\mathbb{Z}}),\quad h\mapsto (\langle e_{i},h\rangle)_{i\in\hat{\mathbb{Z}}},
%\]
%and hence a completely isometric isomorphism 
%\[u\otimes 1: H\otimes^{h}\mathcal{B}_{1}\to \ell^{2}(\hat{\mathbb{Z}})\otimes^{h}\mathcal{B}_{1},\quad h\otimes b\mapsto (\langle e_i,h\rangle b)_{i\in\hat{\mathbb{Z}}}.\]
 Since
\[
(\langle x_{i},x\rangle)_{i\in\hat{\mathbb{Z}}}=(\langle v^{*}(e_i\otimes 1),x\rangle)_{i\in\hat{\mathbb{Z}}}=(\langle e_i\otimes 1, v(x)\rangle)_{i\in\hat{\mathbb{Z}}},%=u(v(x)),
\]
the column $(\langle x_{i},x\rangle)_{i\in\hat{\mathbb{Z}}}$ satisfies
\begin{align*}
\left\|\sum_{|i|\geq n}[T,\langle x_{i},x\rangle]^{*}[T,\langle x_{i},x\rangle]\right\| 
&= \left\|\sum_{|i|\geq n} [T,\langle e_{i}\otimes 1,v(x)\rangle]^{*}[T,\langle e_{i}\otimes 1, v(x)\rangle]\right\|\\
&= \left\|\sum_{|i|\geq n} \pi^{1}_{T}(\langle e_{i}\otimes 1, v(x)\rangle)^{*}\pi_{T}^{1}(\langle e_{i}\otimes 1, v(x)\rangle)\right\|\to 0,
\end{align*}
as claimed. To prove the norm convergence of
\[\sum_{i\in\hat{\mathbb{Z}}}(T-i)^{-1}[T^{2},\langle x_{i},x\rangle]^{*}[T^{2},\langle x_{i},x\rangle](T+i)^{-1},\]
we use the same argument, estimating with the representation $\pi^{2}_{T}$ of Equation \eqref{diffreps}.
\end{proof}
\subsection{$C^{2}$-Grassmann connections}
\begin{lemma}

Let $\mathcal{X}$ be a horizontally $C^{1}$-module for $(\mathcal{B},Y,T)$ and $(v,H_B)$ a $C^{1}$-stabilisation. 
The Grassmann connection
\begin{equation*}
\nabla^{v}:\mathcal{X}\to 
X\otimes_{B}^{h}\Omega^{1}_{u}(\mathcal{B}_{1},B),
\quad \nabla^{v}(x):=(\gamma(v)^{*}\otimes 1)(1\otimes\delta)v(x),
\end{equation*}
is defined on $\X$. For any homogenous orthonormal basis $\{e_i\}\subset\H\subset H$, we can use the  $C^{1}$-frame 
$\{x_i=v^*(e_i\ox 1)\}_{i\in\hat{\mathbb{Z}}}$ 
to express the Grassmann connection as
\begin{equation}
\label{Gra}\nabla^{v}(x)=\sum_{i\in\hat{\mathbb{Z}}} \gamma(x_{i})\otimes\delta(\langle x_{i},x\rangle),\end{equation}
as a norm convergent series.  Consequently 

\[
\nabla^{v}_{T}
:=\pi_{T}(\nabla^{v}):\mathcal{X}\to X\otimes_{B}^{h}\Omega^{1}_{T}(\B_1),\quad \nabla_{T}^{v}(x)=\sum_{i\in\hat{\mathbb{Z}}} \gamma(x_{i})\otimes [T,\langle x_{i},x\rangle],
\]
is well-defined and independent of the choice of orthonormal basis in $\H\subset H$.
\end{lemma}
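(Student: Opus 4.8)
The plan is to verify the three claims in order: (i) that $\nabla^v$ is well-defined on $\mathcal X$ with values in $X\otimes^h_B\Omega^1_u(\mathcal B_1,B)$, (ii) that the series expression \eqref{Gra} converges in norm and represents $\nabla^v$, and (iii) that $\nabla^v_T$ is well-defined with the stated series expression and is basis-independent. For (i), I would observe that $v:\mathcal X\to H\otimes^h\mathcal B_1$ by condition 1 of Definition \ref{diffframe}, that $1\otimes\delta$ maps $H\otimes^h\mathcal B_1$ into $H\otimes^h\Omega^1_u(B,\mathcal B_1)$ (here using that $\delta:\mathcal B_1\to\Omega^1_u(B,\mathcal B_1)$ is completely bounded, being a restriction of the universal differential), and that $\gamma(v)^*\otimes 1$ then maps into $X\otimes^h_B\Omega^1_u(\mathcal B_1,B)$ since $v^*$ is a complete contraction. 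The Leibniz and Hermitian properties are the standard ones for Grassmann connections on operator $*$-modules, already recorded in \cite{BKM,MR}.

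For (ii), I would expand $v(x)=\sum_{i\in\hat{\mathbb Z}}e_i\otimes\langle x_i,x\rangle$ (norm convergent in $H\otimes^h\mathcal B_1$ by the frame/stabilisation property), apply $1\otimes\delta$ term by term, and then apply $\gamma(v)^*\otimes 1$. Writing $v^*(e_i\otimes 1)=x_i$, this gives precisely $\sum_{i\in\hat{\mathbb Z}}\gamma(x_i)\otimes\delta(\langle x_i,x\rangle)$. Norm convergence of this series in $X\otimes^h_B\Omega^1_u(\mathcal B_1,B)$ follows from Proposition \ref{Haaconvergent}.2: the column $(\langle x_i,x\rangle)_{i}=(\langle e_i\otimes 1,v(x)\rangle)_i$ satisfies $\sum_i\|\langle x_i,x\rangle\|^2_{\mathcal B_1}<\infty$ (this is part of the content of the $C^1$-stabilisation, via $\pi^1_T$ applied to $v(x)$, exactly as in Proposition \ref{Ckframechoice}), hence $\sum_i\delta(\langle x_i,x\rangle)^*\delta(\langle x_i,x\rangle)$ is norm convergent, while $\{\gamma(x_i)\}$ forms a frame so $\sup_N\|\sum_{|i|\le N}|x_i\rangle\langle x_i|\|\le 1$; Proposition \ref{Haaconvergent}.2 then applies.

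For (iii), applying the completely bounded map $\pi_T$ (Lemma \ref{indc2}) to the norm-convergent series \eqref{Gra} immediately yields $\nabla^v_T(x)=\sum_i\gamma(x_i)\otimes[T,\langle x_i,x\rangle]$ as a norm-convergent series in $X\otimes^h_B\Omega^1_T(\mathcal B_1)$, and norm convergence of $\sum_i[T,\langle x_i,x\rangle]^*[T,\langle x_i,x\rangle]$ is exactly the first series in Proposition \ref{Ckframechoice}. Basis-independence is then essentially automatic: $\nabla^v$ was defined intrinsically in terms of $v$ (not in terms of any basis), so $\nabla^v_T=\pi_T\circ\nabla^v$ does not depend on the choice of basis of $\mathcal H$; the point is merely that \emph{different} orthonormal bases of $\mathcal H\subset H$ give different-looking series \eqref{Gra} all summing to the same element. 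Concretely, if $\{e'_j\}$ is another homogeneous orthonormal basis with unitary change-of-basis matrix $U$ (block-diagonal for the grading), then $x'_j=\sum_i U_{ji}x_i$ and $\langle x'_j,x\rangle=\sum_i\bar U_{ji}\langle x_i,x\rangle$; substituting into $\sum_j\gamma(x'_j)\otimes[T,\langle x'_j,x\rangle]$ and using $\sum_j\bar U_{ji}U_{ji'}=\delta_{ii'}$ recovers the original series, with all manipulations justified by the norm convergence just established. The only mild obstacle is bookkeeping the Haagerup-norm convergence carefully enough to interchange the (finite in each slot, but doubly-indexed) sums under the unitary change of basis; this is handled by Proposition \ref{Haaconvergent}.2 applied to the relevant rearranged columns and rows, using that unitary matrices act completely isometrically on $H\otimes^h\mathcal B_1$.
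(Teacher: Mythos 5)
Your argument is correct and follows essentially the same route as the paper: well-definedness of $\nabla^{v}=(\gamma(v)^{*}\otimes 1)(1\otimes\delta)v$ from complete boundedness of $\delta$ and of $v,v^{*}$, the series \eqref{Gra} obtained by applying these continuous maps term by term to the expansion $v(x)=\sum_{i}e_{i}\otimes\langle x_{i},x\rangle$, and basis-independence because $\nabla^{v}$ is defined intrinsically through $v$. Your additional justifications (Proposition \ref{Haaconvergent}.2 for convergence and the explicit unitary change-of-basis computation) are redundant given the continuity argument --- and the claim $\sum_{i}\|\langle x_{i},x\rangle\|_{\mathcal{B}_{1}}^{2}<\infty$ should really be norm convergence of $\sum_{i}\langle x_{i},x\rangle^{*}\langle x_{i},x\rangle$ --- but these do not affect the validity of the proof.
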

\begin{proof}
The derivation $\delta:\mathcal{B}_{1}\to \Omega^{1}(\mathcal{B}_{1},B)$ is completely contractive, hence 
\[
1\otimes\delta:H\otimes^{h}\mathcal{B}_{1}\to H\otimes^{h}\Omega^{1}(\mathcal{B}_{1},B)\xrightarrow{\sim}H\otimes^{h}\mathcal{B}_{1}\otimes_{\mathcal{B}_{1}}^{h}\Omega^{1}(\mathcal{B}_{1},B),
\]
 is defined. Thus the composition $\nabla^{v}:=(v^{*}\otimes 1)(1\otimes\delta)v$ is defined on $\mathcal{X}$. Choose 
 an orthonormal basis $\{e_i\}\subset\H\subset H$
 and form the $C^{1}$-frame $\{x_{i}=v^{*}(e_{i}\otimes 1)\}_{i\in\hat{\mathbb{Z}}}$.
 % which 
% according to Proposition \ref{Ckframechoice} satisfies 
% $x_{i}:=v^{*}(e_{i}\otimes 1)$. 
 Then
\begin{align*}
\nabla^{v}(x)&=(\gamma(v)^{*}\otimes 1)(1\otimes\delta)v(x)=\gamma(v)^{*}(1\otimes\delta)\left(\sum_{i\in\hat{\mathbb{Z}}} e_{i}\otimes \langle e_{i}\otimes 1,v(x)\rangle\right)\\
&=\gamma(v)^{*}(1\otimes\delta)\left(\sum_{i\in\hat{\mathbb{Z}}} e_{i}\otimes \langle x_{i},x\rangle\right)=\gamma(v)^{*}\left(\sum_{i\in\hat{\mathbb{Z}}} e_{i}\otimes \delta(\langle x_{i},x\rangle)\right)\\
&=\sum_{i\in\hat{\mathbb{Z}}} \gamma(x_{i})\otimes \delta(\langle x_{i},x\rangle).
\end{align*}
Hence the represented connection 
$\nabla_{T}^{v}=(1\otimes \pi_{T})\circ\nabla^{v}:\mathcal{X}\to 
X\otimes_{B}^{h}\Omega^{1}_{T}(\B_1)$ is well-defined and independent of the choice of orthonormal basis.
\end{proof}
By Lemma \ref{symmetric} 
we obtain a densely-defined symmetric operator
\[
1\otimes_{\nabla^{v}}T:\mathcal{X}\otimes^{\alg}_{\mathcal{B}}\Dom T
\to X\otimes_{B}H, \quad x\otimes h\mapsto \gamma(x)\otimes Th+\nabla_{T}^{v}(x)h,
\]
in the Hilbert $C^{*}$-module $X\otimes_{B}Y$.
\begin{prop} 
\label{C2frameconnection}
Let  $(\mathcal{B},Y,T)$ be a $C^{k}$-Kasparov module, $(\mathcal{X},S)$ a $C^{1}$-module if $k=1$ and a $C^{(1,2)}$-module if $k=2$,
and $(v,\mathcal{H})$ a $C^{k}$-stabilisation.
Suppose that
\begin{equation}
\label{quasigrass}
(\gamma\otimes T)(v\otimes 1)-(v\otimes 1)(1\otimes_{\nabla^v} T):\mathcal{X}\otimes^{\alg}_{\mathcal{B}_{1}}\Dom T\to H_{Y},
\end{equation}
extends to $\Dom S\otimes 1$.  If  $(S\otimes 1, 1\otimes_{\nabla^{v}}T)$ is a vertically anticommuting pair, then
\[
\nabla^{v,S}:\mathcal{X}\to\mathcal{X}_{\nabla^{v}_{T}}^{S}\otimes^{h}_{\mathcal{B}_{1}}\Omega^{1}_{u}(\mathcal{B}_{1},\mathcal{B}_{2}),\quad  \nabla^{v,S}(x):=(\gamma(v)^{*}\otimes 1)(1\otimes\delta)v(x),
\]
is well-defined and the pair $(\nabla^{v},\nabla^{v,S})$ defines a $C^{2}$-connection on $(\mathcal{X},S)$. %, which is a $C^{1}$-module if $k=1$ and a $C^{(1,2)}$-module if $k=2$.
\end{prop}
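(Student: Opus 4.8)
The plan is to verify that the pair $(\nabla^{v},\nabla^{v,S})$ satisfies the conditions of Definition \ref{def: connpair}, which by Proposition \ref{locconvSconn} (or directly) reduces to two things: first, that $\nabla^{v,S}$ is a well-defined connection landing in the stated Haagerup module tensor product over the correct completion $\mathcal{X}^S_{\nabla^v_T}$, and second, that the compatibility condition $(S+i)^{-1}\nabla^v_T(x)y=(S+i)^{-1}\cdot\nabla^{v,S}_T(x)y$ holds for all $x\in\mathcal{X}$, $y\in\Dom T$. The hypothesis that $(S\otimes 1,1\otimes_{\nabla^v}T)$ is a vertically anticommuting pair is exactly what makes the constructions of Section \ref{Hilbcurve} applicable.

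First I would use the $C^k$-frame $\{x_i=v^*(e_i\ox1)\}$ from Proposition \ref{Ckframechoice} to write $\nabla^{v,S}(x)=\sum_i\gamma(x_i)\otimes\delta(\langle x_i,x\rangle)$ and check convergence of this series in the norm of $\mathcal{X}^S_{\nabla^v_T}\otimes^h_{\mathcal{B}_1}\Omega^1_u(\mathcal{B}_1,\mathcal{B}_2)$. The convergence in the $\mathcal{X}_{\nabla^v_T}$-direction is handled by the norm convergence statements in Proposition \ref{Ckframechoice} (using the representations $\pi^1_T$ and $\pi^2_T$) together with the characterisation of Haagerup convergence in Proposition \ref{Haaconvergent}.2; the point of passing from $\mathcal{X}_{\nabla^v_T}$ to $\mathcal{X}^S_{\nabla^v_T}$ is that $\|\cdot\|^S_{\nabla^v_T}\le\|\cdot\|_{\nabla^v_T}$, so the complete contraction $\iota_S$ of Lemma \ref{lem: XSinclusion} carries the convergent series over. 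This also gives the Leibniz rule for $\nabla^{v,S}$ from the Leibniz rule for $\delta$ and the frame expansion, just as in the proof of the $C^2$-Grassmann connection lemma above.

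The key analytic step is the compatibility condition (2) of Definition \ref{def: connpair}. Here I would exploit the displayed hypothesis \eqref{quasigrass}: the operator $(\gamma\otimes T)(v\otimes1)-(v\otimes1)(1\otimes_{\nabla^v}T)$ extends from $\mathcal{X}\otimes^{\alg}_{\mathcal{B}_1}\Dom T$ to $\Dom S\otimes1$. Since $v$ is an isometry intertwining the relevant structures, this is precisely the statement that $|x\rangle$-type intertwiners behave well, in the spirit of Proposition \ref{prop: nablaclosure}; combined with the vertically anticommuting pair property and the domain equalities of Lemma \ref{domaineq} (parts 1 and 3), one obtains that $(S+i)^{-1}\nabla^v_T(x)y\in\Dom(1\otimes_{\nabla^v}T)$ and that the two expressions $(S+i)^{-1}\nabla^v_T(x)y$ and $(S+i)^{-1}\cdot\nabla^{v,S}_T(x)y$ coincide as elements of $X\otimes_BY$. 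The argument is essentially the chain of identities in the proof of Lemma \ref{domaineq}.3, now run with the explicit Grassmann formula rather than an abstract $\nabla^S$.

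The main obstacle I anticipate is bookkeeping the interaction between the stabilisation isometry $v$ and the two completions involved: one must be careful that $v^*$ maps the dense subspace $\mathcal{H}\otimes^{\alg}\mathcal{B}_k$ into $\mathcal{X}$ (condition 2 of Definition \ref{diffframe}) in a way compatible with the $S$-graph topology, so that $\nabla^{v,S}(x)$ genuinely lands in $\mathcal{X}^S_{\nabla^v_T}\otimes^h_{\mathcal{B}_1}\Omega^1_u(\mathcal{B}_1,\mathcal{B}_2)$ and not merely in the larger $X^S$-tensor product. Once the role of \eqref{quasigrass} in controlling $(S+i)^{-1}\nabla^v_T$ is pinned down, the verification of the two defining properties of a $C^2$-connection follows the template already established, and no genuinely new estimate beyond those in Proposition \ref{Ckframechoice} and Lemma \ref{domaineq} is required.
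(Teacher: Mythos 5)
There is a genuine gap in your treatment of the well-definedness of $\nabla^{v,S}$, which is in fact the only nontrivial point of the proposition. You claim that the series $\sum_i\gamma(x_i)\otimes\delta(\langle x_i,x\rangle)$ converges ``in the $\mathcal{X}_{\nabla^v_T}$-direction'' by Proposition \ref{Ckframechoice} and is then pushed into $\mathcal{X}^S_{\nabla^v_T}\otimes^h_{\mathcal{B}_1}\Omega^1_u(\mathcal{B}_1,\mathcal{B}_2)$ by the contraction $\iota_S$. But Proposition \ref{Ckframechoice} only controls the \emph{column} $\bigl(\delta(\langle x_i,x\rangle)\bigr)_i$, i.e.\ it shows that this column lies in $H_{\Omega^1_u(\mathcal{B}_1,\mathcal{B}_k)}$. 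To apply Proposition \ref{Haaconvergent}.2 one also needs uniform boundedness of the \emph{row} $(\gamma(x_i))_i$ in the module in which the left tensor factor lives, and nothing in Proposition \ref{Ckframechoice} gives boundedness of this row in $\mathcal{X}_{\nabla^v_T}$: that would amount to a uniform bound on the connection one-forms $\nabla^v_T(\gamma(x_i))$, which fails in general for countably generated modules (compare the discussion at the end of Section \ref{sect:fgp}, where $\|R_{\nabla^n}\|\sim n$). If the series did converge in $\mathcal{X}_{\nabla^v_T}\otimes^h\Omega^1_u(\mathcal{B}_1,\mathcal{B}_2)$, the space $\mathcal{X}^S_{\nabla^v_T}$ and the hypothesis \eqref{quasigrass} would be superfluous, so your route cannot be correct as stated.

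The actual role of hypothesis \eqref{quasigrass} is precisely to supply the missing row bound, and it is here--not in the compatibility condition 2.\ of Definition \ref{def: connpair}--that it is used in the paper. One estimates
\[
\bigl\|\bigl(|(S+i)^{-1}\nabla^v_T(\gamma(x_i))\rangle\bigr)_i^t\bigr\|
=\bigl\|\bigl((\gamma\otimes T\,\langle x_i|-\langle x_i|\,1\otimes_{\nabla^v}T)(S-i)^{-1}\bigr)_i\bigr\|
\leq\bigl\|\bigl((1-vv^*)(\gamma\otimes T)v\bigr)(S-i)^{-1}\bigr\|,
\]
which is finite exactly because \eqref{quasigrass} says $(\gamma\otimes T)(v\otimes1)-(v\otimes1)(1\otimes_{\nabla^v}T)$ extends to $\Dom S\otimes 1$; together with $\|(|\gamma(x_i)\rangle)_i^t\|\le 1$ this bounds the row in $\mathcal{X}^S_{\nabla^v_T}$, and Proposition \ref{Haaconvergent}.2 then yields convergence of $\nabla^{v,S}(x)$ in the correct Haagerup tensor product. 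By contrast, the compatibility $(S+i)^{-1}\nabla^v_T(x)=(S+i)^{-1}\cdot\nabla^{v,S}_T(x)$, on which you spend the bulk of your argument via Lemma \ref{domaineq}, is automatic here because $\nabla^v$ and $\nabla^{v,S}$ are given by the same series and $(S+i)^{-1}$ is continuous on the relevant completions; no appeal to Lemma \ref{domaineq}.3 is needed for it. So the proposal misplaces the one genuinely needed estimate and leaves the well-definedness of $\nabla^{v,S}$ unproved.
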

\begin{proof} 
The universal differential 
$\delta:\mathcal{B}_{k}\to \Omega^{1}_{u}(\mathcal{B}_{1},\mathcal{B}_{k})$ 
is given by $\delta (a)=1\otimes a-a\otimes 1$. Choose an
orthonormal basis $\{e_i\}\subset\H\subset H$ and form the 
$C^{k}$-frame $\{x_{i}=v^*(e_i\ox1)\}_{i\in\hat{\mathbb{Z}}}$.
%, which according to {\bf FLAG} Proposition \ref{Ckframechoice} satisfies $x_{i}:=(v^{*}\otimes 1)(e_{i}\otimes 1)$. 
As a $C^{k}$-frame is in particular a $C^{1}$-frame, by Equation \eqref{Gra} we have
\begin{align*}
\nabla^{v}(x)=\sum_{i\in\hat{\mathbb{Z}}} \gamma(x_{i})\otimes \delta(\langle x_{i},x\rangle).
\end{align*}
It suffices to show that this series is convergent in the Haagerup norm of 
the tensor product $\mathcal{X}_{\nabla_{T}^{v}}^{S}\otimes^{h}_{\mathcal{B}_{1}}\Omega^{1}_{u}(\mathcal{B}_{1},\mathcal{B}_{k})$, for then we can define
\[\nabla^{v,S}(x)=\sum \gamma(x_{i})\otimes \delta(\langle x_{i},x\rangle)\in \mathcal{X}_{\nabla_{T}^{v}}^{S}\otimes^{h}_{\mathcal{B}_{1}}\Omega^{1}_{u}(\mathcal{B}_{1},\mathcal{B}_{k}).\]
In order to prove norm-convergence of $\nabla^{v,S}(x)$ 
using the Haagerup tensor norm, by part 2 of 
Proposition \ref{Haaconvergent}, we need to address summability of the column with entries
\[
\delta(\langle x_{i},x\rangle)
=1\otimes \langle x_i,x\rangle-\langle x_i,x\rangle\otimes 1,
\]
as well as boundedness of the row $(\gamma(x_{i}))$ in 
$\mathcal{X}_{\nabla_{T}^{v}}^{S}$.
Now $C^{k}$-column finiteness guarantees 
that this column is in 
$H_{\Omega^{1}_{u}(\mathcal{B}_{1},\mathcal{B}_{k})}$. 
It thus remains to show that the row $(\gamma(x_{i}))$ is bounded in 
$\mathcal{X}_{\nabla^{v}_{T}}^{S}$.

By Equation \ref{quasigrass}, $(1-vv^{*})(\gamma\otimes T)v(S+i)^{-1}$ is a 
bounded operator. Computing the norm of the 
row $\gamma(x_{i})$ in $\mathcal{X}_{\nabla_{T}^{v}}^{S}$ gives
\begin{align*}
\left\| \left(|\gamma(x_{i})\rangle\right)_{i}^{t}\right\|_{\mathcal{X}^{S}_{\nabla_{T}^{v}}}
\leq \Big\| \left(|(S+i)^{-1}\nabla_{T}^{v}(\gamma(x_{i}))\rangle\right)_{i}^{t}\Big\|
+\left\| \left(|\gamma(x_{i})\rangle\right)_{i}^{t}\right\|.
\end{align*}
The second term has norm $1$ 
and the first term is estimated by 
\begin{align*}
\Big\| \left(|(S+i)^{-1}\nabla_{T}^{v}(\gamma(x_{i}))\rangle\right)_{i}^{t}\Big\| 
&=\Big\|\left((S+i)^{-1}\gamma(v)^{*}[\gamma\otimes T,v(\gamma(x_{i}))]\right)^{t}_{i}\Big\|_{\mathbb{B}(H_Y, X\otimes_{B} Y)}\\
&=\left\|\left( [\gamma\otimes T,\langle x_{i}| v^{*}] v(S-i)^{-1}\right)_{i}\right\|_{\mathbb{B}( X\otimes_{B} Y, H_{Y})}\\
&=\left\|\left( (\gamma\otimes T \langle x_{i}| -\langle x_{i}|1\otimes_{\nabla^{v}}T)
(S-i)^{-1}\right)_{i}\right\|_{\mathbb{B}( X\otimes_{B} Y, H_{Y})}\\
&\leq \left\|\left( ((1-vv^{*})(\gamma\otimes T)v)(S-i)^{-1}\right)\right\|,
\end{align*}
which remains bounded since Equation \eqref{quasigrass}
tells us that $((1-vv^{*})\otimes 1)(\gamma\otimes T)(v\otimes 1)$ extends to $\Dom (S\otimes 1)$.
\end{proof}
\begin{rmk}
In \cite[Lemma 4.2, Theorem 4.7]{K} it is proved that, given $X$ and $(\B,Y,T)$, one can find 
a dense $\mathcal{B}_{1}$-submodule $\mathcal{X}\subset X$ and an isometry $v:X\to \ell^{2}(\mathbb{Z})\otimes^{h} B$ 
such that $v$ restricts to a map $v:\mathcal{X}\to  \ell^{2}(\mathbb{Z})\otimes^{h} \mathcal{B}_{1}$ and $v^{*}$ 
restricts to a map $v^{*}:C_{c}(\mathbb{Z})\otimes^{\alg}\mathcal{B}_{1}\to \mathcal{X}$. Furthermore, using \cite[Theorem 3.9]{K}
it can be shown that $(v\otimes 1)(1\otimes_{\nabla^{v}}T)-(\gamma\otimes T)(v\otimes 1)$ is defined on the range of a certain explicit positive compact
operator $K$. The inverse of $K$, made odd in an appropriate way, is a natural candidate for a vertical operator $S$. It is unclear however
whether such $S\otimes 1$ and $1\otimes_{\nabla_{v}}T$ can be made to vertically anticommute, so that Proposition \ref{C2frameconnection} can be applied.
This is subject of future research.
\end{rmk}
%\color{red}
%BM: I am looking into whether Jens' work on differentiable stabilisation can be applied to obtain an existence result. AR: OK...beware of rabbit holes.
%\color{black}
\section{The curvature operator of a Riemannian submersion}
\label{sect:riem-subm}

We will now illustrate our notion of curvature for a large class of 
examples given by Riemannian submersions of 
closed spin$^c$ manifolds $M\to B$
that were analysed using techniques from unbounded 
$KK$-theory in \cite{KS16}. The main result therein (\cite[Theorem 23]{KS16}) was a factorisation of essentially self-adjoint operators of the form
\begin{equation}
  \label{eq:factorization}
D_M = D_V \otimes 1 + 1 \otimes_\nabla D_B + \tilde c (\Omega).
\end{equation}
where $D_M$ is the Dirac operator on the total space, $D_V$ a vertical family of Dirac operators, $D_B$ the Dirac operator on the base manifold lifted to an operator $1\ox_\nabla D_B$ on $M$ using the connection $\nabla$ and, finally, $\tilde c(\Omega)$ is (Clifford multiplication by) the curvature of the Riemannian submersion ({\em cf.} Definition \ref{defn:riem-subm-curv} below).

As we will see, in this case the curvature operator of Definition \ref{def: curv-operator} ---for which we will use the short-hand $R_{(D_V,\nabla)}:= R_{(D_V,\nabla_{D_B})}$---indeed captures curvature of the connection $\nabla$ on the vertical Hilbert module of the submersion, as well as other geometric information such as the mean curvature. We will check that the conditions that enter in our general framework are indeed fulfilled in this concrete geometric context. But first we give a summary of the geometric setup. 
%\color{blue}
\subsection{Geometric setup}
Let us start by recalling from \cite{KS16} the relevant ingredients, refering to that paper for all details. Thus, we consider a Riemannian submersion of closed Riemannian manifolds $\pi: M \to B$. Recall the following tensors that are associated to this structure.
\begin{defn}
  \label{defn:riem-subm-curv}
\begin{enumerate}
\item The {\em second fundamental form}, defined for real vertical vector fields $X,Y$ and real horizontal vector fields $Z$ on $M$ by 
\begin{equation}
\label{eq:2nd-fund-form}
S_\pi(X,Y,Z) := \frac{1}{2}\big( Z (\langle X,Y\rangle_M) - \langle[Z,X],Y\rangle_M - \langle[Z,Y],X\rangle_M \big)
%\binn{\nabla_{Z_H}^V (X_V) - [ Z_H, X_V],Y_V}_M
\end{equation}
\item The \emph{mean curvature} $k \in \pi^* \Omega^1(B)$ is given as the trace
\[
k = (\tr \otimes 1) (S_\pi) \, .
\]
\item The {\em curvature of the fibre bundle $\pi : M \to B$} is given by the element $\Omega$ in $\Omega^2(M) \otimes_{C^\infty(M)} \Omega^1(M)$ given by 
\[
\Omega(X,Y,Z) := -\langle [ (1-P)X,(1-P)Y], PZ\rangle_M
\]
where $P$ is the orthogonal projection onto vertical vector fields. 
\end{enumerate}
\end{defn}

If $M$ and $B$ are Riemannian spin manifolds, we may introduce a vertical spinor module $\E_V$, defined in terms of the spinor modules $\E_M$ and $\E_B$ on the given spin$^c$ manifolds $M$ and $B$, respectively \cite[Section 3]{KS16}. We will not dwell on the precise definition here, but merely recall that $\E_V$ is a finitely-generated projective $C^\infty(M)$-module which satisfies the crucial property that $\E_V \otimes \pi^* \E_B \cong \E_M$ where $\pi^* \E_B \equiv \E_B \otimes_{C^\infty(B)} C^\infty(M)$ denotes the pullback. Moreover, Clifford multiplication $c_V$ by vertical vector fields is defined on $\E_V$.

The module $\E_V$ has a (Clifford) connection $\nabla^{\E_V}$ defined  
in terms of the spinor connections $\nabla^{\E_M}$ 
and $\nabla^{\E_B}$. The connection $\nabla^{\E_V}$ 
is Hermitian for the natural Hermitian structure 
$\langle\cdot,\cdot \rangle_{\E_V}$ on $\E_V$. 
The smooth sections of $\E_V$ have a natural locally convex 
structure coming from the usual $C^\infty$-topology, which can be defined
using $\nabla^{\E_V}$.

%{\bf BM: We need a description of $\E_V$ as a locally convex module.
%If it is a $C^\infty$-vector bundle on $M$ then we just take $C^\infty$ sections.}

We now define the pre-Hilbert module $\X$ over $C^\infty(B)$ to be $\E_V$ where the 
%is obtained by considering $\E_V$  as a $C^\infty(B)$-module via pullback. 
% It can be completed to a Hilbert $C(B)$-module $X$ using the following $C(B)$-valued inner product
right action of $C^\infty(B)$ is defined via the inclusion
$C^\infty(B)\to C^\infty(M)$ dual to $\pi:\,M\to B$. The $C^\infty(B)$-valued
inner product is defined by
\begin{equation}
\langle s,t \rangle_X (b):= \int_{\pi^{-1}(b)} \langle s,t \rangle_{\E_V} (x)\,d\mu_{\pi^{-1}(b)}(x),\quad s,\,t\in \E_V. 
\label{eq:ex-inn-prod}
\end{equation}
Here $d\mu_{\pi^{-1}(b)}$ is the Riemannian volume form on the submanifold $\pi^{-1}(b)$. As $b\mapsto d\mu_{\pi^{-1}(b)}$ is smooth (the volume form 
on $M$ decomposes locally as a product),
this inner product does in fact take values in $C^{\infty}(B)$.
\begin{prop}{\cite[Proposition 16]{KS16}}
  Let $\{e_j\}$ be a local orthonormal frame of vertical vector fields on $M$. Then the following local expression defines an odd symmetric unbounded operator $(D_V)_0 : \X \to X$:
\[
(D_V)_0(\xi) = i  \sum_{j=1}^{\dim (F)} c_V(e_j) \nabla^{\E_V}_{e_j}(\xi) % +\tfrac 18 
%\sum_{\alpha,\beta,i} (\Omega(f_\alpha,f_\beta), e_i) c_i c_\alpha c_\beta \circ \phi
\]
The closure $D_V: {\rm dom}(D_V) \to X$ of $(D_V)_0$ is regular and self-adjoint.
\end{prop}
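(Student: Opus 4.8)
The statement to establish is that the local formula $(D_V)_0(\xi) = i\sum_j c_V(e_j)\nabla^{\E_V}_{e_j}(\xi)$ defines an odd symmetric unbounded operator on $\X$ with domain $\X = \E_V$, and that its closure $D_V$ is regular and self-adjoint. Since this is cited verbatim as \cite[Proposition 16]{KS16}, my plan is to reproduce the standard argument for such ``vertically elliptic'' operators rather than to construct anything genuinely new.

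First I would check that the local expression is independent of the choice of local orthonormal frame $\{e_j\}$ of vertical vector fields, so that $(D_V)_0$ is globally well-defined on $\E_V$; this is the usual transformation-law computation for a Clifford-connection Dirac operator, using that the Clifford multiplication $c_V$ and the connection $\nabla^{\E_V}$ transform compatibly under change of orthonormal frame. Oddness is immediate: $c_V(e_j)$ is odd for the $\Z_2$-grading on $\E_V$ and $\nabla^{\E_V}_{e_j}$ is even, so the product shifts degree by one. For symmetry I would compute $\langle (D_V)_0 s, t\rangle_X - \langle s, (D_V)_0 t\rangle_X$ for $s,t\in\E_V$: expanding in a local frame, the terms involving $\nabla^{\E_V}$ combine, via the Leibniz rule and the fact that $\nabla^{\E_V}$ is Hermitian for $\langle\cdot,\cdot\rangle_{\E_V}$ together with skew-adjointness of each $c_V(e_j)$, into a total vertical divergence; integrating over each fibre $\pi^{-1}(b)$ with the Riemannian volume form $d\mu_{\pi^{-1}(b)}$ and using that the fibres are closed manifolds (no boundary) makes this term vanish, while the extra terms coming from differentiating the metric and from $[\nabla^{\E_V}_{e_j}, c_V(e_j)]$ contribute the mean curvature but cancel in the difference because $(D_V)_0$ is built to be the symmetric (self-dual) vertical Dirac operator. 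This gives symmetry of $(D_V)_0$ on the dense submodule $\X\subset X$, and since $\X\subset\Dom D_V$ automatically, $(D_V)_0$ is densely defined and symmetric, hence closable with closure $D_V$.

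The substantive part is regularity and self-adjointness of the closure $D_V$ on the Hilbert $C^*$-module $X$. Here I would invoke the local-global machinery for families of elliptic operators: $\E_V$ is a finitely generated projective $C^\infty(M)$-module and $(D_V)_0$ is, fibrewise over $B$, the Dirac operator of a smooth family of closed Riemannian spin$^c$ manifolds $\{\pi^{-1}(b)\}_{b\in B}$ acting on the restriction of $\E_V$. Each fibrewise operator is an elliptic, formally self-adjoint, first-order differential operator on a closed manifold, hence essentially self-adjoint with compact resolvent; the standard result (as used throughout \cite{KS16} and traceable to the theory of continuous fields of Dirac operators) then yields that the closure $D_V$ is regular and self-adjoint on $X$, with $(1+D_V^2)^{-1}$ acting fibrewise as the genuine resolvent. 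Concretely I would verify that the ranges of $D_V\pm i$ are dense in $X$ — equivalently, that $(D_V^*\mp i)\eta = 0$ forces $\eta=0$ — by testing against elements supported fibrewise and using fibrewise essential self-adjointness together with a partition of unity on $B$ subordinate to trivialising charts of the submersion.

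The main obstacle is the regularity statement: unlike essential self-adjointness of a single elliptic operator on a closed manifold, regularity of an unbounded operator on a Hilbert $C^*$-module is not automatic from density of the ranges and requires knowing that $(D_V\pm i)$ have \emph{adjointable} inverses with range all of $X$. I would handle this exactly as in \cite[Proposition 16]{KS16}: express $(D_V\mp i)^{-1}$ locally over $B$ as a norm-continuous field of the genuine bounded resolvents of the fibrewise Dirac operators, check that this field patches to a well-defined adjointable operator on $X$ using smoothness of $b\mapsto d\mu_{\pi^{-1}(b)}$ and of the family, and conclude that it is a two-sided inverse of $D_V\mp i$, which by Lance's criterion gives regularity and self-adjointness simultaneously. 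Everything else is bookkeeping with local orthonormal frames and the Leibniz rule.
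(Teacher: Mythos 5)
You cannot be compared against ``the paper's own proof'' here, because this paper gives none: the proposition is imported verbatim from \cite[Proposition 16]{KS16} and is stated without proof, serving only as input to the later analysis of the submersion. Judged on its own terms, your outline assembles the right ingredients and, importantly, correctly identifies where the genuine content lies: symmetry of $(D_V)_0$ is elementary, while regularity and self-adjointness of the closure on the Hilbert $C^*$-module $X$ over $C(B)$ do not follow from formal symmetry and must be established either by exhibiting adjointable two-sided inverses of $D_V\pm i$ or by a localisation argument. Both routes you mention are viable; the cleaner one, consistent with the toolkit this paper already cites, is to apply the local-global principle of Kaad--Lesch \cite{KaLe} to the commutative base $C(B)$: the localisation of $D_V$ at each point $b\in B$ is the Dirac-type operator of the Clifford module $\E_V|_{\pi^{-1}(b)}$ on the closed fibre, which is essentially self-adjoint, and this yields self-adjointness and regularity of $D_V$ on $X$ without having to patch a norm-continuous field of resolvents by hand. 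If you instead pursue the patching argument, you still owe a proof that $b\mapsto (D_{V,b}\pm i)^{-1}$ is norm continuous and that the resulting field defines an adjointable operator; asserting it is not enough, although smoothness of the family does make it true.

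One genuine inaccuracy to fix is in your symmetry computation: the mean curvature does not enter there and so there is nothing for it to ``cancel'' against. Symmetry of $(D_V)_0$ with respect to the $C(B)$-valued inner product \eqref{eq:ex-inn-prod} is a purely fibrewise statement: for fixed $b$ one integrates by parts over the closed fibre $\pi^{-1}(b)$, using that $\nabla^{\E_V}$ is Hermitian and Clifford-compatible and that only \emph{vertical} derivatives of the fibre volume form appear, so the boundary-free divergence theorem on $\pi^{-1}(b)$ kills the error term. The mean curvature $k$ arises only when one differentiates the fibrewise measure in \emph{horizontal} directions, which is precisely why the horizontal connection $\nabla^{\X}$ of Definition \ref{defn:metric-conn} carries the correction $\tfrac12 k(Z_H)$ while $D_V$ needs no such correction. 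With that repaired, and with the regularity step made precise along one of the two lines above, your plan is a faithful reconstruction of the standard argument behind the cited result.
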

%\color{red}
%It seems that $(D_{V})_0:\mathcal{X}\to \mathcal{X}$. Is $(D_V)_0$ continuous for the locally convex topology on $\mathcal{X}$?
%\color{blue}

In order to form the unbounded Kasparov product of the vertical and the horizontal components we need to lift the Dirac operator $D_B$ on the base manifold to an essentially self-adjoint unbounded operator on the Hilbert space $X \otimes_{C(B)} L^2(\E_B)$. It turns out that the Hermitian Clifford connection $\nabla^{\E_V}$ on $\E_V$ (Hermitian with respect to the $C^\infty(M)$-valued inner product) does \emph{not define} a metric connection on $\E_V \subseteq X$ with the $C^\infty(B)$-valued inner product, due to correction terms that come from the measure on the fibres $M_b$, $b \in B$. %{\red Thus, we no not have the identity $\Na^{\sE_V}(\xi)$ .}
However, these can be nicely absorbed in an additional term proportional to the 
mean curvature.
\begin{defn}
  \label{defn:metric-conn}
  The metric connection $\nabla^{\X} : \X \to X \otimes_{C(B)} \Omega^1_{\Tex{cont}}(B)$ is defined by
\[
\nabla_Z^{\X} (\xi) = \nabla_{Z_H}^{\E_V} (\xi) + \frac{1}{2} k(Z_H) \cdot \xi
\]
in terms of a vector field $Z$ on $B$ and corresponding horizontal lift $Z_H$. 
\end{defn}
\begin{prop}
  Let $\{f_i\}$ be a local orthonormal frame of vector fields on $B$. The local expression 
  \[
  \begin{split}
& (1 \otimes_\nabla D_B)_0 (\xi \otimes r)  := \xi \otimes D_B( r) + i \sum_i \nabla^\X_{f_i} (\xi)  \otimes c_B(f_i)(r) \qquad
\xi \in \E_V \, , \, \, r \in \E_B
\end{split}
\]
defines an essentially self-adjoint unbounded operator 
\[
(1 \otimes_\nabla D_B)_0 : \E_V \otimes_{C^\infty(B)}^{\rm alg} \E_{B} \to X \otimes_{C(B)} L^2(\E_{B}) \, .
\]
We denote its closure by $1\otimes_\nabla D_B : {\rm Dom}(1\otimes_\nabla D_B ) \to X \otimes_{C(B)} L^2(\E_{B})$.
\end{prop}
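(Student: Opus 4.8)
The plan is to establish essential self-adjointness of $(1 \otimes_\nabla D_B)_0$ by exhibiting it as a local perturbation of the model Kasparov-product operator, so that the general machinery of vertically anticommuting pairs and the unbounded product applies. First I would observe that the naive tensor-sum operator $\xi \otimes r \mapsto \xi \otimes D_B(r) + i\sum_i \nabla^{\E_V}_{f_i}(\xi) \otimes c_B(f_i)(r)$ fails to be symmetric on $\E_V \otimes_{C^\infty(B)}^{\alg} \E_B$, precisely because $\nabla^{\E_V}$ is Hermitian for the $C^\infty(M)$-valued inner product but not for the fibre-integrated $C^\infty(B)$-valued inner product of Equation \eqref{eq:ex-inn-prod}; the discrepancy is a zeroth-order term. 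The content of Definition \ref{defn:metric-conn} is that the shifted connection $\nabla^{\X}_Z = \nabla^{\E_V}_{Z_H} + \tfrac12 k(Z_H)$ corrects exactly this defect, so that $\nabla^{\X}$ is a Hermitian (metric) connection for $\langle\cdot,\cdot\rangle_X$. This reduces the claim to: a Hermitian connection on a finitely generated projective module produces an essentially self-adjoint product operator.

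The key steps, in order, are as follows. (1) Verify that $\E_V$ is finitely generated and projective over $C^\infty(B)$ (hence over the $C^2$-algebra $\mathcal{B}_2 = C^2(B)$-type completion once one fixes the $C^2$-spectral triple $(C^\infty(B), L^2(\E_B), D_B)$) — this follows since $\E_V$ is finitely generated projective over $C^\infty(M)$ and $C^\infty(M)$ is finitely generated projective over $C^\infty(B)$ via the submersion (local triviality of $\pi$). (2) Check that $\nabla^{\X}$ of Definition \ref{defn:metric-conn} is Hermitian for $\langle\cdot,\cdot\rangle_X$: this is the computation
\[
Z\big(\langle s,t\rangle_X(b)\big) = \langle \nabla^{\X}_Z s, t\rangle_X(b) + \langle s, \nabla^{\X}_Z t\rangle_X(b),
\]
where differentiating under the fibre integral produces a $\tfrac12 k(Z_H)$ term (the variation of the fibre volume form) that is matched by the mean-curvature shift. (3) Having a Hermitian connection $\nabla$ on a finitely generated projective module, invoke Proposition \ref{prop:prod-op} (or equivalently the classical results of \cite{BRB, ConnesGrav, LRV, BMS, KaLe2, Mes, MR}) to conclude that $1 \otimes_\nabla D_B$ is well-defined and essentially self-adjoint on $\E_V \otimes_{C^\infty(B)}^{\alg} \Dom D_B$, and that $\E_V \otimes^{\alg}_{C^\infty(B)} \Dom D_B^2 \subset \Dom(1\otimes_\nabla D_B)^2$. (4) Identify the local expression: choosing the frame $\{f_i\}$ and writing $\nabla^{\X}_{f_i} = \nabla^{\E_V}_{(f_i)_H} + \tfrac12 k((f_i)_H)$, the stated formula is exactly $\xi \otimes D_B(r) + i\sum_i \nabla^{\X}_{f_i}(\xi)\otimes c_B(f_i)(r)$, so the displayed operator coincides with the abstract product operator; take its closure.

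The main obstacle is step (2): carefully justifying that the fibre-integrated inner product is compatible with $\nabla^{\X}$, i.e. that the mean-curvature correction $\tfrac12 k$ is exactly the right zeroth-order term. This requires differentiating $\int_{\pi^{-1}(b)} \langle s,t\rangle_{\E_V}\, d\mu_{\pi^{-1}(b)}$ in the base direction, for which one needs the variation formula for the family of fibre volume forms $b \mapsto d\mu_{\pi^{-1}(b)}$ along the horizontal lift $Z_H$; the Lie derivative $\mathcal{L}_{Z_H} d\mu_{\pi^{-1}(b)}$ contributes precisely $k(Z_H)\, d\mu_{\pi^{-1}(b)}$ by the definition of mean curvature as the trace of the second fundamental form (Definition \ref{defn:riem-subm-curv}), combined with the fact that $\nabla^{\E_V}$ is already Hermitian fibrewise. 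Once this identity is in hand, symmetry is immediate and self-adjointness of the closure follows from the finitely-generated-projective machinery, since over a finitely generated projective module essential self-adjointness of the tensor sum is automatic — there is no delicate regularity or domain issue, in contrast to the countably generated case treated with the operator $S$ earlier in the paper. A secondary technical point, easily dispatched, is that all the terms $\nabla^{\E_V}_{(f_i)_H}$ and $k((f_i)_H)$ are smooth and hence define operators with the required $C^1$/$C^2$-boundedness properties relative to $(C^\infty(B), L^2(\E_B), D_B)$, so that the hypotheses of Proposition \ref{prop:prod-op} are genuinely met.
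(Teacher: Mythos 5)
There is a genuine gap, and it sits at the heart of your strategy: step (1) is false. The module $\E_V$ is finitely generated projective over $C^\infty(M)$, but it is \emph{not} finitely generated over $C^\infty(B)$, because the fibres of $\pi$ are positive-dimensional: locally $C^\infty(M)\cong C^\infty(F_0)\widehat\otimes C^\infty(W_\alpha)$, and $C^\infty(F_0)$ is infinite-dimensional. The completion $X$ of $\E_V$ in the inner product \eqref{eq:ex-inn-prod} is a countably generated Hilbert $C(B)$-module whose fibres are the infinite-dimensional spaces $L^2(\E_V|_{\pi^{-1}(b)})$. Consequently Proposition \ref{prop:prod-op} (which assumes $\mathcal{X}$ algebraically finitely generated projective over $\mathcal{B}_2$) simply does not apply, and your concluding remark that ``essential self-adjointness of the tensor sum is automatic, in contrast to the countably generated case'' inverts the actual situation: this example \emph{is} the countably generated case, which is precisely why the paper develops the vertical operator $D_V$, the stabilisation isometry $v$, and the $C^{2}$-correspondence machinery for it. Your step (2) is fine as far as it goes — the mean-curvature shift in Definition \ref{defn:metric-conn} is indeed what makes $\nabla^\X$ Hermitian for the fibre-integrated inner product, and this yields symmetry of $(1\otimes_\nabla D_B)_0$ (this is the content of \cite{KS16}) — but symmetry is not essential self-adjointness, and your only route from the former to the latter goes through the inapplicable finitely-generated-projective result.

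The paper's argument is quite different and much shorter: it does \emph{not} work over the base at all. Using the identification of $X\otimes_{C(B)}L^2(\E_B)$ with the $L^2$-sections of the bundle corresponding to the finitely generated projective $C^\infty(M)$-module $\E_V\otimes_{C^\infty(M)}\pi^*\E_B$, it regards $(1\otimes_\nabla D_B)_0$ as a symmetric first-order differential operator on the closed manifold $M$, and essential self-adjointness is then the classical statement \cite[Corollary 10.2.6]{HR}. If you want to salvage your approach, the fix is to trade ``finitely generated projective over $C^\infty(B)$'' for ``finitely generated projective over $C^\infty(M)$'' and invoke the classical PDE result on $M$, rather than the Hilbert-module product machinery over $B$.
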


\begin{rmk}
In \cite{KS16} it is only shown that $(1 \otimes_\nabla D_B)_0$ is a symmetric operator, whose closure was then used in the tensor sum factorization \eqref{eq:factorization} of $D_M$. However, we may consider the operator $(1 \otimes_\nabla D_B)_0$ as a differential operator of order $1$ on the finitely-generated projective $C^\infty(M)$-module $\E_V \otimes_{C^\infty(M)} \pi^*\E_B$. It is then a classical result \cite[Corollary 10.2.6]{HR} that such an operator is essentially self-adjoint. 
\end{rmk}

The main result of \cite{KS16} is a factorisation of the Dirac operator $D_M$ on $M$ in terms of a vertical family of Dirac operators $D_V$ on $X$ and the Dirac operator $D_B$ on $B$. Explicitly, (up to conjugation by a unitary operator) $D_M$ is given by the tensor sum \eqref{eq:factorization}:
\begin{equation*}
 % \label{eq:factorization}
D_M = D_V \otimes 1 + 1 \otimes_\nabla D_B + \tilde c (\Omega).
\end{equation*}
The last term $\tilde{c}(\Omega)$ is Clifford multiplication  by 
the curvature $\Omega$ of the fibration $\pi: M \to B$. Thus the curvature of the fibration appears as an obstruction 
to the realisation of $D_M$ as an (unbounded) internal 
Kasparov product. We also record the following result, 
which is Lemma 17 of \cite{KS18}. 

\begin{lemma}
\label{lma:commutator-Kuc}
Suppose that $\xi \in \X $ and $r \in \E_B \subseteq L^2(\E_B)$. 
Let $\{e_j\}$ denote a local orthonormal frame of vertical 
vector fields on $M$ and $\{f_i\}$ a local 
orthonormal frame of vector fields on $B$. 
Then we have the local expression
\begin{align*}
&[D_V \otimes 1, 1 \otimes_\nabla D_B]_+(\xi \otimes r)\\  \nonumber 
&\quad = -\sum_{i,j,k} S_\pi (e_k, e_j, (f_i)_H)  \big(c_V(e_j) \nabla_{e_k}^{\E_V} \big) (\xi) \otimes  c_B(f_i)(r)\\ \nonumber 
& \qquad - \sum_{i,j} c_V(e_j)\Big( \Omega^{\E_V}( e_j, (f_i)_H)+\frac 12  e_j \big( k((f_i)_H) \big) \Big)(\xi)\otimes  c_B(f_i)( r) \, ,
\end{align*}
where 
$\Omega^{\E_V} : \E_V \to \E_V \otimes_{C^\infty(M)} \Omega^2(M)$ 
is the curvature form of the Hermitian connection $\nabla^{\E_V}$. 
\end{lemma}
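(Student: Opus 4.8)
The plan is to prove this as a purely local identity of first-order differential operators on the module $\E_V \otimes_{C^\infty(M)} \pi^*\E_B \cong \E_M$. Since both $D_V \otimes 1$ and $1\otimes_\nabla D_B$ act as local differential operators, it suffices to verify the equality after applying both sides to a section of the form $\xi \otimes r$ with $\xi \in \E_V$ and $r \in \E_B$, working in a local orthonormal frame; no functional-analytic subtleties arise on this algebraic domain.

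First I would substitute the local expressions recalled above,
\[
D_V = i\sum_j c_V(e_j)\,\nabla^{\E_V}_{e_j}, \qquad 1\otimes_\nabla D_B(\xi\otimes r) = \xi\otimes D_B(r) + i\sum_i \nabla^{\X}_{f_i}(\xi)\otimes c_B(f_i)(r),
\]
together with $\nabla^{\X}_{f_i} = \nabla^{\E_V}_{(f_i)_H} + \tfrac12 k((f_i)_H)$ from Definition \ref{defn:metric-conn}, into $(D_V\otimes 1)(1\otimes_\nabla D_B) + (1\otimes_\nabla D_B)(D_V\otimes 1)$ and expand, keeping track of the grading so that $c_V(e_j)$ and $c_B(f_i)$ anticommute across the tensor product. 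The contributions that only involve $D_B(r)$ cancel because $D_V$ is odd, leaving a sum over $i,j$ of operators of the form $\big(\text{Clifford terms and derivatives in }\nabla^{\E_V}_{e_j},\ \nabla^{\X}_{f_i}\big)(\xi)\otimes c_B(f_i)(r)$.

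The key structural fact is the order reduction: the second-order part of this expression carries the Clifford factor $c_V(e_j)c_B(f_i) + c_B(f_i)c_V(e_j) = 0$ and hence vanishes --- equivalently, the principal symbols of $D_V \otimes 1$ and $1\otimes_\nabla D_B$, which are Clifford multiplications by the vertical and the horizontal parts of a covector, anticommute. To identify the surviving first-order part I would move $\nabla^{\X}_{f_i}$ past $c_V(e_j)$ using the Clifford-compatibility of $\nabla^{\E_V}$, $[\nabla^{\E_V}_Z, c_V(Y)] = c_V(\mathcal{V}\nabla^{TM}_Z Y)$ for vertical $Y$, and then the O'Neill-type identity relating $\mathcal{V}\nabla^{TM}_{e_k}(f_i)_H$ (and the bracket $[e_j,(f_i)_H]$, which is vertical since $(f_i)_H$ is projectable) to the second fundamental form $S_\pi$ of Definition \ref{defn:riem-subm-curv}(1); this produces precisely the line $-\sum_{i,j,k} S_\pi(e_k,e_j,(f_i)_H)\,(c_V(e_j)\nabla^{\E_V}_{e_k})(\xi)\otimes c_B(f_i)(r)$. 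For the zeroth-order part I would commute the two connection derivatives through the curvature of $\nabla^{\E_V}$, $\nabla^{\E_V}_{e_j}\nabla^{\E_V}_{(f_i)_H} - \nabla^{\E_V}_{(f_i)_H}\nabla^{\E_V}_{e_j} - \nabla^{\E_V}_{[e_j,(f_i)_H]} = \Omega^{\E_V}(e_j,(f_i)_H)$, and apply the Leibniz rule to the mean-curvature correction $\tfrac12 k((f_i)_H)$ in $\nabla^{\X}_{f_i}$; these assemble into $-\sum_{i,j} c_V(e_j)\big(\Omega^{\E_V}(e_j,(f_i)_H) + \tfrac12 e_j(k((f_i)_H))\big)(\xi)\otimes c_B(f_i)(r)$, giving the stated formula.

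The main obstacle will be the bookkeeping. There are several independent sources of lower-order terms --- commuting $\nabla^{\E_V}_{(f_i)_H}$ past $c_V(e_j)$, the bracket term $\nabla^{\E_V}_{[e_j,(f_i)_H]}$, the curvature of $\nabla^{\E_V}$, the derivative $\tfrac12 e_j(k((f_i)_H))$ of the mean curvature, and the vertical-derivative pieces coming from the two orderings in the anticommutator --- and one must check that all first-order pieces recombine into the single $S_\pi$-term (using the symmetry of $S_\pi$ in its two vertical arguments together with Koszul's formula) and that the remaining scalar pieces recombine exactly into $\Omega^{\E_V}$ plus $\tfrac12 e_j(k((f_i)_H))$, with the correct overall signs. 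This is precisely the computation carried out in \cite{KS18}; by locality, once it is organised on $\E_V \otimes^{\alg}_{C^\infty(B)} \E_B$ the identity holds on the domain as stated.
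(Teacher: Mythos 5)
The paper does not prove this lemma at all: it is quoted verbatim as Lemma~17 of \cite{KS18}, so there is no in-paper argument to compare against. Your sketch is essentially the local computation carried out in that reference, and its ingredients are the right ones: expanding $D_V=i\sum_j c_V(e_j)\nabla^{\E_V}_{e_j}$ and $1\otimes_\nabla D_B$ with $\nabla^{\X}_{f_i}=\nabla^{\E_V}_{(f_i)_H}+\tfrac12 k((f_i)_H)$; the cancellation of the second-order part because the vertical and horizontal Clifford symbols anticommute (this is exactly why the graded implementation of $c_B$ matters); Clifford compatibility of $\nabla^{\E_V}$ together with the Koszul/O'Neill identity for $\mathcal{V}\nabla^{TM}_{e_k}(f_i)_H$ and the vertical bracket $[e_j,(f_i)_H]$, which assemble the first-order piece into the $S_\pi$-term; and the curvature commutator plus the Leibniz rule on $\tfrac12 k((f_i)_H)$ producing the $\Omega^{\E_V}$ and $\tfrac12 e_j(k((f_i)_H))$ terms. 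The one caveat is that, as written, your final step defers precisely the decisive part — the sign conventions and the verification that all first-order and zeroth-order pieces recombine with the stated coefficients — back to \cite{KS18}, so your proposal does not constitute an independent verification of the formula; but since the paper itself only cites that lemma, this puts your treatment on the same footing as the paper's, and the outline you give is the correct route to a self-contained proof if one wished to write it out.
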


\color{black}

As a consequence we obtain that the anti-commutator 
$[D_V \otimes 1, 1 \otimes_{\nabla} D_B]_+$ is relatively bounded by 
$D_V \otimes 1$ and makes $D_V\otimes 1$ and $1 \otimes_\nabla D_B$ 
a vertically anticommuting pair in the sense of Definition \ref{wac}. 
\subsection{Local expressions}
Let us first take a typical fibre $F_0$ of $\pi:M \to B$ 
and consider trivialisations
$$
\rho_\alpha : \pi^{-1}(W_\alpha) \to F_0 \times W_\alpha
$$
for charts $W_\alpha \subseteq B$. It is then possible to choose so-called {\em fibration charts} $V_{\alpha} \times W_{\alpha} \subset F_0 \times B$ so that with $U_\alpha = \rho^{-1}_\alpha(V_{\alpha} \times W_{\alpha})$ we have that
$$
\rho_\alpha: U_\alpha \to V_{\alpha} \times W_{\alpha}
$$
is a diffeomorphism. Let us denote by $\{\chi_\alpha^2\}$ 
a partition of unity subordinate to the covering $\{U_\alpha\}$, 
so that $\sum_\alpha \chi_\alpha^2 =1$.
%\color{blue}
Note that $\E_V$ 
as a (finitely-generated projective) $C^\infty(M)$-module admits 
a (finite) local orthonomal frame supported on $U_\alpha$; we denote such a frame by $\{x_{\alpha,n}\}$.
\color{black}
%Ummmm, what is meant here? Do we want a frame $\{x_{\alpha,n}\}$
%where for each $\alpha$ the finitely many (same number each $\alpha$)
%$x_{\alpha,n}$ have support in $U_\alpha$????

%Also, do we mean local orthonormal frame or module frame? When
%we regard the $x_n$ ($x_{\alpha,n}$???) as elements of $X$
%we need to renormalise them...

%} 

On the base manifold $B$ we choose local coordinates 
$\sigma_{\alpha}: W_{\alpha} \to \R^{\dim B}$ whose 
components will be denoted by $\sigma_\alpha^\mu : W_\alpha \to \R$ 
for $\mu=1,\ldots, \dim B$. 

It is convenient to write the inner product on $X$ as an integral over the typical fibre $F_0$. We denote by $\mu_b$ the measure on $F_0$ that corresponds to $\mu_{\pi^{-1}(b)}$ through the identification $\pi^{-1}(b) \cong F_0$. We then obtain 
$$
\langle s,t \rangle_X (b) =\sum_\alpha \int_{F_0}\chi_\alpha^2 \left((\rho_\alpha^{-1})^* \langle s,t\rangle_{\E_V} \right) (y,b) \, d\mu_b(y)
$$
for all $b \in W_{\alpha}$. 
%where we have assumed for simplicity that one of the sections $s,t$ has support in $U_\alpha$. 
As a special case, if $(y,b_0) \in \rho_{\alpha_0} (U_{\alpha_0})$ where $\pi ^{-1} (b_0) =F_0$ is our typical fibre then we set $d\mu_0:= d\mu_{b_0}$.

The Radon-Nikodym derivative of $\mu_b$ with respect to $\mu_0$ gives a function on $F_0 \times B$ which we will denote by
$$
\frac {d\mu }{d\mu_0} (y,b) := \frac{d\mu_b}{d\mu_0}(y)
$$

\begin{rmk}
  As in \cite{KS16} we may combine a choice of coordinates on each of the fibration charts $\{ U_\alpha\}$ with the Riemannian metric $g$
  on $M$ to obtain a positive invertible matrix of smooth functions
\[
g_\alpha : U_\alpha \to \Tex{GL}_{\dim(M)}(\R)_+.
\]
Furthermore, letting $Q : \R^{\dim(M)} \to \R^{\dim(M)}$ denote the projection 
$$
Q : (t_1,\ldots,t_{\Tex{dim}(M)}) \mapsto (t_1, \ldots,t_{\dim (F)}, 0,\dots, 0)
$$ 
onto the first $\dim(F)$ copies of $\R$ in  $\R^{\dim(M)}$, we obtain a 
positive matrix of smooth functions
\[
Q g_\alpha Q : U_\alpha \to \Tex{GL}_{\dim(F)}(\R)_+,\qquad Q g_\alpha Q(x)=Q g_\alpha(x) Q.
\]
Suppose that $(y,b) \in \rho_{\alpha}(U_\alpha)$ and let $y^1_\alpha, \ldots, y^{\dim F_0}_\alpha$ denote local coordinates on $V_\alpha\subset F_0$. Then we may write the volume form on $F_0$ as
$$
d\mu_b (y)= \sqrt{ \det Qg_\alpha Q (\rho_\alpha^{-1}(y,b))} dy_\alpha^1 \wedge \cdots \wedge dy_\alpha^{\dim F_0}. 
$$
Let us check, for completeness, that this expression does not depend on the choice of trivialisation. In fact, if $(y,b) \in \rho_\alpha (U_\alpha)\cap \rho_\beta(U_\beta)$ then the transition functions $\rho_\alpha \circ \rho_\beta^{-1}$ map $(y,b)$ to $(y',b)$. In  terms of the coordinates $y_\alpha^k$ and $y_\beta^l$, the map $y \to y'$ corresponds to an orthogonal transformation $T^{kl}(y) := \partial y_\alpha^k / \partial y_\beta^l$ in $\R^{\dim (F)}$. Hence we have 
$$
\det Qg_\beta Q (\rho_\beta^{-1}(y,b)) =  \det  T^{2}\cdot  \det Qg_\alpha Q (\rho_\alpha^{-1}( y,b))
$$
while at the same time
$$
dy_\beta^1 \wedge \cdots \wedge dy_\beta^{\dim F_0}  =  \det \frac{\partial y_\beta^k}{\partial y_\alpha^l} \cdot  dy_\alpha^1 \wedge \cdots \wedge dy_\alpha^{\dim F_0} = \det T^{-1} \cdot  dy_\alpha^1 \wedge \cdots \wedge dy_\alpha^{\dim F_0}
$$
so that the terms involving $\det T$ cancel in the definition of $d\mu_b$. 

The Radon-Nikodym derivative can now be written unambiguously on $V_\alpha \times W_\alpha$ by
$$
\frac {d\mu }{d\mu_0} (y,b) = \frac{\sqrt{\det Qg_\alpha Q (\rho^{-1}_\alpha(y,b))}} {\sqrt{\det Qg_{\alpha_0}Q (\rho^{-1}_{\alpha_0}(y,b_0))}} .
$$
This is a smooth and nowhere vanishing function on $V_\alpha \times W_\alpha$.
  \end{rmk}

\subsection{The stabilisation isometry}
We are now ready to define the crucial technical ingredient in our approach to curvature, to wit, a stabilising isometry $v: X \to L^2(F_0)^{N} \otimes^h C(B)$. In fact, we will realise this map on $\X$ where it will map to $C^\infty(F_0)^N \widehat\otimes C^\infty(B)$ (in terms of the projective tensor product of Fr\'echet spaces). We let $N$ denote the product of the cardinalities of the sets $\{ \chi_\alpha\} $ and $\{x_{\alpha,n}\}$ for the partition of unity and the frame of $\E_V$, respectively, and will consider elements in $L^2(F_0)^N$ as column vectors.

\begin{lemma}
The map 
  \begin{align*}
  v: \X &\to C^\infty(F_0)^N \widehat\otimes C^\infty(B)\simeq C^\infty(F_0\times B)^{N} \\ 
s &\mapsto \left(\sqrt{\frac{d\mu}{d\mu_0}} \cdot (\rho_{\alpha}^{-1})^* (\langle \chi_\alpha x_{\alpha,n} , s\rangle_{\E_V} ) \right)_{\alpha n}
  \end{align*}
  is a continuous map of Fr\'echet spaces and furthermore  extends to an isometry $X \to L^2(F_0)^N \otimes^h C(B)$.
\end{lemma}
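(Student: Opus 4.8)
The plan is to verify directly that $v$ takes smooth sections to smooth functions, is $C^\infty(B)$-linear, preserves the relevant inner products, and that the inner product on $X$ can be recovered from the Euclidean inner product on $L^2(F_0)^N$ fibrewise; isometry on the $C^*$-module level then follows by density of $\mathcal{X}$ in $X$. First I would check well-definedness and Fr\'echet continuity: each component of $v(s)$ is the product of the smooth nowhere-vanishing function $\sqrt{d\mu/d\mu_0}$ on $V_\alpha\times W_\alpha$ (established in the preceding remark) with the pullback under the diffeomorphism $\rho_\alpha$ of $\langle\chi_\alpha x_{\alpha,n},s\rangle_{\E_V}$, which is a smooth function supported inside $U_\alpha$, hence extends by zero to a global smooth function on $F_0\times B$. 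Continuity for the $C^\infty$-topologies is then immediate from continuity of multiplication, pullback along a diffeomorphism, and the continuity of the $\E_V$-inner product together with the fact that $s\mapsto \chi_\alpha x_{\alpha,n}\cdot(\text{pairing})$ involves only a fixed finite frame.

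Next I would check the module map and isometry properties. Right $C^\infty(B)$-linearity of $v$ is clear: the action of $b\in C^\infty(B)$ on $\X=\E_V$ is by the pulled-back function $\pi^*b\in C^\infty(M)$, and since $\pi=\mathrm{pr}_2\circ\rho_\alpha$ on $U_\alpha$, we have $(\rho_\alpha^{-1})^*(\pi^*b)=1\otimes b$, which pulls out of the fibre pairing and of the Radon--Nikodym factor (which does not involve $s$). For the inner product, fix $b\in W_\alpha$ and compute the Euclidean inner product of the columns $v(s)$, $v(t)$ evaluated at $b$: summing over $\alpha,n$ and integrating over $F_0$ against $d\mu_0$, the factor $\bigl(\sqrt{d\mu/d\mu_0}\bigr)^2\,d\mu_0 = d\mu_b$ converts the integral into the fibre integral against $d\mu_b$, while $\sum_n \langle \chi_\alpha x_{\alpha,n},s\rangle_{\E_V}^*\langle \chi_\alpha x_{\alpha,n},t\rangle_{\E_V} = \chi_\alpha^2\langle s,t\rangle_{\E_V}$ by the frame identity $\sum_n|x_{\alpha,n}\rangle\langle x_{\alpha,n}|=\mathrm{Id}$ applied to the module $\E_V$ (using that $\chi_\alpha$ is a real scalar function commuting with everything and that the pullback $(\rho_\alpha^{-1})^*$ is an isometry of inner products). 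Summing over $\alpha$ and using $\sum_\alpha\chi_\alpha^2=1$ yields exactly the defining formula \eqref{eq:ex-inn-prod} for $\langle s,t\rangle_X(b)$. Thus $\langle v(s),v(t)\rangle_{L^2(F_0)^N\otimes^hC(B)} = \langle s,t\rangle_X$ on the dense submodule $\X$.

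Finally, since $v$ is inner-product preserving on $\X$ and $\X$ is dense in $X$, $v$ extends uniquely to an isometry $X\to L^2(F_0)^N\otimes^h C(B)$; here I would invoke Theorem \ref{thm: Blecher} to identify the Hilbert $C^*$-module tensor product $L^2(F_0)^N\otimes_{C(B)}C(B)\cong L^2(F_0)^N\otimes C(B)$ with the corresponding Haagerup module tensor product, so that the target is the stated operator space. I expect the main obstacle to be purely bookkeeping rather than conceptual: namely being careful that the partition-of-unity charts $U_\alpha$ and the local frames $\{x_{\alpha,n}\}$ interact correctly — in particular that $\chi_\alpha x_{\alpha,n}$ is globally smooth on $M$ (it is, since $\chi_\alpha$ is supported in $U_\alpha$ where $x_{\alpha,n}$ is defined), and that the two frame-and-measure factors combine precisely to reproduce \eqref{eq:ex-inn-prod} with the correct placement of the square root on the measure. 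Once the bookkeeping is set up, no analytic subtlety remains.
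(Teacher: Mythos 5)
Your proposal is correct and follows essentially the same route as the paper: Fr\'echet continuity from continuity of the pullbacks, the Radon--Nikodym factor and the fixed finite frame, then the inner-product computation in which $(\sqrt{d\mu/d\mu_0})^2\,d\mu_0=d\mu_b$, the local frame identity and $\sum_\alpha\chi_\alpha^2=1$ reproduce \eqref{eq:ex-inn-prod}, followed by extension to $X$ by density. The extra checks you include ($C^\infty(B)$-linearity, the identification via Theorem \ref{thm: Blecher}) are harmless additions to the paper's argument.
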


\proof
First observe that as each $\rho_\alpha$ is a diffeomorphism, 
the map $\rho_\alpha^*$ is continuous in the $C^\infty$-topology.
Likewise the derivatives $\sqrt{d\mu/d\mu_0}$ are (uniformly bounded) $C^\infty$ functions,
and so multiplication by them is continuous. Now for every section
$s$ we have 
$$
s=\sum_\alpha\chi_\alpha^2s=\sum_{\alpha,n}\chi_\alpha x_{\alpha,n}\langle \chi_\alpha x_{\alpha,n},s\rangle,
$$
and each term in the sum depends continuously on $s$. 
Taking the inner product with $\chi_\beta x_{\beta,k}$ is 
$C^\infty$ continuous, as is $(\rho_\alpha^{-1})^*$.
%, and doing so gives
%\begin{align*}
%\langle \chi_\beta x_{\beta,k},s\rangle
%&=\sum_{\alpha,n}\langle \chi_\beta x_{\beta,k},\chi_\alpha x_{\alpha,n}\rangle\langle \chi_\alpha x_{\alpha,n},s\rangle\\
%&=\sum_{\alpha,n}\langle \chi_\alpha\chi_\beta x_{\beta,k}, x_{\alpha,n}\rangle\langle  x_{\alpha,n},\chi_\alpha s\rangle\\
%&=\sum_{\alpha}\langle \chi_\alpha\chi_\beta x_{\beta,k}, \chi_\alpha s\rangle\\
%&=\langle \chi_\beta x_{\beta,k},  s\rangle.
%\end{align*}
Hence the map $v$ is Fr\'{e}chet continuous.

By a  standard density argument , to show that $v$ extends to an isomtery, it is sufficient to check that for all $s,t \in \X$ we have $\sum_{\alpha n} \langle v(s)_{\alpha n},v(t)_{\alpha n} \rangle_{L^2(F_0) \otimes^h C(B)} = \langle s,t \rangle_X$. We compute
\begin{align*}
  \sum_{\alpha, n} \langle v(s)_{\alpha n}, & v(t)_{\alpha n} \rangle_{L^2(F_0) \otimes C(B)}(b) 
 = \sum_{\alpha ,n} \int_{F_0} \overline{v(s)_{\alpha n} (y,b)} v(t)_{\alpha n} (y,b)\, d\mu_0(y) \\
  & = \sum_{\alpha ,n}  \int_{F_0}  (\rho_{\alpha}^{-1})^* (\langle  s, \chi_\alpha x_{\alpha,n} \rangle_{\E_V} \langle \chi_\alpha x_{\alpha,n} , t\rangle_{\E_V} ) (y,b)\,\frac{d\mu}{d\mu_0}(y,b)\, d\mu_0(y)\\
  & = \sum_\alpha\int_{F_0} \chi_\alpha^2(\rho_{\alpha}^{-1})^*\left( \langle s,t \rangle \right) (y,b) \,d\mu_b(y)  \equiv \langle s,t\rangle_X
\end{align*}
using completeness of the frame $\{x_{\alpha,n}\}$ and  $\sum_\alpha \chi_\alpha^2 =1$ in the last equality.
%{\bf The proof suggests to me that the $x_n$ are meant to be 
%a local orthonormal frame on all of $U_\alpha$, and so from
%the module point of view $x_{\alpha,n}=\chi_\alpha x_n$ is an elem%ent
%of the module but $x_n$ is not...}
\endproof

\begin{lemma}
  For all $F = (F_{\alpha n}) \in C^\infty(F_0)^N \widehat\otimes C^\infty(B)$ we have
\begin{equation}
v^*(F) = \sum_{\alpha n}  \left( (\rho_\alpha^{-1})^* \left(\sqrt{\frac{d\mu_0}{d\mu}} F_{\alpha n} \right) \chi_\alpha x_{\alpha,n}    \right)_{\alpha n}
\label{eq:vee-adjoint}
\end{equation}
Consequently, the adjoint $v^*: L^2(F_0)^N \otimes^h C(B)\to X$ restricts to a map $$v^{*}:C^\infty(F_0)^N \widehat\otimes C^\infty(B)\to \X,$$ and we have $v^*v= 1$. 
  \end{lemma}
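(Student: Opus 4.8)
The plan is to verify directly that the expression on the right-hand side of \eqref{eq:vee-adjoint} computes the Hilbert $C^*$-module adjoint of $v$, using the characterising property $\langle v(s),F\rangle = \langle s,v^*(F)\rangle_X$, and then to read off the two consequences. Throughout, the symbol $(\rho_\alpha^{-1})^*$ is used to transport functions along $\rho_\alpha$ in the direction opposite to its occurrence in the definition of $v$, i.e.\ from $V_\alpha\times W_\alpha$ back to $U_\alpha$.

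First I would confirm that the claimed formula genuinely takes values in $\X=\E_V$: since $\rho_\alpha$ is a diffeomorphism and $\sqrt{d\mu_0/d\mu}$ is a nowhere-vanishing smooth function on $V_\alpha\times W_\alpha$, each function $(\rho_\alpha^{-1})^*\bigl(\sqrt{d\mu_0/d\mu}\,F_{\alpha n}\bigr)$ is smooth on $U_\alpha$; multiplying by the section $\chi_\alpha x_{\alpha,n}\in\E_V$, which is supported in $U_\alpha$, produces a genuine smooth section of $\E_V$ (extended by zero off $U_\alpha$), and the finite sum over $(\alpha,n)$ remains in $\E_V=\X$.

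The core of the argument is the adjoint identity. By density of $\X$ in $X$ and of $C^\infty(F_0)^N\widehat\otimes C^\infty(B)$ in $L^2(F_0)^N\otimes^h C(B)$, together with boundedness of $v^*$, it suffices to show, for $s\in\X$ and smooth $F$, that $\langle v(s),F\rangle(b)$ equals the value at $b$ of $\langle s,\cdot\rangle_X$ applied to the right-hand side of \eqref{eq:vee-adjoint}. Expanding the left side using the definition of $v(s)_{\alpha n}$ and conjugate-symmetry of $\langle\cdot,\cdot\rangle_{\E_V}$, one is led, for each $b\in W_\alpha$, to integrals of the shape $\int_{F_0} (\rho_\alpha^{-1})^*\bigl(\langle s,\chi_\alpha x_{\alpha,n}\rangle_{\E_V}\bigr)\,\sqrt{d\mu/d\mu_0}\,F_{\alpha n}\,d\mu_0$ over the typical fibre. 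The single elementary identity that drives everything is $\sqrt{d\mu/d\mu_0}\,d\mu_0 = \sqrt{d\mu_0/d\mu}\,d\mu_b$, coming from $d\mu_b = (d\mu/d\mu_0)\,d\mu_0$. Using it and then transporting each integral over $F_0$ against $d\mu_b$ back to an integral over the actual fibre $\pi^{-1}(b)$ against $d\mu_{\pi^{-1}(b)}$ via $\rho_\alpha$ --- a measure-preserving identification by the very construction of $\mu_b$ --- one pulls the scalar factor $(\rho_\alpha^{-1})^*\bigl(\sqrt{d\mu_0/d\mu}\,F_{\alpha n}\bigr)\chi_\alpha$ into the $C^\infty(M)$-linear second slot of the $\E_V$-inner product and arrives at $\sum_{\alpha,n}\int_{\pi^{-1}(b)}\bigl\langle s,\,(\rho_\alpha^{-1})^*\bigl(\sqrt{d\mu_0/d\mu}\,F_{\alpha n}\bigr)\chi_\alpha x_{\alpha,n}\bigr\rangle_{\E_V}\,d\mu_{\pi^{-1}(b)}$, which by linearity and the definition \eqref{eq:ex-inn-prod} of $\langle\cdot,\cdot\rangle_X$ is exactly $\langle s,v^*(F)\rangle_X(b)$ with $v^*(F)$ the right-hand side of \eqref{eq:vee-adjoint}. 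Since that expression lies in $\X$, the restriction statement for $v^*$ follows. Finally $v^*v=1$ is immediate from the isometry property established in the preceding lemma, via $\langle v^*v\,s,t\rangle = \langle v(s),v(t)\rangle = \langle s,t\rangle_X$; alternatively one substitutes $F=v(s)$ into \eqref{eq:vee-adjoint}, where the two Radon--Nikodym factors cancel and $\sum_n x_{\alpha,n}\langle x_{\alpha,n},\chi_\alpha^2 s\rangle = \chi_\alpha^2 s$ together with $\sum_\alpha\chi_\alpha^2 = 1$ give $v^*(v(s)) = s$ directly.

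The main obstacle is entirely \emph{bookkeeping} rather than conceptual: one must keep the local, $\chi_\alpha$-cut-off nature of all the objects straight, reconcile the two directions in which the notation $(\rho_\alpha^{-1})^*$ is used, and --- most importantly --- track the Radon--Nikodym factors carefully, so that the $\sqrt{d\mu/d\mu_0}$ built into $v$ combines with the reference measure $d\mu_0$ of the $L^2(F_0)$-inner product to reproduce precisely the fibre volume $d\mu_{\pi^{-1}(b)}$ appearing in \eqref{eq:ex-inn-prod}.
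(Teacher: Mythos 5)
Your proposal is correct and follows essentially the same route as the paper: one verifies the adjoint identity by a direct computation in which the Radon--Nikodym factor converts between the reference measure $d\mu_0$ and the fibre measure $d\mu_b$, and $v^*v=1$ then follows from the isometry already established. The extra bookkeeping you supply (that the right-hand side of \eqref{eq:vee-adjoint} lands in $\X$, and the density argument) only makes explicit what the paper leaves implicit.
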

\proof
We check that the formula \eqref{eq:vee-adjoint} does indeed
provide the adjoint of $v$ by computing
\begin{align*}
  \langle s, v^*(F)\rangle_X (b) & =
  \int_{F_0} (\rho_\alpha^{-1})^*\left(\langle s, \chi_\alpha x_{\alpha,n} \rangle_{\E_V}\right)(y,b) \, \left(\sqrt{\frac{d\mu_0}{d\mu}}F_{\alpha n}\right)(y,b) \, d\mu_b (y)\\& =
  \int_{F_0} (\rho_\alpha^{-1})^*\left(\langle s, \chi_\alpha x_{\alpha,n} \rangle_{\E_V}\right)(y,b) \, \left(\sqrt{\frac{d\mu}{d\mu_0}}F_{\alpha n}\right)(y,b) \,d\mu_0 (y)\\
  & = \langle v(s),F \rangle_{L^2(F_0)^N \otimes^h C(B)}
\end{align*}
The identity $v^*v=1$ holds true by construction. 
\endproof
Thus, the operator $v$ defines a $C^{2}$-stabilisation $v:\mathcal{X}\otimes_{C^{\infty}(B)}^{\alg}C^{2}(B) \to L^{2}(F_0)\otimes^{h} C^{2}(B)$
 in the sense of Definition \ref{diffframe}.
\begin{prop}
\label{prop: quasicomm}
The operator 
$$
\left((v\otimes 1) (1 \otimes_\nabla D_B) - (\gamma\otimes D_B) (v \otimes 1) \right)  : \X \otimes_{C^\infty(B)} ^{\alg}\E_B \to C^\infty(F_0) \widehat\otimes \E_B\subset L^{2}(F_0)\otimes^{h} C(B)
  $$
is $D_V$-bounded. 
  \end{prop}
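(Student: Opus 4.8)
The strategy is to reduce the statement to a local computation on a single fibration chart, where everything becomes an explicit first-order differential operator acting on smooth sections, and then to recognise that the ``error'' operator $(v\otimes 1)(1\otimes_\nabla D_B) - (\gamma\otimes D_B)(v\otimes1)$ is a zeroth-order (i.e.\ bounded) term plus terms that differentiate only in the vertical directions. The latter are controlled by $D_V$ precisely because $D_V$ is an elliptic first-order operator along the fibres, while $D_B$ only differentiates in the horizontal directions and hence produces no genuinely new unbounded contribution after conjugating by $v$.

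First I would write out $v$ and $v^{*}$ explicitly using the two preceding Lemmas, and compute the operator $(v\otimes 1)(1\otimes_\nabla D_B)(s\otimes r) - (\gamma\otimes D_B)(v(s)\otimes r)$ on an elementary tensor $s\otimes r$ with $s\in\X$, $r\in\E_B$. Using the local expression for $1\otimes_\nabla D_B$ from the Proposition above, namely $(1\otimes_\nabla D_B)_0(\xi\otimes r) = \xi\otimes D_B(r) + i\sum_i\nabla^{\X}_{f_i}(\xi)\otimes c_B(f_i)(r)$, together with the definition $\nabla^{\X}_Z = \nabla^{\E_V}_{Z_H} + \tfrac12 k(Z_H)$, the term $\xi\otimes D_B(r)$ is exactly cancelled by the corresponding term coming from $(\gamma\otimes D_B)(v(s)\otimes r)$ \emph{up to} commutators of $D_B$ with the smooth functions $\sqrt{d\mu/d\mu_0}$, $(\rho_\alpha^{-1})^*(\cdots)$ and $\chi_\alpha$ that appear in the formula for $v$. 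Each such commutator $[D_B, f]$ with a smooth function $f$ on $F_0\times B$ is Clifford multiplication by $df$ restricted to the base directions, hence a bounded operator. This accounts for the ``$D_V$-bounded'' claim: these pieces are in fact $\mathbb B(X\otimes_{C(B)}L^2(\E_B))$-bounded, a fortiori $D_V$-bounded.

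The remaining terms are those in which $D_B$ (through $c_B(f_i)$ and the horizontal lift $f_{i,H}$) hits the vertical connection factor, producing contributions of the shape $\nabla^{\E_V}_{(f_i)_H}(\chi_\alpha x_{\alpha,n}\langle\chi_\alpha x_{\alpha,n},s\rangle)$ and lower-order curvature/mean-curvature terms. Here the key observation, which is essentially Lemma~\ref{lma:commutator-Kuc} in disguise (see also the sentence immediately following it about $[D_V\otimes1, 1\otimes_\nabla D_B]_+$ being $D_V\otimes1$-bounded), is that the differentiation along horizontal lifts $(f_i)_H$ can be traded, via the second fundamental form $S_\pi$ and the curvature $\Omega^{\E_V}$ of the bundle, for vertical differentiation $\nabla^{\E_V}_{e_j}$ plus bounded zeroth-order tensorial terms. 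A vertical first derivative $\sum_j c_V(e_j)\nabla^{\E_V}_{e_j}$ is, up to the invertible Clifford symbol, exactly $-iD_V$ acting on $\E_V$, and one checks that the resulting operator is relatively bounded by $D_V$ with relative bound $0$ by a standard elliptic estimate $\|\nabla^{\E_V}_{e_j}\xi\|\le \varepsilon\|D_V\xi\| + C_\varepsilon\|\xi\|$ valid on the compact fibre $F_0$ uniformly in $b\in B$ (compactness of $B$ and smoothness of the fibration give the uniformity). Assembling these estimates over the finite atlas $\{U_\alpha\}$ using the partition of unity $\{\chi_\alpha^2\}$ yields the claimed $D_V$-boundedness on the dense domain $\X\otimes^{\alg}_{C^\infty(B)}\E_B$.

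\textbf{Main obstacle.} The genuinely delicate point is the uniformity of the elliptic estimate over the base: one must show that the relative bound of $\nabla^{\E_V}_{e_j}$ (and of the horizontal-to-vertical ``tilt'' terms produced by $S_\pi$) with respect to $D_V$ can be taken independent of the fibre parameter $b\in B$, and that the transition between fibration charts does not destroy this. Here I would use that $B$ is compact and $\pi$ is a locally trivial fibration, so that there are finitely many fibration charts and on each the coefficients (the metric $g_\alpha$, the Radon--Nikodym derivative, the frame $\{x_{\alpha,n}\}$ and $\{\chi_\alpha\}$) are smooth on the compact set $\overline{U_\alpha}$ and hence uniformly bounded together with all their derivatives; the elliptic estimate for the closed operator $D_V$, which is self-adjoint and regular by the Proposition above, then holds with uniform constants. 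The rest is bookkeeping with the Haagerup tensor product, using Theorem~\ref{thm: Blecher} to pass freely between $\otimes_{C(B)}$ and $\otimes^h_{C(B)}$.
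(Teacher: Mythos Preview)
Your overall strategy (localise, cancel the $D_B r$ terms, then show what remains is a vertical first-order operator on $s$) is the same as the paper's, but the mechanism you propose for the crucial step is not right, and as written the argument has a genuine gap.

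The error is in how you handle the horizontal derivatives of $s$. You write that the commutators $[D_B,f]$ with the smooth functions appearing in $v$ are ``Clifford multiplication by $df$ restricted to the base directions, hence a bounded operator''. For the fixed functions $\sqrt{d\mu/d\mu_0}$ and $\chi_\alpha$ that is true, but the function $(\rho_\alpha^{-1})^*(\langle\chi_\alpha x_{\alpha,n},s\rangle_{\E_V})$ depends on $s$, and its base-direction derivative \emph{is} a first-order derivative of $s$; it is not bounded uniformly in $s$. This term cannot simply be discarded. On the other side, $v(\nabla^\X_{f_i}s)$ also produces a first-order horizontal derivative of $s$, namely $(\rho_\alpha^{-1})^*((f_i)_H\langle\chi_\alpha x_{\alpha,n},s\rangle)$. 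The whole point of the computation is that these two horizontal derivatives almost cancel: their difference is
\[
(\rho_\alpha^{-1})^*\big((f_i)_H\,g\big)\;-\;f_i\big((\rho_\alpha^{-1})^*g\big),\qquad g=\langle\chi_\alpha x_{\alpha,n},s\rangle_{\E_V},
\]
and one checks (by evaluating on $g=\pi^*h$) that the operator $(\rho_\alpha^{-1})^*\circ Z_H - Z\circ(\rho_\alpha^{-1})^*$ annihilates functions pulled back from $B$, hence is a \emph{vertical} vector field. That is where the vertical derivative controlled by $D_V$ actually comes from.

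Your appeal to Lemma~\ref{lma:commutator-Kuc} and to $S_\pi$, $\Omega^{\E_V}$ to ``trade horizontal differentiation for vertical differentiation'' does not do this. That lemma computes the anticommutator $[D_V\otimes1,1\otimes_\nabla D_B]_+$; it tells you nothing about converting a single horizontal derivative of $s$ into a vertical one. The second fundamental form relates \emph{commutators} of horizontal and vertical fields, not a horizontal derivative by itself. So the argument as written leaves an uncontrolled horizontal first-order term, which is not $D_V$-bounded. Once you replace this step by the trivialisation argument above (the defect between the geometric horizontal lift and the coordinate base derivative is vertical), the rest of your outline, including the uniformity over the compact base, goes through and matches the paper's proof.
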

\proof
We start by computing the first term on $s \otimes \psi \in \X \otimes \E_B$, say, with $\supp s \subseteq U_\alpha$:
\begin{align}
 (v\otimes 1 ) (1 \otimes_\nabla D_B)(s \otimes \psi)& = (v\otimes 1) (\nabla^\X_{\partial/\partial \sigma_\alpha^\mu} (s)\otimes  \gamma^\mu \psi + s \otimes D_B \psi)\\
  &  = \bigg(\sqrt{ \frac{d\mu}{d\mu_0}}  \cdot (\rho_{\alpha}^{-1})^* (\langle \chi_\alpha x_{\alpha,n} , \nabla^\X_{\partial/\partial \sigma_\alpha^\mu}s\rangle_{\E_V} )\gamma^\mu \psi \nonumber\\
  &\qquad \qquad +  \sqrt{\frac{d\mu}{d\mu_0}}  \cdot (\rho_{\alpha}^{-1})^* (\langle \chi_\alpha x_{\alpha,n} ,s \rangle_{\E_V}) \cdot D_B \psi\bigg)_{\alpha n}
  \label{eq:1st-term-vD}
\end{align}
On the other hand, we have
\begin{align}
  (D_B)_\epsilon (v \otimes 1)(s \otimes \psi) &=  \left(D_B \left(\sqrt{\frac{d\mu}{d\mu_0}}  (\rho_{\alpha}^{-1})^* (\langle \chi_\alpha x_{\alpha,n} ,s \rangle_{\E_V})  \cdot \psi \right)\right)_{\alpha n} \nonumber \\
  &=\Bigg(\frac{\partial}{\partial \sigma_\alpha^\mu} \left(\sqrt{\frac{d\mu}{d\mu_0}}\right)  (\rho_{\alpha}^{-1})^* (\langle \chi_\alpha x_{\alpha,n} ,s \rangle_{\E_V})  \cdot \gamma^\mu\psi \nonumber \\
 % & \qquad   + \sqrt{\frac{d\mu}{d\mu_0}} \frac{\partial}{\partial \sigma_\alpha^\mu} \left( (\rho_{\alpha}^{-1})^* (\langle \chi_\alpha x_{\alpha,n} ,s \rangle_{\E_V}) \right)  \cdot %\gamma^\mu \psi \right)\right) \nonumber \\
  &\qquad  \qquad   +   \sqrt{\frac{d\mu}{d\mu_0}}  (\rho_{\alpha}^{-1})^* (\langle \chi_\alpha x_{\alpha,n} ,s \rangle_{\E_V})  \cdot D_B \psi   \Bigg)_{\alpha n}
  \label{eq:2nd-term-vD}
  \end{align}
The first term in this last expression is bounded as it is a derivative of the Radon-Nikodym derivative, while the last term cancels against the corresponding term in \eqref{eq:1st-term-vD}. We are thus left to consider
\begin{align*}
&   (\rho_{\alpha}^{-1})^* (\langle \chi_\alpha x_{\alpha,n} , \nabla^\X_{\partial/\partial \sigma_\alpha^\mu}s\rangle_{\E_V} ) - \frac{\partial}{\partial \sigma_\alpha^\mu} \left( (\rho_{\alpha}^{-1})^* (\langle \chi_\alpha x_{\alpha,n} ,s \rangle_{\E_V}) \right) \\
&   \qquad =
 - (\rho_{\alpha}^{-1})^* (\langle  \nabla^{\E_V}_{(\partial/\partial \sigma_\alpha^\mu)_H}(\chi_\alpha x_{\alpha,n}),s \rangle_{\E_V} ) + \frac 12 (\rho_{\alpha}^{-1})^*  \left( k((\partial/\partial \sigma_\alpha^\mu)_H) \langle  \chi_\alpha x_{\alpha,n},s \rangle_{\E_V} \right) \\
  & \qquad \qquad +  (\rho_{\alpha}^{-1})^* \left(\left(\frac{\partial}{\partial \sigma_\alpha^\mu}\right)_H \langle \chi_\alpha x_{\alpha,n} ,s \rangle_{\E_V}\right) - \frac{\partial}{\partial \sigma_\alpha^\mu} \left( (\rho_{\alpha}^{-1})^* (\langle \chi_\alpha x_{\alpha,n} ,s \rangle_{\E_V}) \right).
\end{align*}
The first two terms on the right-hand side (involving the derivative on the frame and the mean curvature) is bounded, and we claim that the remaining terms combine to give only vertical derivatives, and can thus be relatively bounded with respect to the vertically elliptic $D_V$ when acting on $s$. In order to see that the combination is a vertical derivative, let us consider the more general expression 
$$
(\rho_{\alpha}^{-1})^* Z_H (f) - Z ((\rho_{\alpha}^{-1})^* (f)) 
$$
for a vector field $Z$ on $B$ and a function $f$ on $M$ (supported in $U_\alpha$). Here we understand $Z$ to act on a function on $F_0 \times B$ by only deriving in the second coordinate. For $f= \pi^* g$ one finds that
$$
(\rho_{\alpha}^{-1})^* Z_H (\pi^* g ) - Z ((\rho_{\alpha}^{-1})^* (\pi^* g ))
= (\rho_{\alpha}^{-1})^*  Z_H (g \circ \pi ) - Z( g \circ \pi \circ \rho_\alpha^{-1} )=0
$$
by definition of the horizontal lift
\begin{align*}
(\rho_\alpha^{-1})^*Z_H (g \circ \pi) &= (\rho_\alpha^{-1})^*\pi^*  Z(g) = Z(g)
\end{align*}
in combination with the identity $Z(g \circ \pi \circ \rho_\alpha^{-1}) = Z(g)$. We conclude that $(\rho_{\alpha}^{-1})^* Z_H - Z ((\rho_{\alpha}^{-1})^*)$ is a vertical vector field, as desired.  
\endproof
Thus, the stabilisation map $v$ satisfies the hypotheses of Proposition \ref{C2frameconnection} 
and the associated Grassmann connection constitutes and example of a $C^{2}$-connection on a $C^{(1,2)}$-module.
\subsection{The universal lift}
We may use the isometry $v$ to obtain a convenient expression for the connection $\nabla^\X$. In particular, we can obtain 
a lift of $\nabla^\X$ to a universal connection $\nabla^\X_u$, 
where by `lift' we
mean that $\pi_{D_B} \circ \nabla^\X_u = c \circ \nabla^\X$
where $c$ is Clifford multiplication on spinors. 

Since $v^*v=1$ any $s \in \X$ can be written as $s = v^* ( F(s) )$ 
where $F(s) = v(s) \in C^\infty(F_0)^N \widehat \otimes C^\infty(B)$. 
For any $s\in \X$ there exist functions 
$f_k \in C^\infty(F_0)^N$, $g_k \in C^\infty(B)$ such that  
$$
F(s) =  \sqrt{\frac{d\mu}{d\mu_0}} \sum_k f_k \otimes g_k.
$$
Then we have
\begin{equation}
s = v^*(F(s)) = \sum_{\alpha ,n, k}(\rho_{\alpha}^{-1})^* \left(f_k \otimes g_k\right) \chi_\alpha x_{\alpha,n} 
\label{eq:ess-sum}
\end{equation}
so that with respect to local coordinates $\sigma_\alpha^\mu$ on $B$ we have
\begin{align*}
  \nabla^\X (s)& =\sum_{\alpha,n,k}  \nabla_{\partial/\partial \sigma_\alpha^\mu} ^\X \left((\rho_{\alpha}^{-1})^* \left(f_k \otimes g_k\right) \chi_\alpha x_{\alpha,n} \right ) \otimes_{C^\infty(B)} d \sigma_\alpha^\mu.
\end{align*}

Using the Leibniz rule and the fact that the derivative on the base commutes with the functions $f_k$ in the fibre direction, we find that
\begin{align}
&\nabla^\X (s)\label{eq:conn-pre-univ}\\
 &=\sum_{\alpha,n,k}  \nabla_{\partial/\partial \sigma_\alpha^\mu}^\X \left( \chi_\alpha x_{\alpha,n} \right ) (\rho_{\alpha}^{-1})^* \left(f_k \otimes 1\right) \otimes  g_k d \sigma_\alpha^\mu  + \sum_{\alpha,n,k}  \chi_\alpha x_{\alpha,n}  \cdot (\rho_{\alpha}^{-1})^* \left(f_k \otimes 1\right) \otimes  d g_k(s).\nonumber
\end{align}

\begin{lemma}
\label{lma:conn-univ-lift}
  The connection $\nabla^\X$ can be lifted to a universal connection
  $$
\nabla^\X_u : \X \to \X \widehat \otimes \Omega^1_u (C^\infty(B)), 
$$
in the sense that $\pi_{D_{B}} \circ \nabla^\X_u = c \circ \nabla^\X$ where $c$ denotes Clifford multiplication.
  \end{lemma}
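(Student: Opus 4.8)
The plan is to produce the universal lift $\nabla^\X_u$ by reading off Equation \eqref{eq:conn-pre-univ} and replacing each represented one-form $dg$ (i.e.\ Clifford multiplication by $dg$, which is $\pi_{D_B}(\delta g)$) by the universal one-form $\delta g \in \Omega^1_u(C^\infty(B))$, and each $d\sigma_\alpha^\mu$ appearing in the curvature-of-the-frame term by $\delta\sigma_\alpha^\mu$. Concretely, for $s \in \X$ written as in Equation \eqref{eq:ess-sum} with $F(s) = \sqrt{d\mu/d\mu_0}\sum_k f_k \otimes g_k$, I would \emph{define}
\[
\nabla^\X_u(s) := \sum_{\alpha,n,k} \nabla^{\E_V}_{(\partial/\partial\sigma_\alpha^\mu)_H}(\chi_\alpha x_{\alpha,n})\,(\rho_\alpha^{-1})^*(f_k\otimes 1)\otimes \delta\sigma_\alpha^\mu\, g_k + \sum_{\alpha,n,k}\chi_\alpha x_{\alpha,n}\,(\rho_\alpha^{-1})^*(f_k\otimes 1)\otimes \delta g_k,
\]
plus the mean-curvature correction term $\tfrac12 k((\partial/\partial\sigma_\alpha^\mu)_H)$ that distinguishes $\nabla^\X$ from $\nabla^{\E_V}$ (Definition \ref{defn:metric-conn}), lifted likewise via $\delta\sigma_\alpha^\mu$. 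The first step is to check this is well-defined, i.e.\ independent of the (highly non-unique) presentation of $F(s)$: this follows because any two presentations differ by elements of the kernel of the multiplication map $C^\infty(F_0)^N\widehat\otimes C^\infty(B)\to \X$, on which both the represented connection $\nabla^\X$ and the candidate universal object are insensitive after applying the Leibniz rule — the ambiguity is precisely absorbed into the balancing relation defining $\Omega^1_u(C^\infty(B))$.

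Next I would verify the Leibniz rule $\nabla^\X_u(sg) = \nabla^\X_u(s)g + \gamma(s)\otimes\delta(g)$ for $g\in C^\infty(B)$, which is immediate from the construction: multiplying $s$ by $g$ replaces each $g_k$ by $g_k g$ in the presentation of $F(sg)$, and $\delta(g_kg) = \delta(g_k)g + g_k\,\delta(g)$ by the universal Leibniz rule (the grading $\gamma$ is even here since everything is $C^\infty(M)$-module structure, so no signs intervene). Then the key identity $\pi_{D_B}\circ\nabla^\X_u = c\circ\nabla^\X$ is obtained by applying $\pi_{D_B}$ term by term: $\pi_{D_B}(\delta g) = [D_B,g] = c(dg)$ is Clifford multiplication by $dg$ (the horizontal lift being implicit in how $D_B$ acts on $X\otimes_{C(B)}L^2(\E_B)$), and $\pi_{D_B}(\delta\sigma_\alpha^\mu) = c(d\sigma_\alpha^\mu)$, so the right-hand side collapses exactly to the local expression \eqref{eq:conn-pre-univ} for $c\circ\nabla^\X$. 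Finally, Hermitian-ness of $\nabla^\X_u$ (which we need for it to be a legitimate input to Corollary \ref{universalsquare}) follows from Hermitian-ness of $\nabla^\X$ together with the compatibility $\delta(\langle s,t\rangle) = \langle \gamma(s),\nabla^\X_u t\rangle - \langle\nabla^\X_u\gamma(s),t\rangle$, which can be checked on the frame $\{\chi_\alpha x_{\alpha,n}\}$ using that $\{x_{\alpha,n}\}$ is orthonormal for $\langle\cdot,\cdot\rangle_{\E_V}$ and the mean-curvature term is exactly what corrects the failure of $\nabla^{\E_V}$ to be Hermitian for $\langle\cdot,\cdot\rangle_X$.

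The main obstacle I anticipate is the well-definedness check in the first step: the presentation $F(s) = \sqrt{d\mu/d\mu_0}\sum_k f_k\otimes g_k$ with $f_k$ purely in the fibre and $g_k$ purely on the base is genuinely non-canonical, and the local coordinate expressions involve the cutoffs $\chi_\alpha$ and the transition data, so one must be careful that the $\delta\sigma_\alpha^\mu$ terms assemble coherently across chart overlaps — essentially one is checking that the formula descends from $\widehat\otimes$ to $\widehat\otimes_{C^\infty(B)}$ and that the local pieces glue. The cleanest route is probably to observe that $c\circ\nabla^\X$ is already well-defined (being honest Clifford multiplication composed with an honest connection) and that $\nabla^\X_u$ lands in a space mapping onto it via the \emph{injective-on-the-relevant-subspace} map $\pi_{D_B}$ restricted appropriately; alternatively, one defines $\nabla^\X_u$ abstractly via the Grassmann formula $\nabla^\X_u(s) = \sum_{\alpha,n}\gamma(\chi_\alpha x_{\alpha,n})\otimes\delta(\langle\chi_\alpha x_{\alpha,n},s\rangle_X) + (\text{mean curvature term})$ using the $C^\infty(B)$-valued inner product directly, sidestepping the fibre/base splitting entirely, and then only uses \eqref{eq:conn-pre-univ} to identify its representation. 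I would adopt this second strategy, as it makes well-definedness automatic and reduces the lemma to the bookkeeping identity $\pi_{D_B}(\delta\langle\cdot,\cdot\rangle_X) = c(d\langle\cdot,\cdot\rangle_X)$ together with Definition \ref{defn:metric-conn}.
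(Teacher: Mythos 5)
Your first route is in fact the paper's proof: one takes the local expression \eqref{eq:conn-pre-univ}, replaces $d\sigma_\alpha^\mu$ and $dg_k$ by $\delta(\sigma_\alpha^\mu)$ and $\delta(g_k)$, and observes that the resulting formula \eqref{eq:universal-conn} converges in the projective tensor product, satisfies the Leibniz rule because multiplication by $g\in C^\infty(B)$ just multiplies each $g_k$ by $g$, and represents to $c\circ\nabla^\X$ by construction. The well-definedness issue that worries you is resolved there without any new idea: the candidate is a continuous linear map on $C^\infty(F_0)^N\widehat\otimes C^\infty(B)$ (there are only finitely many indices $(\alpha,n,\mu)$, and $\delta:C^\infty(B)\to\Omega^1_u(C^\infty(B))$ is completely bounded), evaluated on the element $F(s)=v(s)$; since that element, not its presentation $\sum_k f_k\otimes g_k$, is what the map sees, independence of the presentation is automatic from the universal property of $\widehat\otimes$.

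The genuine gap is that you then declare you would adopt the second strategy, and that strategy fails. The finite set $\{\chi_\alpha x_{\alpha,n}\}$ is a frame for $\E_V$ with respect to the \emph{pointwise} $C^\infty(M)$-valued inner product, but it is not a frame for $X$ with respect to the $C^\infty(B)$-valued inner product \eqref{eq:ex-inn-prod}, which integrates over the fibre: $X$ is not finitely generated over $C(B)$, and $\sum_{\alpha,n}\chi_\alpha x_{\alpha,n}\langle\chi_\alpha x_{\alpha,n},s\rangle_X\neq s$. Hence your formula $\sum_{\alpha,n}\gamma(\chi_\alpha x_{\alpha,n})\otimes\delta(\langle\chi_\alpha x_{\alpha,n},s\rangle_X)$ does not even satisfy the Leibniz rule (one gets $\nabla(sg)=\nabla(s)g+P(s)\otimes\delta(g)$ with $P\neq\mathrm{Id}$), so it is not a connection, let alone a lift of $\nabla^\X$. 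The honest Grassmann connection attached to the stabilisation requires the countable frame $\{v^*(e_i\otimes 1)\}$ and the convergence analysis of Proposition \ref{C2frameconnection}; and even then it is a \emph{different} connection from the metric connection $\nabla^\X$ of Definition \ref{defn:metric-conn} — as the computation in Proposition \ref{prop: quasicomm} makes explicit, the discrepancy involves derivatives of the Radon--Nikodym derivative and of the frame elements, not only the mean curvature — so an unspecified ``mean curvature term'' cannot restore the required identity $\pi_{D_B}\circ\nabla^\X_u=c\circ\nabla^\X$. To prove the lemma as stated you must lift $\nabla^\X$ itself, i.e.\ keep your first route and justify well-definedness by the continuity argument above.
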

\proof
From Equation \eqref{eq:conn-pre-univ} we identify 
a candidate universal connection as
\begin{equation}
  \nabla^\X_u (s)\! =\!\! \sum_{\alpha,n,k}  \nabla_{\partial/\partial \sigma_\alpha^\mu}^\X \left( \chi_\alpha x_{\alpha,n} \right ) (\rho_{\alpha}^{-1})^* \left(f_k \otimes 1\right) \otimes   \delta( \sigma_\alpha^\mu)g_k + \sum_{\alpha,n,k}  \chi_\alpha x_{\alpha,n}  \cdot (\rho_{\alpha}^{-1})^* \left(f_k \otimes 1\right) \otimes  \delta ( g_k).
  \label{eq:universal-conn}
\end{equation}
First of all, the right hand side of \eqref{eq:universal-conn} makes sense
in the projective tensor product topology.
The first sum is readily compared to \eqref{eq:ess-sum} since there
are only finitely many terms in the $\alpha,\,n$ sums. The second
term can similarly be compared to \eqref{eq:ess-sum} using the fact that
$\delta:\,C^\infty(B)\to\Omega^1_u(C^\infty(B))$ is completely bounded.

Multiplication of a section $s\in\X$ 
by a function $g \in C^\infty(B)$ via pullback along $\pi$ amounts to multiplying each $g_k$ by $g$. Applying the Leibniz rule
for $\delta$ to the right-hand side of 
Equation \eqref{eq:universal-conn} 
proves that $\nabla^\X_u(sg)=\nabla^\X_u(s)g+s\otimes\delta(g)$.
It is then clear by construction that $\pi_{D_B}\circ\nabla^\X_u$ coincides with $c\circ\nabla^\X$.
\endproof

In the next few statements we compare projective tensor products and Haagerup tensor products and so 
need the notation introduced in 
Equations \eqref{eq:the-map-known-as-iota}, \eqref{eq:iota-tee},
Section \ref{sec:uni-conn}. 

\begin{lemma}
\label{check}
The map
\[
\iota_{D_B}\circ\nabla_{u}^\X:\mathcal{X}\to X\otimes_{C^{1}(B)}^{h}\Omega^{1}_{D_{B}}(C^{1}(B)),
\]
is continuous.
\end{lemma}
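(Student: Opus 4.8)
The plan is to identify $\iota_{D_B}\circ\nabla^{\X}_{u}$ with the represented metric connection and then bound its Haagerup norm using a partition of unity on the compact base $B$. By Lemma~\ref{lma:conn-univ-lift} we have $\pi_{D_B}\circ\nabla^{\X}_{u}=c\circ\nabla^{\X}$, so, unwinding $\iota_{D_B}=(1\otimes\pi_{D_B})\circ\iota_{0}$ from \eqref{eq:iota-tee} (where $\iota_0$ is merely the passage from the projective tensor product of smooth objects to the completed Haagerup module tensor product), the map $\iota_{D_B}\circ\nabla^{\X}_{u}$ is precisely the composition of the represented connection $\nabla_{D_B}:=c\circ\nabla^{\X}:\X\to\X\otimes^{\alg}_{C^{\infty}(B)}\Omega^{1}_{D_B}$ with the canonical inclusion into $X\otimes^{h}_{C^{1}(B)}\Omega^{1}_{D_B}(C^{1}(B))$. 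Hence it suffices to find a continuous seminorm $p$ on the Fr\'echet space $\X$ with $\|\nabla_{D_B}(s)\|_{h}\leq p(s)$.

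To this end I would first write $\nabla_{D_B}(s)$ as a \emph{finite} sum of globally defined terms. Fix a finite cover $\{W_{\beta}\}$ of $B$ by coordinate charts, a partition of unity $\{\psi_{\beta}^{2}\}$ with $\psi_{\beta}\in C^{\infty}(B)$, and on each $W_\beta$ a local orthonormal frame $\{e^{\beta}_{\mu}\}$ of $TB$ with dual coframe $\{e_{\beta}^{\mu}\}$. Since $\nabla^{\X}$ is a connection it is $C^{\infty}(B)$-linear in the vector-field slot, and since $\psi_\beta^2=\psi_\beta\cdot\psi_\beta$ we may rewrite
\begin{align*}
\nabla_{D_B}(s)&=\sum_{\beta,\mu}\psi_{\beta}^{2}\,\nabla^{\X}_{e^{\beta}_{\mu}}(s)\otimes c_{B}(e_{\beta}^{\mu})\\
&=\sum_{\beta,\mu}\nabla^{\X}_{\psi_{\beta}e^{\beta}_{\mu}}(s)\otimes c_{B}(\psi_{\beta}e_{\beta}^{\mu}),
\end{align*}
where now $\psi_{\beta}e^{\beta}_{\mu}$ is a globally defined smooth vector field on $B$ and $\psi_{\beta}e_{\beta}^{\mu}$ a globally defined smooth one-form, so that $c_{B}(\psi_{\beta}e_{\beta}^{\mu})\in\Omega^{1}_{D_B}(C^{1}(B))$ is a fixed bounded operator; only finitely many pairs $(\beta,\mu)$ occur.

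The estimate is then immediate. Because the Haagerup tensor norm is a cross-norm, $\|u\otimes\omega\|_{h}=\|u\|_{X}\|\omega\|$, so by the triangle inequality
\[
\|\nabla_{D_B}(s)\|_{X\otimes^{h}_{C^{1}(B)}\Omega^{1}_{D_B}(C^{1}(B))}\leq\sum_{\beta,\mu}\|\nabla^{\X}_{\psi_{\beta}e^{\beta}_{\mu}}(s)\|_{X}\,\|c_{B}(\psi_{\beta}e_{\beta}^{\mu})\|.
\]
For each fixed global vector field $Z$ on $B$, the operator $\nabla^{\X}_{Z}=\nabla^{\E_V}_{Z_{H}}+\tfrac{1}{2}k(Z_{H})$ of Definition~\ref{defn:metric-conn} is a first-order differential operator on the finitely-generated projective $C^{\infty}(M)$-module $\E_V$, hence continuous in the $C^{\infty}$-topology; composing with the continuous seminorm $\|\cdot\|_{X}\leq\big(\sup_{b}\mathrm{vol}(\pi^{-1}(b))\big)^{1/2}\,\sup_{M}|\cdot|_{\E_V}$ shows that $s\mapsto\|\nabla^{\X}_{Z}(s)\|_{X}$ is a continuous seminorm on $\X$. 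As the displayed sum is finite and the constants $\|c_{B}(\psi_{\beta}e_{\beta}^{\mu})\|$ are fixed, we obtain $\|\nabla_{D_B}(s)\|_{h}\leq p(s)$ for a continuous seminorm $p$, as required.

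The point requiring care is the reduction in the first paragraph, i.e.\ that $\iota_{D_B}\circ\nabla^{\X}_{u}$ really is the image in the Haagerup completion of $c\circ\nabla^{\X}$; here Lemma~\ref{lma:conn-univ-lift} does the essential work, and one must remember that $\iota_{0}$ only records the inclusion of smooth tensors into the completed module tensor product, while $1\otimes\pi_{D_B}$ represents the form factor. Everything else is the soft observation that all norms appearing on the right-hand side — the fibrewise $L^{2}$-norm on $X$, the fixed operator norms of the Clifford multiplications, and the $C^{\infty}$-seminorms controlling the first-order operators $\nabla^{\X}_{Z}$ — are dominated by $C^{\infty}$- (indeed $C^{1}$-) seminorms on $\X$. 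One could also run this estimate directly from the explicit formula \eqref{eq:universal-conn}, but the route through Lemma~\ref{lma:conn-univ-lift} avoids bookkeeping with the auxiliary functions $f_{k},g_{k}$ there.
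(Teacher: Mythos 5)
Your argument is correct, but it takes a genuinely different route from the paper's. The paper never localises on the base: it writes $\nabla_{D_B}(x)=v^{*}\bigl(v(1\otimes_{\nabla}D_B)-(\gamma\otimes D_{B})v\bigr)(x)+v^{*}\bigl[\gamma\otimes D_B, v(x)\bigr]$ using the stabilisation isometry $v$, controls the first summand by inserting $(D_V+i)^{-1}(D_V+i)$ and invoking Proposition \ref{prop: quasicomm} (the quasi-commutator is $D_V$-bounded) together with continuity of $D_V:\X\to\X$, and controls the second as the composition $\X\xrightarrow{v}C^\infty(F_0)\widehat\otimes C^\infty(B)\xrightarrow{1\otimes\d}C^\infty(F_0)\widehat\otimes\Omega^1(B)\xrightarrow{c}L^2(F_0)^N\otimes^h\Omega^1_{D_B}$ followed by $v^*$. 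You instead use Lemma \ref{lma:conn-univ-lift} to replace $\iota_{D_B}\circ\nabla^\X_u$ by the represented metric connection (the same identification with which the paper's proof opens), then localise with a finite partition of unity on the compact base, exploit that the Haagerup norm of an elementary tensor is dominated by the product of the norms (in the balanced case this is an inequality, which is all you use), and bound each of the finitely many terms by observing that $\nabla^\X_Z$ is a first-order differential operator on $\E_V$ over compact $M$, hence $C^\infty$-continuous, with $\|\cdot\|_X$ dominated by the sup-norm. Your version is more elementary and self-contained — it does not need the stabilisation map or Proposition \ref{prop: quasicomm} at all — whereas the paper's version stays inside the operator-module framework and reuses exactly the ingredients ($v$ and the quasi-commutator estimate) that are needed anyway for the $C^2$-Grassmann connection, so nothing extra has to be built for it. Both proofs establish the same seminorm estimate, and your reduction in the first paragraph is precisely the equality the paper asserts at the start of its own proof.
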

\begin{proof} For $x\in\mathcal{X}$ we have an equality 
\begin{align*}
\iota_{D_B}\circ\nabla_{u}^\X (x)=\nabla_{D_B} (x)&=(1\otimes_{\nabla}D_B|x\rangle - |\gamma(x)\rangle D_B )\\
%&=((1\otimes_{\nabla}D_B)v^{*}v(x) - v^{*}v(x)(\gamma\otimes D_B) )\\
&=v^{*}(v(1\otimes_{\nabla}D_B)-(\gamma\otimes D_{B})v)(x) +v^{*}\left[\gamma\otimes D_B, v(x)\right],
\end{align*}
%\begin{align*}
%(D_{V}+i)^{-1}\circ\iota_{D_B}\circ\nabla_{u} (x) &=(D_{V}+i)^{-1}\nabla_{D_B} (x) \\
%&=(D_{V}+i)^{-1}(1\otimes_{\nabla}D_B|x\rangle - |\gamma(x)\rangle D_B )\\
%&=(D_{V}+i)^{-1}((1\otimes_{\nabla}D_B)v^{*}v(x) - v^{*}v(x)(\gamma\otimes D_B) )\\
%&=(D_{V}+i)^{-1}((1\otimes_{\nabla}D_B)v^{*}-v^{*}(\gamma\otimes D_{B}))v(x) + v^{*}\left[\gamma\otimes D_B, v(x)\right],
%\end{align*}
of operators $\Dom D_{B}\to X\otimes^{h}_{C(B)}L^{2}(\mathcal{E}_{B})$. Since
\begin{align*}
v^{*}(v(1\otimes_{\nabla}D_B)-(\gamma\otimes D_{B})v)=v^{*}(v(1\otimes_{\nabla}D_B)-(\gamma\otimes D_{B})v)(D_{V}+i)^{-1}(D_{V}+i),
%=(v(1\otimes_{\nabla}D_B)-(\gamma\otimes D_{B})v)(D_{V}-i)^{-1})^{*},
\end{align*}
and $D_{V}:\mathcal{X}\to\mathcal{X}$ is continuous,  this operator is continuous by Proposition \ref{prop: quasicomm}.
%as operators $L^{2}(F_0)\otimes^{h}C(B)\to X\otimes^{h}_{C(B)}L^{2}(\mathcal{E}_{B})$, this operator is bounded by Proposition \ref{prop: quasicomm}. Hence 
%\[(D_{V}+i)^{-1}((1\otimes_{\nabla}D_B)v^{*}-v^{*}(\gamma\otimes D_{B}))v(x),\]
%is continuous as a map $\mathcal{X}\to \mathbb{B}(L^{2}(\mathcal{E}_{B}), X\otimes^{h}_{C(B)}L^{2}(\mathcal{E}_{B}))$. 
Furthermore,
\[
v^{*}\left[\gamma\otimes D_B, v(x)\right]=v^{*}(\gamma\otimes c(\d v(x))),
\]
and $x\mapsto c(\d v(x))$ is a composition of continuous maps
\[\mathcal{X}\xrightarrow{v} C^{\infty}(F_0)\widehat{\otimes}C^{\infty}(B)\xrightarrow{1\otimes \d} C^{\infty}(F_0)\widehat{\otimes}\Omega^{1}(B)\xrightarrow{c} L^{2}(F_0)^{N}\otimes^{h}\Omega^{1}_{D_{B}}(C^{1}(B)).\] 
Since $v^{*}:L^{2}(F_0)^{N}\otimes^{h}\Omega^{1}_{T}(\mathcal{B}_{1})\to X\otimes^{h}_{B}\Omega^{1}_{D_B}(\mathcal{B}_{1})$ is continuous as well, the lemma is proved.
\end{proof}
%\color{red}
%\begin{rmk}
%The above argument seems optimal. It shows that in this case $(D_{V}+i)^{-1}\circ\iota_{D_B}\circ\nabla_{u}$ is continuous if and only if $(v(1\otimes_{\nabla}D_B)-(\gamma\otimes D_{B})v)(D_{V}-i)^{-1}$ is continuous. It also shows that $\nabla_{D_B}:\mathcal{X}\to X\otimes^{h}_{B}\Omega^{1}_{T}(B,\mathcal{B}_{1})$ is continuous if and only if
%$(v(1\otimes_{\nabla}D_B)-(\gamma\otimes D_{B})v)$ is continuous. The latter seems to be the case if $D_{V}:\mathcal{X}\to\mathcal{X}$ is continuous, in which case there is a continuous map $\mathcal{X}\to \mathcal{X}_{\nabla_{T}}$. 
%\end{rmk}
%\color{black}
\begin{corl}
  \label{corl:riem-subm-C2}
The pair $(\mathcal{X},D_{V})$ is a $C^{2}$-module relative to $(C^{2}(B), L^{2}(\mathcal{E}_{B}), D_{B})$ and the universal connection 
$$
\nabla_u^\X :\mathcal{X}\to \mathcal{X}\widehat{\otimes}_{C^{\infty}(B)}\Omega^{1}_{u}(C^{\infty}(B)),
$$
defines a $C^{2}$-connection $(\nabla,\nabla^{1})$  on $(\mathcal{X},D_V)$, where $\nabla=\iota_0\circ\nabla_u^\X$ and $\nabla^1=\iota_1\circ\nabla_u^\X$. The quintuple $(C^{2}(M),\mathcal{X}\otimes^{\alg}_{\mathcal{B}}\mathcal{B}_{2}, D_{V},(\nabla,\nabla^{1}))$ is a $C^{2}$-correspondence 
for the spectral triples $(C^{2}(M), L^{2}(\mathcal{E}_M), D_{M})$ and $(C^{2}(B), L^{2}(\mathcal{E}_{B}), D_{B})$.
  \end{corl}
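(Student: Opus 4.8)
The plan is to verify each clause of Corollary \ref{corl:riem-subm-C2} by assembling the pieces established in the geometric sections together with the abstract machinery of Sections \ref{sec:two}--\ref{sec:graaaaaaaaaa}. First I would check that $(\mathcal{X},D_V)$ is a $C^2$-module relative to $(C^2(B),L^2(\mathcal{E}_B),D_B)$ in the sense of Definition \ref{horverC2}: the horizontal $C^2$-differentiability of $\mathcal{X}=\E_V$ over $C^\infty(B)$ amounts to checking that the fibre-integrated inner product \eqref{eq:ex-inn-prod} takes values in $C^\infty(B)\subset C^2(B)\subset C^2(B)_2$, which was already observed after \eqref{eq:ex-inn-prod} (smoothness of $b\mapsto d\mu_{\pi^{-1}(b)}$); and the inclusion $\mathcal{X}\subset\Dom D_V^2$ holds because $\E_V$ is a finitely generated projective $C^\infty(M)$-module on which $D_V$ acts as a first-order differential operator, so smooth sections lie in the domain of every power. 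I would also record here that the $C^2$-condition (Definition \ref{C2spec}) holds for the base spectral triple $(C^2(B),L^2(\mathcal{E}_B),D_B)$ --- this is standard for Dirac operators on closed manifolds.

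Next I would invoke the results accumulated in the preceding subsections to feed Corollary \ref{universalsquare}. The stabilisation map $v$ of the two Lemmas preceding Proposition \ref{prop: quasicomm} is a $C^2$-stabilisation in the sense of Definition \ref{diffframe} (as remarked there), and Proposition \ref{prop: quasicomm} shows that $(v\otimes1)(1\otimes_\nabla D_B)-(\gamma\otimes D_B)(v\otimes1)$ is $D_V$-bounded, which is exactly hypothesis \eqref{quasigrass} of Proposition \ref{C2frameconnection}; combined with the vertical anticommutation of $(D_V\otimes1,1\otimes_\nabla D_B)$ --- recorded as a consequence of Lemma \ref{lma:commutator-Kuc} --- Proposition \ref{C2frameconnection} produces the $C^2$-Grassmann connection. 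The universal lift is provided by Lemma \ref{lma:conn-univ-lift}, giving $\nabla_u^\X:\mathcal{X}\to\mathcal{X}\widehat\otimes_{C^\infty(B)}\Omega^1_u(C^\infty(B))$ with $\pi_{D_B}\circ\nabla_u^\X=c\circ\nabla^\X$, and Lemma \ref{check} verifies that $\iota_{D_B}\circ\nabla_u^\X$ is continuous --- precisely the continuity hypothesis of Corollary \ref{universalsquare}. With $C^\infty(B)$ carrying its usual Fréchet $m$-$*$-algebra topology and the continuous inclusion $C^\infty(B)\to C^2(B)_2$, Proposition \ref{locconvSconn} and Corollary \ref{universalsquare} then apply with $S=D_V$, yielding that $(\nabla,\nabla^1)=(\iota_0\circ\nabla_u^\X,\iota_1\circ\nabla_u^\X)$ is a $C^2$-connection on the $C^2$-module $(\mathcal{X},D_V)$.

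For the final claim --- that $(C^2(M),\mathcal{X}\otimes^{\alg}_{\mathcal{B}}\mathcal{B}_2,D_V,(\nabla,\nabla^1))$ is a $C^2$-correspondence for the spectral triples $(C^2(M),L^2(\mathcal{E}_M),D_M)$ and $(C^2(B),L^2(\mathcal{E}_B),D_B)$ --- I would check the four conditions in the hypotheses of Corollary \ref{universalsquare} and the definition of $C^2$-correspondence. Condition 1 ($\mathcal{X}\subset\Dom D_V^2$) and condition 3 (vertical anticommutation) are already in hand; condition 2 ($(C^\infty(M),X,D_V)$ is an unbounded Kasparov module) follows from the fact that $D_V$ is a vertically elliptic family with locally compact resolvent along the fibres (as in \cite{KS16}); and condition 4 ($a:\Dom 1\otimes_\nabla D_B\to\Dom 1\otimes_\nabla D_B$ with $[1\otimes_\nabla D_B,a]$ bounded for $a\in C^\infty(M)$) follows since functions on $M$ act as order-zero operators and Clifford-commute appropriately with the horizontal Dirac operator --- this is essentially the statement that $C^\infty(M)$ preserves the domain, which one reads off from the local expression for $1\otimes_\nabla D_B$. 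Finally, the unitary $u:L^2(\mathcal{E}_M)\to X\otimes_{C(B)}L^2(\mathcal{E}_B)$ intertwining the $C^\infty(M)$-actions, together with the boundedness of $Z=D_M-u^*(D_V\otimes1+1\otimes_\nabla D_B)u=\tilde c(\Omega)$ preserving $\Dom D_M$, is exactly the content of the factorisation \eqref{eq:factorization} of \cite[Theorem 23]{KS16}; Clifford multiplication by the curvature form $\Omega$ is a bounded operator that manifestly preserves the smooth domain, hence $\Dom D_M$. I expect the only genuinely delicate point is bookkeeping the distinction between the algebraically tensored-up module $\mathcal{X}\otimes^{\alg}_{\mathcal{B}}\mathcal{B}_2$ (needed so that the $C^2$-machinery of Section \ref{Hilbcurve} applies verbatim, since there $\mathcal{X}$ is a $\mathcal{B}_2$-module) versus the genuinely Fréchet-smooth $\mathcal{X}=\E_V$, and checking that the density requirement ``$\Dom D_M\cap u^*(\Dom D_V\otimes1\cap\Dom 1\otimes_\nabla D_B)$ is dense in $L^2(\mathcal{E}_M)$'' holds --- but since smooth sections of $\mathcal{E}_M$ lie in all the relevant domains and are dense, this too is routine.
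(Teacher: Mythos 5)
Your proposal is correct and follows essentially the same route as the paper: the $C^2$-module property from $\langle\X,\X\rangle\subset C^\infty(B)$ and essential self-adjointness of $D_V$ on $\X$, the connection $(\nabla,\nabla^1)=(\iota_0\circ\nabla_u^\X,\iota_1\circ\nabla_u^\X)$ from Lemma \ref{lma:conn-univ-lift} and the continuity statement of Lemma \ref{check}, vertical anticommutation from Lemma \ref{lma:commutator-Kuc}, and the conclusion via Corollary \ref{universalsquare} together with the factorisation \eqref{eq:factorization} supplying the unitary $u$ and the bounded perturbation $Z=\tilde c(\Omega)$. The only differences are cosmetic: the detour through Proposition \ref{C2frameconnection} (the Grassmann connection) is not actually needed for this universal connection, and your explicit verification of hypotheses 2 and 4 of Corollary \ref{universalsquare} merely spells out what the paper leaves implicit by citing \cite{KS16}.
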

 \begin{proof}
  Since $\langle \X,\X\rangle\subset C^{\infty}(B)$ 
  and $D_{V}:\mathcal{X}\to \mathcal{X}\subset X$ is 
  essentially self-adjoint and regular, $(\mathcal{X},D_{V})$ is a 
  $C^{2}$-module (see Definition \ref{def: horver}). 
Next we show how to obtain a pair $(\nabla,\nabla^1)$ satisfying Definition \ref{def: connpair}. As the inclusions $\mathcal{X}\to X$ and $C^{\infty}(B)\to \textnormal{Lip}(D_{B})$ are continuous, we obtain a connection 
$\nabla:=\iota_0\circ\nabla_{u}^\X:\mathcal{X}\to X\otimes_{B}\Omega^{1}(C^{1}(B))$. 
By  Lemma \ref{lma:commutator-Kuc} the pair $(D_{V}\otimes 1, 1\otimes_{\nabla}D_{B})$ is vertically anti-commuting and by Lemma \ref{check} 
  \[
\iota_{D_B}\circ\nabla_{u}^\X:\mathcal{X}\to X\otimes_{C^{1}(B)}^{h}\Omega^{1}_{D_{B}}(C^{1}(B),C(B)),
\]
is continuous. %We can thus apply Proposition \ref{locconvSconn} to conclude that $(\nabla,\nabla^{1}):=(\iota_{0}\circ\nabla_{u}, \iota_{S}\circ\nabla_{u})$ is a $C^{(2,2)}$-connection on $(\mathcal{X}, D_{V})$. 
The conclusion now follows from Equation \eqref{eq:factorization} and Corollary \ref{universalsquare}.
% it follows that
   \end{proof}
 %{\bf BM: As stated this corollary is only true if the connection $\nabla_u$ is a continuous map $\mathcal{X}\to \mathcal{X}\widehat{\otimes}_{C^{\infty}(B)}\Omega^{1}_{u}(C^{\infty}(B))$. If not, we can only define $1\otimes_{\nabla_{u}}\delta$ on $\mathcal{X}\otimes^{\alg}_{C^{\infty}(B)}\Omega^{1}_{u}(C^{\infty}(B))$ and it would not necessarily extend to all of $\mathcal{X}\widehat{\otimes}_{C^{\infty}(B)}\Omega^{1}_{u}(C^{\infty}(B))$. See comment above, it would suffice to have continuity of  $(D_{V}+i)^{-1}\circ\nabla_{u}$, and it would be interesting to check this.
 
 %AR: I think $\nabla_u^\X$ is continuous. The issue is that 
% $\delta:C^\infty(B)\to\Omega^1_u(C^\infty(B))$ is bounded,
% being the finite difference operator.}
\subsection{Curvature of $\nabla$}

In this section we compute the curvature $R_{\nabla^\X_{u}}$ of the connection $\nabla^\X$, which is given by
$$
R_{\nabla^\X_{u}} = (1\otimes_{\nabla_u^\X} D_B^2) - (1 \otimes_{\nabla_u^\X} D_B)^2
$$
in terms of the Dirac operator $D_B$ on the base manifold of the submersion.

%{\bf BM: the computations below are now justified, but it is not clear WHAT the curvature operator is in this case. Needs to be spelled out.}
\begin{prop}
  \begin{enumerate}
    \item
  The curvature operator $R_{\nabla^\X} \equiv \pi_{D_B}((\nabla^\X_u)^2)$ 
  on $\X \widehat \otimes_{C^\infty(B)} \E_B$ 
  is given by Clifford multiplication with the curvature of $\nabla^\X$. 
  More precisely, in terms of a local orthonormal frame $\{ f_j\}$ of 
  vector fields on $B$ we have the equality
  $$
R_{\nabla^\X_{u}} =c \circ \left((\nabla^\X)^2\right) \equiv \sum_{j,k}\left(\left[ \nabla^\X_{f_j}, \nabla^\X_{f_k} \right]- \nabla^\X_{[f_j,f_k]} \right) \gamma_j \gamma_k
$$
as skew-symmetric operators from $\X \otimes_{C^\infty(B)}^{\rm alg} \E_B$ to $X \otimes_{C(B)} L^2(\E_B)$ and where $\gamma_j = c((f_j)_H)$ are flat Dirac matrices.
\item 
There is the following local expression for the curvature in terms of the curvature $\Omega$ of the submersion and the connection one-form $A^\X$ of $\nabla^X$:
$$
R_{\nabla^\X_{u}}(\xi) = \sum_{j,k} \left( \sum_i \Omega(\cdot,\cdot ,e_i) e_i + \d A^\X + (A^\X)^2 \right) ((f_j)_H,(f_k)_H) \gamma_j \gamma_k \xi
$$
with $\xi$ supported in a suitable coordinate chart of $M$ and where $\{e_i\} $ is a local orthonormal frame of vertical vector fields on $M$. 

\item The curvature operator $R_{\nabla^\X_{u}}$ is relatively $D_V$-bounded.
  \end{enumerate}
\end{prop}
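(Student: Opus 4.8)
The plan is to obtain (1) from the general machinery already assembled and then unwind the two explicit descriptions. By Corollary~\ref{corl:riem-subm-C2} the data $(\X,D_V,(\nabla,\nabla^1))$ with $\nabla=\iota_0\circ\nabla^\X_u$ and $\nabla^1=\iota_1\circ\nabla^\X_u$ form a $C^2$-correspondence for the spectral triples on $M$ and $B$, so Corollary~\ref{universalsquare} applies with horizontal operator $D_B$ and vertical operator $D_V$: equality~\eqref{projectivecurvature} identifies $R_{\nabla_{D_B}}=(1\otimes_\nabla D_B)^2-1\otimes_\nabla D_B^2$ with the represented universal curvature $\pi_{D_B}((\nabla^\X_u)^2)=(1\otimes m)(1\otimes\pi_{D_B}\otimes\pi_{D_B})((\nabla^\X_u)^2)$ on $\X\otimes^{\alg}_{C^\infty(B)}\Dom D_B^2$, which by Theorem~\ref{thm: Ristwoformdetermineduptojunk} extends to $\X\widehat{\otimes}_{C^\infty(B)}\E_B$. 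What remains for (1) is the purely local identification of $\pi_{D_B}((\nabla^\X_u)^2)$ with $c\circ((\nabla^\X)^2)$. In a fibration chart with base coordinates $\sigma^\mu$ the universal one-forms are generated by the $\delta(\sigma^\mu)$ and $\pi_{D_B}(a\,\delta(\sigma^\mu))=a[D_B,\sigma^\mu]=i\,a\,c(\d\sigma^\mu)$; writing $\nabla^\X_u=\delta+A^\X_u$ locally and using $\delta^2=0$ together with Proposition~\ref{ajunkie} to evaluate $m\circ(\pi_{D_B}\otimes\pi_{D_B})(\delta A^\X_u)$, one reads off $\sum_{j,k}R^\X_{jk}\gamma_j\gamma_k$ with $R^\X_{jk}=[\nabla^\X_{f_j},\nabla^\X_{f_k}]-\nabla^\X_{[f_j,f_k]}$. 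Since $R^\X_{jk}$ is antisymmetric, the symmetric (junk) part of $\gamma_j\gamma_k$ does not contribute, so no lift-dependent correction survives and the expression equals $c\circ((\nabla^\X)^2)$; the stated skew-symmetry then follows from the Hermitian property of $\nabla^\X$ and the grading, exactly as in Lemma~\ref{symmetric}.

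For (2) I would substitute $\nabla^\X_{f_j}=\nabla^{\E_V}_{(f_j)_H}+\tfrac12 k((f_j)_H)$ (Definition~\ref{defn:metric-conn}) into $R^\X_{jk}$ and expand the graded commutators. The single genuinely geometric input is that the horizontal lift is not a Lie-algebra homomorphism: $[(f_j)_H,(f_k)_H]-([f_j,f_k])_H$ is the vertical vector field $\sum_i\Omega((f_j)_H,(f_k)_H,e_i)\,e_i$ by Definition~\ref{defn:riem-subm-curv}(3), and feeding this into $\nabla^{\E_V}_{[f_j,f_k]}$ produces the term $\sum_i\Omega(\cdot,\cdot,e_i)\,e_i$ acting through $\nabla^{\E_V}$. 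The remaining pieces---the curvature of $\nabla^{\E_V}$ along horizontal directions together with the mean-curvature corrections---assemble into the local expression $\d A^\X+(A^\X)^2$ of the curvature of $\nabla^\X$ in the chart, so combining with (1) gives the claimed formula. This step is essentially bookkeeping; the only care needed is consistency of the sign and horizontal-lift conventions.

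For (3) I would read relative boundedness directly off the local formula of (2). On each fibration chart $R_{\nabla^\X_u}$ is the sum of a term $(\d A^\X+(A^\X)^2)((f_j)_H,(f_k)_H)\gamma_j\gamma_k$, which is a smooth compactly supported endomorphism-valued coefficient composed with the bounded Clifford matrices and hence bounded, and terms $\Omega((f_j)_H,(f_k)_H,e_i)\,\nabla^{\E_V}_{e_i}\gamma_j\gamma_k$, each of which is a \emph{vertical} first-order differential operator with bounded coefficients. Since $D_V=i\sum_j c_V(e_j)\nabla^{\E_V}_{e_j}$ is vertically elliptic, every vertical first-order operator with bounded coefficients is relatively bounded by $D_V$---a G{\aa}rding-type estimate on the compact fibres, of the same kind already used in the proof of Proposition~\ref{prop: quasicomm} and underlying \cite[Corollary~10.2.6]{HR}---and summing against the partition of unity $\{\chi_\alpha^2\}$ preserves the bound, giving the $D_V$-boundedness of $R_{\nabla^\X_u}$. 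The step I expect to be the main obstacle is making the junk-cancellation in (1) fully rigorous: one has to confirm that the represented universal square produces precisely the antisymmetric curvature tensor of $\nabla^\X$ and no symmetric correction, which is where the skew-symmetry of $R^\X$ together with the description of $\pi_{D_B}$ and $\pi_{D_B^2}$ on two-forms from Proposition~\ref{ajunkie} and Corollary~\ref{cor:junk-squared} must be combined carefully; once (1) is in place, (2) and (3) are comparatively routine.
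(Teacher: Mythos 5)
Your overall route coincides with the paper's for most of the statement: Corollary \ref{universalsquare} (via Corollary \ref{corl:riem-subm-C2}) identifies $R_{\nabla^\X_u}$ with the represented square of the universal lift; part (2) is obtained exactly as in the paper by writing $\nabla^\X_Z=Z_H+A^\X(Z_H)$ with $A^\X=A^{\E_V}+\tfrac12 k$ (Definition \ref{defn:metric-conn}) and using $[X_H,Y_H]-[X,Y]_H=\sum_i\Omega(X_H,Y_H,e_i)e_i$; and part (3) is the paper's argument: the only potentially unbounded contribution is a vertical vector field, hence relatively bounded by the vertically elliptic $D_V$, while $c_H(\d A^\X+(A^\X)^2)$ is bounded (the paper checks this by an explicit fibre-integral estimate, using compactness of $M$ to reduce to finitely many charts).

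The crux of part (1), however, is handled incorrectly in your proposal. The lift-dependence of the represented square does not enter as ``the symmetric part of $\gamma_j\gamma_k$ paired against the antisymmetric tensor $R^\X_{jk}$''; it enters through $\pi_{D_B^2}$, equivalently through the scalar part of $m\circ(\pi_{D_B}\otimes\pi_{D_B})(\delta A^\X_u)$ in Proposition \ref{ajunkie}, which produces zero-form (junk) terms of the type $\sum_i\langle \d a_i,\d b_i\rangle$ whose value depends on the chosen universal lift (Corollary \ref{cor:junk-squared}). Antisymmetry of the de Rham curvature tensor therefore does not show that such terms are absent: what has to be verified is that the coefficient tensor produced by the specific lift of Lemma \ref{lma:conn-univ-lift}, Equation \eqref{eq:universal-conn}, has no residual symmetric (scalar) component beyond what is written in $c\circ((\nabla^\X)^2)$ --- your remark assumes exactly this conclusion, and you yourself flag it as the main obstacle without resolving it. The paper takes a different route at this point: it invokes Lemma \ref{lma:conn-univ-lift} ($\pi_{D_B}\circ\nabla^\X_u=c\circ\nabla^\X$) together with the assertion that the Clifford representation of universal forms factors through the de Rham calculus, so that the represented square may be computed entirely with de Rham-represented forms, and then carries out the local computation of $c\circ((\nabla^\X)^2)$ directly; no antisymmetry-cancellation step appears. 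To repair your argument you would need either to compute the junk contribution of the explicit lift \eqref{eq:universal-conn} and show it is accounted for, or to follow the paper's factorisation argument. Note that even a bounded scalar discrepancy would leave parts (2) and (3) intact, since junk terms are bounded; it is only the exact identity claimed in (1) that requires this care.
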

\proof
In view of Corollary \ref{universalsquare} 
%\ref{corl:riem-subm-C2},
%Theorem \ref{thm: Ristwoformdetermineduptojunk},
%Corollary \ref{cor: nablasquare}  and Corollary \ref{universalsquare} 
we have that 
$R_{\nabla^\X} \equiv \pi_{D_B}((\nabla^\X_u)^2)$. 
By Lemma \ref{lma:conn-univ-lift} and the fact that the Clifford representation of universal forms factors through the DeRham calculus,  we may compute the 
right-hand side by working with $\pi_D$-represented de Rham differential forms and thus exploit local expressions. Let us start by writing the connection $\nabla^{\X}$ in terms of a connection one-form: using Definition \ref{defn:metric-conn} we have for $\xi \in \X$ supported in a trivializing chart (for $\E_V$) on $M$:
  $$
\nabla^{\X}_Z (\xi) = Z_H(\xi) + A^{\E_V} (Z_H)(\xi) + \frac 12 k(Z_H) \cdot \xi 
  $$
where we have written $\nabla^{\E_V} = \d_M + A^{\E_V}$ in terms of a (locally-defined) connection one-form $A^{\E_V} \in \End_{C^\infty(M)} (\E_V) \otimes_{C^\infty(M)} \Omega^1(M)$. 

Let us define a combined connection one-form as $A^{\X} := A^{\E_V} + \frac 12 k\in \End_{C^\infty(M)} (\E_V) \otimes_{C^\infty(M)} \Omega^1(M)$ so that $\nabla^{\X}_Z  = Z_H + A^\X(Z_H)$. Note that since $M$ is compact we can assume that there is a finite number of such trivialising charts. Hence, for the relative bounds of the curvature operator that we are after here we may just as well work on a single chart. 

With these preparations, we compute the curvature operator 
acting on a $\xi \in\X$ supported in a single chart, finding 
\begin{align*}
  c \circ \left( (\nabla^\X)^2\right)(\xi) &=  \sum_{j,k}\left(\left[ \nabla^\X_{f_j}, \nabla^\X_{f_k} \right]- \nabla^\X_{[f_j,f_k]} \right) \gamma_j \gamma_k (\xi)\\
  &= \sum_{j,k} \left( [(f_j)_H,(f_k)_H ] - [f_j,f_k]_H \right) \gamma_j \gamma_k   (\xi)  + c_H \circ (\d A^\X + (A^\X)^2) (\xi),
  \end{align*}
where $c_H$ denotes Clifford multiplication only in the horizontal direction (involving the $\gamma_j$ and  $\gamma_k$). This last term satisfies
\begin{align*}
&   \langle c_H (\d A^\X + (A^\X))  \xi ,  c_H (\d A^\X + (A^\X)) \xi \rangle_X(b)  \\
  & \qquad = \int_{\pi^{-1}(b)} \langle c_H (\d A^\X + (A^\X))  \xi ,  c_H (\d A^\X + (A^\X)) \xi\rangle_{\E_V} (x) \d\mu_{\pi^{-1}(b)} (x) \\
  & \qquad \leq \| c_H(\d A^\X + (A^\X))\|^2_{\End_{C^\infty(M)} (\E_V)} \langle \xi, \xi \rangle (b)
\end{align*}
%% The expression can then be readily derived, 
%% see \cite[chapter, verse]{LaMi}. 
%% So we only need to check that $R_{\nabla^\X}$ 
%% is relatively $D_V$-bounded on $\X \otimes_{C^\infty(B)}^{\rm alg} \E_B$. 
%% Locally, we have $\nabla^\X_{f_j} = (f_j)_H + \cdots$. Here $(f_j)_H$ is
%% the restriction of $f_j$ to the tangent space of $B$, and 
%% the dots indicate a bounded endomorphism of $\X$, 
%% that is induced by a bounded endomorphism on the finitely 
%% generated projective $C^\infty(M)$-module $\E_V$. 
%% {\bf AR: Ahhhhhggghhggh. Different topologies. Do not believe this. Don't see why the endo part of the connection on $\X$ should be bounded.
%% $\X$ is a module over $C^\infty(B)$, and it is not clear why the endo part of the connection on the fin gen $C^\infty(M)$ is bounded on the countably generated $C^\infty(B)$ module}
The relevant and potentially unbounded term in the 
curvature is thus $[(f_j)_H,(f_k)_H ] - [f_j,f_k]_H$. 
But this difference of commutators is a vertical vector field and, in fact, 
%% Indeed, for a function $g \in C^\infty(B)$ one readily checks that 
%% \begin{align}
%% \Big([(f_j)_H,(f_k)_H ] - [f_j,f_k]_H\Big)(\pi^* g) = 0
%% \label{eq:comm-horiz-vect}
%% \end{align}
%In fact, this vertical vector field
it is precisely the one described by the curvature $\Omega$ of $\pi$ as defined in Definition \ref{defn:riem-subm-curv}. Indeed, the horizontal lift of a commutator is the horizontal part of the commutator of the lifted vector fields and hence 
$$
[X_H,Y_H ] - [X,Y]_H = \sum_i \Omega(X_H, Y_H , e_i) e_i
$$
for vector fields $X,Y$ on $B$ and an orthonormal frame $\{e_i\}$ of vertical vector fields. 
We conclude that the curvature is given locally by a vertical vector field plus bounded terms, and since $D_V$ is vertically elliptic we find the desired relative bound. 
\endproof
%{\bf BM: It would be good to make this remark more precise and/or incorporate it in the Proposition above.}
%% \begin{rmk}
%% If $e_i$ is an orthonormal frame of vertical vector fields, the 
%% commutator \eqref{eq:comm-horiz-vect} is equal to 
%% $\sum \Omega((f_j)_H,(f_k)_H,e_i) e_i$ where 
%% $\Omega$ is the curvature associated to the submersion defined in Definition \ref{defn:riem-subm-curv}. Interestingly, it is precisely this curvature that appeared in Equation \ref{eq:factorization} as an obstruction to the tensor sum decomposition of $D_M$ in terms $D_V$ and $D_B$.
%% \end{rmk}

%% \begin{corl}
%%   We have that 
%%   $$
%% R_{\nabla^\X} =c_B \circ \left(\Omega^{\E_V} + \frac 12 dk\right)
%% %\left(\left[ \nabla^{\E_V}_{(f_j)_H}, \nabla^{\E_V}_{(f_k)_H} \right]- \nabla^{\E_V}([f_j,f_k])_H + dk (((f_j)_H,(f_k)_H \right) c(f_j) c(f_k)
%% $$
%% in terms of horizontal Clifford multiplication $c_H${\bf????} and the de Rham differential $d$ of the mean curvature $k \in \Omega^1(M)$.
%% {\bf Is $\Omega^{\E_V}=\Omega$?}
%% \end{corl}
%% \proof
%% This follows by a direct computation but it can also be seen as follows. Namely, we may treat the mean curvature in $\nabla_{f_j}^\X= \nabla^{\E_V}_{(f_j)_H}+ \frac 1 2 k((f_j)_H)$ as an abelian connection one-form. Hence the curvature of $\nabla^{\X}$ becomes the sum of that of $\nabla^{\E_V}$ and of $k$, which in this {\bf abelian????} case is $\frac 12 d k$. 
%% \endproof

\end{document}